\documentclass[12 pt]{article}
	\usepackage[english]{babel}
	\usepackage[utf8]{inputenc}  
	\usepackage[T1]{fontenc}
	\usepackage{amssymb}
	\usepackage[mathscr]{euscript}
	\usepackage{stmaryrd}
	\usepackage{amsmath}
	\usepackage{tikz}
	\usepackage[all,cmtip]{xy}
	\usepackage{amsthm}
	\usepackage{varioref}
	\usepackage{geometry}
	\geometry{a4paper}
	\usepackage{lmodern}
	\usepackage[unicode]{hyperref}
	 \usepackage{fancyhdr}
	\usepackage{xargs}
	\usepackage{ifthen}
	\usepackage{caption}
	\usepackage{float}
	\pagestyle{fancy}
	\theoremstyle{plain}
	\fancyfoot[C]{\thepage} 
	\fancyhead[L]{\today}
	\fancyhead[R]{}
	\newcounter{n}
	\numberwithin{n}{section}
	\newtheorem{theo}[n]{Theorem}
	\labelformat{theo}{Theorem~#1}
	\labelformat{rmq}{remarque~#1}
	\newtheorem{cor}[n]{Corollary}
	\newtheorem{lm}[n]{Lemma}
	\labelformat{lm}{lemma~#1}
	
	\newtheorem{prop}[n]{Proposition}
	\labelformat{prop}{proposition~#1}
	{\theoremstyle{definition}
	\newtheorem{sett}[n]{Setting}
	
	\newtheorem{nt}[n]{Notation}
    \newtheorem{rmq}[n]{Remark}	
	\newtheorem{df}[n]{Definition}}
	
	\newtheorem*{thmnonnum}{Theorem}
	\newtheorem{theou}[n]{Theorem}
	
 \usetikzlibrary{patterns}

	\renewcommand\epsilon{\varepsilon}
	\renewcommand\phi{\varphi}
	\newcommand\R{\mathbb{R}}
	\newcommand\s{\mathbb{S}}
	\newcommand{\Int}{\mathrm{int}}

\newcommand\QQ{\mathbb{Q}}
\newcommand\und{\underline}
\newcommand\Card{{\mathrm{Card}}}
\newcommand\dimd{{d}}

\newcommand\degk{{k}}
\newcommand\Ln{\mathrm{Ln}}

\newcommand\Tr{{\mathrm{Tr}}}

\newcommand\torsion[1][\psi]{{\mathcal T_{#1}}}

\newcommand\moinsun[1]{{{#1}^-}}
\newcommand\plusun[1]{{{#1}^+}}
\newcommand\zk{{\und{k}}}


\newcommand{\punct}[1]{{#1^\circ}}
\newcommand\sphereambiante{{M}}	
\newcommand\sphamb{{M}}
\newcommand\ambientspace{{\punct \sphamb}}
\newcommand\spamb{{\punct \sphamb}}
\newcommandx\bouleambiante[1][1= M]{B(#1)}
\newcommandx\bM[1][1= M]{B(#1)}

\newcommand\voisinageinfini{\punct{B_\infty}}

\newcommandx\nbdinf[1][1= ]{\ifthenelse{\equal{#1}{}}{\punct{B_\infty}}{\punct{B_\infty(#1)}}}

\newcommand\voisinageinfinideux{{\punct{ B_{\infty, 2}}}}


\newcommandx\sommets[2][1=\Gamma,2= \empty]{\ifthenelse{\equal{#2}{}} {V(#1)}{V^*(#1)} }

\newcommand\sommetsinternes[1][\Gamma]{V_i(#1)}

\newcommand\sommetsexternes[1][\Gamma]{V_e(#1)}

\newcommand\aretes[1][\Gamma]{E(#1)}

\newcommand\aretesinternes[1][\Gamma]{E_i(#1)}

\newcommand\aretesexternes[1][\Gamma]{E_e(#1)}

\newcommand\graphes{\mathcal G_k}

\newcommand\graphesnum{\widetilde{\mathcal G_k}}


\newcommandx\confignoeud[3][1=\Gamma, 2=\psi,3=\empty ]{C_{#1}#3(#2)}

\newcommand\configR{C_2(\R^n)}

\newcommand\config{C_2}

\newcommand\configM{\config(\spamb)}

\newcommand\unitaire{{U}}

\newcommand{\N}{{\mathfrak{N}}}


\newcommand\injections{\mathcal I}


\renewcommand\d{\mathrm d}

\newcommand\orientationarete[1]{{\Omega_{#1}}}

\newcommand\orientationconfig[1][\Gamma]{{\Omega(#1)}}


\newcommand\Propint{{A}}

\newcommand\Propext{{B}}

\newcommandx\formearete[2][1=e,2=\sigma]{{\omega^F_{#1,#2}}}

	\newcommand\Z[1][F]{{Z_{\degk}^{#1}}}

\newcommand{\inj}{{\injections(\R^n,\R^{n+2})}}


\newcommand\indices{{\und{2k}}}


\newcommand\mur[1][r]{{\mathbb D_{\mu}( #1)}}

\newcommand\DR[1][r]{{L_{\theta}( #1)}}

\newcommand\Dr[2]{{L_0^#1(#2)}}

\renewcommand\P{{B}}

\newcommand\Dir{{\omega}}

\newcommand\ys{\mathcal Y_s}

	\newcommand\Ba{{\mathcal B}}
	\newcommand\Bb{{\tilde{\mathcal B}}}
	\newcommand\Lk{{\mathcal L}}
	\newcommand\lk{{\mathrm{lk}}}

	\newcommand\bA{{\zeta}}
	\newcommand\bb{{ a }}
	\newcommand\ba{{ z }}
\newcommand{\Pii}[1][i]{P_{#1,\sigma}}
\newcommand\poids{{ \lambda }}


\newcommand{\crc}{circle (0.1)}
\newcommand{\edgi}{\draw[->, >=latex]}
\newcommand{\edge}{\draw[->, >=latex, dashed]}


\newcommand{\Sym}{\mathfrak{S}}

\newcommand{\Num}{\mathrm{Num}}

\tikzset{
    partial ellipse/.style args={#1:#2:#3}{
        insert path={+ (#1:#3) arc (#1:#2:#3)}
    }
}
\newcommand\figurea{{
\begin{tikzpicture}\draw[color=white] (-4, -0.75) rectangle (4.5, 2.5);
\begin{pgflowlevelscope}{\pgftransformscale{0.75}}
\draw (0,0) [partial ellipse=180:110:4.4cm and 2.8cm];
\draw (0,0) [partial ellipse=0:70:4.4cm and 2.8cm];
\draw (0, 2.598) ellipse (1.5 and 0.3);
\draw[dashed] (-1.5, 2.598)-- (-1.5, 0);
\draw[dashed] (1.5, 2.598) --(1.5, 0);
\draw[dashed] (0, 0) ellipse (1.5 and 0.3);
\draw[dashed] (0, 0) [partial ellipse = 0: 180: 4.4 cm and 0.88 cm];
\draw(0, 0) [partial ellipse = 0: -180: 4.4 cm and 0.88 cm];
\draw [<->] (5, 0) -- (5, 2.598);
\draw (6.75, 1.2) node {$h_r = \sqrt{\left(\frac{18}r\right)^2-r^2}$};
\draw [<->] (0, 3) -- (1.5, 3);
\draw (0.75, 3.2) node {$r$} ;
\draw [<->] (-4.4, 3.1) -- (0, 3.1);
\draw (-2, 3.4) node {$R_r= \frac{18}r$};
\draw[->] (-6, -1) -- ++(0, 3);
\draw[->] (-6, -1) -- ++(3,0);
\draw[->] (-6, -1) -- ++(1.5, 1.5);
\draw (-6.3, 1.5) node{$h_x$};
\draw (-3.75, -1.25) node {$x_1$};
\draw (-5, 0.4) node {$x_2$};
\end{pgflowlevelscope}
\end{tikzpicture}
}}


\newcommand\figureba{{\begin{tikzpicture} \draw[white] (-4,0) rectangle (5,3);
\begin{pgflowlevelscope}{\pgftransformscale{0.7}}
\draw (0,0) [partial ellipse=180:110:4.4cm and 2.8cm];
\draw (0,0) [partial ellipse=0:70:4.4cm and 2.8cm];
\draw (0, 2.598) ellipse (1.5 and 0.3);
\draw[dashed] (-1.5, 2.598)-- (-1.5, 0);
\draw[dashed] (1.5, 2.598) --(1.5, 0);
\draw[dashed] (0, 0) ellipse (1.5 and 0.3);
\draw[dashed] (0, 0) [partial ellipse = 0: 180: 4.4 cm and 0.88 cm];
\draw(0, 0) [partial ellipse = 0: -180: 4.4 cm and 0.88 cm];
\fill[red] (-0.6, 3) circle (0.05);
\draw (-0, 3) node {\scriptsize $y$};
\draw[red, ->, >= latex] (-0.6, 3) -- (-2, 3);
\draw[red] (-4, 3) -- (-2, 3);
\draw[red, dashed] (-4.6, 3) -- (-4, 3);
\draw (-3.3, 2.7) node { $D^0(y, -e_1)$};
\end{pgflowlevelscope}
\end{tikzpicture}}}


\newcommand\figurebb{{\begin{tikzpicture} \draw[white] (-4,0) rectangle (5,3);
\begin{pgflowlevelscope}{\pgftransformscale{0.75}}
\draw (0,0) [partial ellipse=180:110:4.4cm and 2.8cm];
\draw (0,0) [partial ellipse=0:70:4.4cm and 2.8cm];
\draw (0, 2.598) ellipse (1.5 and 0.3);
\draw[dashed] (-1.5, 2.598)-- (-1.5, 0);
\draw[dashed] (1.5, 2.598) --(1.5, 0);
\draw[dashed] (0, 0) ellipse (1.5 and 0.3);
\draw[dashed] (0, 0) [partial ellipse = 0: 180: 4.4 cm and 0.88 cm];
\draw(0, 0) [partial ellipse = 0: -180: 4.4 cm and 0.88 cm];
\fill[red] (-0.8, 1.55) circle (0.05) ;
\draw[red, ->, >= latex] (-.8,1.55) -- (-1.1, 1.55);
\draw[red] (-1.1,1.55) -- (-1.3, 1.55);
\fill[red] (-1.3, 1.55) circle (0.05);
\fill[red] (-3.5, 1.55) circle (0.05);
\draw[red] (-4.5,1.55) -- (-5, 1.55);
\draw[red,->, > = latex] (-3.5,1.55) -- (-4.5, 1.55);
\draw[red, dashed] (-5,1.55) -- (-6.5, 1.55);
\draw[dotted] (0, 1.4) ellipse (1.5 and 0.3);
\draw[dotted] (0, 1.4) ellipse (3.7 and 0.55);
\draw (-0.6, 1.45) node {\scriptsize $y$};
\draw (-1.05, 1.75) node {\scriptsize $x_c^-(y)$};
\draw (-3.2, 1.45) node {\scriptsize $x_s^-(y)$};
\draw (-5.2, 1.9) node { $D^0(y, -e_1)$};
\end{pgflowlevelscope}
\end{tikzpicture}}}


\newcommand\figurebc{{\begin{tikzpicture}\draw[white] (-4,0) rectangle (5,3);
\begin{pgflowlevelscope}{\pgftransformscale{0.75}}
\draw (0,0) [partial ellipse=180:110:4.4cm and 2.8cm];
\draw (0,0) [partial ellipse=0:70:4.4cm and 2.8cm];
\draw (0, 2.598) ellipse (1.5 and 0.3);
\draw[dashed] (-1.5, 2.598)-- (-1.5, 0);
\draw[dashed] (1.5, 2.598) --(1.5, 0);
\draw[dashed] (0, 0) ellipse (1.5 and 0.3);
\draw[dashed] (0, 0) [partial ellipse = 0: 180: 4.4 cm and 0.88 cm];
\draw(0, 0) [partial ellipse = 0: -180: 4.4 cm and 0.88 cm];
\fill[red] (4.5, 1.55) circle (0.05) ;
\draw (4.7, 1.55) node {$y$};
\fill[red] (1.3, 1.55) circle (0.05) ;
\draw[red, ->, >= latex] (-1.3,1.55) --  (1.3, 1.55)-- (0, 1.55);
\fill[red] (-1.3, 1.55) circle (0.05);
\fill[red] (-3.5, 1.55) circle (0.05);
\fill[red] (3.5, 1.55) circle (0.05);
\draw[red, ->, >= latex] (3.5,1.55) -- (4.5, 1.55)-- (4, 1.55);
\draw[red, <-, >= latex] (-4.3, 1.55) --(-3.5,1.55) -- (-5, 1.55) ;
\draw[red, dashed] (-5,1.55) -- (-6.5, 1.55);
\draw[dotted] (0, 1.4) ellipse (1.5 and 0.3);
\draw[dotted] (0, 1.4) ellipse (3.7 and 0.55);
\draw (-1.01, 1.79) node {\scriptsize $x_c^-(y)$ };
\draw (1.05, 1.79) node {\scriptsize $x_c^+(y)$};
\draw (3.2, 1.4) node {\scriptsize $x_s^+(y)$};
\draw (-3.2, 1.4) node {\scriptsize $x_s^-(y)$};
\draw (-5.2, 1.8) node { $D^0(y, -e_1)$};
\end{pgflowlevelscope}\end{tikzpicture}}}


\newcommand\figurebd{{
\begin{tikzpicture}\draw[white] (-4,0) rectangle (5,3);
\begin{pgflowlevelscope}{\pgftransformscale{0.75}}
\draw (0,0) [partial ellipse=180:110:4.4cm and 2.8cm];
\draw (0,0) [partial ellipse=0:70:4.4cm and 2.8cm];
\draw (0, 2.598) ellipse (1.5 and 0.3);
\draw[dashed] (-1.5, 2.598)-- (-1.5, 0);
\draw[dashed] (1.5, 2.598) --(1.5, 0);
\draw[dashed] (0, 0) ellipse (1.5 and 0.3);
\draw[dashed] (0, 0) [partial ellipse = 0: 180: 4.4 cm and 0.88 cm];
\draw(0, 0) [partial ellipse = 0: -180: 4.4 cm and 0.88 cm];
\fill[red] (4.5, 1) circle (0.05) ;
\draw (4.8, 1) node {$y$};
\fill[red] (2.6, 1) circle (0.05) ;
\fill[red] (-2.6, 1) circle (0.05);
\draw[red, -> , >= latex](2.6,1) -- (4.5, 1)-- (3.3, 1);
\draw[red, ->,>= latex] (-3.5, 1)--(-2.6,1) -- (-5, 1);
\draw[red, dashed] (-5,1) -- (-6.5, 1);
\draw[dotted] (0, 1.4) ellipse (1.5 and 0.3);
\draw[dotted] (0, 1.4) ellipse (3.7 and 0.55);
\draw (-5.2, 1.3) node { $D^0(y, -e_1)$};
\draw (2.6, 1.2) node {\scriptsize $x_s^+(y)$};
\draw (-2.6, 1.2) node {\scriptsize $x_s^-(y)$};
  \end{pgflowlevelscope}\end{tikzpicture}
}}


\newcommand\figureca{{
\begin{tikzpicture}\draw[white] (-6,-1) rectangle (1,3);
\begin{pgflowlevelscope}{\pgftransformscale{1}}
\draw (0,0) [partial ellipse=180:110:4.4cm and 2.8cm];
\draw (0, 2.598) [partial ellipse = 90 : 270 : 1.5 and 0.3];
\draw[dashed, opacity = .3] (-1.5, 2.598)-- (-1.5, 0);
\draw (0, 0) [partial ellipse = 90 : 270 : 1.5 and 0.3];
\draw[dashed,opacity = .3] (0, 0) [partial ellipse = 90: 180: 4.4 cm and 0.88 cm];
\draw(0, 0) [partial ellipse = -90: -180: 4.4 cm and 0.88 cm];
\draw[dotted,opacity = .3] (0, .7) [partial ellipse = 90 : 270 :2.1 and 0.35];
\draw[dotted,opacity = .3] (0, .7) [partial ellipse = 90: 270 : 3.8 and 0.6];
\draw[red] (-2.1, 1.65) --(-2.1,0.7);
\fill[red] (0, .95) circle (0.05) ;
\draw[color = red, <-, >=latex] ( -1, .95)--(0, 0.95)--(-1.6, .95);
\draw (-0, 1.15) node {\large $y$};
\draw[red] (0, .7)[partial ellipse = 180: 138: 2.1 and 0.38];
\draw[red] (0,0.7) [partial ellipse=180:131:3.2cm and 1.25 cm];
\draw[red] (0, .7) [partial ellipse= 180:151:3.2 and 0.5];
\draw[color = red] (-4.9, 0.95) -- (-2.8, .95);
\draw[color = red, dashed] (-4.9, 0.95)--(-5.9, 0.95);
\end{pgflowlevelscope}
\end{tikzpicture}
}}

\newcommand\figurecb{{
\begin{tikzpicture}\draw[white] (-7  ,-2) rectangle (0, 2);
\begin{pgflowlevelscope}{\pgftransformscale{1}}
\draw (0, 0) [partial ellipse =170 : 190 : 5 and 5];
\draw (0, 0) [partial ellipse =90 : 270 : 1 and 1];
\fill[red][red] (0, 0.7) circle (0.05);
\draw[red, <-, >= latex] (-1, 0.7)--(0, 0.7) -- (-1.885, 0.7);
\draw[red] (-3.93, 0.7) -- (-6, 0.7);
\draw[red, dashed] (-6, .7) -- (-7, .7);
\draw[red] (0,0)[partial ellipse = 159.60: 180: 2 and 2];
\draw[red] (0,0) [partial ellipse = 170: 180 : 4 and 4];
\draw[red, dotted, opacity = 1] (-2, 0)--(-4, 0);
\draw[dashed] (0, 0 ) [partial ellipse = 170: 165: 5 and 5];
\draw[dashed] (0, 0 ) [partial ellipse = 190: 195: 5 and 5];
\draw (0.1, 0.9) node {\scriptsize $y$};
\draw (-1.9, 0.9) node {\scriptsize $x^c_L(y)$};
\draw (-2.2,-0.2 ) node {\scriptsize $x^c_\Sigma(y)$};
\draw (-3.8,-0.2 ) node {\scriptsize $x^s_\Sigma(y)$};
\draw (-3.8,0.9 ) node {\scriptsize $x^s_L(y)$};
\draw (-5.4, .9 ) node {\scriptsize $x_s^-(y)$};
\draw (-0.4,0.5 ) node {\scriptsize $x_c^-(y)$};
\draw[dashed] (-1.885, 0.7) -- (0,0); 
\draw[dashed] (-3.93, 0.7) -- (0,0); 
\fill[white] (-1.4, 0.5) rectangle (-1.8, 0.1);
\draw[ <-] (0,0)[partial ellipse = 159.60: 180: 1.8 and 1.8];
\draw[ <-] (0,0)[partial ellipse = 170: 180 : 3.8 and 3.8];
\draw (-3.6, 0.3) node {\scriptsize $\eta_s$};
\draw (-1.6, 0.25) node {\scriptsize $\eta_c$};
\end{pgflowlevelscope}
\end{tikzpicture}
}}

 
\newcommand\figured{{
\begin{tikzpicture}[scale = .5]
\draw (0, 0) circle (1) circle (5);
\draw[dotted] (0, 0) circle (2) circle (4);
\fill[red] (5, 4.2) circle (0.1);
\draw (6.25, 4.5) node {\tiny $y^{(1)}\in\mathcal Y_s^1$};
\fill[red] (5.5, 2.1) circle (0.1);
\draw (6.8, 2.4) node {\tiny $y^{(2)}\in\mathcal Y_s^2$};
\draw[red, ->, >= latex] (-2.7, 4.2) -- (2.7,4.2)-- (0, 4.2);
\draw (0, 4.6) node{\tiny $D^1(y^{(1)}, -e_1)$};
\draw[red] (-4.55, 2.1) -- (-3.39,2.1);
\fill[white] (-3.3, 1.35) rectangle (-4.1, 2.1) ; 
\fill[white] (3.3, 1.35) rectangle (4.1, 2.1) ; 
\fill[white] (2.3, -.5) rectangle (1.1, .45) ; 
\fill[white] (-2.3, -.5) rectangle (-1.1, .35) ; 
\draw (-3.5, 1.7) node{\tiny $x_{s,2}^-(y^{(2)})$} (3.5, 1.7) node{\tiny $x_{s,2}^+(y^{(2)})$};
\draw (-2.2, -0.15) node{\tiny $x_{c,2}^-(y^{(3)})$} (2.79, -0.15) node{\tiny $x_{c,2}^+(y^{(3)})$};
\draw[red] (4.55, 2.1) -- (3.39,2.1);
\draw[red, <-, >= latex] (0,0) [partial ellipse = 148.4: 31.6 : 4 and 4];
\fill[red] (6, -0.6) circle (0.1);
\draw[red] (4.95, -.6) -- (3.95, -.6);
\draw[red] (-4.95, -.6) -- (-3.95, -.6);
\draw[red] (-1.91, -.6) -- (-0.81, -.6) (.81, -.6)  -- (1.91,-.6);
\draw[red, <-, >= latex] (0,0) [partial ellipse = -171.5: -8.5 : 4 and 4];
\draw[red, ->, >= latex] (0,0) [partial ellipse = -162.5: -17.5 : 2 and 2];
\draw (-1, -3.25) node{\tiny $\gamma_s(y^{(3)})$};
\draw (1, -2.2) node{\tiny $\gamma_c(y^{(3)})$};
\draw (7.2, -0.3) node{\tiny $y^{(3)}\in\mathcal Y_s^3$};
\draw[red] (-4.55, 2.1) -- (-3.39,2.1);
\end{tikzpicture}
}}


\newcommand\figureXYZ{{\begin{tikzpicture}
\draw[white] rectangle (4,4) -- (5.5,4);
\draw [->] (0,0) --(0,4.2);
\draw [->] (0,0) --(4.2,0);
\fill  (0,0) rectangle (1,1) (2,0) rectangle (4,1) (3,1) rectangle (4,2) (0,2) rectangle (1,4) (1,3) rectangle (2,4);
\fill (0.9, 2) -- (1,2) -- (2,3) --(2, 3.1)-- (0.9, 3.1) -- cycle;
\fill (2, 1) -- (3, 2)-- (3.1, 2) -- (3.1, 0.9)-- (2,0.9) -- cycle;
\fill [opacity = 0.4] (1,0) rectangle (2,1);
\fill [ opacity = 0.4] (0,1) rectangle (1,2);
\draw  (1.5, 0.5) node {$Y_1$} (0.5, 1.5) node {$Y_2$};
\draw[color=white]  (0.5, 0.5) node{$X_0$} (1, 3) node{$X_2$} (3, 1) node {$X_1$};
\fill [color =white, opacity = 0.8] (1, 1) -- (2, 1) -- (3, 2) -- (4, 2) -- (4,4) -- (2, 4) -- (2, 3) -- (1, 2)-- cycle;
\draw  (2.3, 2.5) node{$W$};
\draw (1, 0) -- (1, -0.2) (2, 0) -- (2, -0.2) (3, 0) -- (3, -0.2);
\draw (1, -0.4) node {$1$} (2,-0.4) node {$2$} (3, -0.4) node {$3$} ;
\draw (0,1) -- ( -0.2,1) ( 0,2) -- (-0.2,2) (0,3) -- (-0.2,3);
\draw ( -0.4, 1) node {$1$} (-0.4, 2) node {$2$} (-0.4, 3) node {$3$};
\draw (0,0) rectangle (4,4);
\draw[<->] (-0.7,3) --(-.7, 4);
\draw (-1.5, 3.5) node {$y\in E_3$};
\draw[<->] (-.7,0) --(-.7, 2);
\draw (-1.5, 1) node {$y\in N_2$};
\draw[<->] (1,-1) --(4, -1);
\draw (2.2, -1.4) node {$x\in E_1$};
\end{tikzpicture}}}


\newcommand\figuresigma{{
\begin{tikzpicture}
\draw[opacity = 0.6] (0,0) [partial ellipse=180:110:4.4cm and 2.8cm];
\draw[opacity = 0.6] (0,0) [partial ellipse=0:70:4.4cm and 2.8cm];
\draw[opacity = 0.6] (0, 2.598) ellipse (1.5 and 0.3);
\draw[dashed,opacity = 0.6] (-1.5, 2.598)-- (-1.5, 0);
\draw[dashed,opacity = 0.6] (1.5, 2.598) --(1.5, 0);
\draw[dashed,opacity = 0.6] (0, 0) ellipse (1.5 and 0.3);
\draw[dashed,opacity = 0.6] (0, 0) [partial ellipse = 0: 180: 4.4 cm and 0.88 cm];
\draw[opacity = 0.6](0, 0) [partial ellipse = 0: -180: 4.4 cm and 0.88 cm];
\fill[ opacity = 0.2, color = red] (1.35, 0.1) -- (4.05, 0.3)  (0,0) [partial ellipse=5.5:70:4.1cm and 2.8cm] --(1.35, 0.1) ;
\fill[ opacity = 0.8, color =red ] (1.35, 0.1) (0,0) [partial ellipse=70: 61:4.1cm and 2.8cm] --  (2, 0.15) --(1.35, 0.1) ;
\fill[ opacity = 0.8, color =red, pattern= north west lines ] (2, 0.15) (0,0) [partial ellipse=55.5: 5.5:3.5cm and 2.4 cm] --  (2, 0.15) ;
\draw (1.1, 1.4) node{$\mathbb D$};
\draw (3.5,0) node{$\Sigma'$};
\draw[thick, -> ] (1.5, 0.2) --++ (0.4, 0.05);
\draw[thick, -> ] (1.5, 0.4) --++ (0.4, 0.05);
\draw[thick, -> ] (1.5, 0.6) --++ (0.4, 0.05);
\draw[thick, -> ] (1.5, 0.8) --++ (0.4, 0.05);
\draw[thick, -> ] (1.5, 1) --++ (0.4, 0.05);
\draw[thick, -> ] (1.5, 1.2) --++ (0.4, 0.05);
\draw[thick, -> ] (1.5, 1.4) --++ (0.4, 0.05);
\draw[thick, -> ] (1.5, 1.6) --++ (0.4, 0.05);
\draw[thick, -> ] (1.5, 1.8) --++ (0.4, 0.05);
\draw[thick, -> ] (1.5, 2.0) --++ (0.4, 0.05);
\draw[thick, -> ] (1.5, 2.2) --++ (0.4, 0.05);
\draw[thick, -> ] (1.5, 2.4) --++ (0.4, 0.05);
\draw[thick, -> ] (1.9, 2.2) --++ (0.4, 0.05);
\draw[thick, -> ] (2.3, 2) --++ (0.4, 0.05);
\draw[thick, -> ] (2.7, 1.75) --++ (0.4, 0.05);
\draw[thick, -> ] (3, 1.3) --++ (0.4, 0.05);
\draw[thick, -> ] (3.45, .8) --++ (0.4, 0.05);
\draw[thick, -> ] (3.8, .4) --++ (0.4, 0.05);
\end{tikzpicture}
}}

\newcommand\figuresigmab{{
\begin{tikzpicture}
\fill[opacity =0.8,color = red] (0,0) -- (0, 3) (0,0) [partial ellipse = 90 : 70 : 3cm and 3cm] --(1,0)-- (0,0);
\fill[opacity = 0.2, color = red] (0, 0) [partial ellipse = 70:0: 3 cm and 3 cm] -- (1,0) -- (1, 1.44);
\draw[thick, -> ] (1, 0.2) --++ (0.4, 0);
\draw[thick, -> ] (1, 0.4) --++ (0.4, 0);
\draw[thick, -> ] (1, 0.6) --++ (0.4, 0);
\draw[thick, -> ] (1, 0.8) --++ (0.4, 0);
\draw[thick, -> ] (1, 1) --++ (0.4, 0);
\draw[thick, -> ] (1, 1.2) --++ (0.4, 0);
\draw[thick, -> ] (1, 1.4) --++ (0.4, 0);
\draw[thick, -> ] (1, 1.6) --++ (0.4, 0);
\draw[thick, -> ] (1, 1.8) --++ (0.4, 0);
\draw[thick, -> ] (1, 2.0) --++ (0.4, 0);
\draw[thick, -> ] (1, 2.2) --++ (0.4, 0);
\draw[thick, -> ] (1, 2.4) --++ (0.4, 0);
\draw[thick, -> ] (1, 2.6) --++ (0.4, 0);
\draw[thick, -> ] (0, 0.2) --++ (-0.4, 0);
\draw[thick, -> ] (0, 0.4) --++ (-0.4, 0);
\draw[thick, -> ] (0, 0.6) --++ (-0.4, 0);
\draw[thick, -> ] (0, 0.8) --++ (-0.4, 0);
\draw[thick, -> ] (0, 1) --++ (-0.4, 0);
\draw[thick, -> ] (0, 1.2) --++ (-0.4, 0);
\draw[thick, -> ] (0, 1.4) --++ (-0.4, 0);
\draw[thick, -> ] (0, 1.6) --++ (-0.4, 0);
\draw[thick, -> ] (0, 1.8) --++ (-0.4, 0);
\draw[thick, -> ] (0, 2.0) --++ (-0.4, 0);
\draw[thick, -> ] (0, 2.2) --++ (-0.4, 0);
\draw[thick, -> ] (0, 2.4) --++ (-0.4, 0);
\draw[thick, -> ] (0, 2.6) --++ (-0.4, 0);
\draw[thick, -> ] (0, 2.8) --++ (-0.4, 0);
\draw[thick, -> ] (0.7, 0.2) --++ (0.2, 0);
\draw[thick, -> ] (0.7, 0.4) --++ (0.2, 0);
\draw[thick, -> ] (0.7, 0.6) --++ (0.2, 0);
\draw[thick, -> ] (0.7, 0.8) --++ (0.2, 0);
\draw[thick, -> ] (0.7, 1) --++ (0.2, 0);
\draw[thick, -> ] (0.7, 1.2) --++ (0.2, 0);
\draw[thick, -> ] (0.7, 1.4) --++ (0.2, 0);
\draw[thick, -> ] (0.7, 1.6) --++ (0.2, 0);
\draw[thick, -> ] (0.7, 1.8) --++ (0.2, 0);
\draw[thick, -> ] (0.7, 2.0) --++ (0.2, 0);
\draw[thick, -> ] (0.7, 2.2) --++ (0.2, 0);
\draw[thick, -> ] (0.7, 2.4) --++ (0.2, 0);
\draw[thick, -> ] (0.7, 2.6) --++ (0.2, 0);
\draw[thick, -> ] (0.3, 0.2) --++ (-0.2, 0);
\draw[thick, -> ] (0.3, 0.4) --++ (-0.2, 0);
\draw[thick, -> ] (0.3, 0.6) --++ (-0.2, 0);
\draw[thick, -> ] (0.3, 0.8) --++ (-0.2, 0);
\draw[thick, -> ] (0.3, 1) --++ (-0.2, 0);
\draw[thick, -> ] (0.3, 1.2) --++ (-0.2, 0);
\draw[thick, -> ] (0.3, 1.4) --++ (-0.2, 0);
\draw[thick, -> ] (0.3, 1.6) --++ (-0.2, 0);
\draw[thick, -> ] (0.3, 1.8) --++ (-0.2, 0);
\draw[thick, -> ] (0.3, 2.0) --++ (-0.2, 0);
\draw[thick, -> ] (0.3, 2.2) --++ (-0.2, 0);
\draw[thick, -> ] (0.3, 2.4) --++ (-0.2, 0);
\draw[thick, -> ] (0.3, 2.6) --++ (-0.2, 0);
\draw[thick, -> ] (0, 3) -- ++ (-0.3, 0.2);
\draw[thick, -> ] (0.22, 2.95) -- ++ (-0.2, 0.3);
\draw[thick, -> ] (0.5, 2.925) -- ++ (0, 0.34);
\draw[thick, -> ] (0.7, 2.885) -- ++ (0.2, 0.3);
\draw[thick, -> ] (0.85, 2.85) -- ++ (0.35, 0.1);
\draw[thick, -> ] (0.4, 2.65) -- ++ (-0.15, 0.15);
\draw[thick, -> ] (0.6,2.65) -- ++ (0.15, 0.15);
\end{tikzpicture}
}}

\newcommand\figuredz{
\begin{tikzpicture}
\draw[white] (-.5, 0) rectangle (1.8, 2.8);
\begin{pgflowlevelscope}{\pgftransformscale{0.6}}
\draw (0,0) [partial ellipse = 0: 180 : 1 and 1];
\draw (0,0) [partial ellipse = 0: 180 : 5 and 5];
\draw[dotted] (30: 1) -- (30:5);
\draw[dotted] (60: 1) -- (60:5);
\draw (25: 3) node{$\Sigma'$};
\draw (65: 3) node{$\Sigma'^+$};
\draw (-1, 1) node{$\partial N_1$};
\draw (130: 4.52) node{$\partial N_1$};
\draw[dashed] (0.86, 0.5)-- (1.94, 0.5);
\draw[dashed] (0, 0) [partial ellipse = 15: 0 :2 and 2 ] (0,0) [partial ellipse = 0:7.5:4 and 4];
\draw[dashed] (3.97, 0.5)-- (4.97, 0.5);
\draw[dashed] (2.5,4.33) -- (-2.5, 4.33);
\fill (0.86,0.5) circle (0.1); 
\draw (0.61, 0.25) node {$z_1^0$};
\fill (2.5, 4.33) circle (0.1);
\draw (3, 4.6) node {$(a_1^0)^+$};
\end{pgflowlevelscope}
\end{tikzpicture}
}

\title{Generalized Bott-Cattaneo-Rossi invariants in terms of Alexander polynomials}
	\author{David Leturcq\footnote{Institut Fourier, Université-Grenoble-Alpes (now JSPS Fellow at RIMS, Kyoto university)}}
	\date{ }
\begin{document}
\maketitle
\begin{abstract}
The Bott-Cattaneo-Rossi invariant $(Z_\degk)_{\degk\in \mathbb N\setminus\{0,1\}}$ is an invariant of long knots $\R^n\hookrightarrow\R^{n+2}$ for odd $n$, which reads as a combination of integrals over configuration spaces.
In this article, we compute such integrals and prove explicit formulas for (generalized) $Z_k$ in terms of Alexander polynomials, or in terms of linking numbers of some cycles of a hypersurface bounded by the knot. Our formulas, which hold for all null-homologous long knots in homology $\R^{n+2}$ at least when $n\equiv 1\mod 4$, conversely express the Reidemeister torsion of the knot complement in terms of $(Z_\degk)_{\degk\in\mathbb N\setminus\{0,1\}}$. 
Our formula extends to the even-dimensional case, 
where $Z_k$ will be proved to be well-defined in an upcoming article.
\end{abstract}
\textbf{Keywords} : High-dimensional knot theory, Alexander polynomials, Configuration spaces\\
 \textbf{MSC2010}: 57Q45, 57M27, 55R80
\section{Introduction}

\textbf{Invariants from configuration space integrals }

Knot invariants defined as combinations of integrals over configuration spaces
emerged after the seminal work of Witten \cite{[Wit]} on the perturbative
expansion of the Chern-Simons theory.
Knot invariants defined from spatial configurations of unitrivalent graphs were formally
defined by Guadagnini, Martellini and Mintchev \cite{GMM}, Bar-Natan \cite{BarNatanPCST}, Altschüler and Freidel \cite{Altschuler_1997},
Bott and Taubes \cite{Bott2017}, and others,
for knots in $\R^3$.
Their definition applies to \emph{long knots in $\R^3$}, which are embeddings of $\R$ into $\R^3$
that coincide with the vertical line outside a ball.
Poincaré duality allowed Dylan Thurston \cite{thurston1999integral} and Sylvain Poirier \cite{Poirier_2002} to see the mentioned combinations of
configuration space integrals as algebraic counts of configurations
of uni-trivalent graphs such that the univalent vertices lie on the knot 
and such that the edge directions are in some generic
finite set.
Work of Kontsevich \cite{Kontsevich1994}, Kuperberg 
and Thurston \cite{kuperberg1999perturbative} 
allowed Lescop \cite{[Lescop],[Lescop2]} to
generalize these constructions to knots in homology $\R^3$, where the
constraint "an edge must have a direction in some finite set" is
replaced with
"the endpoints of an edge must lie in some propagator in a finite
set of propagators", where a \emph{propagator} is a $4$-dimensional chain of
the two-point configuration space of the ambient manifold that
satisfies some boundary conditions.
The above knot invariants thus become combinations of algebraic intersections of preimages
of propagators in some configuration spaces.

\textbf{(Generalized) Bott-Cattaneo-Rossi invariants}

In \cite{[Bott]}, Bott introduced a similar isotopy invariant $Z_2$ of knots $\s^n\hookrightarrow \R^{n+2}$, when $n$ is odd and $n\geq3$. 
The invariant $Z_2$ reads as a linear combination of configuration space integrals associated to graphs 
by integrating a wedge product over the edges of pull-backs under Gauss maps of volume forms on the spheres $\s^{n\pm1}$, 
where such a Gauss map is the direction of an edge in $\R^n$ or in $\R^{n+2}$. The involved graphs have 
four vertices of two kinds, and four edges of two kinds.

This invariant was generalized to a whole family $(Z_{k})_{k\in\mathbb N\setminus\{0,1\}}$ of isotopy invariants of long knots $\R^n\hookrightarrow \R^{n+2}$, for odd $n\geq3$, by Cattaneo and Rossi in \cite{Cattaneo2005} and by Rossi in his thesis \cite{[Rossi]}. The invariant $Z_k$ involves graphs with $2k$ vertices of two kinds and $2k$ edges of two kinds.

In \cite{article1}, we introduced more flexible definitions for the invariants $Z_k$ 
where propagating forms dual to propagators as above replace pull-backs via Gauss maps dual to direction constraints. 
Our definitions allowed us to generalize these invariants in the larger setting of long knots inside
\emph{asymptotic (integral) homology}\footnote{These asymptotic homology $\R^{n+2}$ are precisely defined in the beginning of Section \ref{S11}.} $\R^{n+2}$ 
when $n$ is odd $\geq3$. 
They also allow us to compute $Z_k$ as a combination of algebraic intersections of preimages of propagators under restriction maps.
In \cite{article3}, we notice that, for any $k\in\mathbb N\setminus\{0,1\}$, our definition of $Z_k$ is also valid for $n=1$ 
and for long knots in rational homology $\R^3$. 
To this end, we prove that $Z_k$ coincides with the composition
$Z_k = - w'_C \circ \mathcal Z_k$ for long knots in asymptotic rational homology $\R^3$, where the invariant $\mathcal Z_k$ is the perturbative expansion of the Chern-Simons theory, as defined in \cite{[Lescop],[Lescop2]} and where the 
weight system $w'_{C}$ coincides with the Conway weight system $w_C$ defined by Bar-Natan and 
Garoufalidis in \cite{BNG} on the connected unitrivalent diagrams and vanishes on trivalent diagrams and on non-trivial 
products of diagrams.
In another upcoming article \cite{article4}, we prove that the generalized Bott-Cattaneo-Rossi invariants are also 
well-defined for long knots of parallelizable asymptotic homology $\R^{n+2}$ when $n\geq 2$ is even. 

\textbf{Results of this article}

Here, we construct suitable propagators associated to hypersurfaces bounded by the knot
and we exactly compute 
the involved algebraic intersection numbers.
Our computations lead us to explicit formulas for the generalized $Z_k$ for all null-homologous long knots (at least) when 
$n\not\equiv3\mod4$ in terms of linking numbers of some cycles of a hypersurface as above (Corollary \ref{Zklemma}).

These formulas allow us to prove the following main theorem of this article, 
which holds for the virtually rectifiable long knots of Definition \ref{rectifiabilitydef}.
For even-dimensional knots, the consistency of the definition of $Z_k$ relies on the result of \cite{article4}.
\footnote{Regardless of these results, the theorem below holds for even-dimensional long knots with $Z_k^{F_*}(\psi)-Z_k^{F_*^0}(\psi_0)$ instead of $Z_k(\psi)$, where $\psi_0$ is the trivial knot, and $F_*$ and $F_*^0$ are special family of propagators. See Section \ref{Section2} for more details.}

\begin{thmnonnum}[Theorem \ref{Reidth}]
Let $\mathbb K$ denote $\mathbb Z$ if $n\geq2$, and $\mathbb Q$ if $n=1$.
Let $\psi$ be a null-homologous long knot of an asymptotic $\mathbb K$-homology $\R^{n+2}$.
If $\psi$ is virtually rectifiable (in particular if $n\equiv 1\mod 4$, 
or if $n$ is even and if $\spamb$ is parallelizable), we have the following equality in $\QQ[[h]]$ : 
\[\sum\limits_{\degk\geq 2}Z_{\degk}(\psi)h^{\degk} = 
(-1)^n\Ln\left(\torsion(e^{h})\right)
=
\sum\limits_{\dimd=1}^{n}(-1)^{n+d+1}\left(
\Ln\left(\Delta_{\dimd,\Sigma}(e^{h})\right)- \Delta_{d,\Sigma}'(1)h
\right),\]
where $\torsion(t)$ is the Reidemeister torsion, as normalized in Definition \ref{Reidemeister}, and the $(\Delta_{d, \Sigma}(t))_{d\in\{1,\ldots, n\}}$ are the Alexander polynomials of Theorem \ref{Levine-th}.
\end{thmnonnum}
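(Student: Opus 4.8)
The plan is to evaluate all three expressions through a hypersurface bounded by the knot, reducing the whole identity to a linking‑form computation. Since $\psi$ is null‑homologous it bounds a hypersurface $\Sigma$, and virtual rectifiability (Definition~\ref{rectifiabilitydef}), which holds in the listed cases (and over $\QQ$ when $n=1$), equips $\Sigma$ with the extra data making the explicit formula of Corollary~\ref{Zklemma} available. By that corollary, $Z_{\degk}(\psi)$ is a signed sum, over $d\in\{1,\ldots,n\}$ and over closed \emph{necklaces} of homology cycles of $\Sigma$, of products of the linking numbers $\lk(x,y^{+})$ of consecutive cycles $x,y$ along each necklace. First I would recognize these products as matrix traces: assembling the $\lk(x,y^{+})$ into Seifert matrices of $\Sigma$ (the data from which Theorem~\ref{Levine-th} builds the $\Delta_{d,\Sigma}$), and composing with the inverse of the intersection pairing $J_d$ of $\Sigma$ in complementary degrees (unimodular, by Poincaré--Lefschetz duality), one gets for each $d$ an endomorphism $W_d$ with entries in $\mathbb K$ such that $\Tr(W_d^{j})$ is exactly the length‑$j$ necklace sum appearing in Corollary~\ref{Zklemma}.

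The second and main task is to identify the ensuing combinatorics. The graphs contributing to $Z_{\degk}$ are polygons with $2\degk$ vertices and $2\degk$ edges of alternating kinds; their cyclic automorphisms, together with the count of how the $\degk$ propagator‑intersections of a configuration can distribute along a closed necklace of $j\le\degk$ homology cycles of $\Sigma$, turn the formula of Corollary~\ref{Zklemma} into
\[Z_{\degk}(\psi)=\sum_{d=1}^{n}(-1)^{n+d+1}\,c_{d,\degk},\qquad c_{d,\degk}=-\sum_{j=1}^{\degk}\frac1j\,\bigl([h^{\degk}]\,(e^{h}-1)^{j}\bigr)\,\Tr(W_d^{j}),\]
the generating function $(e^{h}-1)^{j}$ (rather than $h^{j}$) being precisely what counts those distributions. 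Since $\sum_{j\ge1}x^{j}/j=-\Ln(1-x)$ and $\Ln\det=\Tr\,\Ln$, one recognizes $\sum_{\degk\ge1}c_{d,\degk}h^{\degk}=\Tr\,\Ln\bigl(\Id-(e^{h}-1)W_d\bigr)$, which by Theorem~\ref{Levine-th} equals $\Ln\Delta_{d,\Sigma}(e^{h})$; as $Z_{\degk}$ is defined only for $\degk\ge2$, summing the displayed formula over $\degk\ge2$ removes the term $c_{d,1}=-\Tr(W_d)=\Delta_{d,\Sigma}'(1)$, which gives the right‑hand equality of the theorem. That only \emph{connected} polygons contribute — consistent, for $n=1$, with the vanishing of $w'_{C}$ on products of diagrams in $Z_{\degk}=-w'_{C}\circ\mathcal Z_{\degk}$ — is why a logarithm, rather than a bare determinant, appears here.

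It then remains to pass to the Reidemeister torsion. By Theorem~\ref{Levine-th} the $\Delta_{d,\Sigma}$ are the genuine, isotopy‑invariant Alexander polynomials $\Delta_d$ of $\psi$, and, classically, the Reidemeister torsion of the knot exterior is, up to a unit $\pm t^{m}$ of $\QQ[t^{\pm1}]$, the alternating product $\prod_{d=1}^{n}\Delta_d(t)^{(-1)^{d+1}}$; hence $(-1)^{n}\Ln\torsion(e^{h})$ equals $\sum_{d=1}^{n}(-1)^{n+d+1}\Ln\Delta_d(e^{h})$ up to a term linear in $h$. With the normalization of Definition~\ref{Reidemeister}, $\Ln\torsion(e^{h})$ has no constant and no linear term, which forces that linear term to be $-\sum_{d}(-1)^{n+d+1}\Delta_d'(1)h$ and yields the left‑hand equality. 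For $n$ even, the consistency of the definition of $Z_{\degk}$ is the content of \cite{article4}; independently of that result, the same argument proves the stated identity with $Z_{\degk}^{F_{*}}(\psi)-Z_{\degk}^{F_{*}^{0}}(\psi_{0})$ in place of $Z_{\degk}(\psi)$.

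The main obstacle is the second task: extracting from the intricate but explicit sum of Corollary~\ref{Zklemma} the clean form $Z_{\degk}=\sum_{d}(-1)^{n+d+1}c_{d,\degk}$ with the exact signs, the automorphism factors $1/j$, and — crucially — the weights $[h^{\degk}](e^{h}-1)^{j}$, i.e. the appearance of $e^{h}$ (equivalently of $t$) through the equivariant bookkeeping of the linking numbers in the infinite cyclic cover. A secondary, more classical but still delicate, point is matching all the normalizations at once: unimodularity of the intersection pairings $J_d$ with the correct signs, and the unit ambiguity of the Reidemeister torsion pinned down by Definition~\ref{Reidemeister}, so that the three linear corrections agree.
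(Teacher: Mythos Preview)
Your outline lands at the right destination, and the reduction of $\sum_{k\geq 2}Z_k h^k$ to $\Tr\Ln$ of a Seifert-matrix expression is exactly what the paper does in Section~\ref{Section6}. But your sketch has a genuine gap precisely where you yourself flag ``the main obstacle.''

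First, two small corrections. Corollary~\ref{Zklemma} is already stated in trace form: $Z_k = \sum_{\nu=1}^{k-1}\lambda_{k,\nu}\mathcal L_{k,\nu}$ with $\mathcal L_{k,\nu} = \frac1k\sum_d(-1)^{d+1}\Tr\bigl((V_d^+)^\nu(V_d^-)^{k-\nu}\bigr)$, where the $\lambda_{k,\nu}$ are normalized Eulerian numbers. Your ``necklace'' description and the passage through $J_d^{-1}$ are one step upstream (closer to Lemma~\ref{contributionGamma1}); with the dual bases of Definition~\ref{bases duales} the intersection pairing is already the identity, so no $J_d^{-1}$ is needed and two commuting Seifert matrices $V_d^\pm$ (with $V_d^--V_d^+=I$) appear from the outset. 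Second, your identification $\Tr\Ln(I-(e^h-1)W_d)=\Ln\Delta_{d,\Sigma}(e^h)$ is off by a linear term and a $d\leftrightarrow n{+}1{-}d$ swap: with $W_d=V_d^+$ one gets $\Ln\det(V_d^--e^hV_d^+)$, and the paper converts this to $\Ln\Delta_{d,\Sigma}(e^{-h})$ and then invokes the symmetry $\Delta_{d,\Sigma}(t^{-1})=\Delta_{n+1-d,\Sigma}(t)$ of Lemma~\ref{211} before assembling $\torsion$.

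The substantive gap is the identity you assert but do not prove,
\[
\frac1k\sum_{\nu=1}^{k-1}\lambda_{k,\nu}\,\Tr\bigl((V_d^+)^\nu(V_d^-)^{k-\nu}\bigr)\;=\;\sum_{j\ge 1}\frac1j\,[h^k](e^h-1)^j\,\Tr\bigl((V_d^+)^j\bigr)\qquad(k\ge 2).
\]
Your one-line justification (``counting distributions of propagator-intersections along a necklace'') is not a proof, and I do not see how to make it one without essentially redoing the Eulerian combinatorics. The paper establishes this identity by a quite different mechanism: it proves a quadratic recursion for the $\lambda_{k,\nu}$ (Lemma~\ref{lm5.2}), turns it into the Riccati equation $\partial_Y L = L^2-\bigl(\tfrac{1-X}2\bigr)^2$ for the two-variable generating series $L(X,Y)=\sum_k L_k(X)Y^{k-1}$ (Lemma~\ref{lm5.4}), solves it in closed form (Lemma~\ref{lm5.1}), integrates to $M(X,Y)= -X-\Ln\bigl((Y-Xe^{Y-X})/(Y-X)\bigr)$, and only then substitutes the matrices. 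The specialization $Y-X=hI$ collapses $M(hV_d^+,hV_d^-)$ to $-hV_d^+-\Ln(V_d^--e^hV_d^+)$, which is your single-matrix expression a posteriori. So your repackaging is correct in outcome, but the work that justifies it is Lemmas~\ref{lm5.1}--\ref{lm5.4}, not a bijective count. Your final paragraph on the torsion normalization is fine and matches the paper's Definition~\ref{Reidemeister} and the end of Lemma~\ref{lemmefinal}.
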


This formula determines the invariants $Z_k$ as explicit functions of the Alexander polynomials of the knot. To our knowledge, our induced explicit determination of $(Z_k)_{k\geq2}$ is the first complete computation of an invariant defined from spatial configuration space integrals in degree higher than five.
In \cite[Corollary 4.9]{[Watanabe]}, Watanabe proved that the BCR invariants are finite type invariants 
with respect to some operations on \emph{long ribbon knots} of $\R^{n+2}$ for odd $n\geq3$.
His study allowed him to prove 
that BCR invariants $Z_{k}$ are not trivial for $k$ even $\geq2$ and $n$ odd or for $(k,n)=(3,2)$, and that they are related to the Alexander 
polynomial for long ribbon knots of $\R^{n+2}$. He obtained an exact formula for $Z_2$ in terms of the 
Alexander polynomial for any long ribbon knot. 
The Watanabe expression of $Z_k$ for long ribbon knots was found by using the 
Habiro-Kanenobu-Shima finite type invariant theory of \cite{[HKS]}.
It contains some indeterminacies when $k\geq4$.
Our formula for $(Z_k)_{k\geq2}$ extends this result (\cite[Corollary 4.9]{[Watanabe]}) 
and its analogue (\cite[Theorem 6.2]{[Watanabe2]}) for long handle $(p,q)$-knots
to virtually rectifiable long knots, where no finite type invariant theory is known, 
and it does not contain indeterminacies anymore. 
In particular, it proves \cite[Conjecture 6.3]{[Watanabe2]} and it
partially answers \cite[Problems 6.4 \& 6.5]{[Watanabe2]}: 
the formula 
\[\exp\left(\sum\limits_{k\geq2} Z_k(\psi) h^k\right)
= \torsion(t=e^h)^{(-1)^n} = f(t)\]
 defines a rational fraction $f(t)\in \mathbb Q(t)$ such that $f(t^{-1}) = (f(t))^{(-1)^{n-1}}$, $f'(1)=0$ and $f(1)=1$.

\textbf{An application : the classical Alexander polynomial in terms of the perturbative expansion of the Chern-Simons theory}

In \cite{article3}, 
our formula and \cite[Theorem 2.13]{article3} allow us to prove\footnote{This is \cite[Corollary 2.17]{article3}.} 
\[ \sum\limits_{k\geq 1} \left(w'_{C} \circ \mathcal Z_k\right) h^k = \Ln(\Delta_\psi(e^h)) 
\] for any null-homologous long knot of an asymptotic rational homology $\R^3$, 
where $\Delta_\psi(t)$ is the Alexander polynomial and $\mathcal Z = (\mathcal Z_k)_{k\geq0}$ is 
the forementioned invariant 
defined in \cite{[Lescop],[Lescop2]}. 
This generalizes a result
proved by Bar-Natan and Garoufalidis \cite{BNG}\footnote{
The result of \cite{BNG} applies to the Kontsevich integral $Z_{K}$ 
instead of $\mathcal Z$ but these two invariants are connected by a result of Lescop \cite{LesJKTR}. In \cite[Proposition 2.18]{article3}, we proved that 
this result
implies that $w'_C\circ \mathcal Z_k=w'_C\circ Z_{K,k}$ in any degree $k$.
} for knots of $\s^3$, using
crossing change formulas.
Our proof extends to null-homologous long knots in asymptotic rational homology $\R^3$, 
where crossing change formulas (and the value on the unknot) do not suffice to determine a knot invariant. In degree $2$, this result was proved by Lescop in \cite[Theorem 17.36]{[Lescop2]} for usual closed knots in rational homology spheres.

\textbf{Structure of this article}

In Section \ref{S.1}, we recall the definition\footnote{In the odd dimensional case, this is a review of \cite[Section 2]{article1}.} of the invariant $\Z[]$ in terms of intersection numbers of preimages of \emph{propagators}, 
where propagators are special chains in the two-point configuration space of the ambient asymptotic homology $\R^{n+2}$, which are presented in Definition \ref{df-prop}. 
We precisely state all the forementioned theorems in this section.
Section \ref{Section2} describes how to obtain the formula for $\Z[]$ in terms of linking numbers (Theorem \ref{th0}), for the null-homologous \emph{rectifiable knots} of Definition \ref{rectifiabilitydef}, using propagators related to some 
hypersurface bounded by the knot.
The details of the construction of such propagators for any null-homologous rectifiable knot are presented in Section \ref{Section 3}.
In Section \ref{Section6}, we derive the formula of Theorem \ref{Reidth} for the Reidemeister torsion from Corollary \ref{Zklemma}.
In Section \ref{Section4}, we prove that any odd-dimensional long knot is rectifiable up to connected sum with three copies of itself, when $n\equiv 1\mod 4$, and that any even-dimensional long knot in a parallelizable asymptotic integral homology $\R^{n+2}$ is rectifiable up to connected sum with one copy of itself.

I thank my advisor Christine Lescop for her help with the redaction of this article.
\section{Definition of \texorpdfstring{$(Z_k)_{k\geq2}$}{Zk} and main statements}\label{S.1}

In this article, $n$ is a positive integer. Let $\mathbb K$ denote $\mathbb Z$ if $n\geq2$, or $\mathbb Q$ if $n=1$. 
\subsection{Parallelized asymptotic homology \texorpdfstring{$\R^{n+2}$}{Rn+2} and long knots}\label{S11}

 Let $M$ be an $(n + 2)$-dimensional closed smooth oriented manifold, 
 such that $H_*(M; \mathbb K) = H_*(\s^{n+2}; \mathbb K)$. 
 Such a manifold is called a \emph{$\mathbb K$-homology $(n+2)$-sphere}. 
 In such a homology sphere, choose a point $\infty$ and a closed ball $B_\infty(\sphereambiante)$ around this point. 
 Fix an identification of this ball $B_\infty(M)$ with the complement 
 $B_\infty$ of the open unit ball of $\R^{n+2}$ in $\s^{n+2}=\R^{n+2}\cup \{\infty\}$, 
 such that this smooth identification extends 
 from a neighborhood of $B_\infty(M)$ to a neighborhood of $B_\infty$ in $\s^{n+2}$. 
 Let $\ambientspace$ denote the manifold $\sphereambiante \setminus\{\infty\}$ and 
 let $\voisinageinfini(\sphereambiante)$ denote the punctured ball 
 $B_\infty(\sphereambiante)\setminus\{\infty\}$, which is identified with 
 the complement $\voisinageinfini$ of the open unit ball in $\R^{n+2}$. 
 Let $\bouleambiante$ denote the closure of $\ambientspace\setminus\voisinageinfini$, 
 so that the manifold $\ambientspace$ can be seen as 
 $\ambientspace = \bouleambiante \cup \voisinageinfini$, where $\voisinageinfini\subset \R^{n+2}$. 
 The manifold $\punct M$ endowed with the decomposition 
 $\ambientspace = \bouleambiante \cup \voisinageinfini$ is called 
 an \emph{asymptotic $\mathbb K$-homology $\R^{n+2}$}.

\emph{Long knots} of such a space $\ambientspace$ are smooth embeddings $\psi\colon \R^n\hookrightarrow \ambientspace$ such that $\psi(x)=(0,0,x)\in \voisinageinfini$ when $||x||\geq 1$, and $\psi(x) \in \bouleambiante$ when $||x||\leq 1$.

\begin{df}\label{paral-def}
A \emph{parallelization} of an asymptotic $\mathbb K$-homology $\R^{n+2}$ is a 
smooth bundle isomorphism 
$\tau\colon \ambientspace\times\R^{n+2}\rightarrow T\ambientspace$ 
that coincides with the canonical trivialization 
$\tau_0\colon \R^{n+2}\times\R^{n+2}\rightarrow T\R^{n+2}$ of $T\R^{n+2}$ on $\voisinageinfini \times \R^{n+2}$. 
An asymptotic $\mathbb K$-homology $\R^{n+2}$ with such a parallelization is 
called a \emph{parallelized asymptotic $\mathbb K$-homology $\R^{n+2}$}. 
An asymptotic $\mathbb K$-homology $\R^{n+2}$ that admits such a 
parallelization is called \emph{parallelizable}. 
Given a parallelization $\tau$ and a point $x\in \ambientspace$, $\tau_x$ 
denotes the isomorphism 
$\tau(x,\cdot)\colon \R^{n+2} \rightarrow T_x\ambientspace$.
\end{df}

Some partial results of parallelizability are given in Proposition \ref{conn-sum2}.

\subsection{BCR diagrams}
In this section, we describe the diagrams involved in the definition of the 
invariant $Z_k$, which are the BCR diagrams of \cite[Section 2.2]{article1}.

\begin{df}\label{Def-BCR}
A \emph{BCR diagram} is an oriented connected graph $\Gamma$, defined by a set 
$\sommets$ of vertices, decomposed into 
$\sommets = \sommetsinternes\sqcup\sommetsexternes$, 
and a set $\aretes$ of ordered pairs of distinct vertices, 
decomposed into $\aretes = \aretesinternes\sqcup\aretesexternes$, 
whose elements are called \emph{edges}\footnote{In particular, our graphs have neither loops nor multiple edges with the same orientation.}, 
where the elements of $\sommetsinternes$ are called \emph{internal vertices}, 
those of $\sommetsexternes$ \emph{external vertices}, 
those of $\aretesinternes$ \emph{internal edges}, 
and those of $\aretesexternes$ \emph{external edges}, and such that, 
for any vertex $v$ of $\Gamma$,  one of the five following properties holds: 
\begin{enumerate}
\item $v$ is external, 
with two incoming external edges and one outgoing external edge, 
and exactly one of the incoming edges comes from a univalent vertex. 
\item $v$ is internal and trivalent, 
with one incoming internal edge, one outgoing internal edge, 
and one incoming external edge, which comes from a univalent vertex.
\item $v$ is internal and univalent, with one outgoing external edge. 
\item $v$ is internal and bivalent, 
with one incoming external edge and one outgoing internal edge.
\item $v$ is internal and bivalent, 
with one incoming internal edge and one outgoing external edge.
\end{enumerate}

\end{df}
In the following, internal edges are depicted by solid arrows, 
external edges by dashed arrows, 
internal vertices by black dots, 
and external vertices by white dots, as in Figure \ref{BCR4}, 
where the five behaviors of Definition \ref{Def-BCR} appear.

\begin{figure}[H]
\centering
\begin{tikzpicture}
\fill (0:1) \crc (45:1) \crc (0:2) \crc (90:2) \crc (135:2) \crc (180:1) \crc (225:1) \crc (225:2) \crc (270:1) \crc (315:1) \crc;
\draw (90:1) \crc (135:1) \crc;
\draw [<-, >=latex] (45:1) ++(-67.5:.1) -- ++(-67.5:.6); 
\draw [<-, >=latex] (0:1) ++(-112.5:.1) -- ++(-112.5:.6); 
\draw [<-, >=latex, dashed] (-45:1) ++(-157.5:.1) -- ++(-157.5:.6); 
\draw [<-, >=latex] (-90:1) ++(-201.5:.1) -- ++(-201.5:.6); 
\draw [<-, >=latex] (-135:1) ++(-246.5:.1) -- ++(-246.5:.6); 
\draw [<-, >=latex, dashed] (180:1) ++(67.5:.1) -- ++(67.5:.55); 
\draw [<-, >=latex, dashed] (135:1) ++(22.5:.1) -- ++(22.5:.55); 
\draw [<-, >=latex, dashed] (90:1) ++(-22.5:.1) -- ++(-22.5:.6); 
\edge (0:1.9) -- (0:1.1); 
\edge (90:1.9) -- (90:1.1); 
\edge (135:1.9) -- (135:1.1); 
\edge (-135:1.9) -- (-135:1.1); 
\end{tikzpicture}
\caption{A BCR diagram of degree 6}\label{BCR4}
\end{figure}
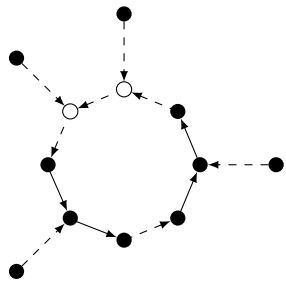
Definition \ref{Def-BCR} implies that any BCR diagram consists of 
one cycle with some legs attached to it, 
where \emph{legs} are external edges that come from a (necessarily internal) univalent vertex, 
and where the graph is a cyclic sequence of pieces as in Figure \ref{fig-BCR2} 
with as many pieces of the first type than of the second type. 
In particular, any BCR diagram has an even number of vertices.

\begin{figure}[H]
\centering
\begin{tikzpicture}[yscale = -1]
\draw[white] (0, 1) circle(0.1);
\draw [->, dashed, >= latex] (-0.5, 0)-- (-0.1,0);
\fill (0,0) circle (0.1) ;
\draw  (.1, 0)-- (0.5,0);
\end{tikzpicture}\ \ \ \ \ \ \ \ \ \ 
\begin{tikzpicture}[yscale = -1]
\draw[white] (0, 1) circle(0.1);
\draw [->, >= latex] (-0.5, 0)-- (-0.1,0);
\fill (0,0) circle (0.1);
\draw [dashed] (.1, 0)-- (0.5,0);
\end{tikzpicture}\ \ \ \ \ \ \ \ \ \ 
\begin{tikzpicture}[scale=-1]
\fill (1,0) circle (0.1) (3,0) circle (0.1);
\fill (1,1) circle (0.1) (3,1) circle (0.1);
\draw [->, dashed,>= latex] (1, 0.9)-- (1,0.1);
\draw [->,dashed,  >= latex] (3, 0.9)-- (3,0.1);
\draw [ >= latex] (.5, 0)-- (0.9,0);
\draw [<-,  >= latex] (1.1, 0)-- (1.7,0);
\draw [<-, >= latex] (2.3, 0)-- (2.9,0);
\draw [dotted] (1.7, 0)-- (2.3,0);
\draw [<-,  >= latex](3.1, 0)-- (3.5,0);
\end{tikzpicture}\ \ \ \ \ \ \ \ \ \ 
\begin{tikzpicture}[scale=-1]
\draw (1,0) circle (0.1) (3,0) circle (0.1) ;
\fill (1,1) circle (0.1) (3,1) circle (0.1) ;
\draw [->,dashed, >= latex] (1, 0.9)-- (1,0.1);
\draw [->,dashed, >= latex] (3, 0.9)-- (3,0.1);
\draw [<-,dashed, >= latex] (.5, 0)-- (0.9,0);
\draw [>=latex, <-, dashed] (1.1, 0)-- (1.7,0);
\draw [<-, dashed,>= latex] (2.3, 0)-- (2.9,0);
\draw [dotted] (1.7, 0)-- (2.3,0);
\draw [dashed, <-, >=latex] (3.1, 0)-- (3.5,0);
\end{tikzpicture}
\caption{ }\label{fig-BCR2}
\end{figure}

For any positive integer $k$, set $\und k = \{1, \ldots, k\}$. 
\begin{df}\label{Def-numb}
Define the \emph{degree} of a BCR diagram $\Gamma$ as 
$\mathrm{deg}(\Gamma) = \frac12\mathrm{Card}(\sommets)$, 
and let $\graphes$ denote the set of all BCR diagrams of degree $k$. 
Note that a degree $k$ BCR diagram has $2k$ edges.

A \emph{numbering} of a BCR diagram $\Gamma$ of degree $k$ is 
a bijection $\sigma \colon \aretes\rightarrow \und {2k}$. 
A \emph{numbered BCR diagram} is a pair $(\Gamma, \sigma)$ 
where $\Gamma$ is a BCR diagram and $\sigma$ is a numbering of $\Gamma$. 
Let $\graphesnum$ denote the set of numbered BCR diagrams up to numbered graph isomorphisms.
\end{df}

An \emph{orientation} of a finite set $X$ is the data of a total order on $X$, up to an even permutation of $X$.
If $o$ is an orientation, $-o$ denote the opposite orientation. 
Note the following easy lemma.
\begin{lm}\label{orientationar}

Given a degree $k$ BCR diagram $\Gamma$, let $E_\theta(\Gamma)$ denote the set of edges which are
either 
an external edge going to an internal vertex, or an external edge of the cycle going to an external vertex.
For such an edge, define the edge $e'$ as follows:
\begin{itemize}
\item if $e$ is going to an internal vertex $v$, $e'$ is the internal edge going out of $v$,
\item if $e$ is an external edge of the cycle going to an external vertex $v$, 
$e'$ is the leg going to $v$.
\end{itemize}
We have $\Card(E_\theta(\Gamma)) = k$. Furthermore,
given any total order $(e_1, \ldots, e_k)$ on $E_\theta(\Gamma)$, 
$(e_1, e'_1, e_2, e'_2, \ldots, e_k, e'_k)$ is a total order on $\aretes$.
The orientation induced on $\aretes$ does not depend on the choice of the total order on $E_\theta(\Gamma)$, and is called the \emph{canonical orientation} of $\aretes$.
\end{lm}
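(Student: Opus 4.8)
\noindent
The plan is to produce the pairing directly: I will show that $e\mapsto e'$ is a bijection from $E_\theta(\Gamma)$ onto its complement $\aretes\setminus E_\theta(\Gamma)$ inside the edge set $\aretes$. Granting this, since a degree-$k$ BCR diagram has $\Card(\aretes)=2k$ edges (Definition \ref{Def-numb}), we get $\Card(E_\theta(\Gamma))=\tfrac12\Card(\aretes)=k$; moreover, for any enumeration $(e_1,\dots,e_k)$ of $E_\theta(\Gamma)$, the sequence $(e_1,e_1',e_2,e_2',\dots,e_k,e_k')$ runs exactly once through $E_\theta(\Gamma)\sqcup\bigl(\aretes\setminus E_\theta(\Gamma)\bigr)=\aretes$, which is the total-order claim. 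Throughout I will use the structural description recalled after Definition \ref{Def-BCR}: $\Gamma$ is a single cycle carrying legs emanating from its univalent vertices, the legs being attached only to the trivalent internal vertices of case (2) and to the external vertices of case (1); the vertices of cases (2), (4), (5) lie on the cycle, while those of case (3) are the endpoints of the legs.

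\noindent
First I check that $e'$ is well defined and avoids $E_\theta(\Gamma)$. If $e\in E_\theta(\Gamma)$ enters an internal vertex $v$, then $v$ is of case (2) or (4) — the only internal vertices receiving an external edge, the others being univalent (3) or receiving only an internal edge (5) — and each of these has exactly one outgoing internal edge, so $e'$ is well defined; being internal, $e'\notin E_\theta(\Gamma)$. If instead $e$ is an external edge of the cycle entering an external vertex $v$, then $v$ is of case (1), which receives exactly one leg, so again $e'$ is well defined; and $e'$ is a leg entering an external vertex, hence is neither an external edge into an internal vertex nor an external edge of the cycle, so $e'\notin E_\theta(\Gamma)$.

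\noindent
Next I construct the inverse. I first record that $\aretes\setminus E_\theta(\Gamma)$ consists exactly of the internal edges together with the legs that enter external vertices: indeed no internal edge lies in $E_\theta(\Gamma)$; a leg entering a case-(2) vertex lies in $E_\theta(\Gamma)$ while a leg entering a case-(1) vertex does not; and every external edge of the cycle enters a vertex of case (1) or (4) — the only cycle vertices whose incoming external edge is not a leg — hence lies in $E_\theta(\Gamma)$. Now for $f\in\aretes\setminus E_\theta(\Gamma)$: if $f$ is internal it leaves a unique vertex $v$ of case (2) or (4), and I set $\phi(f)$ to be the unique external edge entering $v$ (which then lies in $E_\theta(\Gamma)$); if $f$ is a leg it enters a unique external vertex $v$ of case (1), and I set $\phi(f)$ to be the unique external edge of the cycle entering $v$. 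One checks at once that $\phi(e')=e$ for $e\in E_\theta(\Gamma)$ and $(\phi(f))'=f$, so $\phi$ is a two-sided inverse of $e\mapsto e'$, proving bijectivity.

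\noindent
Finally, for the orientation statement, replacing the total order $(e_1,\dots,e_k)$ on $E_\theta(\Gamma)$ by its image under some $\pi\in\Sym_k$ replaces $(e_1,e_1',\dots,e_k,e_k')$ by the total order on $\aretes$ obtained by permuting the $k$ consecutive two-element blocks $(e_i,e_i')$ according to $\pi$. Writing $\pi$ as a product of transpositions, this permutation of $\aretes$ is a product of permutations of the form $(e_i\,e_j)(e_i'\,e_j')$, each of which is even; hence the two total orders on $\aretes$ differ by an even permutation and define the same orientation. The one delicate point is the bookkeeping in the third step — verifying that the case analysis for the targets of external edges (cases (1), (2), (4)) and for the sources of internal edges (cases (2), (4)) is exhaustive and non-overlapping — but this follows mechanically from Definition \ref{Def-BCR} together with the cycle-plus-legs description; everything else is immediate.
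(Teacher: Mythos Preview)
Your proof is correct and complete. The paper states this as an ``easy lemma'' and gives no proof at all, so your careful verification---showing that $e\mapsto e'$ is a bijection from $E_\theta(\Gamma)$ onto its complement via an explicit inverse, and observing that permuting the $k$ blocks $(e_i,e_i')$ is always an even permutation of $\aretes$---fills in exactly the details the paper leaves implicit.
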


\subsection{One-point and two-point configuration spaces}

Let $X$ be a $d$-dimensional closed smooth oriented manifold, 
let $\infty$ be a point of $X$, and set $\punct X= X\setminus\{\infty\}$. 
We give a short overview of a compactification $C_2(\punct X)$ of 
the two-point configuration space 
$C_2^0(\punct X)=\{(x,y) \in (\punct X)^2 \mid x\neq y\}$, 
as defined in \cite[Section 2.2]{[Lescop]}.

If $P$ is a submanifold of a manifold $Q$, such that $P$ is transverse to $\partial Q$ and $\partial P= P\cap \partial Q$, 
its \emph{normal bundle} $\mathfrak NP$ is the bundle whose fibers are $\mathfrak N_xP = T_xQ/T_xP$. 
A fiber $U\mathfrak N_xP$ of the \emph{unit normal bundle $U\mathfrak NP$ of $P$} 
is the quotient of $\mathfrak N_xP \setminus\{0\}$ by the dilations\footnote{Dilations are homotheties with positive ratio.}.  

Here, we use the blow-up in differential topology, 
which replaces a compact submanifold $P$ of a compact manifold $Q$ as above 
with its unit normal bundle $U\mathfrak NP$. 
The obtained manifold is a smooth compact manifold with boundary and ridges. 
It is diffeomorphic to the complement in $Q$ of an open tubular neighborhood 
of $P$. Its interior is $Q\setminus(\partial Q\cup P)$, 
and its boundary is $U\mathfrak NP\cup (\partial Q\setminus  \partial P)$ as a set.

Define the \emph{one-point configuration space} $C_1(\punct X)$ as the blow-up of $X$ at $\{\infty\}$. 
It is a compact manifold with interior $\punct X$ and 
with boundary the unit normal bundle $\s^{d-1}_\infty X$ to $X$ at $\infty$. 

Blow up the point $(\infty,\infty)$ in $X^2$. 
In the obtained manifold, blow up the closures of the sets $\{\infty\}\times\punct X$, $\punct X\times \{\infty\}$ and $\Delta_{\punct X}=\{(x,x)\mid x\in\punct  X\}$.

The obtained manifold $C_2(\punct X)$ is compact and 
it comes with a canonical map $p_b\colon\config(\punct X)\rightarrow X^2$. 
Its interior is canonically diffeomorphic to the open configuration space $C_2^0(\punct X) = \{(x,y) \in (\punct X)^2 \mid x\neq y\}$, 
and $C_2(\punct X)$ has the same homotopy type as $C_2^0(\punct X)$. 
The manifold $C_2(\punct X)$ is called the \emph{two-point configuration space of $\punct X$}.
Its boundary is the union of\begin{itemize}
\item the closed part $p_b^{-1}(\{(\infty,\infty)\})$,
\item the unit normal bundles to $\punct X \times\{\infty\}$ 
and $\{\infty\}\times\punct X$, which are $\punct X \times \s_\infty^{d-1}X$
and $\s_\infty^{d-1}X\times \punct X$, 
\item the unit normal bundle to the diagonal $\Delta_{\punct X}$, 
which is identified with the unit tangent bundle $\unitaire\punct X$ 
via the map $[(u,v)]_{(x,x)}\in U\mathfrak N_{(x,x)}\Delta_{\punct X} \mapsto \left[v-u\right]_x\in \unitaire_x\punct X$.\end{itemize}

The following lemma can be proved as \cite[Lemma 2.2]{[Lescop]}, which is the case $d=3$.
\begin{lm}\label{Gauss}
When $\punct X=\R^d$, the Gauss map 
\[\begin{array}{lll} C_2^0(\R^d) & \rightarrow & \s^{d-1}\\ (x,y) & \mapsto & \frac{y-x}{||y-x||}\end{array}\] extends to a smooth map $G\colon C_2(\R^d)\rightarrow \s^{d-1}$. 
\end{lm}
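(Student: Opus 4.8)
The plan is the following. The Gauss map $G_0\colon(x,y)\mapsto\frac{y-x}{||y-x||}$ is clearly smooth on the open configuration space $C_2^0(\R^d)$. Since $C_2(\R^d)$ is a compact manifold with corners with interior $C_2^0(\R^d)$ whose boundary charts are the explicit charts provided by the blow-up construction, and since $\s^{d-1}$ has no boundary, it suffices to check that, read in each such chart, $G_0$ extends to a smooth map up to the boundary; the strata of codimension $\geq2$ (intersections of the faces discussed below) will then be handled by combining the corresponding local computations. This is exactly the scheme of the proof of \cite[Lemma 2.2]{[Lescop]} (the case $d=3$), which I would follow face by face.

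Over the diagonal, the boundary face is $U\mathfrak N\Delta_{\punct X}\cong \unitaire\punct X$: in the blow-up chart a configuration near it is $(x,y)$ with $y-x=\epsilon\xi$, $\epsilon\geq0$, $\xi\in\s^{d-1}$, hence $G_0(x,y)=\xi$ for $\epsilon>0$, which extends smoothly by the value $\xi$ at $\epsilon=0$; this is the tautological map $[v-u]_x\mapsto[v-u]$ under the identification recalled just before the lemma, as it should be. Near the face $\punct X\times\s^{d-1}_\infty$, away from the locus where the first point also escapes to infinity, $C_2(\R^d)$ is locally $\R^d\times C_1(\R^d)$ and its boundary is reached through the chart $y=\theta/\rho$, $\rho\geq0$, $\theta\in\s^{d-1}$ (the face being $\rho=0$); then $y-x=(\theta-\rho x)/\rho$, so $G_0(x,y)=\frac{\theta-\rho x}{||\theta-\rho x||}$, which is smooth up to $\rho=0$ because $\theta\neq0$, with value $\theta$ there. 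The face $\s^{d-1}_\infty\times\punct X$ is symmetric, via the point-exchange involution of $C_2(\R^d)$ (which carries $G_0$ to $-G_0$), or directly with $x=\theta/\rho$, giving $G_0=\frac{\rho y-\theta}{||\rho y-\theta||}\to-\theta$.

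The delicate family of faces is the one over $(\infty,\infty)$. In the chart $(z,w)\mapsto(z/||z||^2,w/||w||^2)$ of $X^2$ near $(\infty,\infty)$, the point $(\infty,\infty)$ is $(0,0)$, the diagonal $\Delta_{\punct X}$ is $\{z=w\}$, and $\{\infty\}\times\punct X$, $\punct X\times\{\infty\}$ are $\{z=0\}$, $\{w=0\}$. Blowing up $(0,0)$ in $\R^{2d}$, I would use the chart $(z,w)=s(a,b)$ with $s\geq0$, $(a,b)\in\s^{2d-1}$; then $x=a/(s||a||^2)$, $y=b/(s||b||^2)$ and
\[y-x=\tfrac1s\,\Psi(a,b),\qquad\Psi(a,b):=\frac{b}{||b||^2}-\frac{a}{||a||^2},\]
so that $G_0=\Psi(a,b)/||\Psi(a,b)||$ does \emph{not} depend on $s$. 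This is smooth, and extends smoothly to $s=0$, wherever $a\neq0$, $b\neq0$ and $\Psi(a,b)\neq0$; moreover $\Psi(a,b)=0$ exactly when $a=b$ (injectivity of $z\mapsto z/||z||^2$), and $\Psi$ is singular exactly along $\{a=0\}\cup\{b=0\}$. These three loci are the proper transforms of the closures of the submanifolds blown up next, and they are pairwise disjoint in the blow-up of $(0,0)$. Writing $b=a+c$, one has $\Psi(a,a+c)=\frac1{||a||^2}\bigl(\Id-2\tfrac{aa^\top}{||a||^2}\bigr)c+O(||c||^2)$ with invertible leading term, so $\Psi/||\Psi||$ has a limit depending only on the direction of $c$, i.e. extends smoothly across the divisor obtained by blowing up $\{a=b\}$; along $\{a=0\}$ (resp. $\{b=0\}$) one has $\Psi=-a/||a||^2+O(1)$ (resp. $b/||b||^2+O(1)$), so $\Psi/||\Psi||\to-a/||a||$ (resp. $b/||b||$), which depends only on the normal direction to that locus, hence extends smoothly across the corresponding divisor. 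Performing these three blow-ups produces $C_2(\R^d)$ near $p_b^{-1}(\{(\infty,\infty)\})$, and the above shows $G_0$ extends smoothly across each new face; this exhausts the boundary.

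The step I expect to cost the most care is this last one: carrying out the three iterated blow-ups over $(\infty,\infty)$ and writing honest local coordinates near each of the resulting divisors. What makes it tractable is the observation that, already in the first blow-up chart, $G_0$ equals $\Psi(a,b)/||\Psi(a,b)||$ \emph{independently of the radial variable $s$}, which reduces the whole analysis to the elementary behaviour of $\Psi$ near $\{a=b\}$, $\{a=0\}$ and $\{b=0\}$.
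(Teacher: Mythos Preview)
Your proposal is correct and follows precisely the approach the paper indicates: the paper does not give its own proof but merely states that the lemma ``can be proved as \cite[Lemma 2.2]{[Lescop]}, which is the case $d=3$,'' and you carry out exactly that face-by-face verification in general dimension. Your key observation that in the first blow-up chart over $(\infty,\infty)$ the Gauss map reads $\Psi(a,b)/||\Psi(a,b)||$ independently of the radial variable $s$ is the right device for handling that stratum, and the remaining checks are routine.
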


We now define an analogue of $G$ on the boundary of $\configM$ 
for any parallelized asymptotic $\mathbb K$-homology $\R^{n+2}$.

\begin{df}\label{configM}Let $(\ambientspace, \tau)$ be a parallelized asymptotic $\mathbb K$-homology $\R^{n+2}$. 
Identify the sphere $\s^{n+1}_\infty M$ with $\s^{n+1}$ in such a way that 
$u\in\s^{n+1}$ is the limit when $t$ approaches $+\infty$ of the map 
$\left(t\in \left[1, +\infty\right[ \mapsto t.u\in \voisinageinfini\subset\R^{n+2}\right)$.
The boundary of $\configM$ is the union of \begin{itemize}
\item the closed part $\partial_{\infty,\infty}\configM = p_b^{-1}(\{\infty\times\infty\})$, 
which identifies with the similar part $\partial_{\infty, \infty}\config(\R^{n+2})\subset\config(\R^{n+2})$,
\item an open\footnote{as a subset of $\partial\configM$.} face $\partial_{\infty, \ambientspace} \configM =p_b^{-1}(\{\infty\}\times\ambientspace)= \s^{n+1}_\infty M\times \ambientspace=\s^{n+1}\times\ambientspace$,
\item an open face $\partial_{\ambientspace, \infty}\configM = p_b^{-1}(\ambientspace\times\{\infty\})= \ambientspace\times\s^{n+1}$,
\item an open face $\partial_\Delta\configM = p_b^{-1}(\Delta_{\ambientspace}) = \unitaire\ambientspace$.
\end{itemize}

Define the smooth map $G_\tau \colon\partial\configM\rightarrow\s^{n+1}$ 
by the following formula: 
\[G_\tau(c) =\left\{\begin{array}{lll} 
G(c) &\text{if $c\in \partial_{\infty,\infty}\configM= \partial_{\infty,\infty}C_2(\R^{n+2})$,}\\
 -u &  \text{if $c= (u, y)\in\partial_{\infty,\ambientspace}\configM = \s^{n+1}\times \ambientspace$,}\\
u & \text{if $c=(x, u)\in\partial_{\ambientspace,\infty}\configM=\ambientspace \times \s^{n+1}$,}\\
\frac{\tau_x^{-1}(u)}{||\tau_x^{-1}(u)||} & \text{if  $c= [u]_x\in \unitaire_x\ambientspace\subset\unitaire\ambientspace=\partial_{\Delta}\configM$.}
\end{array}\right.\]
In order to simplify the notations, 
for any configuration in one of the three above open faces, 
we write $c= (x, y, u)$ where $(x, y) = p_b(c)$, 
and $u$ denotes the coordinate in the previous definition, 
which is either in $\s^{n+1}$ or in $U_x\punct M$.
\end{df}

\subsection{Conventions about orientations}
From now on, homology groups are taken with rational coefficients 
unless otherwise mentioned, all the manifolds are oriented, 
and their boundaries are oriented with the "outward normal first" convention. 
The ordered products of manifolds are naturally oriented, 
and this orientation does not depend on the order if the manifolds are even-dimensional. 
The fibers of the normal bundle of an oriented submanifold $P$ 
of a manifold $Q$ are oriented in such a way that the orientation of $\N_xP$ 
followed by the orientation of $T_xP$ is the orientation of $T_xQ$. 
The orientation of $\N_x P$ is called the \emph{coorientation} of $P$.
The preimages of submanifolds are oriented in such a way that 
coorientations are preserved. 
The intersection $A_\cap=\bigcap\limits_{i=1}^r A_r$ of submanifolds is 
oriented in such a way that $\mathfrak NA_\cap$ is oriented as 
$\bigoplus\limits_{i=1}^r\mathfrak NA_i$. 
If $A$ is an oriented manifold, $-A$ denotes the same manifold, 
with opposite orientation.

\subsection{Configuration spaces}\label{confspace}

Let $\Gamma$ be a BCR diagram, 
and let $\ambientspace$ be an asymptotic $\mathbb K$-homology $\R^{n+2}$. 
Fix a long knot $\psi\colon \R^n \hookrightarrow \ambientspace$. 
Let $\confignoeud[\Gamma][\psi][^0]$ denote the open \emph{configuration space} 
\[\confignoeud[\Gamma][\psi][^0]= 
\{c\colon \sommets \hookrightarrow \ambientspace 
\mid \text{There exists }c_i \colon \sommetsinternes\hookrightarrow \R^n 
\text{ such that } c_{|\sommetsinternes} = \psi\circ c_i
\}.\]

An element $c$ of $\confignoeud[\Gamma][\psi][^0]$ is called a \emph{configuration}. 
Note that $c_i$ is uniquely determined by $c$. 
By definition, the images of the vertices under a configuration are pairwise distinct, 
and the images of the internal vertices are on the knot.

This configuration space is a non-compact smooth manifold. 
It admits a compactification $\confignoeud$, 
which is defined in \cite[Section 2.4, pp. 51-61]{[Rossi]}.

\begin{theo}[Rossi]\label{cgamma}There exists a smooth compact manifold $\confignoeud$ with boundary and ridges such that: 
\begin{itemize}
\item the interior of $\confignoeud$ is canonically diffeomorphic to $\confignoeud[\Gamma][\psi][^0]$,
\item for any two internal vertices $v$ and $w$, the map $\big(c\in \confignoeud[\Gamma][\psi][^0] \mapsto (c_i(v), c_i(w))\in \configR\big)$ extends to a smooth map $p^{i}_{v,w}\colon \confignoeud\rightarrow \configR$,
\item for any two vertices $v$ and $w$, the map $\big(c\in \confignoeud[\Gamma][\psi][^0] \mapsto (c(v), c(w))\in \configM\big)$ extends to a smooth map $p^{e}_{v,w}\colon \confignoeud\rightarrow \configM$,
\item for any vertex $v$, the map $\big(c\in \confignoeud[\Gamma][\psi][^0] \mapsto c(v)\in C_1(\ambientspace)\big)$ extends to a smooth map $p_v\colon \confignoeud \rightarrow C_1(\ambientspace)$.
\end{itemize} 
\end{theo}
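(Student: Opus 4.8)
This is Rossi's theorem \cite[Section 2.4]{[Rossi]}; the construction is the high-dimensional, mixed-vertex version of the Fulton--MacPherson / Axelrod--Singer / Kontsevich compactification, in the differential-topological blow-up form used by Lescop for $C_2(\punct X)$ (\cite[Section 2.2]{[Lescop]}; see also \cite{Kontsevich1994, kuperberg1999perturbative}). The plan is to build $\confignoeud$ as an iterated blow-up of an auxiliary compact manifold with corners, and then to recover the projection maps by functoriality of the construction. First I would set up the auxiliary space. The data of a configuration splits as an injection $c_i\colon\sommetsinternes\hookrightarrow\R^n$ of the internal vertices and an injection of the external vertices into $\ambientspace$; since $\psi$ is the standard vertical half-line near $\infty$, it extends to a smooth map $C_1(\R^n)\to C_1(\ambientspace)$ carrying the boundary sphere $\s^{n-1}_\infty$ of $C_1(\R^n)$ into $\s^{n+1}_\infty\subset\partial C_1(\ambientspace)$. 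Hence the natural ambient object is the compact manifold with corners
\[W_0=C_1(\R^n)^{\sommetsinternes}\times C_1(\ambientspace)^{\sommetsexternes},\]
whose interior contains $\confignoeud[\Gamma][\psi][^0]$ as the complement of the \emph{degeneration locus}: the internal--internal coincidences $\{c_i(v)=c_i(w)\}$ in $\R^n$, the external--external coincidences $\{c(v)=c(w)\}$ in $\ambientspace$, the external--internal coincidences $\{c(v)=\psi(c_i(w))\}$ (an external vertex meeting the \emph{image} of an internal vertex --- note that an external vertex may approach any other point of the knot, so no further incidence with $\psi(\R^n)$ must be removed), and the faces at $\infty$, together with all of their multiple and nested intersections.

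Then I would perform an iterated spherical blow-up of $W_0$. The centres are indexed by \emph{degeneration patterns}: a subset $S\subseteq\sommets$ declared to collide --- its internal vertices colliding in $\R^n$, its external ones colliding in $\ambientspace$, and the two collision points identified through $\psi$ when $S$ meets both $\sommetsinternes$ and $\sommetsexternes$ --- possibly with the whole cluster $S$ escaping to $\infty$, and with nested such choices allowed. These patterns form a poset under refinement, and one blows up from the deepest (most degenerate) patterns to the shallowest. At each stage the verification is the standard one: the proper transform of the next centre is a closed submanifold, transverse to all the boundary faces and corners already created and meeting each of them in its own boundary, so that the blow-up is again a smooth compact manifold with corners; one checks this in the Fulton--MacPherson local normal form for a colliding cluster (barycentre, a non-negative scale parameter, and the relative positions recorded on a unit sphere), which near the face at $\infty$ is combined with the radial coordinates of $C_1$. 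After exhausting all patterns one obtains the manifold with boundary and ridges $\confignoeud$ of the statement; its interior is $\confignoeud[\Gamma][\psi][^0]$ because the blow-ups only alter the degeneration locus, and it is compact because $W_0$ is.

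For the extension of the projection maps I would use functoriality. On $\confignoeud[\Gamma][\psi][^0]$ the maps $p^{i}_{v,w}$, $p^{e}_{v,w}$, $p_v$ are the components of the map
\[\confignoeud[\Gamma][\psi][^0]\longrightarrow \prod_{\{v,w\}\subseteq\sommetsinternes}\configR\ \times\ \prod_{\{v,w\}\subseteq\sommets}\configM\ \times\ \prod_{v\in\sommets}C_1(\ambientspace),\]
each factor being a coordinate projection of $W_0$ followed by the inclusion of an open configuration space into its own compactification $\configR$, $\configM$ or $C_1(\ambientspace)$. Now each of these small compactifications is itself obtained from $C_1(\R^n)^2$, $C_1(\ambientspace)^2$ or $C_1(\ambientspace)$ by blowing up (the images of) a sub-collection of the centres used to build $\confignoeud$; therefore the corresponding blow-down maps commute with the blow-ups performed on $W_0$, and the displayed map extends smoothly over $\confignoeud$. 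Equivalently, one may simply \emph{define} $\confignoeud$ as the closure of the image of $\confignoeud[\Gamma][\psi][^0]$ in the above product, in which case the projections are manifestly smooth and the entire content becomes the manifold-with-corners assertion of the previous paragraph. Either way, $p^{i}_{v,w}$, $p^{e}_{v,w}$ and $p_v$ are the asserted coordinate projections of this extension, and $p_v$ is likewise recovered from the $C_1(\ambientspace)$-factor (with $p^e_{v,w}$ restricting to $p^i_{v,w}$ composed with $d\psi$ when $v,w$ are both internal).

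The main obstacle is precisely the combinatorial and differential-topological control of this stratification: enumerating the degeneration patterns of a mixed BCR diagram (internal vertices free in $\R^n$, external vertices free in the larger space $\ambientspace$, external vertices allowed to meet the knot only at images of internal vertices, all of it allowed to escape to $\infty$), choosing a blow-up order compatible with the refinement poset, and verifying at every step the transversality and boundary-compatibility that keep everything inside the category of smooth compact manifolds with boundary and ridges. All of this is carried out in \cite[Section 2.4, pp. 51--61]{[Rossi]}, to which I would refer after presenting the outline above.
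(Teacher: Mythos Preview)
The paper does not prove this theorem; it merely states it with attribution to Rossi and cites \cite[Section 2.4, pp.~51--61]{[Rossi]} for the construction. Your proposal goes further than the paper by sketching the Fulton--MacPherson/Axelrod--Singer iterated blow-up construction before deferring to the same reference, and your outline is a faithful summary of what Rossi does there.
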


\begin{df}

For any edge $f$ of $\Gamma$, which goes from a vertex $v$ to a vertex $w$, $C_f$ denotes the configuration space $\configR$ if $f$ is internal, and $\configM$ if $f$ is external, and $p_f\colon\confignoeud\rightarrow C_f$ denotes the map $p_{v,w}^{i}$ if $f$ is internal, and $p_{v,w}^{e}$ if $f$ is external.
\end{df}
If $n$ is odd, 
orient the space $\confignoeud$ as follows.\footnote{Here, we describe the orientation in a more natural way than in \cite[Section 2.4]{article1}, but it is immediate to see that the two definitions are equivalent.} 
For any $i\in\und n$, let $\d Y_i^{v}$ denote the 
$i$-th coordinate form of the internal vertex $v$ (parametrized by $\R^n$) 
and for any $i\in\und{n+2}$, let $\d X_i^{v}$ denote 
the $i$-th coordinate form of the external vertex $v$ (in an oriented chart of $\ambientspace$).
Split any external edge $e$ in two halves: 
the tail $e_-$ and the head $e_+$. 
Define a form $\orientationarete{e_\pm}$ for any half-edge $e_\pm$ of an external edge $e$, as follows: \begin{itemize}
\item for the head $e_+$ of an edge that is not a leg, going to an external vertex $v$, $\Omega_{e_+} = \d X_v^1$,
\item for the head $e_+$ of a leg going to an external vertex $v$, $\Omega_{e_+} = \d X_v^2$,
\item for the tail $e_-$ of an edge coming from an external vertex $v$, $\Omega_{e_-} = \d X_v^3\wedge\cdots\wedge\d X_v^{n+2}$,
\item for any external half-edge $e_\pm$ adjacent to an internal vertex $v$, $\Omega_{e_\pm} = \d Y_v^1\wedge\ldots\wedge\d Y_v^n$.
\end{itemize}

Let $N_T(\Gamma)$ denote the number of trivalent vertices, and define the \emph{sign} of a BCR diagram as $\epsilon(\Gamma) = (-1)^{N_T(\Gamma)+\Card(\aretesexternes)}$. The orientation of $\confignoeud$ is $\orientationconfig=\epsilon(\Gamma)\bigwedge\limits_{e\in\aretesexternes}\orientationarete{e}$,
where $\orientationarete{e}= \orientationarete{e_-}\wedge\orientationarete{e_+}$ for any external edge $e$.

If $n$ is even, orient $\confignoeud$ as $\epsilon(\Gamma)(\psi(\R^n))^{\sommetsinternes} \times (\spamb)^{\sommetsexternes}$.

\begin{lm}\label{orex}
For $\degk\geq2$, let $\Gamma_\degk$ be the degree $\degk$ BCR diagram of Figure \ref{Gk}. 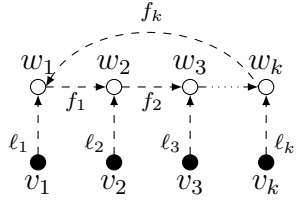
\begin{figure}[H]
\centering 
\begin{tikzpicture}[yscale=-1]
\draw (1,0) circle (0.1) ++(0,-0.3) node {$w_{1}$};
\draw [dashed, ->, >= latex] (1.1,0) -- (1.9,0) ;
\draw [dashed, <-, >= latex] (1.1,0) to[bend right=60] (3.9,0) ;
\draw [dotted] (3.2,0) -- (3.7,0) ;
\draw [dashed, ->, >= latex] (2.1,0) -- (2.9,0) ;
\draw [dashed] (3.1,0) -- (3.2,0) ;
\draw [dashed, ->, >= latex] (3.7,0) -- (3.9,0) ;
\draw (2,0) circle (0.1) ++(0,-0.3) node {$w_{2}$};
\draw (3,0) circle (0.1) ++(0,-0.3) node {$w_{ 3}$};
\draw (4,0) circle (0.1) ++(0,-0.3) node {$w_{ k}$};
\fill (1, 1) circle (0.1);
\fill (2, 1) circle (0.1);
\fill (3, 1) circle (0.1);
\fill (4, 1) circle (0.1);
\draw (1,1) circle (0.1) ++(0,0.3) node {$v_{1}$};
\draw (2,1) circle (0.1) ++(0,0.3) node {$v_{ 2}$};
\draw (3,1) circle (0.1) ++(0,0.3) node {$v_{ 3}$};
\draw (4,1) circle (0.1) ++(0,0.3) node {$v_{ \degk}$};
\draw [dashed, ->, >= latex] (1,0.9) -- (1,0.1) ;
\draw [dashed, ->, >= latex] (2,0.9) -- (2,0.1) ;
\draw [dashed, ->, >= latex] (3,0.9) -- (3,0.1) ;
\draw [dashed, ->, >= latex] (4,0.9) -- (4,0.1) ;
\draw (1.5,.2) node {\scriptsize $f_{1}$};
\draw (2.5,.2) node {\scriptsize $f_{2}$};
\draw (2.5,-1) node {\scriptsize $f_{\degk}$};
\draw (.75,.75) node {\scriptsize $\ell_{ 1}$};
\draw (1.75,.75) node {\scriptsize $\ell_{ 2}$};
\draw (2.75,.75) node {\scriptsize $\ell_{3}$};
\draw (4.25,.75) node {\scriptsize $\ell_{\degk}$};
\end{tikzpicture} \caption{The graph $\Gamma_\degk$}
\label{Gk}
\end{figure}

The space $(-1)^{k+n(k+1)}C_{\Gamma_\degk}(\psi)$ is oriented by the coordinates $(c_i(v_{j}),c(w_{j}))_{j\in \zk}\in (\R^n\times \spamb)^\degk$, and the order of Definition \ref{orientationar} on $E(\Gamma_k)$ is 
$(-1)^{k-1}(f_1, \ell_1, \ldots, f_k, \ell_k)$.

\end{lm}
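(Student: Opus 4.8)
The plan is to verify the two claims of Lemma~\ref{orex} by direct bookkeeping from the orientation conventions set up above, for the specific diagram $\Gamma_k$ of Figure~\ref{Gk}. The key observation about $\Gamma_k$ is its combinatorial simplicity: it has $k$ external vertices $w_1,\dots,w_k$ (each with two incoming external edges, namely a leg $\ell_j$ from the univalent internal vertex $v_j$ and the cycle-edge $f_{j-1}$, and one outgoing cycle-edge $f_j$, with indices mod $k$), and $k$ univalent internal vertices $v_1,\dots,v_k$. So $\sommetsinternes = \{v_1,\dots,v_k\}$, $\sommetsexternes=\{w_1,\dots,w_k\}$, $\aretesexternes = \{f_1,\dots,f_k,\ell_1,\dots,\ell_k\}$ (all $2k$ edges are external; there are no internal edges), and there are no trivalent vertices, so $N_T(\Gamma_k)=0$ and $\epsilon(\Gamma_k) = (-1)^{\Card(\aretesexternes)} = (-1)^{2k} = 1$.

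First I would settle the second claim, about the order on $E_\theta(\Gamma_k)$. By Lemma~\ref{orientationar}, $E_\theta(\Gamma_k)$ consists of the external edges of the cycle going to an external vertex — that is exactly $\{f_1,\dots,f_k\}$ — and for $e = f_{j-1}$ (arriving at $w_j$) the associated edge $e'$ is the leg going to $w_j$, namely $\ell_j$. One must be careful with the index shift: writing the $w_j$-indices mod $k$, the edge $f_k$ arrives at $w_1$, so if we order $E_\theta$ as $(f_1,\dots,f_k)$ the paired list from the lemma is $(f_1, \ell_2, f_2, \ell_3,\dots, f_{k-1},\ell_k, f_k, \ell_1)$. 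Reindexing the legs (a cyclic relabelling, i.e. a product of transpositions whose parity I would track) and comparing with the desired list $(f_1,\ell_1,f_2,\ell_2,\dots,f_k,\ell_k)$, one reads off that the two differ by the cyclic permutation sending $\ell_1\mapsto\ell_2\mapsto\cdots\mapsto\ell_k\mapsto\ell_1$ on the $k$ leg-slots, whose sign is $(-1)^{k-1}$; this is precisely the claimed sign. (Concretely: moving $\ell_1$ from the last slot past $\ell_2,\dots,\ell_k$ is $k-1$ transpositions.)

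Next I would compute $\orientationconfig[\Gamma_k]$ from the definition $\orientationconfig=\epsilon(\Gamma)\bigwedge_{e\in\aretesexternes}\orientationarete e$, using the canonical order just established to fix the wedge order as $\orientationarete{f_1}\wedge\orientationarete{\ell_1}\wedge\cdots\wedge\orientationarete{f_k}\wedge\orientationarete{\ell_k}$. For the cycle-edge $f_j$: its tail is at the external vertex $w_j$, so $\orientationarete{(f_j)_-}=\d X_{w_j}^3\wedge\cdots\wedge\d X_{w_j}^{n+2}$; its head is at $w_{j+1}$ and $f_j$ is not a leg, so $\orientationarete{(f_j)_+}=\d X_{w_{j+1}}^1$. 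For the leg $\ell_j$: its tail is at the internal vertex $v_j$, so $\orientationarete{(\ell_j)_-}=\d Y_{v_j}^1\wedge\cdots\wedge\d Y_{v_j}^n$; its head is at the external vertex $w_j$ and $\ell_j$ is a leg, so $\orientationarete{(\ell_j)_+}=\d X_{w_j}^2$. Assembling, $\orientationconfig[\Gamma_k]$ is, up to sign, the wedge over $j$ of $\bigl(\d X_{w_j}^3\wedge\cdots\wedge\d X_{w_j}^{n+2}\bigr)\wedge \d X_{w_{j+1}}^1 \wedge \bigl(\d Y_{v_j}^1\wedge\cdots\wedge\d Y_{v_j}^n\bigr)\wedge\d X_{w_j}^2$. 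The target orientation is the product orientation given by $(c_i(v_j),c(w_j))_{j\in\zk}$, i.e. $\bigwedge_j\bigl(\d Y_{v_j}^1\wedge\cdots\wedge\d Y_{v_j}^n\wedge \d X_{w_j}^1\wedge\cdots\wedge\d X_{w_j}^{n+2}\bigr)$. So the whole computation reduces to: (i) for each fixed $j$, count the transpositions needed to bring $\bigl(\d X^3_{w_j}\cdots\d X^{n+2}_{w_j}\bigr)\,\d X^1_{w_{j+1}}\,\bigl(\d Y^1_{v_j}\cdots\d Y^n_{v_j}\bigr)\,\d X^2_{w_j}$ into the standard per-vertex order, keeping track of the $\d X^1_{w_{j+1}}$ factor which belongs to a \emph{different} block and must be shuttled along the cycle; and (ii) track the sign from the index shift $w_{j+1}$ in the head of $f_j$ versus $w_j$ in the standard labelling — this is another cyclic reindexing on the $k$ external-vertex blocks. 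Carrying out (i): within block $j$, reordering $(\d X^3\cdots\d X^{n+2},\d Y^1\cdots\d Y^n,\d X^2)$ to $(\d Y^1\cdots\d Y^n,\d X^1,\d X^2,\d X^3\cdots\d X^{n+2})$ (once the stray $\d X^1$ has arrived) contributes a sign depending on $n$ mod small numbers, say $(-1)^{a(n)}$ per block, hence $(-1)^{k\,a(n)}$ total; and (ii) the cyclic shift contributes $(-1)^{b(n)(k-1)}$ where $b(n)$ is the dimension-parity of one external-vertex block modulo $2$ — here $\dim = n+2$, so $b(n)\equiv n$, giving $(-1)^{n(k-1)}$. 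Combining $\epsilon(\Gamma_k)=1$ with these and comparing exponents with the asserted prefactor $(-1)^{k+n(k+1)}$ finishes the proof.

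The main obstacle is purely the sign bookkeeping in step (i)–(ii): one has to be meticulous about (a) the mod-$k$ index convention for the $w_j$'s, since the head of $f_j$ sits at $w_{j+1}$ and this introduces a genuine cyclic permutation (not just a relabelling) whose sign depends on the parity of $\dim(\spamb)=n+2$; (b) the fact that when $n$ is odd, even-dimensional blocks commute freely, which collapses some of the apparent signs; and (c) consistency with the sign $(-1)^{k-1}$ in the ordering claim, so that the two parts of the lemma are proved compatibly. I would organize the computation as a single product $\bigwedge_{j=1}^k$ and move the "misplaced" one-forms $\d X^1_{w_{j+1}}$ one step at a time around the cycle, which makes the $(k-1)$-fold cyclic sign manifest rather than hidden. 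No deep input is needed beyond the definitions recalled above and elementary permutation-sign arithmetic.
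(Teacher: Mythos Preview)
Your approach is the same as the paper's---a direct sign computation from the orientation conventions---but there is one genuine error and one real gap.

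\textbf{The error.} You claim that $\Gamma_k$ has no trivalent vertices, hence $N_T(\Gamma_k)=0$ and $\epsilon(\Gamma_k)=1$. This is wrong: each external vertex $w_j$ has three incident edges (the incoming leg $\ell_j$, the incoming cycle edge $f_{j-1}$, and the outgoing cycle edge $f_j$), so $w_j$ is trivalent. Thus $N_T(\Gamma_k)=k$ and $\epsilon(\Gamma_k)=(-1)^{k+2k}=(-1)^k$, as the paper states in its first line. With your value $\epsilon(\Gamma_k)=1$, the final sign you would obtain is off by $(-1)^k$, and the comparison with $(-1)^{k+n(k+1)}$ fails.

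\textbf{The gap.} You do not actually carry out the reordering in step (i)--(ii): you introduce unspecified exponents $a(n)$ and $b(n)$ and assert that ``comparing exponents\ldots finishes the proof''. But the whole content of the lemma is that specific sign, so this is exactly the step that cannot be left implicit. The paper's computation (for odd $n$) is short and worth imitating: it first regroups the big wedge by associativity alone (no sign), then moves the single stray $1$-form $\d X^1_{w_1}$ from the last position to the first across forms of total odd degree (one factor of $-1$), and finally swaps $\d X^1_{w_j}$ past $\d Y_{v_j}$ in each of the $k$ blocks (a factor $(-1)^{nk}$). You should also treat the even-$n$ case separately: there the orientation is defined directly as $\epsilon(\Gamma_k)(\psi(\R^n))^{\sommetsinternes}\times(\spamb)^{\sommetsexternes}$, and the claim follows in one line once you have the correct $\epsilon(\Gamma_k)=(-1)^k$ and observe that the even-dimensional $\R^n$-factors commute freely.

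Your treatment of the second assertion (the edge order) is correct and equivalent to the paper's: you cycle the legs, the paper cycles the $f_j$'s, and both are $k$-cycles of sign $(-1)^{k-1}$.
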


\begin{proof}
First note that $\epsilon(\Gamma_\degk)=(-1)^k$ since there are $2k$ external edges and $k$ trivalent vertices. The result for even $n$ follows.
Let us now assume that $n$ is odd.
For any $j\in \zk$, let $\plusun{j}$ denote the integer \[\plusun{j}= \begin{cases} j+1 & \text{if $j<\degk$,}\\ 1 & \text{if $j=\degk$,}\end{cases}\] 
so that for any $j\in \zk$, $\ell_j$ is the leg from $v_j$ to $w_j$, and $f_j$ is the external edge of the cycle from $w_j$ to $w_{\plusun{j}}$ as in Figure \ref{Gk}. 
Set $\d Y_{v_j} = \bigwedge\limits_{i=1}^n \d Y_{v_j}^i$, $\d\overline{ X_{w_j}} = \bigwedge\limits_{i=3}^{n+2} \d X_{w_j}^i$ and $\d X_{w_j} = \bigwedge\limits_{i=1}^{n+2} \d X_{w_j}^i$,
so that $\orientationarete{\ell_j} = \d Y_{v_j}\wedge \d X_{w_j}^2$ and 
$\orientationarete{f_j} =  \d \overline{X_{w_j}}  \wedge \d X_{w_{\plusun{j}}}^1$.
Therefore, \begin{eqnarray*}
\orientationconfig[\Gamma_k] &= & (-1)^k\bigwedge\limits_{j=1}^\degk \left(\d Y_{v_j} \wedge \d X_{w_j}^2 \wedge \d \overline{X_{w_j}}  \wedge \d X_{w_{\plusun{j}}}^1 \right) \\
&=& (-1)^k\d Y_{v_1}\wedge \d X_{w_1}^2\wedge \d \overline{X_{w_1}}\wedge 
\bigwedge\limits_{j=2}^\degk \left(\d X_{w_{j}}^1 \wedge\d Y_{v_j}   \wedge \d X_{w_j}^2 \wedge \d \overline{ X_{w_j} }\right)
\wedge\d X_{w_1}^1 \\
&=& (-1)^{k+1}\bigwedge\limits_{j=1}^\degk \left( \d X_{w_{j}}^1 \wedge\d Y_{v_j}   \wedge \d X_{w_j}^2 \wedge \d \overline{ X_{w_j} }\right) 
=- \bigwedge\limits_{j=1}^\degk \left(\d Y_{v_j} \wedge \d X_{w_j}\right).
\end{eqnarray*}
The second assertion is immediate, since the canonical order obtained from  Definition \ref{orientationar} is 
$(f_k, \ell_1, f_1, \ell_2, \ldots, f_{k-1}, \ell_k)$, which differs from 
$(f_1, \ell_1, \ldots, f_k, \ell_k)$ by a cycle of length $k$.
\end{proof}

\subsection{Rectifiability and virtual rectifiability}\label{Alexander}

Let $\inj$ denote the space of linear injections $\R^n\hookrightarrow \R^{n+2}$, 
and let $\iota_0$ be the standard injection $x \in \R^n\mapsto (0,0,x) \in \R^{n+2}$. 
Let $\mathbb D^{n}$ be the unit ball of $\R^{n}$, and see $\pi_n(\inj,\iota_0)$ as 
the set $[(\R^n, \R^n \setminus \mathbb D^n), (\inj, \iota_0)]$ of homotopy classes 
of maps $\R^n\rightarrow \inj$ that map $\R^n\setminus\mathbb D^n$ to $\iota_0$ among such maps.
\begin{lm}\label{obstructionlemma}
Let $\spamb$ be a parallelizable asymptotic integral homology $\R^{n+2}$ and let $\psi$ be a long knot of $\spamb$. 
 For any parallelization $\tau$ of $\ambientspace$, 
 the tangent map $T\psi$ induces a map 
 $\iota(\tau,\psi)\colon x\in \R^n\mapsto (\tau_{\psi(x)})^{-1}\circ T_x\psi\in \inj$.

The class \[[\iota(\tau, \psi)]\in \pi_n(\inj, \iota_0)\] is independent of the parallelization $\tau$.\end{lm}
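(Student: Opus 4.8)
The first assertion is a direct verification, which I would carry out as follows: $\iota(\tau,\psi)(x)=(\tau_{\psi(x)})^{-1}\circ T_x\psi$ is the composite of the injective linear map $T_x\psi\colon\R^n\to T_{\psi(x)}\punct M$ with the linear isomorphism $(\tau_{\psi(x)})^{-1}$, hence a linear injection depending smoothly on $x$, i.e.\ an element of $\inj$; and for $\|x\|>1$ one has $\psi(x)=(0,0,x)$, $T_x\psi=\iota_0$ and $\tau_{\psi(x)}=(\tau_0)_{\psi(x)}$, whence $\iota(\tau,\psi)(x)=\iota_0$. So $\iota(\tau,\psi)$ represents a class in $[(\R^n,\R^n\setminus\mathbb D^n),(\inj,\iota_0)]=\pi_n(\inj,\iota_0)$.

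For the independence statement, let $\tau,\tau'$ be two parallelizations and set $g(x)=(\tau_x)^{-1}\circ\tau'_x$. Since $\punct M$ is connected and $g$ equals $\mathrm{Id}$ on $\voisinageinfini$ (both parallelizations agree with $\tau_0$ there), $g$ is a smooth map $\punct M\to\mathrm{GL}_{n+2}^+(\R)$ equal to $\mathrm{Id}$ on $\voisinageinfini$, and one computes $\iota(\tau',\psi)(x)=g(\psi(x))^{-1}\circ\iota(\tau,\psi)(x)$. The map $A\colon x\mapsto g(\psi(x))^{-1}$ is constant equal to $\mathrm{Id}$ for $\|x\|\ge1$, hence represents a class in $\pi_n(\mathrm{GL}_{n+2}^+(\R))$; if this class vanishes, then $A$ is homotopic rel $\{\|x\|\ge1\}$ to the constant map, and post-composing such a homotopy with $\iota(\tau,\psi)$ yields a homotopy rel $\R^n\setminus\mathbb D^n$ from $\iota(\tau',\psi)$ to $\iota(\tau,\psi)$. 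As inversion induces $-\mathrm{id}$ on $\pi_n$ of the topological group $\mathrm{GL}_{n+2}^+(\R)$, it is therefore enough to prove $[g\circ\psi]=0$ in $\pi_n(\mathrm{GL}_{n+2}^+(\R))$.

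The idea for this last step is to factor $g\circ\psi$. Viewed as a based map $\mathbb D^n/\partial\mathbb D^n=\s^n\to\mathrm{GL}_{n+2}^+(\R)$, it factors as
\[\s^n=\mathbb D^n/\partial\mathbb D^n\xrightarrow{\ \bar\psi\ }\bouleambiante/\partial\bouleambiante\xrightarrow{\ \bar g\ }\mathrm{GL}_{n+2}^+(\R),\]
because $g$ is $\mathrm{Id}$ on $\voisinageinfini\supseteq\partial\bouleambiante$ while $\psi$ carries $(\mathbb D^n,\partial\mathbb D^n)$ into $(\bouleambiante,\partial\bouleambiante)$ and $\R^n\setminus\Int\mathbb D^n$ into $\voisinageinfini$. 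Now $M$ is an \emph{integral} homology $(n+2)$-sphere, so $\bouleambiante$ is an integral homology ball (a homology sphere minus an open ball) and hence $Y:=\bouleambiante/\partial\bouleambiante$ is an integral homology $(n+2)$-sphere; in particular $\tilde H^i(Y;A)=0$ for every abelian group $A$ and every $i$ with $1\le i\le n$. Since $\mathrm{GL}_{n+2}^+(\R)$ is a topological group, $\pi_1$ acts trivially on all its homotopy groups, so the obstructions to constructing a based homotopy between $\bar g$ and the constant map, skeleton by skeleton on $Y$, lie in the untwisted groups $H^i(Y;\pi_i(\mathrm{GL}_{n+2}^+(\R)))$; these vanish for $1\le i\le n$, so $\bar g$ and the constant map agree up to based homotopy on the $n$-skeleton $Y^{(n)}$. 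On the other hand, by cellular approximation $\bar\psi\colon\s^n\to Y$ is homotopic rel basepoint to a map into $Y^{(n)}$. Combining these two facts, $\bar g\circ\bar\psi$ is based-nullhomotopic, i.e.\ $[g\circ\psi]=0$, as needed.

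The crux — and essentially the only delicate point — is the third step. The difference $g$ of two parallelizations need not itself be nullhomotopic (rel $\voisinageinfini$ its class is governed by $\pi_{n+2}(\mathrm{GL}_{n+2}^+(\R))$), but its restriction along the $n$-dimensional knot $\psi$ is, and this works \emph{only} because the homology of $Y=\bouleambiante/\partial\bouleambiante$ vanishes through degree $n$ with \emph{arbitrary} coefficients — which is exactly where the integral (rather than rational) homology sphere hypothesis enters: over $\QQ$, torsion in $H^{\le n}(Y;\mathbb Z)$ could obstruct the skeletal homotopy above (e.g.\ through the $2$-torsion of $\pi_1(\mathrm{GL}_{n+2}^+(\R))$). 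For $n\in\{1,2\}$ one may alternatively deduce the independence directly from the fibration $SO(2)\to SO(n+2)\to SO(n+2)/SO(2)$, which shows that the natural map $\pi_n(\mathrm{GL}_{n+2}^+(\R))\to\pi_n(\inj,\iota_0)$ vanishes; but for $n\ge3$ that map is injective and the homology-sphere argument is genuinely needed.
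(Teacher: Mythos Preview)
Your proof is correct and takes a genuinely different route from the paper's. The paper's argument is a one-line citation: it invokes \cite[Theorem 6.2]{article1}, which asserts that any two parallelizations $\tau,\tau'$ can be connected by a smooth family $(\tau_t)_{t\in[0,1]}$ of parallelizations with $\tau_1$ and $\tau'$ agreeing on $\psi(\R^n)$; the homotopy $t\mapsto\iota(\tau_t,\psi)$ then does the job. Your approach instead works directly with the transition map $g=(\tau)^{-1}\circ\tau'$, reduces the question to the vanishing of $[g\circ\psi]\in\pi_n(GL_{n+2}^+(\R))$, and proves this by an obstruction-theoretic argument exploiting that $B(M)/\partial B(M)$ has the integral homology of $\s^{n+2}$.

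What each approach buys: the paper's proof is shorter but opaque, as the content is hidden in an external reference. Your argument is self-contained and has the pedagogical virtue of making transparent exactly where the \emph{integral} (as opposed to rational) homology hypothesis enters --- namely, in killing the obstruction groups $H^i(B(M),\partial B(M);\pi_i(GL_{n+2}^+(\R)))$ for $1\le i\le n$, which could survive over $\QQ$ because of torsion in $\pi_*(SO)$. Your closing remark about $n\in\{1,2\}$ is a nice complement but not needed for the lemma as stated, since the hypothesis is already integral; it does, however, explain why the $n=1$ case (where the paper works over $\QQ$) is handled separately in Definition~\ref{rectifiabilitydef}.

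One small point of presentation: your phrase ``post-composing such a homotopy with $\iota(\tau,\psi)$'' is slightly imprecise --- you mean applying the homotopy $A_t$ pointwise via the $GL_{n+2}^+(\R)$-action on $\inj$, i.e.\ $H_t(x)=A_t(x)\circ\iota(\tau,\psi)(x)$ --- but the intent is clear.
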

\begin{proof}
Let $\tau$ and $\tau'$ be two parallelizations of $\spamb$. 
\cite[Theorem 6.2]{article1}, whose proof and result remain inchanged for even $n$, 
implies the existence of a smooth family $(\tau_t)_{t\in [0,1]}$ of parallelizations of $\spamb$ 
such that $\tau_0=\tau$ and that $\tau_1$ and $\tau'$ coincide on $\psi(\R^n)$. 
This yields a homotopy $(\iota(\tau_t,\psi))_{t\in[0,1]}$ from $\iota(\tau,\psi)$ to $\iota(\tau',\psi)$. 
\end{proof}

\begin{df}\label{rectifiabilitydef} 
If $n=1$, we say that any long knot $\psi\colon\R\hookrightarrow\spamb$ 
of an asymptotic rational homology $\R^3$ is \emph{rectifiable} by definition.
If $n\geq2$ and if $\psi\colon \R^n\hookrightarrow \spamb$ is a long knot of 
a parallelizable asymptotic integral homology $\R^{n+2}$, 
the class $\iota(\psi)=[\iota(\tau, \psi)]\in \pi_n(\inj, \iota_0)$ is called 
the \emph{rectifiability obstruction of $\psi$}.
The long knot $\psi$ is \emph{rectifiable} if $\spamb$ is parallelizable and if its rectifiability obstruction is zero. 
\end{df}

For long knots in a possibly non-parallelizable asymptotic $\mathbb K$-homology $\R^{n+2}$, 
we have the following weaker definition.
\begin{df}\label{defiota}
Let $\ambientspace$ be an asymptotic $\mathbb K$-homology $\R^{n+2}$, 
and let $r$ be a positive integer.
A long knot $\psi\colon \R^n\hookrightarrow \ambientspace$ is \emph{$r$-rectifiable} if 
the connected sum $\psi^{(r)}=\psi\sharp \cdots \sharp \psi$ of 
$r$ copies of $\psi$ is rectifiable.
A long knot $\psi$ is \emph{virtually rectifiable} if it is $r$-rectifiable for some $r\geq1$.
\end{df}
In Section \ref{Section4}, we establish the following lemma.
\begin{lm}\label{threct}
If $n \equiv 1 \mod 4$, then any long knot in an asymptotic $\mathbb K$-homology $\R^{n+2}$ is $4$-rectifiable.
If $n$ is even, then any long knot in a parallelizable asymptotic integral homology $\R^{n+2}$ is $2$-rectifiable.
\end{lm}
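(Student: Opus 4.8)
The plan is to reduce the lemma to a homotopy–theoretic statement about the group $\pi_n(\inj,\iota_0)$ in which the rectifiability obstruction lives, and then to bound its $2$-primary exponent.

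\textbf{Reduction to additivity.} The case $n=1$ is trivial, since every long knot is rectifiable by definition; assume $n\geq2$. When $n$ is odd I would invoke Proposition \ref{conn-sum2} to know that (iterated connected sums of) asymptotic $\mathbb K$-homology $\R^{n+2}$'s are parallelizable, so that the obstruction classes below make sense. The main point of the reduction is that the rectifiability obstruction is additive under connected sum: gluing parallelizations of $\spamb_1$ and $\spamb_2$ along the balls at infinity (where both equal $\tau_0$) yields a parallelization of $\spamb_1\sharp\spamb_2$, and, under the identification $\pi_n(\inj,\iota_0)=[\R^n/(\R^n\setminus\Int\mathbb D^n),\inj]_*\cong\pi_n(\inj)$, the ``stacking'' construction of $\psi_1\sharp\psi_2$ realizes the co-$H$-space comultiplication $S^n\to S^n\vee S^n$ on the source: on the two rescaled sub-balls the tangent map $\iota(\tau_1\sharp\tau_2,\psi_1\sharp\psi_2)$ is homotopic to $\iota(\tau_1,\psi_1)$ and to $\iota(\tau_2,\psi_2)$, and it equals $\iota_0$ elsewhere. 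Hence $\iota(\psi^{(r)})=r\cdot\iota(\psi)$ in $\pi_n(\inj,\iota_0)$, and $\psi^{(r)}$ is rectifiable as soon as the ambient manifold of $\psi^{(r)}$ is parallelizable and $r\cdot\iota(\psi)=0$. (For odd $n$, where $\spamb$ itself may fail to be parallelizable, one runs this with $\iota(\psi^{(2)})$ in place of $\iota(\psi)$, using $\iota(\psi^{(4)})=2\,\iota(\psi^{(2)})$.)

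\textbf{Computing the obstruction group.} Gram--Schmidt retracts $\inj$ onto the Stiefel manifold $V_n(\R^{n+2})=SO(n+2)/SO(2)$, carrying $\iota_0$ to a standard $n$-frame, so $\pi_n(\inj,\iota_0)\cong\pi_n(V_n(\R^{n+2}))$. For $n\geq3$, the homotopy exact sequence of the bundle $SO(2)\hookrightarrow SO(n+2)\to V_n(\R^{n+2})$, together with $\pi_n(SO(2))=\pi_{n-1}(SO(2))=0$, gives $\pi_n(V_n(\R^{n+2}))\cong\pi_n(SO(n+2))$, which already lies in the stable range since $n=(n+2)-2$, hence $\cong\pi_n(SO)$. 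By Bott periodicity $\pi_n(SO)$ is $\mathbb Z/2$ for $n\equiv0,1\pmod8$, is $0$ for $n\equiv2,4,5,6\pmod8$, and is $\mathbb Z$ for $n\equiv3,7\pmod8$. So for $n\equiv1\pmod4$ or $n$ even (and $n\geq3$) this group has exponent dividing $2$, giving $2\,\iota(\psi)=0$ for every long knot, a fortiori $4\,\iota(\psi)=0$; for $n\equiv3\pmod4$ it is $\mathbb Z$ and the statement genuinely fails, which matches the hypotheses of Theorem \ref{Reidth}. The residual case $n=2$ needs a separate argument, as $\pi_2(V_2(\R^4))\cong\mathbb Z$ is torsion-free: in the bundle $SO(2)\to V_2(\R^4)\to\widetilde{\mathrm{Gr}}_2(\R^4)\cong S^2\times S^2$, the image of $\iota(\tau,\psi)$ is the tangent-plane Gauss map of $\psi(\R^2)$, which pulls the tautological $2$-plane bundle and its orthogonal complement back to bundles isomorphic to the tangent and normal bundles of $\psi(\R^2)$, both trivial since $\psi(\R^2)$ is contractible; as the Euler classes of those two bundles span $H^2(S^2\times S^2;\mathbb Q)$, the Gauss map is rationally trivial, hence nullhomotopic, and exactness ($\pi_2(SO(2))=0$) forces $\iota(\psi)=0$. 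Combining with the reduction: $\psi^{(2)}$ is rectifiable when $n$ is even and $\spamb$ is parallelizable, and $\psi^{(4)}$ is rectifiable when $n\equiv1\pmod4$.

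\textbf{Expected main obstacle.} I expect two points to require genuine work. First, making the additivity step rigorous: identifying the connected-sum operation on tangent data with co-$H$-space addition on $S^n$, with care about basepoints and about the choice of parallelization (Lemma \ref{obstructionlemma} lets us ignore the latter up to homotopy, and \cite[Theorem 6.2]{article1} supplies the needed homotopies of parallelizations). Second, and more delicate, the parallelizability input for odd $n$ behind Proposition \ref{conn-sum2}: controlling the obstruction in $\pi_{n+1}(SO(n+2))$ to parallelizing a connected sum of homology $(n+2)$-spheres compatibly with $\tau_0$ near infinity. It is the interplay of that obstruction with the torsion of $\pi_n(SO)$ that determines the precise constant $4$ in the odd case, as opposed to $2$ in the even case where parallelizability is part of the hypothesis.
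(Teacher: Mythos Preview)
Your argument for $n\geq3$ coincides with the paper's (Lemma~\ref{lmpin}, Theorem~\ref{Bt}, Proposition~\ref{conn-sum2}, and additivity of the obstruction under connected sum, exactly as in Corollary~\ref{cor-un} and Lemma~\ref{lmfin}); your Stiefel-manifold route to $\pi_n(\inj,\iota_0)\cong\pi_n(SO(n+2))$ is equivalent to the paper's fibration $GL_{n+2}^+(\R)\to\inj$.

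The $n=2$ case, however, has a genuine gap. You assert that $g^*\gamma$ and $g^*\gamma^\perp$ are trivial ``since $\psi(\R^2)$ is contractible'' and conclude that the Gauss map to $\widetilde{\mathrm{Gr}}_2(\R^4)$ is null. But the class to be killed lives in $\pi_2$, hence is carried by a map from $S^2=\mathbb D^2/\partial\mathbb D^2$, not from $\R^2$; over a contractible base every bundle is trivial and the Euler-class argument is vacuous. Over $S^2$ the bundle $g^*\gamma$ is indeed trivial (the lift to $V_2(\R^4)$ furnishes a frame), but $g^*\gamma^\perp$ need not be: the long exact sequence of $SO(2)\to V_2(\R^4)\to\widetilde{\mathrm{Gr}}_2(\R^4)$ identifies the image of $\pi_2(V_2(\R^4))\cong\mathbb Z$ in $\pi_2(\widetilde{\mathrm{Gr}}_2(\R^4))\cong\mathbb Z^2$ with the subgroup where $e(g^*\gamma)=0$, so a generator there has $e(g^*\gamma^\perp)\neq0$. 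The missing ingredient is that $e(g^*\gamma^\perp)$ equals the Euler number of the normal bundle of the closed sphere $\psi(\R^2)\cup\{\infty\}$ in $M$, i.e.\ its self-intersection, which vanishes because $H_2(M)=0$. The paper supplies precisely this input via Lemma~\ref{lmS} (a normal framing extending the standard one near infinity), then combines normal and tangent framings into a map $(\R^2,\R^2\setminus\mathbb D^2)\to(GL_4^+(\R),I_4)$ and concludes from $\pi_2(SO(4))=0$ (Lemma~\ref{lmeven}). Your Grassmannian argument can be repaired along the same lines once the self-intersection input is made explicit.
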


\subsection{Propagators and first definition of \texorpdfstring{$Z_k$}{Zk} }
In this article, an \emph{embedded rational $\dimd$-chain} $A$ 
of a manifold $X$ is a finite rational combination of compact oriented 
$\dimd$-submanifolds with boundary and ridges 
with pairwise disjoint interiors of $X$. 
The \emph{support} $\mathrm{Supp}(A)$ of $A$ is 
the union of these submanifolds, 
and the \emph{interior} $\Int(A)$ of $A$ is the union of their interiors.
Let $w_A \colon \Int(A) \rightarrow \mathbb Q$ denote the function that maps a point of $\Int(A)$ to the coefficient
of the submanifold in which he lies.

In this section, 
we define $Z_k$ for long knots in parallelized asymptotic homology $\R^{n+2}$,
for an integer $k\geq2$.
Let $(\ambientspace, \tau)$ be a parallelized asymptotic $\mathbb K$-homology $\R^{n+2}$.

\begin{df}\label{df-prop}

An \emph{internal propagator}\footnote{In \cite{article1}, propagators were called propagating chains.} 
is an embedded rational $(n + 1)$-chain $A$ in
$\configR$ such that $\partial A = \frac12{(G_{|\partial\configR})}^{-1}(\{ - x_A, +x_A\})$ for some $x_A \in \s^{n-1}$.

An \emph{external propagator} of $(\ambientspace, \tau)$ is an embedded rational $(n + 3)$-chain $B$ in
$\configM$ such that $\partial B =\frac12 {(G_\tau)}^{-1}(\{- x_B, +x_B\})$ for some $x_B \in \s^{n+1}$.

For any $\degk\geq1$, a $\degk$-family 
$F_*= (A_i, B_i)_{i\in\indices}$ of propagators of $(\ambientspace, \tau)$ is 
the data of $2\degk$ internal propagators $(A_i)_{i\in\indices}$ 
and $2\degk$ external propagators $(B_i)_{i\in\indices}$ of $(\ambientspace, \tau)$.

\end{df}
A simple example of an external propagator of $\R^{n+2}$ with its canonical trivialization is given by the chain
$B= \frac{(-1)^{n-1}}2 \overline{\left\{ (x, x+t x_B) \mid x\in\R^{n+2}, t\in \R^*\right\}}\subset C_2(\R^{n+2})$ 
for some $x_B\in \s^{n+1}$. 
Similarly, for any $x_A\in \s^{n-1}$, the chain
$A= \frac{(-1)^{n-1}}2 \overline{\left\{ (x, x+t x_A) \mid x\in\R^{n}, t\in \R^*\right\}}
\subset C_2(\R^n)$
is an internal propagator. 
When $\spamb$ is a general asymptotic $\mathbb K$-homology $\R^{n+2}$, 
the existence of such propagators follows from \cite[Section 4.2]{article1}.
\footnote{In \cite{article1}, the existence is stated for 
odd-dimensional asymptotic $\mathbb Z$-homology $\R^{n+2}$, but the argument is exactly the 
same for asymptotic $\mathbb Q$-homology $\R^{3}$ or for even-dimensional asymptotic $\mathbb Z$-homology $\R^{n+2}$.}
We recall the discrete definition of the invariant $\Z[]$ from our previous article \cite[Sections 2.7-2.8]{article1}.

Let $\psi$ be a long knot of $\spamb$.
Consider a $\degk$-family $F_*=(\Propint_i,\Propext_i)_{i\in\indices}$ of propagators of $(\ambientspace, \tau)$.
For any BCR diagram $\Gamma$, let $P_\Gamma$ be the product map
 \[ \begin{array}{llll}P_\Gamma\colon& \confignoeud &\rightarrow &
\prod\limits_{e\in\aretesinternes}\configR\times\prod\limits_{e\in\aretesexternes}\configM
=\prod\limits_{e\in\aretes}C_e\\
&c & \mapsto &  (p_e(c))_{e\in\aretes}
\end{array} .\]
For any edge $e$ of a numbered BCR diagram $(\Gamma,\sigma)$, 
let $X_{e,\sigma} $ denote the chain $A_{\sigma(e)}$ if $e$ is internal, 
or the chain $B_{\sigma(e)}$ if $e$ is external.

The $\degk$-family $F_*$  is in \emph{general position for $\psi$} if, 
for any numbered BCR diagram $(\Gamma,\sigma)\in \graphesnum$, 
and for any $c\in\confignoeud$ such that $P_\Gamma(c) \in\prod\limits_{e\in\aretes}X_{e,\sigma}$, \begin{itemize}
\item for any edge $e$, $p_e(c) \in \Int(X_{e,\sigma})$,
\item we have the following equality between oriented vector spaces 
\[ (-1)^{(n-1)k}\epsilon(c) T_{P_\Gamma(c)}\left(\prod\limits_{e\in\aretes}C_e\right)
= T_cP_\Gamma(T_c\confignoeud) + \prod\limits_{e\in\aretes}T_{p_e(c)}X_{e,\sigma},\] 
where $\epsilon(c) =\pm1$ is called the \emph{sign} of the intersection point $c$,
and where the orientations of the product is
given by any order on $\aretes$ compatible with the orientation of Lemma \ref{orientationar}
\end{itemize}
The $(-1)^{(n-1)k}$ sign above is such that
the sign $\epsilon(c)$ is obtained from a comparison of the orientations of $T_c \confignoeud$
and 
$\prod\limits_{e\in \aretes}\big(  \N_{p_e(c)} X_{e,\sigma}  \big)$.

In the following, set $D_{e,\sigma}^{F_*} = {p_e}^{-1}(X_{e,\sigma})$. This defines a chain of $\confignoeud$.
The algebraic intersection of the chains $(D_{e,\sigma}^{F_*})_{e\in\aretes}$ is defined as
\[ I_{F_*}(\Gamma,\sigma, \psi) = \sum\limits_{c\in {P_\Gamma}^{-1}\big(\prod\limits_{e\in\aretes}X_{e,\sigma}\big)}
\epsilon(c) \prod\limits_{e\in \aretes} w_{X_{e,\sigma}}(p_e(c)).\]
The proof of \cite[Lemma 11.4]{[Lescop2]} directly adapts to the higher-dimensional case and guarantees 
the existence of $\degk$-families of propagators in general position for any $\psi$ and any $\degk$. 
See \cite[Theorem 4.3]{article1} for an outline. We now define our generalized BCR invariant $Z_k$.

\begin{df}
Recall that $\mathbb K$ denotes $\mathbb Z$ if $n\geq2$, and $\mathbb Q$ if $n=1$.
Let $(\spamb, \tau)$ be a parallelized asymptotic $\mathbb K$-homology $\R^{n+2}$, and let $\psi$ be a long knot of $\spamb$.
Let $F_*=(\Propint_i,\Propext_i)_{i\in\indices}$ be a $\degk$-family of propagators of $(\ambientspace, \tau)$ in general position for $\psi$. Set \[Z_k^{F_*}(\psi, \tau) = \frac1{(2\degk)!} \sum\limits_{(\Gamma,\sigma)\in\graphesnum} I_{F_*}(\Gamma,\sigma,\psi).\]
\end{df}

For odd $n\geq 3$, \cite[Theorems 2.10 and 2.13]{article1} imply the following theorem.
The result for $n=1$ follows from \cite[Corollary 2.15]{article3}, 
and from the same arguments than in the proof of \cite[Theorem 2.13]{article1}.

\begin{theo}\label{th*}
If $n$ is odd,
then $Z_k^{F_*}(\psi)$ depends neither on the choice of the propagators, 
nor on the parallelization. It defines an 
invariant $Z_k$ of long knots
such that for any diffeomorphism $\phi$ of $\spamb$ that restricts to $\nbdinf$ as the identity map, 
$Z_k(\phi \circ \psi) = Z_k(\psi)$.
Furthermore, $Z_k$ is trivial if $k$ is odd.
\end{theo}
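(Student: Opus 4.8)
The plan is to deduce all four assertions from one mechanism, the invariance of the algebraic intersection numbers $I_{F_*}(\Gamma,\sigma,\psi)$ under deformations of the auxiliary data, organized as a cobordism argument on the configuration spaces $\confignoeud$. I would start with independence of the propagators. Given two $k$-families $F_*$, $F_*'$ in general position for $\psi$, a rotation reduces us to the case where the associated directions in $\s$ coincide, so that a common refinement writes each $A_i'-A_i$ and each $B_i'-B_i$ as a boundary; one can then join $F_*$ to $F_*'$ by a smooth one-parameter family $(F_*(t))_{t\in[0,1]}$ of propagators whose cobordisms are transverse to all restriction maps $p_e$. For each $(\Gamma,\sigma)\in\graphesnum$ the number $I_{F_*(t)}(\Gamma,\sigma,\psi)$ is then locally constant in $t$ and jumps only when the intersection cycle meets $\partial\confignoeud$, so that
\[
Z_k^{F_*'}(\psi)-Z_k^{F_*}(\psi)=\frac1{(2k)!}\sum_{(\Gamma,\sigma)\in\graphesnum}\big(\text{boundary contribution of }\partial\confignoeud\big),
\]
and everything reduces to showing that this sum vanishes.

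I would then split $\partial\confignoeud$ into the three usual families of faces. \emph{Hidden faces} --- a connected sub-configuration on at least three vertices collapses, or a set of vertices not carrying a sub-diagram escapes to infinity --- contribute nothing by the Kontsevich--Lescop codimension count (\cite{Kontsevich1994}): there the product of the $p_e$ factors through a configuration space of strictly smaller dimension, so a generic cobordism misses the product of propagators. \emph{Faces at infinity} are controlled by the normalization of $\psi$ as the standard line outside the unit ball and by the explicit form of $G_\tau$ on the faces $\partial_{\infty,\spamb}\configM$, $\partial_{\spamb,\infty}\configM$, $\partial_{\infty,\infty}\configM$ of Definition \ref{configM}; they vanish for the same dimensional reason or cancel in pairs. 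The remaining \emph{principal faces} --- exactly two vertices collide, or one internal vertex reaches the knot normally --- are the only ones contributing to an individual diagram, and must cancel after summation over $\graphesnum$: such a face is indexed by an edge (or an infinitesimal pair) of $\Gamma$, and contracting it yields a diagram with a bivalent or trivalent vertex ``blown up''; grouping these faces by their contracted diagram, the two or three ways of re-expanding the vertex cancel with the signs prescribed by the orientation conventions recalled in \ref{orientationar}. This last step is the combinatorial heart of the proof, and the step I expect to be the main obstacle: it demands a complete inventory of faces and signs. For odd $n\geq 3$ it is exactly \cite[Theorems 2.10 and 2.13]{article1}; for $n=1$ the same face analysis goes through once the relevant two-leg relation is available rationally, which is \cite[Corollary 2.15]{article3}.

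For independence of the parallelization I would rerun the cobordism argument along a path of parallelizations (relying on \cite[Theorem 6.2]{article1} to handle the space of parallelizations): the only new boundary term is supported on $\partial_\Delta\configM$, where $G_\tau$ varies, and it equals a universal anomaly. For $n$ odd this anomaly vanishes by the standard antipodal-symmetry argument --- the antipodal map $u\mapsto -u$ on the tangent spheres $\unitaire_x\spamb\cong\s^{n+1}$ is orientation-reversing since $n+1$ is even, it reverses every edge direction, and it sends the anomalous face to itself reversing orientation, so the contribution is minus itself --- and no $p_1$-type correction is forced, because any such term would again be of this anomalous kind. Hence $Z_k(\psi):=Z_k^{F_*}(\psi,\tau)$ is well defined. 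For the diffeomorphism statement, a diffeomorphism $\phi$ of $\spamb$ equal to the identity on $\nbdinf$ extends to $M$ fixing $\infty$, hence induces an orientation-preserving diffeomorphism of $\configM$ and an orientation-preserving diffeomorphism $\confignoeud\to C_\Gamma(\phi\circ\psi)$; pushing a general-position family $F_*$ forward by $\phi$ produces a general-position family of propagators for $(\spamb,\phi_*\tau)$ and $\phi\circ\psi$ matching the chains $D_{e,\sigma}^{F_*}$ edge by edge, so each $I_{F_*}(\Gamma,\sigma,\psi)$ is unchanged; the already proved independence from propagators and parallelization then gives $Z_k(\phi\circ\psi)=Z_k(\psi)$, and in particular $Z_k$ is an isotopy invariant of long knots.

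Finally, for the triviality of $Z_k$ when $k$ is odd, I would exhibit a sign-reversing involution of the set of summands $I_{F_*}(\Gamma,\sigma,\psi)$: then each two-element orbit sums to zero and each fixed point has $I_{F_*}=-I_{F_*}=0$, so $\sum_{(\Gamma,\sigma)}I_{F_*}(\Gamma,\sigma,\psi)=0$ and $Z_k(\psi)=0$. The natural candidate comes from reversing the orientation of the core loop of a BCR diagram together with the induced relabeling of the numbering and of the solid/dashed structure; I would check, by a sign computation in the spirit of the one carried out in \ref{orex} (together with the dependence of $\epsilon(\Gamma)$ on the numbers of trivalent vertices and external edges), that this involution reverses the sign of $I_{F_*}(\Gamma,\sigma,\psi)$ precisely when $k$ is odd. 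Pinning down this involution and the sign is the one remaining subtlety, and for odd $n\geq3$ it is part of \cite{article1}.
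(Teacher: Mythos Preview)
The paper does not give a proof of this theorem at all: it simply records that for odd $n\geq 3$ the statement is \cite[Theorems 2.10 and 2.13]{article1}, and that for $n=1$ it follows from \cite[Corollary 2.15]{article3} together with the arguments of \cite[Theorem 2.13]{article1}. Your proposal is a faithful high-level outline of precisely that strategy --- cobordism of propagators, the hidden/infinity/principal trichotomy of faces of $\confignoeud$, combinatorial cancellation of principal-face contributions over $\graphesnum$, anomaly analysis on $\partial_\Delta\configM$ for parallelization independence, naturality under diffeomorphisms by pushing forward propagators, and a sign-reversing involution for odd $k$ --- and you already point to the same references for the parts you do not spell out. So your approach and the paper's coincide; there is nothing to compare beyond noting that the paper defers the entire argument to \cite{article1,article3} while you sketch what those references contain.
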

The results of \cite{[Watanabe]} imply that the last statement above is an "if and only if"
when $\spamb =\R^{n+2}$.

In a work in preparation \cite{article4}, we prove the following result, which is an even-dimensional analogue of the above theorem.

\begin{theou}\label{thu*}
If $n$ is even, and if the map $\iota(\tau, \psi)$ 
of Lemma \ref{obstructionlemma}
is the constant map\footnote{Lemma \ref{rect-lemma} implies the existence of such a parallelization for any
rectifiable long knot.} with value $\iota_0$,
then $Z_k^{F_*}(\psi)$ depends neither on the choice of the family $F_*$ of propagators of $(\spamb, \tau)$, 
nor on the parallelization $\tau$ such that $\iota(\tau, \psi)$ is the constant map with value $\iota_0$.
It defines an invariant $Z_k(\psi)$ of rectifiable long knots
such that for any diffeomorphism $\phi$ of $\spamb$ that restricts to $\nbdinf$ as the identity map, 
$Z_k(\phi \circ \psi) = Z_k(\psi)$.
Furthermore, $Z_k$ is trivial if $k$ is even.
\end{theou}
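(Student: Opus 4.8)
The plan is to follow the architecture of the proof of \cite[Theorems 2.10 and 2.13]{article1} for odd $n$, keeping the discrete formula for $I_{F_*}(\Gamma,\sigma,\psi)$ unchanged and only replacing the differential-form orientation of $\confignoeud$ by the orientation $\epsilon(\Gamma)(\psi(\R^n))^{\sommetsinternes}\times(\spamb)^{\sommetsexternes}$ of Section~\ref{confspace}, while inserting the hypothesis $\iota(\tau,\psi)\equiv\iota_0$ exactly where the odd-dimensional argument relied on a parity coincidence. The statement splits into four assertions: (i) for a fixed parallelization $\tau$ with $\iota(\tau,\psi)\equiv\iota_0$, the number $Z_k^{F_*}(\psi,\tau)$ does not depend on the $\degk$-family $F_*$ of propagators in general position; (ii) it does not depend on $\tau$ within the class of parallelizations with $\iota(\tau,\psi)\equiv\iota_0$; (iii) $Z_k(\phi\circ\psi)=Z_k(\psi)$ for $\phi$ as in the statement; (iv) $Z_k=0$ when $k$ is even. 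Assertions (iii) and (iv) will follow quickly from (i)--(ii) and from a symmetry of $\graphesnum$; the substance lies in (i) and (ii).

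For (i), one changes the $4\degk$ propagators of $F_*$ into those of a second family $F_*'$ one at a time, so it suffices to treat two families differing in a single propagator. If that propagator is an internal one, $\Propint$ versus $\Propint'$, then joining $x_{\Propint}$ to $x_{\Propint'}$ by a path $\gamma$ in $\s^{n-1}$ and adjoining the cylinder chain $\tfrac12 G^{-1}(\gamma\cup(-\gamma))$ turns $\Propint-\Propint'$ into a rational $(n+1)$-cycle of $\configR$, which bounds a rational $(n+2)$-chain by the vanishing of the relevant rational homology group of $\configR$ recorded in \cite[Section 4.2]{article1}; this chain furnishes a "propagator over $[0,1]$" in $\configR\times[0,1]$ interpolating between $\Propint$ and $\Propint'$ (the external case is identical, in $\configM\times[0,1]$). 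Pulling these interpolating chains back by the maps $p_e$ and taking the algebraic intersection over the edges of each numbered diagram yields a one-dimensional chain in $\confignoeud\times[0,1]$ whose boundary is $I_{F_*}(\Gamma,\sigma,\psi)-I_{F_*'}(\Gamma,\sigma,\psi)$ plus a sum over the codimension-one faces of $\confignoeud$. After summing over $(\Gamma,\sigma)\in\graphesnum$ and dividing by $(2\degk)!$, this face contribution must vanish, and one checks this face-type by face-type, using that on each face the relevant restriction map factors through $G$ or $G_\tau$: the principal faces (two vertices linked by an edge collide) cancel across the sum over $\graphesnum$ by the same combinatorial mechanism as for odd $n$, the signs $\epsilon(\Gamma)$ together with the even-$n$ orientation of $\confignoeud$ absorbing the parity shift; the hidden faces (a larger subset of vertices collapses, or vertices run to $\infty$ or onto the knot without a constraining edge) vanish by a dimension count exploiting the rigid "cycle-with-legs" shape of BCR diagrams; and the anomalous faces --- a leg collapsing onto the knot, or a connected subconfiguration escaping to infinity --- are where $\iota(\tau,\psi)\equiv\iota_0$ enters: it forces the local model along the knot to be the standard one, so the anomalous contribution equals the (vanishing) one for the trivial knot of $\R^{n+2}$.

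For (ii), if $\iota(\tau,\psi)\equiv\iota(\tau',\psi)\equiv\iota_0$, then by \cite[Theorem 6.2]{article1}, whose proof is insensitive to the parity of $n$, one may assume after a smooth deformation through parallelizations that $\tau$ and $\tau'$ agree on a neighbourhood of $\psi(\R^n)\cup\nbdinf$, so that $\tau'\circ\tau^{-1}$ is given by a map $g\colon\spamb\to\mathrm{SO}(n+2)$ trivial near $\psi(\R^n)\cup\nbdinf$; modifying each external propagator of $F_*$ near the diagonal face of $\configM$ turns it into an external propagator for $\tau'$, and $Z_k^{F_*}(\psi,\tau)-Z_k^{F_*}(\psi,\tau')$ becomes a universal anomaly depending only on $\degk$ and on the homotopy class of $g$. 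As in \cite{article1}, the only codimension-one faces that can detect $g$ are those where a connected arc of the cycle of a BCR diagram degenerates, and the alternating structure of BCR diagrams (as many pieces of one type as of the other in Figure~\ref{fig-BCR2}) makes the corresponding model integral vanish, so the anomaly is zero. Assertion (iii) is then formal: a diffeomorphism $\phi$ equal to the identity on $\nbdinf$ pushes $(F_*,\tau)$ forward to a pair adapted to $\phi\circ\psi$ with $\iota(\phi_*\tau,\phi\circ\psi)\equiv\iota_0$ by naturality of the tangent map, and $Z_k^{\phi_*F_*}(\phi\circ\psi,\phi_*\tau)=Z_k^{F_*}(\psi,\tau)$ tautologically, whence $Z_k(\phi\circ\psi)=Z_k(\psi)$ by (i)--(ii).

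Finally, for (iv), reversing the orientation of the cycle of a BCR diagram while keeping its legs is a well-defined involution of $\graphesnum$; by the orientation computation of \ref{orex}, extended from $\Gamma_\degk$ to an arbitrary $\Gamma$, this involution multiplies each $I_{F_*}(\Gamma,\sigma,\psi)$ by a sign equal to $(-1)^{k+1}$ when $n$ is odd but to $(-1)^{k}$ when $n$ is even --- the switch being produced precisely by the $n(k+1)$ contribution in the orientation sign of \ref{orex} --- so $\sum_{(\Gamma,\sigma)}I_{F_*}(\Gamma,\sigma,\psi)$ vanishes exactly when $(-1)^{k}=-1$, i.e. when $k$ is even. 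The real obstacle in this program is the treatment of the anomalous faces in steps (i) and (ii): one must re-establish, with the even-$n$ orientation of $\confignoeud$, that the face where a leg meets the knot contributes a term controlled by $\iota(\tau,\psi)$ and that this term cancels once $\iota(\tau,\psi)\equiv\iota_0$; the rest is a parity-sensitive but routine update of the sign bookkeeping of \cite{article1}.
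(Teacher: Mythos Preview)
The paper does not contain a proof of this theorem. It is stated explicitly as a result to be established in the forthcoming article \cite{article4} (``In a work in preparation \cite{article4}, we prove the following result\ldots''), and the present paper uses it only as a black box; indeed, the authors note that their main computations for even $n$ hold independently of Theorem~\ref{thu*}, with $Z_k^{F_*}(\psi)-Z_k^{F_*^0}(\psi_0)$ in place of $Z_k(\psi)$. There is therefore no proof here against which to compare your proposal.

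That said, two remarks on your sketch. First, your program is the natural one and you correctly isolate the anomalous faces as the genuine difficulty: in the odd case several of these vanish by parity arguments that no longer apply, and replacing them by the hypothesis $\iota(\tau,\psi)\equiv\iota_0$ is exactly what has to be made precise. Your last paragraph already concedes that this step is not carried out, so the proposal is an outline rather than a proof.

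Second, your argument for (iv) contains a sign slip. You assert that the cycle-reversal involution multiplies $I_{F_*}(\Gamma,\sigma,\psi)$ by $(-1)^{k+1}$ for odd $n$ and by $(-1)^{k}$ for even $n$, and then write ``vanishes exactly when $(-1)^k=-1$, i.e.\ when $k$ is even''. But $(-1)^k=-1$ means $k$ is odd, and with your claimed signs you would get vanishing for even $k$ when $n$ is odd and for odd $k$ when $n$ is even, the opposite of Theorems~\ref{th*} and~\ref{thu*}. The consistency check with Lemma~\ref{symkn} (vanishing when $k\equiv n\bmod 2$) shows the involution sign should be $(-1)^{k+n+1}$: for odd $n$ this is $(-1)^{k}$ (vanishing for odd $k$), and for even $n$ it is $(-1)^{k+1}$ (vanishing for even $k$). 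So either your sign computation from Lemma~\ref{orex} or the final arithmetic needs to be corrected.
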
 
In the computations of this article, we use the two above theorems as a definition of the \emph{degree $k$ generalized Bott-Cattaneo-Rossi} (BCR for short) \emph{invariant} $Z_k(\psi)$ for rectifiable long knots. 
In the next subsection, we explain how the invariants extend to any odd-dimensional long knot 
(without assuming that $\spamb$ is parallelizable),
and to any even-dimensional long knot (without assuming that $\psi$ is rectifiable) if $\spamb$ is parallelizable.

Let $\psi_0$ denote the trivial knot.
In Section \ref{Section2}, we are going to compute $Z_k^{F_*}(\psi)- Z_k^{F_*^0}(\psi_0)$ for any rectifiable knot $\psi$, using two particular families of propagators $F_*$ and $F_*^0$, defined in Section \ref{Section 3}.
These computations do not use Theorem \ref{thu*}, and, what is
actually proved in this article when $n$ is even is the formula of
Theorem \ref{Reidth} with the difference $Z_k^{F_*}(\psi)- Z_k^{F_*^0}(\psi_0)$ above instead of $Z_k(\psi)$. The
given formulation of Theorem \ref{Reidth}
is then a straightforward consequence of Theorem \ref{thu*}, which implies
that $Z_k(\psi_0)=0$.

\subsection{Connected sum and general definition of \texorpdfstring{$\Z[]$}{Zk}}
In this section, we review the connected sum defined in \cite[Section 2.9]{article1}.

Let $\punct{M_1}$ and $\punct{M_2}$ be two asymptotic $\mathbb K$-homology $\R^{n+2}$, respectively decomposed as $B(M_1)\cup \voisinageinfini$ and $B(M_2)\cup\voisinageinfini$.
Let $\voisinageinfinideux$ be the complement in $\R^{n+2}$ of the two open balls $\mathbb B_1$ and $\mathbb B_2$ of radius $\frac14$ and with respective centers $(0, 0, \ldots, 0, -\frac12)$ and $(0, 0, \ldots, 0, \frac12)$.
For $i\in\{1,2\}$ and $x$ in $\partial B(M_i)\subset \R^{n+2}$, define the map $\phi_i(x) = \frac14x +(-1)^i(0, \ldots, 0 ,\frac12)$, 
which is a diffeomorphism from $\partial B(M_i)$ to $\partial \mathbb B_i$.

Set $\punct{M_1}\sharp \punct{M_2}= \voisinageinfinideux\cup B(M_1) \cup B(M_2)$, where $B(M_i)$ is glued to $\partial \mathbb B_i$ via the map $\phi_i$. Set $B(\punct{M_1}\sharp \punct{M_2}) = B(\punct{M_1}) \cup B(\punct{M_2}) \cup \overline{(\voisinageinfinideux\setminus\voisinageinfini)}$. This defines an asymptotic $\mathbb K$-homology $\R^{n+2}$, with two canonical injections $\iota_i \colon B(M_i) \hookrightarrow B(\punct{M_1}\sharp \punct{M_2})$ for $i\in\{1,2\}$. 

If $\punct{M_1}$ and $\punct{M_2}$ contain two long knots $\psi_1$ and $\psi_2$, define the long knot $\psi_1\sharp\psi_2$ of $\punct{M_1}\sharp\punct{M_2}$ by the following formula, for any $x\in \R^n$:

\[(\psi_1\sharp \psi_2)(x)= \left\{\begin{array}{lll}
\iota_2(\psi_2(4.x_1, \ldots, 4.x_{n-1}, 4.x_n-2)) & \text { if $||x-(0, \ldots, 0, \frac12)|| \leq \frac14$,}\\
\iota_1(\psi_1(4.x_1, \ldots, 4.x_{n-1}, 4.x_n+2)) & \text { if $||x-(0, \ldots, 0, -\frac12)|| \leq \frac14$,}\\
(0, 0, x)\in \voisinageinfinideux &\text{otherwise.}
\end{array}
\right.
\]

Similarly, any two parallelizations $\tau_1$ and $\tau_2$ of $\punct{M_1}$ and $\punct{M_2}$ induce a parallelization $\tau_1\sharp \tau_2$ of $\punct{M_1}\sharp\punct{M_2}$, which is well-defined up to homotopy. 
In particular, if $\punct{M_1}$ and $\punct{M_2}$ are parallelizable, then $\punct{M_1}\sharp\punct{M_2}$ is also parallelizable in the sense of Definition \ref{paral-def}. 
\begin{theo}\label{conn-sum}
For any long knots $\psi_1\colon \R^n \hookrightarrow \punct{M_1}$ and $\psi_2\colon \R^n \hookrightarrow \punct{M_2}$, \[\Z[](\psi_1\sharp\psi_2)= \Z[](\psi_1)+\Z[](\psi_2).\]
\end{theo}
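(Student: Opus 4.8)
The plan is to follow the strategy of \cite[Section 2.9]{article1}, which adapts without change to the present (in particular even-dimensional) setting: reduce to one convenient family of propagators on the connected sum and show that every configuration contributing to the relevant intersection numbers is localized in one of the two summands. Since $Z_k$ is independent of the chosen family of propagators (Theorem \ref{th*}, resp.\ Theorem \ref{thu*}), it suffices to compute $Z_k(\psi_1\sharp\psi_2)$ with a single well-chosen family $F_*$ on $(\punct{M_1}\sharp\punct{M_2},\tau_1\sharp\tau_2)$. I would fix direction vectors $x_{A_i}\in\s^{n-1}$, $x_{B_i}\in\s^{n+1}$ and families $F_*^1$ on $(\punct{M_1},\tau_1)$, $F_*^2$ on $(\punct{M_2},\tau_2)$, each in general position for the corresponding knot, with boundary directions $x_{A_i}$, $x_{B_i}$, and each equal to the model propagators of $\R^{n+2}$ near the shells $\partial B(M_j)$ and near infinity (the latter being automatic for families built as in \cite[Section 4.2]{article1}). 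Then $F_*$ is defined to restrict to $F_*^j$ over the part of $C_2(\punct{M_1}\sharp\punct{M_2})$ lying (under the canonical projection $p_b$) above $B(M_j)\times B(M_j)$ and above $B(M_j)\times\voisinageinfinideux$, and to equal the model propagators over the parts lying above $\voisinageinfinideux\times\voisinageinfinideux$ and above $B(M_1)\times B(M_2)$. Since everything is standard near the shells these pieces glue, and one checks directly that the boundary condition of Definition \ref{df-prop} holds: the diagonal face of $C_2(\punct{M_1}\sharp\punct{M_2})$ splits along $p_b$ into parts above $B(M_1)$, above $B(M_2)$, and above $\voisinageinfinideux$, on which $\partial F_*$ reduces to the boundary conditions for $F_*^1$, $F_*^2$, and for the model respectively.

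The heart of the argument is the localization step. Choose all the direction vectors $x_{A_i}$ (resp.\ $x_{B_i}$) outside a fixed open neighborhood of $\{\pm e_n\}$ in $\s^{n-1}$ (resp.\ of $\{\pm e_{n+2}\}$ in $\s^{n+1}$), where $e_n$, $e_{n+2}$ are the last coordinate vectors, i.e.\ the axis of the neck. Fix $(\Gamma,\sigma)\in\graphesnum$ and a configuration $c$ contributing to $I_{F_*}(\Gamma,\sigma,\psi_1\sharp\psi_2)$, and suppose some vertex of $\Gamma$ is sent by $c$ into $\Int B(M_1)$ and another into $\Int B(M_2)$. As $\Gamma$ is connected, some edge $e$ joins a vertex sent into $\Int B(M_1)$ to a vertex sent into $\Int B(M_2)$, so $p_e(c)$ lies above $B(M_1)\times B(M_2)$; there $X_{e,\sigma}$ is the model propagator, so $p_e(c)\in X_{e,\sigma}$ forces the difference of the two images (or of their $\R^n$-coordinates, if $e$ is internal) to be a nonzero multiple of $x_{B_{\sigma(e)}}$ (resp.\ $x_{A_{\sigma(e)}}$). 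But that difference must carry a point of the ball $\mathbb B_1$ to a point of the ball $\mathbb B_2$, which, since the two balls have radius $\frac14$ and centers on the last axis at distance $1$, forces its direction into the excluded neighborhood of $\{\pm e_{n+2}\}$ (resp.\ $\{\pm e_n\}$), a contradiction. Hence, for every contributing configuration, either no vertex lies in $\Int B(M_1)$, or no vertex lies in $\Int B(M_2)$.

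It remains to identify the three resulting classes of configurations with the intersection numbers of the summands. A contributing configuration with no vertex in $\Int B(M_2)$ takes its values in $B(M_1)\cup\voisinageinfinideux$, which is carried diffeomorphically onto $\punct{M_1}$ by a rescaling that is the identity on $B(M_1)$, sends $\psi_1\sharp\psi_2$ to $\psi_1$ (up to an isotopy fixing a neighborhood of infinity) and sends the restriction of $F_*$ there to a family of propagators of $\punct{M_1}$ that we may take to be $F_*^1$; symmetrically for $M_2$. Configurations all of whose vertices lie in $\voisinageinfinideux$ — exactly the ones counted twice — are configurations of the trivial knot $\psi_0$ in $\R^{n+2}$ with the model family $F_*^0$. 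A routine inclusion–exclusion over $(\Gamma,\sigma)\in\graphesnum$, divided by $(2k)!$, then yields
\[ Z_k(\psi_1\sharp\psi_2) = Z_k(\psi_1) + Z_k(\psi_2) - Z_k(\psi_0), \]
and $Z_k(\psi_0)=0$: by Theorem \ref{thu*} when $n$ is even, and by the vanishing of $Z_k$ on the trivial knot when $n$ is odd (trivially so when $k$ is odd).

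I expect the main obstacle to be the construction of $F_*$ as a genuine family of propagators in general position for $\psi_1\sharp\psi_2$. One must simultaneously (i) match $F_*^1$, $F_*^2$ and the model near the shells so that the pieces glue into chains satisfying the boundary condition of Definition \ref{df-prop}, and (ii) restore general position by a perturbation supported away from the gluing region and from infinity, without disturbing the boundary condition or the direction constraints used in the localization step. The identification in the third paragraph also has to be carried out carefully enough to track orientations, signs $\epsilon(c)$ and weights $w_{X_{e,\sigma}}$. As in \cite{article1}, none of this is conceptually difficult once the framework is in place, but it is the part that requires care.
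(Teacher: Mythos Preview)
Your overall strategy matches the paper's, which simply cites \cite[Theorem 2.17]{article1} for odd $n\geq 3$, notes that the argument extends to even $n$ via Theorem~\ref{thu*}, and invokes separate references for $n=1$. Your sketch, however, has two gaps. First, in the localization step you deduce from the connectedness of $\Gamma$ that ``some edge $e$ joins a vertex sent into $\Int B(M_1)$ to a vertex sent into $\Int B(M_2)$'', but this does not follow: a path between two such vertices may pass through intermediate vertices lying in $\voisinageinfinideux$, so no edge need have both endpoints in the two balls. Relatedly, declaring the external propagator to be ``the model'' over $p_b^{-1}(B(M_1)\times B(M_2))$ is not meaningful as stated, since there is no Gauss map between points of the two distinct balls; the construction of $F_*$ on that piece is precisely the delicate point you flag at the end, and the localization argument depends on how it is resolved. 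The argument of \cite{article1} separates the two summands by the hyperplane $\{x_{n+2}=0\}$ and controls the external propagators so that an edge crossing it cannot satisfy the direction constraint; this requires more than what you have written.

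Second, your inclusion--exclusion gives $Z_k(\psi_1\sharp\psi_2)=Z_k(\psi_1)+Z_k(\psi_2)-Z_k(\psi_0)$, after which you invoke $Z_k(\psi_0)=0$. In the nontrivial parity cases (odd $n$ with even $k$, or even $n$ with odd $k$) neither Theorem~\ref{th*} nor Theorem~\ref{thu*} asserts this; the paper itself records $Z_k(\psi_0)=0$ only as a \emph{corollary} of Theorem~\ref{conn-sum} (see the sentence following Lemma~\ref{contributionGamma1}), so your appeal is circular. Splitting along a separating hyperplane rather than using the overlapping regions $B(M_j)\cup\voisinageinfinideux$ removes the extra term and hence this issue. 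Finally, you do not address $n=1$, which the paper handles separately via \cite{article3} and \cite{[Lescop2]}.
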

\begin{proof}
When $n\geq3$ is odd, this is the result of \cite[Theorem 2.17]{article1}. 
The proof extends without any change to the even-dimensional case assuming Theorem \ref{thu*}. 
For $n=1$, it follows from \cite[Theorem 2.14]{article3} and \cite[Theorem 16.10]{[Lescop2]}.

\end{proof}

According to \cite[Proposition 5.5]{[Lescop2]},
any asymptotic rational homology $\R^3$ is parallelizable in the sense of Definition \ref{paral-def}.
In general, we have the following \cite[Proposition 2.18]{article1}.
\begin{prop}\label{conn-sum2}
For any positive odd integer $n$, the connected sum of any asymptotic integral homology $\R^{n+2}$ with itself is parallelizable in the sense of Definition \ref{paral-def}.
\end{prop}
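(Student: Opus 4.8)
The statement is really about the behaviour near $\infty$, so the plan is first to isolate the relevant obstruction. Since each $M_i$ is an integral homology $(n+2)$-sphere, a Mayer--Vietoris argument for $M_i=B(M_i)\cup_{\s^{n+1}}B_\infty(M_i)$ shows that $B(M_i)$ is $\mathbb Z$-acyclic, hence so is $\punct{M_i}$; in particular $T\punct{M_i}$ is trivial and every map $\punct{M_i}\to SO(n+2)$ is null-homotopic (because $H^{k}(\punct{M_i};G)=0$ for all $k>0$ and all abelian $G$, and $SO(n+2)$ is a simple space). I would then run the obstruction theory for upgrading the prescribed trivialization $\tau_0$ of $T\punct{M_i}$ over $\voisinageinfini$ to a global one: all obstruction groups $H^{j}\bigl(B(M_i),\partial B(M_i);\pi_{j-1}(SO(n+2))\bigr)$ with $j<n+2$ vanish by Lefschetz duality and acyclicity, so the only obstruction is a well-defined class $o(\punct{M_i})\in H^{n+2}\bigl(B(M_i),\partial B(M_i);\pi_{n+1}(SO(n+2))\bigr)\cong\pi_{n+1}(SO(n+2))$, the last isomorphism using the orientation of $B(M_i)$; and $\punct{M_i}$ is parallelizable in the sense of Definition \ref{paral-def} if and only if $o(\punct{M_i})=0$. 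This reduces the proposition to the vanishing of $o(\punct M\sharp\punct M)$.

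The next step is to establish additivity, $o(\punct{M_1}\sharp\punct{M_2})=o(\punct{M_1})+o(\punct{M_2})$, directly from the connected-sum construction recalled above. There $B(\punct{M_1}\sharp\punct{M_2})$ is $B(M_1)$ and $B(M_2)$ glued, via the orientation-preserving affine maps $\phi_1,\phi_2$, onto the region $\overline{\voisinageinfinideux\setminus\voisinageinfini}\subset\R^{n+2}$, which carries the standard trivialization $\tau_0$ and over which $\tau_0$ visibly extends. Since the differentials of $\phi_1,\phi_2$ are constant (equal to $\tfrac14\,\Id$, hence homotopic to the identity), they preserve framings up to homotopy and orientations exactly. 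Building a trivialization from the outer boundary inwards, the obstruction cocycle is then carried by the top cells of $B(M_1)$ and of $B(M_2)$, where it represents $o(\punct{M_1})$ and $o(\punct{M_2})$ respectively; evaluating against the fundamental classes gives the claimed sum, and in particular $o(\punct M\sharp\punct M)=2\,o(\punct M)$.

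It then remains to prove that $2\,o(\punct M)=0$, and this is the main obstacle; it is also the only place where the parity of $n$ is used. Since $n$ is odd, $n+2$ is odd and $n+1$ is even, and $\pi_{n+1}(SO(n+2))$ is a finite $2$-group of exponent $2$: the fibration $SO(n+2)\to SO(n+3)\to\s^{n+2}$ gives an exact sequence $\mathbb Z\xrightarrow{\partial}\pi_{n+1}(SO(n+2))\to\pi_{n+1}(SO)\to 0$ in which $\partial(1)$ is the clutching class of $T\s^{n+2}$ — of order at most $2$ because $n+2$ is odd — while $\pi_{n+1}(SO)$ is $0$ or $\mathbb Z/2$ because $n+1$ is even. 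Equivalently, $o(\punct M)$ stabilizes into the $2$-torsion group $\pi_{n+1}(SO)$, and since every integral homology sphere is stably parallelizable, $o(\punct M)$ must then lie in the kernel of the stabilization map, which has order at most $2$. Combining this with the previous step yields $o(\punct M\sharp\punct M)=2\,o(\punct M)=0$, so $\punct M\sharp\punct M$ is parallelizable. I expect pinning down the $2$-primary structure of $\pi_{n+1}(SO(n+2))$ for odd $n$ — equivalently, the stable parallelizability of homology $(n+2)$-spheres — to be the delicate point; note that the case $n=1$ is vacuous, since every orientable $3$-manifold is parallelizable and hence $o(\punct M)=0$ already.
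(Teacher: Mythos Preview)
The paper does not prove this proposition here; it is simply quoted from the author's earlier paper \cite{article1}, so there is no in-text argument to compare against.

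Your obstruction-theoretic setup is correct: acyclicity of $B(M)$ and simplicity of $SO(n+2)$ yield a well-defined single obstruction $o(\punct M)\in\pi_{n+1}(SO(n+2))$, and your additivity $o(\punct M\sharp\punct M)=2\,o(\punct M)$ is fine. The issue is the final step.

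Your first argument --- that the exact sequence $\mathbb Z\xrightarrow{\partial}\pi_{n+1}(SO(n+2))\to\pi_{n+1}(SO)\to0$ forces $\pi_{n+1}(SO(n+2))$ to have exponent~$2$ --- has a gap as stated. You have only shown that this group is an extension of a group of order $\leq2$ by a group of order $\leq2$; that does not exclude $\mathbb Z/4$. (It is in fact true that $\pi_{m-1}(SO(m))$ has exponent~$2$ for every odd $m$, but this needs more than the sequence you wrote.)

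Your second argument, by contrast, is the right one and suffices on its own. The stabilised obstruction is exactly the stable class $[TM]\in\widetilde{KO}(M)\cong\pi_{n+1}(O)$, and once $M$ is known to be stably parallelizable, $o(\punct M)$ lies in $\ker\bigl(\pi_{n+1}(SO(n+2))\to\pi_{n+1}(SO)\bigr)=\langle\tau_{n+2}\rangle$, which has order $\leq2$ since $n+2$ is odd. For odd $n+2$ the stable parallelizability is automatic when $\pi_{n+1}(O)=0$; in the remaining case $n+1\equiv0\pmod 8$ one observes that the framed sphere $(\partial B(M),\text{$g$-twisted framing})$ bounds the framed manifold $(B(M),\tau)$, so $J$ of the stabilised obstruction vanishes, and Adams' injectivity of $J$ on $\pi_{8k}(O)=\mathbb Z/2$ finishes it. You correctly flagged this as the delicate input; spelling out this last paragraph would make your proof complete.
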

The above proposition allows us to extend $\Z[]$ as follows in the odd-dimensional case.
Lemma \ref{threct} allows us to extend $\Z[]$ similarly in the even-dimensional case.

\begin{df}\label{Zkext}
Let $\psi$ be a long knot in an odd-dimensional asymptotic $\mathbb K$-homology $\R^{n+2}$
or in an even-dimensional parallelizable asymptotic integral homology $\R^{n+2}$.
Define $\Z[](\psi) = \frac12 \Z[](\psi\sharp \psi),$ where $\Z[]$ is the invariant of Theorem \ref{th*}.
\end{df}

\cite[Prop 2.20]{article1} implies\footnote{The proof is 
for odd $n$, it also works for even $n$.} 
that Theorem \ref{conn-sum} is still valid for this extended $\Z[]$.

\subsection{Linking number}\label{sectionlk}
We use the following definition and basic properties of the linking number. 

\begin{df}\label{d27}Let $X^{\dimd}$ and $Y^{n+1-\dimd}$ be two disjoint rational cycles of 
our homology $(n+2)$-sphere $M$, with $\dimd\in\und n$. Let $W_X$ and $W_Y$ be two chains with 
respective boundaries $X$ and $Y$, such that $W_X$ and $W_Y$ are transverse to each other. 
The \emph{linking number} of $X$ and $Y$ is defined as the algebraic intersection 
number $\langle W_X, Y\rangle_M$, so that \[\mathrm{lk}(X,Y) = (-1)^{\dimd+1} \langle X, W_Y\rangle_M=  \langle W_X, Y\rangle_M.\]
Furthermore, $\mathrm{lk}(X^\dimd, Y^{n+1-\dimd}) = (-1)^{n\dimd+1} \mathrm{lk}(Y^{n+1-\dimd},X^\dimd)$.

\end{df}
These linking numbers will appear in the computation of our invariant $Z_k$ because of the following lemma, which relates external propagators to linking numbers.

\begin{lm}\label{lk}
Let $X^\dimd$ and $Y^{n+1-\dimd}$ be two disjoint cycles of $\ambientspace$. For any external propagator $B$, \[\mathrm{lk} (X, Y) = (-1)^{d+1} \langle X\times Y, B\rangle_{\configM}.\]
\end{lm}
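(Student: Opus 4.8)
The plan is to reduce the statement to the definition of the linking number in Definition~\ref{d27} by exhibiting an explicit chain in $\configM$ whose boundary is controlled by $B$ and whose intersection with $X\times Y$ computes $\langle W_X, Y\rangle_\ambientspace$. First I would pick a chain $W_X$ in $\ambientspace$ with $\partial W_X = X$, transverse to $Y$. The key geometric object is the chain $W_X \times Y \subset \ambientspace \times \ambientspace$, or rather its preimage $p_b^{-1}(W_X\times Y)$ in $\configM$: this is an $(n+2-d) + (n+1-d) = $ wait, let me recount — $W_X$ has dimension $d+1$ and $Y$ has dimension $n+1-d$, so $W_X\times Y$ has dimension $n+2$, and its closure in $\configM$ is an $(n+2)$-chain $\widetilde{W_X\times Y}$ with boundary $\overline{X\times Y} \cup (\text{a piece in } \partial_\Delta\configM \cup \partial_{\ambientspace,\infty}\configM \cup \cdots)$ coming from where $W_X$ meets $Y$ or runs to infinity. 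Since $B$ is an $(n+3)$-chain with $\partial B = \frac12 G_\tau^{-1}(\{-x_B,+x_B\})$, the idea is that $\langle X\times Y, B\rangle$ can be compared to $\langle X\times Y, B'\rangle$ for any other external propagator $B'$ (the difference $B - B'$ is a cycle, hence bounds rationally since $H_{n+3}(\configM;\QQ)$ is in the right degree, and $X\times Y$ is a cycle disjoint from... — actually I should check $X\times Y$ does not meet $\partial B$, which holds after a generic perturbation since $G_\tau$ restricted to $\partial\configM$ meeting $\overline{X\times Y}$ is generic). So it suffices to compute $\langle X\times Y, B\rangle$ for one convenient $B$.

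The main steps, in order: (1) Reduce to a single convenient external propagator using that the algebraic intersection $\langle X\times Y, B\rangle_{\configM}$ is independent of the choice of $B$ — here I'd invoke that $X\times Y$ is a cycle of dimension $n+1 < \dim\configM - (n+3) = \ldots$, together with a dimension/transversality count showing $B-B'$ is a boundary away from $X\times Y$. (2) Choose $B$ adapted to a chosen $W_X$: concretely, take $B$ to be (a perturbation of) the closure of the set of configurations $(a, a + t\, v)$ where $a$ ranges over a collar and $v$ over directions, arranged so that $\mathrm{Supp}(B) \cap (X\times Y)$ consists exactly of pairs $(x,y)$ with $x\in X$, $y\in Y$, $x$ and $y$ joined by a segment-like path lying in $W_X$; more robustly, build $B$ from the pullback of a propagating form and use that $\langle X\times Y, B\rangle = \pm\int_{X\times Y}\omega_B$ for the dual form, then apply Stokes with a primitive supported near $W_X$. (3) Match signs: track the orientation conventions (outward-normal-first, the coorientation conventions of Subsection on orientations, the $(-1)^{d+1}$ in Definition~\ref{d27}) to land exactly on $\mathrm{lk}(X,Y) = (-1)^{d+1}\langle X\times Y, B\rangle_{\configM}$.

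The hard part will be step (3), the sign bookkeeping: one must carefully reconcile the coorientation convention used to orient preimages $p_b^{-1}(W_X\times Y)$, the orientation of $\configM$ versus $\ambientspace^2$, and the asymmetry $(-1)^{d+1}$ already present in the definition of $\mathrm{lk}$ — a factor that comes from $\langle W_X, Y\rangle = (-1)^{d+1}\langle X, W_Y\rangle$. A clean way to avoid case analysis is to prove the identity first for $\ambientspace = \R^{n+2}$ with the standard propagator $B = \frac{(-1)^{n-1}}{2}\overline{\{(x, x+tx_B)\mid t\in\R^*\}}$ and linear/affine cycles $X,Y$, where everything is explicit, then transport to general $\ambientspace$ and general cycles by the independence-of-$B$ argument of step (1) together with the cobordism invariance of both sides under isotopies of $X$ and $Y$. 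I expect the interior of the argument (steps 1 and 2) to be essentially routine given the machinery already set up; the orientation matching is where care is genuinely needed.
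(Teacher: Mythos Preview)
Your first paragraph already contains the key idea the paper uses: form the closure of $W_X\times Y$ in $\configM$ and look at its boundary. But your ``main steps'' then abandon this and pivot to an independence-of-$B$ argument followed by a transport from the model case $\ambientspace=\R^{n+2}$. That transport is the gap. Independence of $B$ (for fixed $X,Y,\ambientspace$) is fine, since $H_{n+3}(\configM;\QQ)=0$; but ``cobordism invariance under isotopies of $X$ and $Y$'' does not let you move arbitrary cycles in a general $\ambientspace$ into $\R^{n+2}$, and there is no mechanism in your plan for changing the ambient manifold. So as written, steps (1)--(3) only prove the lemma for $\ambientspace=\R^{n+2}$.

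The paper's proof fixes this by using exactly the object from your first paragraph, but in the direction you dropped: it removes small balls from $W_X$ around the finitely many points of $W_X\cap Y$ to get $W'_X$ with $W'_X\cap Y=\emptyset$, so $W'_X\times Y$ lies in the interior of $\configM$ and is disjoint from $\partial B\subset\partial\configM$. Then $\partial(W'_X\times Y)=X\times Y-\sum_x \s^d_x\times Y$, which gives $\langle X\times Y,B\rangle=\sum_x\langle \s^d_x\times Y,B\rangle$ for \emph{any} $B$. Iterating once more with $W_Y$ localizes to $\sum_x\langle \s^d_x\times\s^{n+1-d}_{x'},B\rangle$, and now each term involves only configurations inside an arbitrarily small ball, where one may assume $\ambientspace=\R^{n+2}$ with the standard propagator and compute the Gauss map degree explicitly. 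The sign bookkeeping you flagged as the hard part is then a single local computation. In short: do not prove independence of $B$ and then try to globalize from $\R^{n+2}$; instead, use the $W_X\times Y$ chain to localize the intersection for arbitrary $B$ and arbitrary $\ambientspace$, and only invoke $\R^{n+2}$ at the very end, inside a chart.
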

\begin{proof}
Let $W_X$ and $W_Y$ be chains such that $\partial W_X=X$ and $\partial W_Y=Y$ as above. For the proof, assume that $W_X$ and $W_Y$ are manifolds (the general case follows easily). 
Let $W'_X$ be obtained from $W_X$ by removing a little ball $\mathbb D^{\dimd+1}_x$ with boundary $\s^\dimd_x$ around each point $x\in W_X\cap Y$, 
so that $\partial(W'_X\times Y) = X\times Y- \sum\limits_{x\in W_X\cap Y} \s^\dimd_x\times Y$. 
Since $W'_X\cap Y = \emptyset$, 
we have $(W'_X\times Y) \cap \partial B= \emptyset$, 
and
\[ \langle X\times Y, B\rangle  = \sum\limits_{x\in W_X\cap Y}\langle \s^\dimd_x\times Y, B\rangle.\]

For any $x\in W_X\cap Y$, assume that $\mathbb D_x^{\dimd+1}$ meets $W_Y$ transversely along an interval $[x, x']$. 
For any $x\in W_X\cap Y$, remove a little ball $\mathbb D^{n+2-\dimd}_{x'}$ with boundary $\s^{n+1-\dimd}_{x'}$ around the point $x'$ (which is the intersection of $\s^\dimd_x$ and $W_Y$) from $W_Y$. 
The same argument as above yields 
\[\langle X\times Y, B\rangle  = \sum\limits_{x\in W_X\cap Y} \langle \s^\dimd_x\times\s^{n+1-\dimd}_{x'}, B\rangle.\]

It suffices to prove that $ (-1)^{\dimd+1} \langle \s_x^\dimd\times \s_{x'}^{n+1-\dimd} , B \rangle_{\configM}$ is  the sign of the intersection point $x$ in $W_X \cap Y$.
Since the balls $\mathbb D^{\dimd+1}_x$ and $\mathbb D^{n+2-\dimd}_{x'}$ can be taken arbitrarily small, assume without loss of generality that $\ambientspace = \R^{n+2}$, \[\begin{array}{rl} \s_x^\dimd&= \{(x_1,\ldots, x_{\dimd+1}, 0,\ldots, 0)\mid {(x_1)}^2+\ldots+{(x_{\dimd+1})}^2=1\}, \\
\s_{x'}^{n+1-\dimd}&= \{(0, \ldots, x_{\dimd+1}, \ldots, x_{n+2}) \mid (x_{\dimd+1}-1)^2+{(x_{\dimd+2})}^2+\ldots +{(x_{n+2})}^2=1\}, \\ 
\text{and  } B &= \frac12\left(G^{-1}(\{-e_{n+2}\}) + G^{-1}(\{+e_{n+2}\})\right),\end{array}\] where $e_{n+2}$ is the last vector of the canonical basis of $\R^{n+2}$.
Now, the intersection number $\langle \s_x^\dimd\times \s_{x'}^{n+1-\dimd} , B \rangle_{C_2(\R^{n+2})}$ is the degree of the Gauss map $\s_x^\dimd\times \s_{x'}^{n+1-\dimd} \rightarrow \s^{n+1}$, which is $(-1)^\dimd$. Since $\langle \mathbb B_x^{\dimd+1} , \s^{n+1-\dimd}_{x'} \rangle_{\R^{n+2}}=-1$, this concludes the proof.
\end{proof}

\subsection{Seifert (hyper)surfaces and matrices} 

From now on, we will often use the coordinates $(x_1,x_2,\overline x)\in \R\times \R\times \R^n$ for points $x\in\nbdinf\subset \R^{n+2}$.
Set $N^0 = \{x \in \R^{n+2}\mid {x_1}^2+{x_2}^2 \leq 1\text{ or } ||x|| \geq1\}$.
\begin{df}\label{defSf}
A \emph{Seifert (hyper)surface} for a long knot $\psi\colon \R^n\hookrightarrow \ambientspace$ is an oriented connected $(n+1)$-submanifold $\Sigma$ of $\ambientspace$ such that $\partial\Sigma = \psi(\R^n)$, such that
$\Sigma\cap\bouleambiante$ is compact, and such that
 there exists a neighborhood $N$ of $\psi(\mathbb R^n)$ such that \begin{itemize}
\item $\voisinageinfini \subset N$, 
\item $N\cap\bM$ is a tubular neighborhood of $\psi(\R^n)\cap \bM=\psi(\mathbb D^n)$, 
\item there exists a diffeomorphism 
$\Theta \colon N^0 \rightarrow N$ that restricts to $\nbdinf$ as the identity such that
for any $x\in\R^n$, $\psi(x)= \Theta(0,0,x)$, and such that
$\Sigma\cap N= \Theta(\{(r\cos(\theta), r\sin(\theta), \overline x)\in N^0 \mid r\geq0,\overline x\in\R^n \})$ for some $\theta\in\R$.
\end{itemize}
\end{df}

\begin{lm}\label{lmS}
Any null-homologous long knot $\psi\colon\R^n\hookrightarrow \spamb$ 
of an asymptotic rational homology $\R^{n+2}$ admits Seifert surfaces as in Definition \ref{defSf}.
\end{lm}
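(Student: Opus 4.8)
The plan is to obtain $\Sigma$ by the classical construction of a Seifert hypersurface as the preimage of a regular value of a circle‑valued map on the knot complement, and then to normalize the result so that it satisfies all the requirements of Definition \ref{defSf} near $\psi(\R^n)$ and near infinity. First I would compactify: extending $\psi$ by $\infty\mapsto\infty$ gives a smooth embedding $\hat\psi\colon\s^n\hookrightarrow M$; write $K=\hat\psi(\s^n)$. Its normal bundle in $M$ is an oriented rank‑$2$ bundle over $\s^n$, classified by an Euler class in $H^2(\s^n;\mathbb Z)$, which vanishes for $n\neq2$ and which for $n=2$ equals the self‑intersection $[K]\cdot[K]=0$ (since $H_2(M;\mathbb Q)=0$); hence the normal bundle is trivial. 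I would fix a closed tubular neighbourhood $N(K)$ together with a trivialization $N(K)\cong\s^n\times D^2$ that agrees near $\infty$ with the standard cylindrical product coming from $\psi(x)=(0,0,x)$ on $\nbdinf$ (tubular neighbourhood theorem relative to the region near $\infty$, where $\psi$ is already standard), and set $X=M\setminus\Int N(K)$, a compact manifold with $\partial X=\s^n\times\s^1$ and meridian $\mu=\{\ast\}\times\s^1$.

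The key step is the circle‑valued map. Rational Alexander duality in the homology sphere $M$ gives $H^1(X;\mathbb Q)\cong\mathbb Q$, and the null‑homologous hypothesis is precisely what promotes this to an integral class: it guarantees that $\mu$ generates a $\mathbb Z$‑summand of $H_1(X;\mathbb Z)$ (when $\spamb$ is moreover an integral homology sphere and $n\geq2$ this is automatic, by the Mayer--Vietoris sequence of $M=N(K)\cup X$ together with $H_1(M;\mathbb Z)=0$), so there is a class in $H^1(X;\mathbb Z)$ pairing to $1$ with $\mu$, which I realize by a smooth map $g\colon X\to\s^1$. After a homotopy supported in a collar of $\partial X$ I would arrange that $g$ restricts on $\partial X=\s^n\times\s^1$ to the projection onto the $\s^1$‑factor and is a collar product there; this is possible because $g|_{\partial X}$ represents the meridian‑dual class of $H^1(\partial X;\mathbb Z)$ (for $n=1$ this is exactly where the null‑homologous longitude gets pinned down). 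Finally, since $g$ is determined up to homotopy by its class and the standard angular map $(x_1,x_2,\overline x)\mapsto (x_1,x_2)/\|(x_1,x_2)\|$ represents the restriction of that class to $\nbdinf\setminus\psi(\R^n)$, I would homotope $g$ to agree with this standard map on $\nbdinf\setminus\psi(\R^n)$ after shrinking $\nbdinf$ slightly.

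Next I would cut and reassemble. By Sard's theorem pick a regular value $\theta_0\in\s^1$; then $\Sigma_0=g^{-1}(\theta_0)$ is a properly embedded codimension‑one, hence $(n+1)$‑dimensional, submanifold of $X$ with $\partial\Sigma_0=\s^n\times\{\theta_0\}$. I would extend $\Sigma_0$ across $N(K)\cong\s^n\times D^2$ (with $D^2$ the unit disc of $\mathbb C$) by the half‑disc bundle $\{(k,t\theta_0)\mid k\in\s^n,\ t\in[0,1]\}$, which matches $\Sigma_0$ smoothly along $\s^n\times\{\theta_0\}$ since $g$ is a collar product there, obtaining a compact submanifold $\hat\Sigma\subset M$ with $\partial\hat\Sigma=K$. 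As $K=\s^n$ is connected, exactly one component of $\hat\Sigma$ has nonempty boundary; discarding the remaining closed components and then removing $\infty$ yields a connected oriented $(n+1)$‑submanifold $\Sigma\subset\spamb$ with $\partial\Sigma=\psi(\R^n)$, where the orientation (chosen via the generator of $T_{\theta_0}\s^1$) is arranged so that the boundary orientation is that of $\R^n$. It then remains to verify the conditions of Definition \ref{defSf}: $\Sigma\cap\bM=\hat\Sigma\cap\bM$ is closed in the compact set $\bM$, hence compact; with $N=N(K)\setminus\{\infty\}$ (which contains $\nbdinf$ and meets $\bM$ in a tubular neighbourhood of $\psi(\mathbb D^n)$), the trivialization $N(K)\cong\s^n\times D^2$ together with the standard model at infinity provides a diffeomorphism $\Theta\colon N^0\to N$ restricting to the identity on $\nbdinf$ with $\psi(x)=\Theta(0,0,x)$; and by construction $\Sigma\cap N=\Theta(\{(r\cos\theta,r\sin\theta,\overline x)\mid r\geq0,\ \overline x\in\R^n\})$, where $\theta$ is the argument of $\theta_0$.

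The main obstacle is the homological input: showing that the null‑homologous hypothesis really does force $\mu$ to generate an integral direct summand of $H_1(X;\mathbb Z)$ (equivalently, that the Seifert surface can be taken to be an honest submanifold rather than only a rational chain), and then normalizing $g$ on $\partial X$ accordingly — in particular, for $n=1$, identifying the null‑homologous longitude with the boundary slope of $g^{-1}(\theta_0)$. Once this is in place, the remainder is the classical cut‑and‑paste construction together with routine bookkeeping to match the normalizations near $\psi(\R^n)$ and near infinity that are built into Definition \ref{defSf}; in the higher‑dimensional case, where the homological step is essentially automatic, this bookkeeping is the only real task.
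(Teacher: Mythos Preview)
Your proof is correct and follows essentially the same classical approach as the paper: trivialize the normal bundle (with the same self-intersection argument for $n=2$), produce an $\s^1$-valued map on the knot exterior that restricts to the angular projection on the boundary of the tube, and take the preimage of a regular value. The only cosmetic difference is that the paper packages the second step as an extension problem solved via Steenrod obstruction theory (the obstruction being $[\ell(\Theta_1)]\in H^2(E,\partial E;\pi_1(\s^1))\cong H_n(E;\mathbb Z)$), whereas you produce the map directly from a class in $H^1(X;\mathbb Z)\cong[X,\s^1]$ dual to the meridian and then homotope on the boundary; these are equivalent formulations of the same construction.
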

\begin{proof}

There exists a bundle isomorphism from $\mathbb D^2 \times\mathbb D^n$ to a tubular
neighborhood of $\psi(\mathbb D^n)$
that reads $(d,b) \mapsto (f(b)(d),b)$ on $\mathbb D^2 \times \s^{n-1} (\subset
\nbdinf \cap N^0)$ for some map $f \colon \s^{n-1} \to SO(2)$.
The homotopy class of $f$ is obviously trivial when $n \neq 2$.
When $n=2$, since the homotopy class of $f$ is determined by the
self-intersection of the (null-homologous) knot $\psi(\R^2) \cup
\{\infty\}$ in $M=\spamb\cup\{\infty\}$, it is also trivial.
Therefore, there exists a diffeomorphism $\Theta_1$ from $N^0$ onto a
neighborhood $N$ of the image of $\psi$, which restricts to $\mathbb D^2
\times\mathbb  D^n$ as a bundle isomorphism as above, and which is the
identity map on $\nbdinf$.
$\Theta_1$ yields a smooth map $g\colon \partial N \rightarrow
\s^1$ and a longitude $\ell(\Theta_1) = g^{-1}(\{1\})$. 
According to
\cite[Theorem 34.2]{Steenrod} the map $g$ 
extends to $E=\overline{\spamb \setminus N}$
as soon as some obstruction in $H^2(E, \partial E ;
\pi_1(\s^1))= H_n(E;\mathbb Z)$, which 
is the class
$[\ell(\Theta_1)]\in H_n(E; \mathbb Z)$ in our case, vanishes.
When $n\neq1$, 
the group $H_n(E; \mathbb Z)$ is trivial, and when $n=1$, 
we can choose $\Theta_1$ such that $\ell(\Theta_1)$ is zero.\qedhere

\end{proof}

Let $\psi$ be a null-homologous long knot, let $\Sigma$ be a Seifert surface for $\psi$, and, for any $\dimd\in\und n$, let $b_\dimd$ denote the $\dimd$-th Betti number of $\Sigma$. Let $\Sigma^+$ denote a parallel surface obtained from $\Sigma$ by slightly pushing it in the positive normal direction. For any cycle $z$ in $\Sigma$, let $z^+$ denote the image of $z$ in the parallel surface $\Sigma^+$. 
\begin{df}\label{bases duales}
Let $\Sigma$ be a Seifert surface for a long knot $\psi$.
For any $\dimd\in\und n$, let $([\bb_i^\dimd])_{i\in\und{b_\dimd}}$ and $([\ba_i^\dimd])_{i\in\und{b_\dimd}}$ be two bases of $H_\dimd(\Sigma)$.
We say that the bases $\Ba=([\bb_i^\dimd])_{i,\dimd}$ and $\Bb=([\ba_i^\dimd])_{i,\dimd}$ of the reduced homology $\overline H_*(\Sigma)$ are \emph{dual} to each other if, for any $\dimd\in\und n$, and any $ (i , j) \in(\und{b_\dimd})^2$, $\langle [\bb_i^{\dimd}],[ \ba_j^{n+1-\dimd}]\rangle_{\Sigma}=\delta_{i,j}$, where $\langle\cdot,\cdot\rangle_{\Sigma}$ denotes the intersection pairing of $\Sigma$ and $\delta_{i,j}$ denotes the Kronecker delta.
Given such \emph{a pair $(\Ba,\Bb)$ of dual bases of $\overline{H}_*(\Sigma)$}, for any $d\in\und n$, define the \emph{Seifert matrices} \[V_\dimd^+(\Ba,\Bb) = (\mathrm{lk}(\bb_i^\dimd, (\ba_j^{n+1-\dimd})^+))_{1\leq i,j\leq b_\dimd}\] \[V_\dimd^-(\Ba,\Bb)= (\mathrm{lk}((\bb_i^\dimd)^+, \ba_j^{n+1-\dimd}))_{1\leq i,j\leq b_\dimd}\] and set 
$\Lk_{\degk,\nu}(\Ba, \Bb) = \frac1{\degk}\sum\limits_{\dimd\in\und n}
(-1)^{d+1}
\Tr(V_\dimd^+(\Ba, \Bb)^{\nu}V_\dimd^-(\Ba, \Bb)^{\degk-\nu})\) for any $k\geq 2$ and any $\nu\in\{0,\ldots, \degk\}$.

The duality of the bases implies that $V_\dimd^-(\Ba, \Bb)-V_\dimd^+(\Ba, \Bb) = I_{b_d(\Sigma)}$.
\end{df}

Note that pairs of dual bases as above exist thanks to Poincaré duality.  
The following immediate lemma describes how the $\Lk_{\degk,\nu}$ behave under connected sum.

\begin{lm}\label{sigmadd}
Let $\Sigma_1$ and $\Sigma_2$ be Seifert surfaces for two long knots $\psi_1$ and $\psi_2$. For $i\in\{1,2\}$, let $(\Ba_i,\Bb_i)$ be a pair of dual bases of $\overline H_*(\Sigma_i)$ as in Definition \ref{bases duales}.

There exists a natural Seifert surface $\Sigma_{1,2}$ for the connected sum $\psi_1\sharp \psi_2$ and a pair $(\Ba_{1,2},\Bb_{1,2})$ of dual bases of $\overline H_*(\Sigma_{1,2})$ such that, for any $\dimd\in\und n$, \[V_\dimd^\pm(\Ba_{1,2}, \Bb_{1,2}) = \begin{pmatrix} V_\dimd^\pm(\Ba_1, \Bb_1) & 0 \\ 0 & V_\dimd^\pm(\Ba_2, \Bb_2) \end{pmatrix}.\]
In particular, for any $k\geq 2$ and any $\nu\in\{0,\ldots, k\}$, \[\Lk_{\degk,\nu}(\Ba_{1,2}, \Bb_{1,2}) = \Lk_{\degk,\nu}(\Ba_{1}, \Bb_{1})+ \Lk_{\degk,\nu}(\Ba_{2}, \Bb_{2}).\]
\end{lm}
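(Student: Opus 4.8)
The plan is to build $\Sigma_{1,2}$ by pasting the three standard pieces supplied by the connected sum construction recalled above. Recall that $\punct{M_1}\sharp\punct{M_2}=\voisinageinfinideux\cup\iota_1(B(M_1))\cup\iota_2(B(M_2))$ and that, in $\voisinageinfinideux$, the knot $\psi_1\sharp\psi_2$ is the straight line $\overline x\mapsto(0,0,\overline x)$. By Definition \ref{defSf}, each $\Sigma_i$ has near $\psi_i(\mathbb D^n)$ the normalized form $\Theta_i(\{(r\cos\theta_i,r\sin\theta_i,\overline x)\})$; after an ambient rotation in the first two coordinates we may assume $\theta_1=\theta_2=\theta$. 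Then I would let $\Sigma_{1,2}$ be the union, inside $\iota_i(B(M_i))$, of the ($\phi_i$-rescaled) copy of $\Sigma_i$ together with, inside $\voisinageinfinideux$, the half-hyperplane piece $\{(r\cos\theta,r\sin\theta,\overline x)\mid r\ge 0,\ \overline x\in\R^n\}$ intersected with $B(\punct{M_1}\sharp\punct{M_2})$. Since the collars match precisely by the defining condition of Definition \ref{defSf}, this is a smooth oriented connected $(n+1)$-submanifold with boundary $(\psi_1\sharp\psi_2)(\R^n)$, and it again satisfies Definition \ref{defSf}. The one genuinely delicate point is here: one must follow the coorientation through the gluing so that a single positive normal pushoff $\Sigma_{1,2}^+$ restricts, on each piece $\iota_i(B(M_i))\cap\Sigma_{1,2}$, to the rescaled copy of $\Sigma_i^+$; this pins down the relative choice of $\theta$ and of the two pushoff directions, and it is the main obstacle.

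Granting this, $\Sigma_{1,2}$ is obtained from $\Sigma_1$ and $\Sigma_2$ by gluing them along the piece inside $\voisinageinfinideux$, which deformation retracts onto a disk of $\partial\Sigma_i$; a Mayer--Vietoris argument then provides a canonical isomorphism $\overline H_*(\Sigma_{1,2})\cong\overline H_*(\Sigma_1)\oplus\overline H_*(\Sigma_2)$ induced by the two inclusions. I take $\Ba_{1,2}$ (resp.\ $\Bb_{1,2}$) to be the union of the images of $\Ba_1$ and $\Ba_2$ (resp.\ $\Bb_1$ and $\Bb_2$). Every class of $\overline H_\dimd(\Sigma_i)$ is represented by a cycle supported in $\iota_i(B(M_i))\cap\Sigma_{1,2}$, so a class from $\Sigma_1$ and a class from $\Sigma_2$ are represented by disjoint cycles, hence have zero intersection in $\Sigma_{1,2}$, while $\langle\cdot,\cdot\rangle_{\Sigma_{1,2}}$ restricted to the classes of a single $\Sigma_i$ is $\langle\cdot,\cdot\rangle_{\Sigma_i}$. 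Thus $(\Ba_{1,2},\Bb_{1,2})$ is again a pair of dual bases in the sense of Definition \ref{bases duales}.

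For the Seifert matrices, I would invoke the locality of linking numbers. A basis cycle $\bb_i^\dimd$ of $\Sigma_i$ ($\dimd\in\und n$) bounds a rational chain $W$ inside $B(M_i)$, since $B(M_i)$ has the rational homology of a point ($M_i$ being a $\mathbb K$-homology sphere); moreover $B(M_i)$ sits inside both $\punct{M_i}$ and $\punct{M_1}\sharp\punct{M_2}$ by an orientation-preserving embedding. Hence the intersection count $\langle W,(\ba_j^{n+1-\dimd})^+\rangle$, with $(\ba_j^{n+1-\dimd})^+\subset\Sigma_i^+$ also supported in $B(M_i)$, takes place entirely inside $B(M_i)$ and is the same computed in $\punct{M_i}$ or in $\punct{M_1}\sharp\punct{M_2}$; this yields the diagonal blocks $V_\dimd^\pm(\Ba_i,\Bb_i)$. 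The off-diagonal blocks vanish because a Seifert chain for a cycle of $\Sigma_1$ may be taken in $B(M_1)$, disjoint from any cycle of $\Sigma_2$ (pushed off or not), which lies in $B(M_2)$. This establishes the block-diagonal formula for $V_\dimd^\pm(\Ba_{1,2},\Bb_{1,2})$. Finally, powers and products of block-diagonal matrices are block-diagonal and the trace is additive over blocks, so for all $\dimd$ and $\nu$
\[\Tr\bigl(V_\dimd^+(\Ba_{1,2},\Bb_{1,2})^{\nu}V_\dimd^-(\Ba_{1,2},\Bb_{1,2})^{\degk-\nu}\bigr)=\Tr\bigl(V_\dimd^+(\Ba_1,\Bb_1)^{\nu}V_\dimd^-(\Ba_1,\Bb_1)^{\degk-\nu}\bigr)+\Tr\bigl(V_\dimd^+(\Ba_2,\Bb_2)^{\nu}V_\dimd^-(\Ba_2,\Bb_2)^{\degk-\nu}\bigr),\]
and summing over $\dimd\in\und n$ against the signs $(-1)^{\dimd+1}$ and dividing by $\degk$ gives $\Lk_{\degk,\nu}(\Ba_{1,2},\Bb_{1,2})=\Lk_{\degk,\nu}(\Ba_1,\Bb_1)+\Lk_{\degk,\nu}(\Ba_2,\Bb_2)$, as asserted.
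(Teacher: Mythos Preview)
Your argument is correct and is precisely the standard boundary-connected-sum construction the paper has in mind; note that the paper states this lemma without proof, calling it ``immediate'' just before the statement. Your write-up simply spells out the details (the Mayer--Vietoris splitting of $\overline H_*(\Sigma_{1,2})$, the locality of the linking numbers inside each $B(M_i)$, and the trace-additivity for block-diagonal matrices) that the author takes for granted.
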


\subsection{A formula for \texorpdfstring{$\Z[]$}{Zk} in terms of linking numbers}

In Section \ref{Section2}, we prove the following theorem, where $k$ is an integer with $k\geq2$.
\begin{theo}\label{th0}
Recall that $\mathbb K$ denotes $\mathbb Z$ if $n\geq2$, and $\mathbb Q$ if $n=1$.
 For any null-homologous rectifiable knot $\psi$ in an asymptotic $\mathbb K$-homology $\R^{n+2}$, any Seifert surface $\Sigma$ for $\psi$, and any pair $(\Ba,\Bb)$ of dual bases of $\overline{H}_*(\Sigma)$, \[\Z[](\psi)=\sum\limits_{\nu=1}^{\degk-1} \poids_{\degk,\nu}\Lk_{\degk,\nu}(\Ba,\Bb),\]
where $\poids_{\degk,\nu} = \frac1{(\degk-1)!} \Card(\{\sigma\in\Sym_{\degk-1} \mid \Card\{i\in \und{\degk-2} \mid \sigma(i)<\sigma(i+1)\} = \nu-1
\}),$ and where $\Lk_{\degk,\nu}(\Ba,\Bb)$ is introduced in Definition \ref{bases duales}.
\end{theo}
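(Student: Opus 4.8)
The plan is to compute $Z_k(\psi)$ directly from its discrete definition, using a single $k$-family of propagators tailored to the Seifert surface. Fix the data $\psi$, $\Sigma$, $(\Ba,\Bb)$ of the statement. Section \ref{Section 3} produces a $k$-family $F_*^{\Sigma}=(A_i,B_i)_{i\in\indices}$, in general position for $\psi$, in which the internal propagators are generic constant ones, $A_i=\frac{(-1)^{n-1}}2\overline{\{(x,x+t\,x_{A_i})\mid x\in\R^n,\ t\in\R^*\}}$ with $x_{A_i}\in\s^{n-1}$ generic, while each external propagator $B_i$ is built from $\Sigma$, a generic parallel copy of $\Sigma$, and a generic direction $x_{B_i}\in\s^{n+1}$, in such a way that the bulk of $\mathrm{Supp}(B_i)$ sits over $\overline{\Sigma\times\Sigma}\subset\configM$ and the two ends $\pm x_{B_i}$ of $\partial B_i$ single out the two sides of $\Sigma$. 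By Lemma \ref{lk} such a $B_i$ records linking numbers, $\langle X\times Y,B_i\rangle=(-1)^{\dimd+1}\mathrm{lk}(X,Y)$ for disjoint cycles $X^\dimd,Y^{n+1-\dimd}$, which is what will make the Seifert entries $\mathrm{lk}(\bb_i^{\dimd},(\ba_j^{n+1-\dimd})^{\pm})$ of $V_\dimd^{\pm}$ appear. Since $Z_k$ is independent of the family of propagators (Theorem \ref{th*}, or Theorem \ref{thu*} for even $n$ — in the even case the argument below actually computes $Z_k^{F_*^{\Sigma}}(\psi)-Z_k^{F_*^0}(\psi_0)$ for the analogous family $F_*^0$ of the trivial knot bounding a disk, which equals $Z_k(\psi)$ because the disk has trivial reduced homology), it suffices to evaluate $Z_k^{F_*^{\Sigma}}(\psi)=\frac1{(2k)!}\sum_{(\Gamma,\sigma)\in\graphesnum}I_{F_*^{\Sigma}}(\Gamma,\sigma,\psi)$.

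\textbf{Reduction to the diagram $\Gamma_\degk$.} Since types $2$, $4$ and $5$ of Definition \ref{Def-BCR} all involve an internal edge, the only degree $k$ BCR diagram with no internal edge is the diagram $\Gamma_\degk$ of Lemma \ref{orex}, and the next step is to show that $I_{F_*^{\Sigma}}(\Gamma,\sigma,\psi)=0$ for every $\Gamma$ carrying an internal edge and every numbering $\sigma$: using the explicit form of the $B_i$ near $\Sigma$ and the genericity of the $x_{A_i}$, an internal edge $g=(v\to w)$ imposes the constraint $c_i(w)-c_i(v)\in\R_{>0}\,x_{A_{\sigma(g)}}$ on the (knotted) internal vertices that is incompatible with the constraints imposed at $v$ and $w$ by the external propagators through $\Sigma$, so that $\bigcap_{e}p_e^{-1}(X_{e,\sigma})=\emptyset$. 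Hence, using that $\mathrm{Aut}(\Gamma_\degk)\cong\mathbb Z/k$ acts freely on the numberings of $\Gamma_\degk$ with $I_{F_*^{\Sigma}}(\Gamma_\degk,\cdot,\psi)$ constant on the orbits,
\[Z_k^{F_*^{\Sigma}}(\psi)=\frac1{(2k)!}\sum_{(\Gamma_\degk,\sigma)\in\graphesnum}I_{F_*^{\Sigma}}(\Gamma_\degk,\sigma,\psi)=\frac1{k\,(2k)!}\sum_{\sigma}I_{F_*^{\Sigma}}(\Gamma_\degk,\sigma,\psi),\]
the last sum running over all $(2k)!$ numberings of $\Gamma_\degk$.

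\textbf{Evaluating $\Gamma_\degk$ and extracting the Eulerian numbers.} For a fixed numbering $\sigma$ one must describe the finitely many intersection points of $\bigcap_{e}p_e^{-1}(X_{e,\sigma})$ in the configuration space of $\Gamma_\degk$. The $k$ legs $\ell_j$ pin each external vertex $w_j$ onto a basis cycle of $\Sigma$; running once around $w_1\xrightarrow{f_1}\cdots\xrightarrow{f_\degk}w_1$, the cycle edges $f_j$ with the $\Sigma$-adapted propagators make the configuration run through a cyclic word in the basis cycles, and, for each fixed $\dimd\in\und n$, the signed count of such configurations is a diagonal entry of a product of $k$ matrices, each equal to $V_\dimd^{+}$ or $V_\dimd^{-}$ according to which of the two ends $\pm x_{B_{\sigma(f_j)}}$ is active at $f_j$ — and this side is dictated by whether $\sigma(f_j)<\sigma(f_{j+1})$ or $\sigma(f_j)>\sigma(f_{j+1})$, i.e.\ by the cyclic ascents and descents of the sequence $\sigma(f_1),\dots,\sigma(f_\degk)$. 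Summing over intersection points gives $I_{F_*^{\Sigma}}(\Gamma_\degk,\sigma,\psi)=(\text{sign})\sum_{\dimd=1}^{n}(-1)^{\dimd+1}\Tr(\cdots)$ of such matrix products; then the relation $V_\dimd^{-}-V_\dimd^{+}=\mathrm{I}$ of Definition \ref{bases duales} together with cyclicity of the trace collapses the products to the normal form $(V_\dimd^{+})^{\nu}(V_\dimd^{-})^{\degk-\nu}$, and, after using the $\mathbb Z/k$-symmetry to normalise the position of the largest cycle-edge label, the number of numberings $\sigma$ producing a given $\nu$ is exactly $(k-1)!$ times $\poids_{\degk,\nu}$. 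Reassembling this count with $\epsilon(\Gamma_\degk)=(-1)^\degk$, the orientation $(-1)^{\degk+n(\degk+1)}$ of the configuration space of $\Gamma_\degk$ from Lemma \ref{orex}, the $(-1)^{(n-1)\degk}$ of the general-position convention, and the sign of Lemma \ref{lk}, yields $Z_k^{F_*^{\Sigma}}(\psi)=\sum_{\nu=1}^{\degk-1}\poids_{\degk,\nu}\Lk_{\degk,\nu}(\Ba,\Bb)$, which is the claim.

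\textbf{Main obstacle.} The genuinely delicate part is the evaluation of $I_{F_*^{\Sigma}}(\Gamma_\degk,\sigma,\psi)$: pinning down exactly which configurations contribute and matching them with products of basis cycles of $\Sigma$ (this is where the full strength of the construction of Section \ref{Section 3} is used), tracking how the order of the propagator labels selects a side of $\Sigma$, and carrying all the signs — $\epsilon(\Gamma_\degk)$, the orientation of the configuration space, $(-1)^{(n-1)\degk}$, the linking-number sign $(-1)^{\dimd+1}$, and the degrees of the direction maps $G$ and $G_\tau$ — consistently through to the final identification of the numbering sum with the Eulerian count $\Card\{\sigma\in\Sym_{\degk-1}\mid\Card\{i\in\und{\degk-2}\mid\sigma(i)<\sigma(i+1)\}=\nu-1\}$ defining $\poids_{\degk,\nu}$. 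By comparison, the vanishing of the diagrams other than $\Gamma_\degk$ is routine once the adapted propagators of Section \ref{Section 3} are in hand.
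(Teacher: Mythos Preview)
Your overall architecture matches the paper's --- $\Sigma$-adapted external propagators, reduction to the single diagram $\Gamma_k$, traces of Seifert-matrix products, Eulerian count --- but two load-bearing steps are off.

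First, the comparison with the trivial knot is not an even-$n$ afterthought; it is essential in every parity. When one decomposes the class of $B_{\sigma(f_j)}$ restricted to $\Sigma_{\sigma(\ell_j)}\times\Sigma_{\sigma(\ell_{j^+})}$ in a K\"unneth basis, the middle-degree terms $1\le d\le n$ produce the desired linking numbers, but there are also residual terms in degrees $0$ and $n+1$ coming from the point class and the fundamental class of the closed-up surface (the $R^B_{i,\sigma}$ of Lemma \ref{decomp-prop} and the $\rho_1,\rho_3$ of Lemma \ref{formule1}). These residuals are not zero; the paper only shows that they are \emph{independent of the knot}, and kills them by subtracting the identical computation for $\psi_0$ with the family $F_*^0$ of Lemma \ref{th-prop} (the two families agree on $N_R\times N_R$ via $\Theta_2$, so all near-knot contributions cancel). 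A direct evaluation of $Z_k^{F_*^\Sigma}(\psi)$ alone, as you propose, leaves these terms uncontrolled.

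Second, the rule selecting $V_d^+$ versus $V_d^-$ at the cycle edge $f_j$ is not the comparison $\sigma(f_j)\lessgtr\sigma(f_{j+1})$. In the paper's Setting \ref{def-pert} one uses $4k$ parallel copies $\Sigma_1^+,\ldots,\Sigma_{2k}^+,\Sigma_1^-,\ldots,\Sigma_{2k}^-$ in that normal order; the leg $\ell_j$ forces $w_j$ onto $\Sigma_{\sigma(\ell_j)}^{\hat\epsilon(j)}$ for an extra binary choice $\hat\epsilon(j)\in\{\pm\}$ (summed over, whence the $2^k$), and the side at $f_j$ is decided by comparing the \emph{leg} data $\sigma_{\hat\epsilon}(\ell_j)=\sigma(\ell_j)+(1-\hat\epsilon(j))k$ with $\sigma_{\hat\epsilon}(\ell_{j^+})$ (Lemma \ref{contributionGamma1}). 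The Eulerian numbers then arise via the standardisation map $(\hat\epsilon,\sigma)\mapsto F_1(\hat\epsilon,\sigma)\in\mathfrak{S}_k$ of Lemma \ref{lambda10}, not from ascents of $(\sigma(f_1),\ldots,\sigma(f_k))$. (Your remark that $V_d^--V_d^+=I$ collapses mixed products is correct and is exactly what underlies Lemma \ref{defpoids}.) Finally, your reduction to $\Gamma_k$ via an ``internal-edge constraint incompatible with $\Sigma$'' is not the paper's mechanism and is too vague as stated: Lemmas \ref{transv1}--\ref{transv2} instead exploit that an external edge between two internal vertices forces a Gauss direction in $\{0\}^2\times\s^{n-1}$, contradicting the admissible-propagator direction $\pm(\cos\theta_i,\sin\theta_i,\overline 0)\in\s^1\times\{0\}^n$, and then propagate this along chains of external vertices.
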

Note the symmetry $\lambda_{k, \nu} = \lambda_{k, k-\nu}$ for any $k$ and $\nu$.

Theorem \ref{th0} yields the following more general corollary. 

\begin{cor}\label{Zklemma}
For any null-homologous virtually rectifiable knot $\psi$ in an asymptotic $\mathbb K$-homology $\R^{n+2}$, any Seifert surface $\Sigma$ for $\psi$, and any pair $(\Ba,\Bb)$ of dual bases of $\overline{H}_*(\Sigma)$,\[\Z[](\psi)=\sum\limits_{\nu=1}^{\degk-1} \poids_{\degk,\nu}\Lk_{\degk,\nu}(\Ba,\Bb).\]
In particular, this formula holds for any null-homologous long knot when $n\equiv 1 \mod 4$, 
and for any long knot in an even-dimensional parallelizable asymptotic integral homology $\R^{n+2}$.
\end{cor}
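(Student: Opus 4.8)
The plan is to reduce to Theorem~\ref{th0} by forming connected sums with copies of $\psi$, exploiting that both sides of the desired identity are additive under connected sum.

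First I would use virtual rectifiability to fix an integer $r\geq1$ for which $\psi$ is $r$-rectifiable, so that $\psi^{(r)}=\psi\sharp\cdots\sharp\psi$ (with $r$ copies of $\psi$) is a rectifiable long knot; in particular the underlying $r$-fold connected sum $\spamb\sharp\cdots\sharp\spamb$ is a parallelizable asymptotic $\mathbb K$-homology $\R^{n+2}$. Iterating Lemma~\ref{sigmadd} along the decomposition $\psi^{(r)}=\psi^{(r-1)}\sharp\psi$, I obtain a Seifert surface $\Sigma^{(r)}$ for $\psi^{(r)}$ and a pair $(\Ba^{(r)},\Bb^{(r)})$ of dual bases of $\overline H_*(\Sigma^{(r)})$ whose Seifert matrices $V^{\pm}_{\dimd}(\Ba^{(r)},\Bb^{(r)})$ are block-diagonal with $r$ diagonal blocks, each equal to $V^{\pm}_{\dimd}(\Ba,\Bb)$; in particular $\psi^{(r)}$ is null-homologous, so Theorem~\ref{th0} applies to the triple $\bigl(\psi^{(r)},\Sigma^{(r)},(\Ba^{(r)},\Bb^{(r)})\bigr)$.

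Next I would evaluate both sides on $\psi^{(r)}$. Since powers of a block-diagonal matrix are block-diagonal and the trace is additive over blocks, the definition of $\Lk_{\degk,\nu}$ gives $\Lk_{\degk,\nu}(\Ba^{(r)},\Bb^{(r)})=r\,\Lk_{\degk,\nu}(\Ba,\Bb)$ for every $\nu$, which is precisely the additivity statement of Lemma~\ref{sigmadd} applied $r-1$ times. On the other hand, Theorem~\ref{conn-sum}, which by \cite[Prop 2.20]{article1} remains valid for the extended invariant of Definition~\ref{Zkext}, gives $\Z[](\psi^{(r)})=r\,\Z[](\psi)$. Plugging these into Theorem~\ref{th0} for $\psi^{(r)}$ yields
\[r\,\Z[](\psi)=\Z[](\psi^{(r)})=\sum_{\nu=1}^{\degk-1}\poids_{\degk,\nu}\Lk_{\degk,\nu}(\Ba^{(r)},\Bb^{(r)})=r\sum_{\nu=1}^{\degk-1}\poids_{\degk,\nu}\Lk_{\degk,\nu}(\Ba,\Bb),\]
and dividing by $r$ establishes the formula for the original data $(\psi,\Sigma,(\Ba,\Bb))$.

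For the last assertion, Lemma~\ref{threct} shows that every long knot in an asymptotic $\mathbb K$-homology $\R^{n+2}$ is $4$-rectifiable when $n\equiv1\mod4$, and that every long knot in a parallelizable asymptotic integral homology $\R^{n+2}$ is $2$-rectifiable when $n$ is even; in both cases the knot is virtually rectifiable, so the formula just proved applies (noting that in the even-dimensional case every such long knot is null-homologous, since $H_n$ of an integral homology $(n+2)$-sphere vanishes for $n\geq2$, cf.\ the proof of Lemma~\ref{lmS}). I do not expect a serious obstacle here: the entire content sits in Theorem~\ref{th0}, and the only point requiring care is the inductive bookkeeping of the connected-sum construction of $(\Sigma^{(r)},\Ba^{(r)},\Bb^{(r)})$ together with the already-established additivity of $\Z[]$ and of $\Lk_{\degk,\nu}$ under connected sum.
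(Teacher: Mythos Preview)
Your proof is correct and follows essentially the same approach as the paper: reduce to Theorem~\ref{th0} by passing to the rectifiable connected sum $\psi^{(r)}$, use the additivity of $\Z[]$ (Theorem~\ref{conn-sum}) and of $\Lk_{\degk,\nu}$ (Lemma~\ref{sigmadd}) to pull the factor $r$ out of both sides, and invoke Lemma~\ref{threct} for the final assertion. Your write-up is in fact slightly more detailed than the paper's, spelling out the block-diagonal structure and the null-homologous point in the even case.
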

\begin{proof}Let $\psi$ be a null-homologous virtually rectifiable knot, and let $r$ be an integer such that $\psi^{(r)}$ is rectifiable. 
Let $\Sigma$, $\Ba$ and $\Bb$ be as in the theorem.
From Theorem \ref{conn-sum}, we get that 
$\Z[](\psi^{(r)})= r\Z[](\psi)$. For any $\degk\geq2$ and $\nu\in\und{\degk-1}$, 
Lemma \ref{sigmadd} yields $\Lk_{\degk,\nu}(\Ba_r, \Bb_r)= r\Lk_{\degk,\nu}(\Ba, \Bb)$, 
where $(\Ba_r,\Bb_r)$ is the pair of dual bases of the reduced homology of a Seifert surface for $\psi^{(r)}$
induced by $(\Ba, \Bb)$. 
On the other hand, Theorem \ref{th0} implies that $\Z[](\psi^{(r)})=\sum\limits_{\nu=1}^{\degk-1} \poids_{\degk,\nu} \Lk_{\degk,\nu}(\Ba_r,\Bb_r)$.
This concludes the proof of Corollary \ref{Zklemma} for virtually rectifiable long knots. 
The last assertion follows from Lemma \ref{threct}.\end{proof}

\subsection{Alexander polynomials and Reidemeister torsion}

We use the following formula of \cite[p.542]{[Levine]} as a definition of Alexander polynomials.
\begin{theo}[Levine]\label{Levine-th}
The \emph{Alexander polynomials} of the Seifert surface $\Sigma$ are defined, for $\dimd\in\und n$, by the formula\footnote{With the notations of \cite[Theorem 1]{[Levine]}, our Alexander polynomial $\Delta_{\dimd, \Sigma}$ is the product $\prod\limits_{i\in\und{b_\dimd}}\lambda_i^\dimd$.} \[\Delta_{\dimd,\Sigma}(t) = 
\det\left(t^{\frac12}V_\dimd^-(\Ba, \Bb)- t^{-\frac12}V_\dimd^+(\Ba,\Bb)\right)\] and do not depend on the choice of the pair $(\Ba, \Bb)$ of dual bases of $\overline H_*(\Sigma)$.
Note that $\Delta_{\dimd,\Sigma}(1)=1$.

If $\Sigma$ and $\Sigma'$ are two Seifert surfaces for $\psi$, then there exists an integer $a\in \mathbb Z$ such that\footnote{This follows from the fact that the $(\lambda_i^\dimd)$ from \cite{[Levine]} are defined from the knot up to multiplication by a monomial.} $\Delta_{\dimd, \Sigma'} (t) = t^{\frac{a}2}. \Delta_{\dimd, \Sigma}(t) $. 
\end{theo}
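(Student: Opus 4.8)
The plan is to treat the three assertions separately: independence of the determinant $\Delta_{\dimd,\Sigma}(t)$ from the chosen pair of dual bases, the normalization $\Delta_{\dimd,\Sigma}(1)=1$, and the fact that changing the Seifert surface multiplies $\Delta_{\dimd,\Sigma}$ only by a factor $t^{a/2}$. The first two are elementary linear algebra; only the last genuinely uses Levine's work \cite{[Levine]}.

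\textbf{Independence of the dual bases and value at $1$.} Let $(\Ba,\Bb)$ and $(\Ba',\Bb')$ be two pairs of dual bases of $\overline H_*(\Sigma)$. Since dual bases exist, the intersection pairing on $\overline H_*(\Sigma)$ is nondegenerate, so $b_\dimd=b_{n+1-\dimd}$ for every $\dimd$ and the Seifert matrices are square. For each $\dimd$, let $P^\dimd\in GL_{b_\dimd}(\QQ)$ be the base-change matrix on $H_\dimd(\Sigma)$ carrying the $\Ba$-basis to the $\Ba'$-basis, and let $Q^\dimd\in GL_{b_\dimd}(\QQ)$ carry the $\Bb$-basis to the $\Bb'$-basis. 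Bilinearity of the linking number then gives $V_\dimd^{\pm}(\Ba',\Bb')=P^\dimd\,V_\dimd^{\pm}(\Ba,\Bb)\,(Q^{n+1-\dimd})^{T}$. Subtracting the two identities and using $V_\dimd^--V_\dimd^+=I$ (recalled at the end of Definition \ref{bases duales}) for both pairs yields $P^\dimd(Q^{n+1-\dimd})^{T}=I$, hence $\det P^\dimd\cdot\det Q^{n+1-\dimd}=1$, and therefore
\[
\det\left(t^{\frac12}V_\dimd^-(\Ba',\Bb')-t^{-\frac12}V_\dimd^+(\Ba',\Bb')\right)=\det P^\dimd\,\det Q^{n+1-\dimd}\,\det\left(t^{\frac12}V_\dimd^-(\Ba,\Bb)-t^{-\frac12}V_\dimd^+(\Ba,\Bb)\right),
\]
which equals $\Delta_{\dimd,\Sigma}(t)$ computed from $(\Ba,\Bb)$. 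Putting $t=1$ in this common value gives $\Delta_{\dimd,\Sigma}(1)=\det(V_\dimd^--V_\dimd^+)=\det I_{b_\dimd}=1$, the second assertion.

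\textbf{Independence of the Seifert surface.} Here I would complete $\psi$ to the null-homologous closed knot $\psi(\R^n)\cup\{\infty\}$ in the homology sphere $M=\spamb\cup\{\infty\}$ and invoke \cite[Theorem 1]{[Levine]}: the rational homology $H_\dimd$ of the infinite cyclic cover of the knot complement is a finitely generated torsion $\QQ[t^{\pm1}]$-module, its elementary-divisor polynomials $\lambda_i^\dimd(t)$ are knot invariants well-defined up to units of $\QQ[t^{\pm1}]$, and the Seifert-matrix expression $\det(t^{\frac12}V_\dimd^--t^{-\frac12}V_\dimd^+)$ coincides with $\prod_i\lambda_i^\dimd(t)$ up to such a unit, for any Seifert surface. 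Consequently, for two Seifert surfaces $\Sigma$ and $\Sigma'$ there is a unit $\mu\in\QQ[t^{\pm1/2}]$ --- that is, $\mu(t)=c\,t^{a/2}$ with $c\in\QQ^{*}$ and $a\in\mathbb Z$, a genuine half-integer exponent being possible because $b_\dimd$ need not be the same for $\Sigma$ and $\Sigma'$ --- such that $\Delta_{\dimd,\Sigma'}(t)=\mu(t)\,\Delta_{\dimd,\Sigma}(t)$. Evaluating at $t=1$ and using $\Delta_{\dimd,\Sigma}(1)=\Delta_{\dimd,\Sigma'}(1)=1$ forces $c=1$, which is the claimed equality.

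The main obstacle is this last step. Reducing it to Levine's theorem requires matching our deliberately $t^{\pm1/2}$-symmetrized determinant with a Seifert-matrix presentation of the order of the Alexander module --- keeping exact track of the normalizing power of $t^{1/2}$ --- and checking that Levine's arguments, written for knots in $\s^{n+2}$, carry over to a null-homologous knot in a homology sphere. If one prefers to bypass the module-theoretic input, the geometric alternative is that any two Seifert surfaces for $\psi$ are related by finitely many ambient surgeries, each modifying the Seifert matrices by an elementary enlargement that multiplies $\Delta_{\dimd,\Sigma}(t)$ by a monomial $\pm t^{m}$ with $m\in\frac12\mathbb Z$; combined with $\Delta_{\dimd,\Sigma}(1)=1$ this again pins the factor down to $t^{a/2}$, at the cost of longer but elementary bookkeeping.
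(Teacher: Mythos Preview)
The paper does not give its own proof of this theorem: it is stated as a result attributed to Levine, with the two footnotes pointing to \cite{[Levine]} for the identification $\Delta_{\dimd,\Sigma}=\prod_i\lambda_i^\dimd$ and for the surface-independence up to a monomial. So there is no argument in the paper to compare yours against beyond the citation.

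That said, your proof is sound. The base-independence and the value at $1$ are handled correctly: the key identity $V_\dimd^--V_\dimd^+=I$ forces $P^\dimd(Q^{n+1-\dimd})^T=I$, so the determinants of the base-change matrices cancel exactly, and $\Delta_{\dimd,\Sigma}(1)=\det I=1$ follows. For the Seifert-surface independence you do precisely what the paper does implicitly through its footnote---reduce to Levine's theorem on the Alexander module and use $\Delta_{\dimd,\Sigma}(1)=1$ to kill the scalar in the unit $c\,t^{a/2}$. Your alternative sketch via ambient surgeries and elementary enlargements is also the standard geometric route and would work here, though it is not what the paper invokes.
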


\begin{lm}\label{211}

For any Seifert surface $\Sigma$, and any $\dimd\in\und n$, $\Delta_{\dimd,\Sigma}(t^{-1})= \Delta_{n+1-\dimd,\Sigma}(t)$. 
%
\end{lm}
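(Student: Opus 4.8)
The plan is to deduce the identity from the independence of $\Delta_{\dimd,\Sigma}$ on the pair of dual bases (Theorem~\ref{Levine-th}), by computing $\Delta_{n+1-\dimd,\Sigma}$ from a cleverly chosen dual pair obtained by \emph{swapping the roles} of the two families $\Ba$ and $\Bb$. Fix a pair $(\Ba,\Bb)=(([\bb_i^\dimd])_{i,\dimd},([\ba_i^\dimd])_{i,\dimd})$ of dual bases of $\overline H_*(\Sigma)$. For each $\dimd\in\und n$ set $\epsilon_\dimd=(-1)^{\dimd(n+1-\dimd)}\in\{\pm1\}$, and let $\Ba'=(\epsilon_\dimd[\ba_i^\dimd])_{i,\dimd}$, $\Bb'=([\bb_i^\dimd])_{i,\dimd}$. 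The graded symmetry of the intersection pairing of $\Sigma$ gives $\langle[\ba_i^\dimd],[\bb_j^{n+1-\dimd}]\rangle_\Sigma=(-1)^{\dimd(n+1-\dimd)}\langle[\bb_j^{n+1-\dimd}],[\ba_i^\dimd]\rangle_\Sigma=(-1)^{\dimd(n+1-\dimd)}\delta_{ij}$, where the last equality is the duality of $(\Ba,\Bb)$ in degree $n+1-\dimd$; hence $\langle\epsilon_\dimd[\ba_i^\dimd],[\bb_j^{n+1-\dimd}]\rangle_\Sigma=\delta_{ij}$, so $(\Ba',\Bb')$ is again a pair of dual bases and $\Delta_{n+1-\dimd,\Sigma}(t)=\det\bigl(t^{1/2}V_{n+1-\dimd}^-(\Ba',\Bb')-t^{-1/2}V_{n+1-\dimd}^+(\Ba',\Bb')\bigr)$.

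Next I would rewrite $V_{n+1-\dimd}^\pm(\Ba',\Bb')$ in terms of the original Seifert matrices. By definition $V_{n+1-\dimd}^+(\Ba',\Bb')_{ij}=\lk(\epsilon_{n+1-\dimd}\ba_i^{n+1-\dimd},(\bb_j^\dimd)^+)$, and applying the symmetry $\lk(X^{n+1-\dimd},Y^\dimd)=(-1)^{n(n+1-\dimd)+1}\lk(Y^\dimd,X^{n+1-\dimd})$ of Definition~\ref{d27} turns this into $c\,\lk((\bb_j^\dimd)^+,\ba_i^{n+1-\dimd})=c\,V_\dimd^-(\Ba,\Bb)_{ji}$, where $c=\epsilon_{n+1-\dimd}(-1)^{n(n+1-\dimd)+1}$. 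The same manipulation applied to the pushed-off copy $(\ba_i^{n+1-\dimd})^+$ gives $V_{n+1-\dimd}^-(\Ba',\Bb')_{ij}=c\,V_\dimd^+(\Ba,\Bb)_{ji}$. In other words, $V_{n+1-\dimd}^\pm(\Ba',\Bb')=c\,\bigl(V_\dimd^\mp(\Ba,\Bb)\bigr)^T$. (Note that every linking number occurring here pairs a cycle of $\Sigma$ with a cycle of the disjoint parallel surface $\Sigma^+$, so all of them are well defined.)

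Finally I would substitute these expressions into the determinant and collect signs, using $\det(M^T)=\det M$, $\det(-M)=(-1)^{b_\dimd}\det M$ and $b_{n+1-\dimd}=b_\dimd$:
\[\Delta_{n+1-\dimd,\Sigma}(t)=\det\bigl(c\,(t^{1/2}V_\dimd^+(\Ba,\Bb)-t^{-1/2}V_\dimd^-(\Ba,\Bb))^T\bigr)=(-c)^{b_\dimd}\det\bigl(t^{-1/2}V_\dimd^-(\Ba,\Bb)-t^{1/2}V_\dimd^+(\Ba,\Bb)\bigr)=(-c)^{b_\dimd}\Delta_{\dimd,\Sigma}(t^{-1}).\]
It then remains to check that $(-c)^{b_\dimd}=1$: we have $-c=\epsilon_{n+1-\dimd}(-1)^{n(n+1-\dimd)}=(-1)^{(n+1-\dimd)(n+\dimd)}$, and since $(n+\dimd)+(n+1-\dimd)=2n+1$ is odd the two factors $n+\dimd$ and $n+1-\dimd$ have opposite parities, so their product is even and $-c=1$. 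Thus $\Delta_{n+1-\dimd,\Sigma}(t)=\Delta_{\dimd,\Sigma}(t^{-1})$, which is the asserted identity after exchanging $\dimd$ and $n+1-\dimd$. The only genuinely delicate point is the sign bookkeeping: choosing the $\epsilon_\dimd$ so that $(\Ba',\Bb')$ is actually a dual pair, and keeping track of signs through the transposition and through the substitution $t\mapsto t^{-1}$; each individual step is elementary.
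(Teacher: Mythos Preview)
Your proof is correct and follows essentially the same strategy as the paper's: both arguments produce a second dual pair by swapping the roles of $\Ba$ and $\Bb$ (with signs) and then use the symmetry of linking numbers to show $V_{n+1-\dimd}^\pm$ for the new pair is $\pm(V_\dimd^\mp)^T$. Your version is in fact a bit cleaner, since by working with two distinct pairs $(\Ba,\Bb)$ and $(\Ba',\Bb')$ you handle all $\dimd$ uniformly, whereas the paper builds a single special pair $(\Ba_0,\Bb_0)$ with $\bb_i^\dimd=\ba_i^\dimd$ in high degrees and must treat the middle dimension $\dimd=\tfrac{n+1}{2}$ separately.
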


\begin{proof}There exists a pair $(\Ba_0, \Bb_0)$ of dual bases $\Ba_0= ([\bb_i^\dimd])_{i,\dimd}$ and $\Bb_0 =([\ba_i^\dimd])_{i,\dimd}$ of $\overline H_*(\Sigma)$ such that $\bb_i^{\dimd} = \ba_i^{\dimd}$ for $\dimd>\frac{n+1}2$, and such that $\ba_i^{\dimd} = (-1)^{n\dimd}\bb_i^{\dimd}$ for $\dimd<\frac{n+1}2$. 
It follows that for $\dimd\neq \frac{n+1}2$, ${}^TV_\dimd^\pm(\Ba_0, \Bb_0) = - V_{n+1-\dimd}^\mp(\Ba_0, \Bb_0)$. This implies that $\Delta_{\dimd,\Sigma}(t^{-1})= \Delta_{n+1-\dimd,\Sigma}(t)$.
If $n$ is odd, let $\Ba'_0$ be the basis defined from $\Ba_0$ by replacing $\bb_i^{\frac{n+1}2}$ with $ (-1)^{\frac{n+1}2} \ba_i^{\frac{n+1}2}$ for any $i\in\{1,\ldots, b_{\frac{n+1}2}\}$, and let $\Bb'_0$ be the basis defined from $\Bb_0$ by replacing $\ba_i^{\frac{n+1}2}$ with $\bb_i^{\frac{n+1}2}$, so that $(\Ba'_0, \Bb'_0)$ is a pair of dual bases of $\overline{H}_*(\Sigma)$.
We have ${}^TV_{\frac{n+1}2}^\pm(\Ba_0, \Bb_0) =-V_{\frac{n+1}2}^\mp(\Ba'_0, \Bb'_0)$, hence $\Delta_{\frac{n+1}2, \Sigma}(t^{-1}) = \Delta_{\frac{n+1}2, \Sigma}(t)$. The first point of the lemma follows.\qedhere
%
%
\end{proof}

The same method allows us to derive the following lemma, which will be used in Section \ref{Section2}.
Note that this lemma and Corollary \ref{Zklemma} are consistent with the forementioned property 
that $Z_k= 0$  when $k\equiv n \mod 2$.
\begin{lm}\label{symkn}
With the notations of Definition \ref{bases duales}, if $k \equiv n \mod 2$, then 
$\sum\limits_{\nu=1}^{\degk-1} \poids_{\degk,\nu}\Lk_{\degk,\nu}(\Ba,\Bb)=0.$
\end{lm}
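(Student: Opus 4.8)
The plan is to reduce the statement to a single antisymmetry relation, namely
$\Lk_{k,\nu}(\Ba,\Bb) = -\Lk_{k,k-\nu}(\Ba,\Bb)$ whenever $k\equiv n\bmod 2$, and then to combine it with the symmetry $\poids_{k,\nu}=\poids_{k,k-\nu}$ observed after Theorem~\ref{th0}: in $\sum_{\nu=1}^{k-1}\poids_{k,\nu}\Lk_{k,\nu}(\Ba,\Bb)$ the substitution $\nu\mapsto k-\nu$ preserves the summation range and, using both relations, sends the sum to its own opposite, hence to $0$.

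To prove the antisymmetry I would first observe that $\Lk_{k,\nu}(\Ba,\Bb)$ does not depend on the chosen pair of dual bases. A basis change acting on $\Ba$ in degree $\dimd$ by an invertible matrix $P_\dimd$ is forced, by the duality condition of Definition~\ref{bases duales}, to act on $\Bb$ in degree $n+1-\dimd$ by $({}^TP_\dimd)^{-1}$, so that $V_\dimd^\pm(\Ba,\Bb)$ gets replaced by ${}^TP_\dimd\,V_\dimd^\pm(\Ba,\Bb)\,({}^TP_\dimd)^{-1}$. Since $V_\dimd^+$ and $V_\dimd^-$ are conjugated by the same matrix, each trace $\Tr\big((V_\dimd^+)^\nu(V_\dimd^-)^{k-\nu}\big)$ is unchanged, hence so is $\Lk_{k,\nu}$. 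It is therefore enough to compute with the distinguished pair $(\Ba_0,\Bb_0)$ of dual bases built in the proof of Lemma~\ref{211}, for which ${}^TV_\dimd^\pm(\Ba_0,\Bb_0)=-V_{n+1-\dimd}^\mp(\Ba_0,\Bb_0)$ when $\dimd\neq\frac{n+1}2$, and ${}^TV_{\frac{n+1}2}^\pm(\Ba_0,\Bb_0)=-V_{\frac{n+1}2}^\mp(\Ba_0',\Bb_0')$ for the companion pair $(\Ba_0',\Bb_0')$ of that proof, which differs from $(\Ba_0,\Bb_0)$ only in the middle degree (a degree present only when $n$ is odd).

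Next, using invariance of the trace under transposition, I would transpose the product inside $\Tr\big((V_\dimd^+)^\nu(V_\dimd^-)^{k-\nu}\big)$ and insert the relations above to get $\Tr\big((V_\dimd^+)^\nu(V_\dimd^-)^{k-\nu}\big)=(-1)^{k}\Tr\big((V_{n+1-\dimd}^+)^{k-\nu}(V_{n+1-\dimd}^-)^\nu\big)$ for every $\dimd\in\und n$; for $\dimd=\frac{n+1}2$ one passes from $(\Ba_0',\Bb_0')$ back to $(\Ba_0,\Bb_0)$ using the basis-independence just proved. Multiplying by $(-1)^{\dimd+1}$, summing over $\dimd$, re-indexing by $e=n+1-\dimd$ and using $(-1)^{\dimd+1}=(-1)^{n-e}$, this gives $k\,\Lk_{k,\nu}= -(-1)^{k+n}\,k\,\Lk_{k,k-\nu}$. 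Since $k\equiv n\bmod 2$ this is exactly the antisymmetry $\Lk_{k,\nu}=-\Lk_{k,k-\nu}$, and the lemma then follows from the elementary manipulation of the first paragraph.

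The only point requiring some care is the middle degree $\dimd=\frac{n+1}2$, which occurs precisely when $n$, and hence $k$, is odd: there the identity furnished by Lemma~\ref{211} mixes two different pairs of dual bases, so one genuinely needs the basis-independence of $\Lk_{k,\nu}$ to close the argument; everything else is sign bookkeeping.
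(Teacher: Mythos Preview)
Your proof is correct and follows essentially the same route as the paper: use the special dual bases $(\Ba_0,\Bb_0)$ (and $(\Ba_0',\Bb_0')$ in the middle degree) from the proof of Lemma~\ref{211} to obtain the relation $\Lk_{k,k-\nu}=(-1)^{k+n+1}\Lk_{k,\nu}$, and then combine with $\poids_{k,\nu}=\poids_{k,k-\nu}$. Your explicit treatment of the basis-independence of $\Lk_{k,\nu}$, which the paper leaves implicit, is exactly what is needed to reconcile the two pairs of bases in the middle degree.
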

\begin{proof}
With the notations of the proof of Lemma \ref{211}, we have $\mathcal L_{k, k-\nu}(\Ba_0, \Bb_0) = (-1)^{k+n+1}\mathcal L_{k, \nu}(\Ba_0, \Bb_0)$, for any $\nu\in \und{k-1}$ when $k\neq \frac{n+1}2$,
and $\mathcal L_{k, k-\nu}(\Ba_0, \Bb_0) = (-1)^{k+n+1}\mathcal L_{k, \nu}(\Ba'_0, \Bb'_0)$, for any $\nu\in \und{k-1}$ when $k=\frac{n+1}2$. 
Since $\lambda_{k,\nu} = \lambda_{k, k-\nu}$, the lemma follows.
\end{proof}

\subsection{The Reidemeister torsion in terms of BCR invariants}

We use \cite[Theorem p.131]{[Milnorcyc]} to compute the Reidemeister torsion of null-homologous knots (i.e. the Reidemeister torsion of the knot complement) as follows. 

\begin{df}\label{Reidemeister}
The \emph{Reidemeister torsion} $\mathcal T_\Sigma(t)$ of a null-homologous long knot $\psi$ is defined\footnote{Here, we normalize the Reidemeister torsion without taking into account the degree $0$ part, which is independent of the knot. This normalization gives $\mathcal T_{\psi_0}(t) = 1$ for the trivial knot. 
For $n=1$, $\mathcal T_\psi(t)$ is the Alexander polynomial. This is not the usual normalization of the Reidemeister torsion.}
as follows. We set \[\mathcal T_\Sigma(t)= \prod\limits_{\dimd\in\und n}  \left(\Delta_{\dimd,\Sigma}(t)\right)^{(-1)^{\dimd+1}}\in \mathbb Q(t),\] where $\Sigma$ is a Seifert surface for $\psi$. We have $\mathcal T_\Sigma(1)=1$ and $\mathcal T_\Sigma(t^{-1})=(\mathcal T_\Sigma(t))^{(-1)^{n+1}}$, so that the torsion does not depend on the surface $\Sigma$ if $n$ is odd.
If $n$ is odd, we set $\torsion(t) = \mathcal T_\Sigma(t)$ for any Seifert surface, and
if $n$ is even, we let $\torsion(t)$ be $\mathcal T_\Sigma(t). t^{- \mathcal T_\Sigma'(1)}$, so 
that $\torsion'(1)=0$ and $\torsion(t)$ does not depend on $\Sigma$.
\end{df}
In Section \ref{Section6}, we 
rephrase Corollary \ref{Zklemma} as follows, using some computations on formal series associated to 
the coefficients $(\poids_{k, \nu})$.

\begin{theo}\label{Reidth}Recall that $\mathbb K$ denotes $\mathbb Z$ if $n\geq2$, and $\mathbb Q$ if $n=1$.
Let $\psi$ be a null-homologous long knot of an asymptotic $\mathbb K$-homology $\R^{n+2}$.
If $\psi$ is virtually rectifiable (and in particular for any $\psi$ if $n\equiv 1\mod 4$, 
or if $n$ is even, and if $\spamb$ is parallelizable), we have the following equality in $\QQ[[h]]$ : \[
\sum\limits_{\degk\geq 2}
Z_{\degk}(\psi)h^{\degk} = 
(-1)^{n}
\Ln\left(\torsion(e^{h})\right).
\]
\end{theo}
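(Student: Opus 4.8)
Since $\psi$ is null-homologous it admits a Seifert surface $\Sigma$ (Lemma \ref{lmS}), and since it is also virtually rectifiable, Corollary \ref{Zklemma} gives $Z_{\degk}(\psi)=\sum_{\nu=1}^{\degk-1}\poids_{\degk,\nu}\Lk_{\degk,\nu}(\Ba,\Bb)$ for a pair $(\Ba,\Bb)$ of dual bases. Expanding $\Lk_{\degk,\nu}$, this reads $Z_{\degk}(\psi)=\sum_{\dimd\in\und n}(-1)^{\dimd+1}\frac1{\degk}\sum_{\nu=1}^{\degk-1}\poids_{\degk,\nu}\Tr\!\big((V_\dimd^+)^{\nu}(V_\dimd^-)^{\degk-\nu}\big)$ with $V_\dimd^\pm=V_\dimd^\pm(\Ba,\Bb)$. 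The relation $V_\dimd^-=V_\dimd^++\Id$ from Definition \ref{bases duales} makes $V_\dimd^+$ and $V_\dimd^-$ commute, so $\Tr\!\big((V_\dimd^+)^{\nu}(V_\dimd^-)^{\degk-\nu}\big)=\sum_\alpha \alpha^\nu(\alpha+1)^{\degk-\nu}$, the sum running over the eigenvalues $\alpha$ of $V_\dimd^+$ with multiplicity. Hence the statement reduces to a closed-form evaluation of the per-eigenvalue generating function $G(\alpha,h)=\sum_{\degk\geq2}\frac{h^{\degk}}{\degk}\sum_{\nu=1}^{\degk-1}\poids_{\degk,\nu}\alpha^\nu(\alpha+1)^{\degk-\nu}$, followed by a sum over eigenvalues and over $\dimd$.

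\textbf{The main computation} is the identity $G(\alpha,h)=-\Ln\!\big((\alpha+1)-\alpha e^{h}\big)-\alpha h$ in $\QQ(\alpha)[[h]]$. One uses that $\poids_{\degk,\nu}=\frac1{(\degk-1)!}A(\degk-1,\nu-1)$ is a normalized Eulerian number — the set counted in the definition of $\poids_{\degk,\nu}$ is exactly the ascent set of a permutation of $\degk-1$ letters — together with the classical identity $\sum_{\ell\geq0}\ell^{m}z^{\ell}=\frac{z\,A_m(z)}{(1-z)^{m+1}}$ for the Eulerian polynomial $A_m$. Applying it with $z=\frac{\alpha}{\alpha+1}$ (so $1-z=\frac1{\alpha+1}$) yields, after a short manipulation, $\sum_{\nu=1}^{\degk-1}\poids_{\degk,\nu}\alpha^\nu(\alpha+1)^{\degk-\nu}=\frac1{(\degk-1)!}\sum_{\ell\geq1}\ell^{\degk-1}\big(\tfrac{\alpha}{\alpha+1}\big)^{\ell}$. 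Plugging this into $G$, exchanging the summations over $\degk$ and $\ell$, and recognizing the exponential series gives $G(\alpha,h)=\sum_{\ell\geq1}\frac1{\ell}\big(\tfrac{\alpha}{\alpha+1}\big)^{\ell}\big(e^{h\ell}-1-h\ell\big)=-\Ln\!\big(1-\tfrac{\alpha}{\alpha+1}e^{h}\big)+\Ln\!\big(1-\tfrac{\alpha}{\alpha+1}\big)-\alpha h$, which collapses to the claimed formula. The exchange of sums and the series identities are carried out first for real $\alpha>0$, where everything converges, and then extended by the uniqueness of rational functions.

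\textbf{Recognizing the torsion.} As $\alpha$ ranges over the eigenvalues of $V_\dimd^+$, the numbers $(\alpha+1)-\alpha e^{h}$ are the eigenvalues of $V_\dimd^--e^{h}V_\dimd^+$, so $\sum_\alpha\Ln\!\big((\alpha+1)-\alpha e^{h}\big)=\Ln\det\!\big(V_\dimd^--e^{h}V_\dimd^+\big)=\tfrac{b_\dimd}{2}h+\Ln\Delta_{\dimd,\Sigma}(e^{-h})$, the last equality by Levine's formula (Theorem \ref{Levine-th}) applied to $t=e^{-h}$ after pulling $e^{h/2}$ out of each eigenvalue. Summing $\sum_\alpha G(\alpha,h)$ over the eigenvalues and then over $\dimd$ with weights $(-1)^{\dimd+1}$, and using the functional equation $\mathcal T_\Sigma(t^{-1})=\mathcal T_\Sigma(t)^{(-1)^{n+1}}$ recalled in Definition \ref{Reidemeister} (a consequence of Lemma \ref{211}), I obtain $\sum_{\degk\geq2}Z_{\degk}(\psi)h^{\degk}=(-1)^{n}\Ln\!\big(\mathcal T_\Sigma(e^{h})\big)+ch$ for some scalar $c$ collecting all the linear terms. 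The left-hand side has no term of degree $\leq1$ and $\mathcal T_\Sigma(1)=1$, so comparing degree-$1$ coefficients forces $c=-(-1)^{n}\mathcal T_\Sigma'(1)$; hence $\sum_{\degk\geq2}Z_{\degk}(\psi)h^{\degk}=(-1)^{n}\big(\Ln(\mathcal T_\Sigma(e^{h}))-\mathcal T_\Sigma'(1)h\big)=(-1)^{n}\Ln\!\big(\torsion(e^{h})\big)$ by the definition of the normalized torsion $\torsion$ (equal to $\mathcal T_\Sigma$, with $\mathcal T_\Sigma'(1)=0$, when $n$ is odd, and to $\mathcal T_\Sigma(t)\,t^{-\mathcal T_\Sigma'(1)}$ when $n$ is even).

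\textbf{Where the difficulty lies.} The crux is the generating-function identity for $G(\alpha,h)$ in the second step; everything else is bookkeeping with eigenvalues and the already-established symmetry of the Alexander polynomials. Inside that step the two delicate points are (i) pinning down $\poids_{\degk,\nu}$ as a normalized Eulerian number and invoking the correct form of the Worpitzky--Carlitz identity, and (ii) justifying the interchange of the sums over $\degk$ and $\ell$, which is why one first works in a region of convergence and then extends the resulting identity of rational functions of $\alpha$ to formal power series in $h$. One must also track carefully the sign conventions entering Corollary \ref{Zklemma} and the precise normalization $V_\dimd^-(\Ba,\Bb)-V_\dimd^+(\Ba,\Bb)=\Id$.
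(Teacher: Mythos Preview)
Your argument is correct and reaches the same endpoint as the paper, but the route is genuinely different. The paper (Section \ref{Section6}) derives the generating function of the $\poids_{\degk,\nu}$ from scratch: a recursion on the $\poids_{\degk,\nu}$ (Lemma \ref{lm5.2}) yields a quadratic recursion on the polynomials $L_\degk$ (Lemma \ref{lm5.3}), hence a Riccati-type ODE for $L(X,Y)$ (Lemma \ref{lm5.4}), which is solved explicitly (Lemma \ref{lm5.1}); one then integrates $L$ to obtain the bivariate series $M(X,Y)$ and evaluates $M(hV_\dimd^+,hV_\dimd^-)$ using $\Tr\Ln=\Ln\det$. You instead recognise $\poids_{\degk,\nu}=\frac1{(\degk-1)!}A(\degk-1,\nu-1)$ as a normalised Eulerian number and invoke the classical Carlitz/Worpitzky identity $\sum_{\ell\geq1}\ell^{m}z^{\ell}=\frac{zA_m(z)}{(1-z)^{m+1}}$, which collapses the inner sum in one stroke and leads directly to the per-eigenvalue closed form $G(\alpha,h)=-\Ln\big((\alpha+1)-\alpha e^{h}\big)-\alpha h$; this is exactly the eigenvalue shadow of the paper's matrix identity $M(hV_\dimd^+,hV_\dimd^-)=-hV_\dimd^+-\Ln(V_\dimd^--e^{h}V_\dimd^+)$. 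Your approach is shorter and more conceptual, at the cost of importing a classical identity; the paper's approach is self-contained and never names the Eulerian numbers. The subsequent identification with the torsion is essentially the same in both proofs; your device of fixing the linear term $c$ by comparing degree-$1$ coefficients is a clean shortcut for the explicit bookkeeping with $\Tr(V_\dimd^+)$ and $b_\dimd$ that the paper carries out, and it is justified since for $n$ odd one has $\mathcal T_\Sigma'(1)=0$ while for $n$ even the normalisation $\torsion(t)=\mathcal T_\Sigma(t)\,t^{-\mathcal T_\Sigma'(1)}$ absorbs exactly that term. Two small points worth making explicit in a final write-up: the eigenvalue argument tacitly passes to $\mathbb C$ to simultaneously triangularise the commuting pair $(V_\dimd^+,V_\dimd^-)$, and the exchange of sums over $\degk$ and $\ell$ is best justified (as you indicate) for real $\alpha>0$ and $|h|<\ln\frac{\alpha+1}{\alpha}$, after which both sides are power series in $h$ with polynomial coefficients in $\alpha$, so the identity extends formally.
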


The following lemma will be used later. 

\begin{lm}\label{dernierlemme}
Let $\Sigma$ be a Seifert surface for a null-homologous long knot $\psi$, 
and let $(\Ba,\Bb)$ be a pair of dual bases of its reduced homology. We have 
\[ \sum\limits_{d=1}^n (-1)^d \Tr\left( V_d^-(\Ba, \Bb) \right) = \frac{\chi(\Sigma)-1}2- \mathcal T_\Sigma'(1)\]
so that $ \mathcal T_\Sigma'(1)\in\mathbb Z$ .
\end{lm}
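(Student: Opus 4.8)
The identity to prove relates three quantities attached to a Seifert surface $\Sigma$: the alternating trace $\sum_d(-1)^d\Tr(V_d^-)$, the Euler characteristic $\chi(\Sigma)$, and the derivative $\mathcal T_\Sigma'(1)$. The natural route is to differentiate the defining product $\mathcal T_\Sigma(t)=\prod_d \Delta_{d,\Sigma}(t)^{(-1)^{d+1}}$ at $t=1$, reduce everything to $\sum_d(-1)^{d+1}\Delta_{d,\Sigma}'(1)$, and then compute each $\Delta_{d,\Sigma}'(1)$ directly from its determinantal formula $\Delta_{d,\Sigma}(t)=\det(t^{1/2}V_d^- - t^{-1/2}V_d^+)$.

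The first step is the logarithmic differentiation. Since $\Delta_{d,\Sigma}(1)=1$ for every $d$ (Theorem \ref{Levine-th}), we have $\mathcal T_\Sigma(1)=1$ and $\mathcal T_\Sigma'(1)=\sum_{d=1}^n(-1)^{d+1}\Delta_{d,\Sigma}'(1)$. So it suffices to evaluate $\Delta_{d,\Sigma}'(1)$. Write $M_d(t)=t^{1/2}V_d^- - t^{-1/2}V_d^+$, so $M_d(1)=V_d^- - V_d^+ = I_{b_d}$ by the last sentence of Definition \ref{bases duales}. Using Jacobi's formula $\frac{d}{dt}\det M_d(t) = \det M_d(t)\,\Tr\!\big(M_d(t)^{-1}M_d'(t)\big)$ and evaluating at $t=1$ where $M_d(1)=I$, we get $\Delta_{d,\Sigma}'(1)=\Tr(M_d'(1)) = \Tr\!\big(\tfrac12 V_d^- + \tfrac12 V_d^+\big)$. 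Now substitute $V_d^+ = V_d^- - I_{b_d}$ to obtain $\Delta_{d,\Sigma}'(1) = \Tr(V_d^-) - \tfrac12 b_d$, where $b_d=b_d(\Sigma)$ is the $d$-th Betti number of $\Sigma$.

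The second step assembles these. We get
\[
\mathcal T_\Sigma'(1) = \sum_{d=1}^n(-1)^{d+1}\Big(\Tr(V_d^-) - \tfrac12 b_d\Big)
= -\sum_{d=1}^n(-1)^{d}\Tr(V_d^-) - \tfrac12\sum_{d=1}^n(-1)^{d+1}b_d.
\]
Rearranging gives $\sum_{d=1}^n(-1)^d\Tr(V_d^-) = -\mathcal T_\Sigma'(1) - \tfrac12\sum_{d=1}^n(-1)^{d+1}b_d$, so it remains to identify $\tfrac12\sum_{d=1}^n(-1)^{d+1}b_d$ with $-\tfrac{\chi(\Sigma)-1}2$, i.e.\ to show $\sum_{d=1}^n(-1)^{d+1}b_d = 1-\chi(\Sigma)$. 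Since $\Sigma$ is a connected $(n+1)$-manifold with nonempty boundary, $b_0(\Sigma)=1$ and $b_{n+1}(\Sigma)=0$, and $\chi(\Sigma)=\sum_{j=0}^{n+1}(-1)^j b_j(\Sigma) = 1 - \sum_{d=1}^n(-1)^{d+1}b_d$ (the reindexing: $(-1)^j = -(-1)^{j+1}$ for the middle terms, and the top term vanishes). Hence $\sum_{d=1}^n(-1)^{d+1}b_d = 1-\chi(\Sigma)$, which completes the identity. Finally, $\mathcal T_\Sigma'(1)\in\mathbb Z$ because the left-hand side $\sum_d(-1)^d\Tr(V_d^-)$ is an integer (the $V_d^-$ have integer entries, being matrices of linking numbers of integral cycles) and $\tfrac{\chi(\Sigma)-1}2$ must then also be forced appropriately — more precisely, from $\mathcal T_\Sigma'(1) = -\sum_d(-1)^d\Tr(V_d^-) + \tfrac{\chi(\Sigma)-1}2$ one needs $\tfrac{\chi(\Sigma)-1}{2}\in\mathbb Z$; this holds because $\chi(\Sigma)$ is odd, which itself follows since $\Sigma$ is odd-dimensional with boundary when $n$ is even and, when $n$ is odd, $\chi(\Sigma)=\tfrac12\chi(\partial\Sigma)=0$ is even — wait, this needs care.

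**Main obstacle.** The only genuinely delicate point is the integrality claim $\mathcal T_\Sigma'(1)\in\mathbb Z$ and, relatedly, making the parity of $\chi(\Sigma)$ work out uniformly in $n$. When $n$ is odd, $\Sigma$ is an even-dimensional manifold with boundary $\psi(\R^n)\cong\R^n$ (an open disk, essentially $\s^n$ after one-point compactification), and one must argue $\chi(\Sigma)$ is odd so that $(\chi(\Sigma)-1)/2\in\mathbb Z$; this uses $\chi(\partial\Sigma)=0$ together with the doubling formula $\chi(\Sigma) \equiv \chi(\mathrm{double})/2 \pmod{?}$ — one should instead note directly that the pairing $\langle\cdot,\cdot\rangle_\Sigma$ being nondegenerate on each $H_d(\Sigma)$ forces $b_d = b_{n+1-d}$, so the middle Betti number argument pins down the parity. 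When $n$ is even, $\Sigma$ is odd-dimensional with boundary, and $\chi(\Sigma) = \tfrac12\chi(\partial\Sigma)$... but $\partial\Sigma$ is noncompact here, so one works with the compactly-supported or compact-core version. I expect the cleanest fix is to run the whole Betti-number computation using the duality $b_d(\Sigma)=b_{n+1-d}(\Sigma)$ from Poincaré–Lefschetz duality (implicit in the existence of dual bases), which simultaneously gives $\sum(-1)^{d+1}b_d = 1-\chi(\Sigma)$ and the correct parity of $\chi(\Sigma)$; everything else is the routine Jacobi-formula computation of Step 1.
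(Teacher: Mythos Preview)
Your derivation of the identity is correct and essentially identical to the paper's: both compute $\mathcal T_\Sigma'(1)=\tfrac12\sum_d(-1)^{d+1}\Tr(V_d^-+V_d^+)$ by differentiating the determinantal formula at $t=1$ (using $V_d^--V_d^+=I_{b_d}$), express $\chi(\Sigma)-1=\sum_d(-1)^d b_d=\sum_d(-1)^d\Tr(V_d^--V_d^+)$, and take the appropriate linear combination.

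The gap is in your integrality argument. Your proposed fix via $b_d=b_{n+1-d}$ does not settle the parity of $\chi(\Sigma)$ in general: when $n\equiv 3\pmod 4$ the intersection form on the middle dimension $H_{(n+1)/2}(\Sigma)$ is symmetric, so $b_{(n+1)/2}$ can be odd, and your pairing argument leaves $\chi(\Sigma)$ with unknown parity. Moreover, your claim that the $V_d^-$ have integer entries is only justified for $n\geq 2$ (where $M$ is an integral homology sphere); for $n=1$ the ambient is only a rational homology sphere. The paper handles integrality by splitting on the parity of $n$. For $n$ odd, the symmetry $\mathcal T_\Sigma(t^{-1})=\mathcal T_\Sigma(t)$ (from $\Delta_{d,\Sigma}(t^{-1})=\Delta_{n+1-d,\Sigma}(t)$, Lemma~\ref{211}) immediately gives $\mathcal T_\Sigma'(1)=0$, so there is nothing further to check. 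For $n$ even, one observes that $\Sigma\cap B(M)$ is a compact orientable odd\nobreakdash-dimensional manifold whose boundary is homeomorphic to $\s^n$, hence $\chi(\Sigma)=\chi(\Sigma\cap B(M))=\tfrac12\chi(\s^n)=1$; combined with the integrality of the Seifert matrices (now valid, since $n\geq 2$), the formula gives $\mathcal T_\Sigma'(1)=\sum_d(-1)^{d+1}\Tr(V_d^-)\in\mathbb Z$.
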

\begin{proof}
Recall that $V_d^-(\Ba, \Bb) -V_d^+(\Ba, \Bb)= I_{b_d(\Sigma)}$ for any $d\in\{1,\ldots, n\}$.
The definition of $\mathcal T_\Sigma(t)$ yields
\[ \mathcal T_\Sigma'(1) = \frac12 \sum\limits_{d=1}^n (-1)^{d+1} \Tr\left( V_d^-(\Ba, \Bb) + V_d^+(\Ba, \Bb) \right),\]
and we have
\[\chi(\Sigma)-1 = \sum\limits_{d=1}^n (-1)^d b_d(\Sigma)
= \sum\limits_{d=1}^n (-1)^d \Tr\left( V_d^-(\Ba, \Bb) - V_d^+(\Ba, \Bb)\right).\]
The formula of the lemma then follows from a linear combination of the two above equalities. 
When $n$ is odd, $\mathcal T_\Sigma'(1) =0 $.
Now assume that $n$ is even. Note that $\Sigma\cap \bM$ is an odd-dimensional compact orientable manifold with boundary homeomorphic to $\s^n$,
so that $\chi(\Sigma)= 1$.
Since $M$ is an integral homology sphere, 
the Seifert matrices are integral matrices 
when the bases are chosen in the image of $H_*(\Sigma; \mathbb Z) \rightarrow H_*(\Sigma; \mathbb Q)$.
This yields $\mathcal T_\Sigma'(1)=\sum\limits_{d=1}^n (-1)^{d+1} \Tr\left( V_d^-(\Ba, \Bb) \right) \in\mathbb Z$.
\end{proof}
\section{Computing \texorpdfstring{$\Z[]$}{Zk} from admissible propagators}\label{Section2}
\subsection{Admissible propagators}\label{Section31}

Let $\ambientspace$ be a fixed parallelizable asymptotic $\mathbb K$-homology $\R^{n+2}$ and 
let $\psi\colon\R^n\hookrightarrow \punct M$ be a fixed null-homologous long knot. 
Let $\big(\psi_0\colon x\in\R^n\mapsto (0,0,x)\in\R^{n+2}\big)$ denote the trivial knot.
Fix a real number $R\geq 3$.

For $r\in[1, R]$, let $N_r^0$ denote the following neighborhood of the trivial knot $N_r^0 = \{x\in\R^{n+2} \mid {x_1}^2+{x_2}^2\leq r^2 \text{ or } ||x||\geq\frac{2R^2}r\}$.

Choose a neighborhood $N_R$ of $\psi(\R^n)$ in $\ambientspace$ such that 
$N_R\cap \voisinageinfini= N_R^0\cap \voisinageinfini$, and such that 
$N_R\cap\bouleambiante$ is a tubular neighborhood of $\psi(\R^n)\cap\bouleambiante$. 
Fix a diffeomorphism
$\Theta\colon N_R^0\rightarrow N_R$ such that\begin{itemize}
\item $\Theta$ reads as the identity map on 
$ N_R^0\cap \voisinageinfini$, 
\item $\Theta$ induces a bundle isomorphism from $N_R^0\cap \bouleambiante$
to $N_R\cap \bouleambiante$,
\item $\Theta\circ \psi_0=\psi$,
\item there exists a Seifert surface with $\Sigma\cap N_R = \Theta\left(\{x\in N_R^0\mid x_1\geq0, x_2=0\}\right)$.
\end{itemize}

In Section \ref{Section4-1}, we prove the following lemma.
\begin{lm}\label{rect-lemma}
For $n\geq2$ and any rectifiable knot $\psi$ of 
a parallelizable asymptotic integral homology $\R^{n+2}$,
there exists a parallelization $\tau$ such that 
the map $\iota(\tau,\psi)$ of Lemma \ref{obstructionlemma} 
is the constant map with value $\iota_0$.
\end{lm}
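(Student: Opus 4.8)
The plan is to extract the one piece of homotopy-theoretic content carried by rectifiability and feed it into a correction of a parallelization near the knot. By Definition~\ref{rectifiabilitydef}, $\spamb$ is parallelizable and, by Lemma~\ref{obstructionlemma}, for \emph{any} parallelization $\tau'$ the class $\iota(\psi) = [\iota(\tau',\psi)] \in \pi_n(\inj,\iota_0)$ vanishes. I would fix such a $\tau'$ and write $f = \iota(\tau',\psi)\colon \R^n \to \inj$; it is smooth, equals $\iota_0$ on $\{\|x\|\geq 1\}$, and $[f]=0$. The goal is to build a smooth gauge transformation $g\colon \spamb \to \mathrm{GL}^+_{n+2}(\R)$, equal to the identity on $\nbdinf$ and supported in a tube around $\psi(\R^n)$, with $g(\psi(x))\circ\iota_0 = f(x)$ for all $x$; then $\tau\colon (p,w)\mapsto \tau'(p,g(p)w)$ is a parallelization (it equals $\tau'$, hence the canonical trivialization, on $\nbdinf$), and $\iota(\tau,\psi)(x) = g(\psi(x))^{-1}\circ(\tau'_{\psi(x)})^{-1}\circ T_x\psi = g(\psi(x))^{-1}\circ f(x) = \iota_0$, which is exactly the conclusion.

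The bundle-theoretic ingredient is the fibre bundle $p\colon \mathrm{GL}^+_{n+2}(\R) \to \inj$, $A\mapsto A\circ\iota_0$, which is surjective and locally trivial since $\mathrm{GL}^+_{n+2}(\R)$ acts transitively on $\inj$ by post-composition, with fibre over $\iota_0$ the subgroup $F = \{A\in\mathrm{GL}^+_{n+2}(\R)\mid A|_{\{0\}^2\times\R^n}=\mathrm{id}\}$. In blocks adapted to $\R^{n+2}=\R^2\oplus\R^n$ one sees $F\cong\mathrm{GL}^+_2(\R)\times M_{n\times 2}(\R)$, which is homotopy equivalent to $\mathrm{SO}(2)=\s^1$; hence $\pi_i(F)=0$ for $i=0$ and for $i\geq2$, while $\pi_1(F)=\mathbb Z$. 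In particular, since $n\geq2$, $\pi_n(F)=0$, and $\pi_{n-1}(F)=0$ as soon as $n\geq3$.

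The construction of $g$ then runs in two obstruction-theoretic steps, both governed by the homotopy exact sequence of $F\hookrightarrow\mathrm{GL}^+_{n+2}(\R)\xrightarrow{p}\inj$. First, I would lift $f$ through $p$: regarding $f$ as a map of pairs $(\R^n,\{\|x\|\geq1\})\to(\inj,\iota_0)$ already lifted over $\{\|x\|\geq1\}$ by the constant $\mathrm{id}$, the sole obstruction to extending the lift over the remaining ball is the image $\partial[f]\in\pi_{n-1}(F)$ of $[f]$ under the connecting map, and this vanishes because $[f]=0$; this produces a smooth $g_0\colon\R^n\to\mathrm{GL}^+_{n+2}(\R)$ with $p\circ g_0=f$ and $g_0=\mathrm{id}$ on $\{\|x\|\geq1\}$ (for $n\geq3$ this step is automatic). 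Second, since $\pi_n(F)=0$ the map $p_*\colon\pi_n(\mathrm{GL}^+_{n+2}(\R),\mathrm{id})\to\pi_n(\inj,\iota_0)$ is injective, so $p_*[g_0]=[f]=0$ forces $[g_0]=0$, that is, $g_0$ is homotopic rel $\{\|x\|\geq1\}$ to the constant map $\mathrm{id}$. It is this second step in which rectifiability is genuinely used when $n\geq3$.

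Finally I would globalize. Choose a tubular neighbourhood $\nu$ of $\psi(\R^n)$ in $\spamb$ compatible with the decomposition $\spamb=\bM\cup\nbdinf$ and trivial as a $\mathbb D^2$-bundle (the normal bundle of the properly embedded $\psi(\R^n)$ is trivial, $\R^n$ being contractible), so $\nu\cong\psi(\R^n)\times\mathbb D^2$ with $\nu\cap\nbdinf$ a tube around $\psi(\{\|x\|\geq1\})$. Pick a homotopy $G\colon\R^n\times[0,1]\to\mathrm{GL}^+_{n+2}(\R)$ as in the second step, with $G(\cdot,0)=g_0$, $G(\cdot,1)\equiv\mathrm{id}$, $G(\cdot,t)\equiv\mathrm{id}$ whenever $\|x\|\geq1$, and $G$ independent of $t$ near $t\in\{0,1\}$; set $g(\psi(x),v)=G(x,|v|)$ on $\nu$ (using the radial coordinate $|v|\in[0,1]$ on the fibre) and $g\equiv\mathrm{id}$ on $\spamb\setminus\nu$. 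Since $G(x,|v|)=\mathrm{id}$ for $|v|$ near $1$ and for $\|x\|\geq1$, the two prescriptions agree near $\partial\nu$, so $g$ is a well-defined smooth map (the $t$-constancy near $0$ repairs smoothness along the core $\psi(\R^n)$), it equals $\mathrm{id}$ on $\nbdinf$, and $g(\psi(x))\circ\iota_0=g_0(x)\circ\iota_0=f(x)$. Then $\tau(p,w)=\tau'(p,g(p)w)$ is the required parallelization. I expect the real work to be in this last paragraph — adapting the tubular neighbourhood to the $\bM\cup\nbdinf$ splitting, arranging the triviality and the identity-near-infinity conditions, and the smoothing along the core — rather than in the short homotopy-exact-sequence heart of the argument.
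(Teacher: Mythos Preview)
Your proof is correct and follows essentially the same route as the paper. The only difference is presentational: where you obtain the lift $g_0$ and its nullhomotopy $G$ via two abstract appeals to the exact sequence of the fibration $F\hookrightarrow\mathrm{GL}^+_{n+2}(\R)\to\inj$, the paper packages both into a single explicit construction (Lemma~\ref{relev}), lifting a chosen homotopy $h_t$ from $\iota_0$ to $\iota(\tau_e,\psi)$ to a path $g_t$ in $\mathrm{GL}^+_{n+2}(\R)$ starting at the identity by iterated orthogonal projections; the globalization over the tubular neighbourhood and the resulting parallelization are then exactly your final paragraph.
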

This allows us to prove the following.
\begin{lm}\label{gauss-rect}The null-homologous knot $\psi$ is rectifiable if and only if 
there exists a parallelization $\tau$ of $\ambientspace$ such that, 
for any $x\in N_R^0$, $T_x\Theta\circ (\tau_0)_x = \tau_{\Theta(x)}$, 
where $\tau_0$ denotes the canonical parallelization of $\R^{n+2}$ 
introduced in Definition \ref{paral-def}.
\end{lm}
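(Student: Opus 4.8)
The plan is to prove the two implications separately. \emph{If such a $\tau$ exists, then $\psi$ is rectifiable.} By Lemma \ref{obstructionlemma} the class $[\iota(\tau,\psi)]$ does not depend on the parallelization, so I would compute it with the given $\tau$. For $x\in\R^n$ one has $\psi(x)=\Theta(0,0,x)$ and, since $\Theta\circ\psi_0=\psi$, $T_x\psi=T_{(0,0,x)}\Theta\circ\iota_0$ (identifying $T_x\psi_0$ with $\iota_0$ through the canonical trivialization of $T\R^n$). Because $(\tau_0)_{(0,0,x)}$ is the identity of $\R^{n+2}$, the hypothesis yields $\tau_{\psi(x)}=T_{(0,0,x)}\Theta\circ(\tau_0)_{(0,0,x)}=T_{(0,0,x)}\Theta$, hence $\iota(\tau,\psi)(x)=(\tau_{\psi(x)})^{-1}\circ T_x\psi=\iota_0$. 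So $\iota(\tau,\psi)$ is the constant map $\iota_0$, and $\psi$ is rectifiable in the sense of Definition \ref{rectifiabilitydef}.

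\emph{Conversely, if $\psi$ is rectifiable, such a $\tau$ exists.} Assume first $n\geq 2$. By Lemma \ref{rect-lemma} there is a parallelization $\tau'$ of $\spamb$ with $\iota(\tau',\psi)$ constant equal to $\iota_0$. I would then consider the \emph{defect map} $h\colon N_R\to GL^+_{n+2}(\R)$ given by $h(\Theta(x))=(\tau'_{\Theta(x)})^{-1}\circ T_x\Theta\circ(\tau_0)_x$ for $x\in N_R^0$. On $N_R\cap\voisinageinfini$, where $\Theta$ is the identity and $\tau'=\tau_0$ by Definition \ref{paral-def}, one has $h\equiv\mathrm{id}$; and the computation of the first part, applied to $\tau'$, shows $h(\psi(x))\circ\iota_0=\iota_0$, i.e. $h(\psi(x))$ fixes $\iota_0(\R^n)$ pointwise for all $x\in\R^n$. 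The goal is then to extend $h$ to a smooth map $\tilde h\colon\spamb\to GL^+_{n+2}(\R)$ that equals the identity on $\voisinageinfini$: for such a $\tilde h$, the fibrewise composition $\tau:=\tau'\circ\tilde h$ is a parallelization of $\spamb$ in the sense of Definition \ref{paral-def} and, on $N_R^0$, satisfies $\tau_{\Theta(x)}=\tau'_{\Theta(x)}\circ h(\Theta(x))=T_x\Theta\circ(\tau_0)_x$, as required.

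To construct $\tilde h$, I would use that $GL^+_{n+2}(\R)$ deformation retracts onto $SO(n+2)$, that $h$ is already the identity on the part of $N_R$ contained in the neighborhood of infinity, and that $N_R$ with that part collapsed has the homotopy type of the sphere $\s^n$ (it is the tube of the disk $\psi(\mathbb D^n)$ with its lateral boundary crushed). Hence the obstruction to extending $h$ over $\spamb$ relative to the neighborhood of infinity is a single class in $\pi_n\big(SO(n+2)\big)$; under the map $g\mapsto g\circ\iota_0$ from $GL^+_{n+2}(\R)$ to $\inj$, a fibration with fiber $\mathrm{Stab}(\iota_0)\simeq\s^1$ (so $\pi_n(\s^1)=0$ and the induced map on $\pi_n$ is injective for $n\geq 2$), this class is sent to the rectifiability obstruction $[\iota(\tau',\psi)]=0$ in $\pi_n(\inj,\iota_0)$, hence it vanishes. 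Therefore $h$ is homotopic, relative to the neighborhood of infinity, to the constant map $\mathrm{id}$; carrying $h$ to $\mathrm{id}$ along such a homotopy inside a slightly smaller neighborhood and setting $\tilde h=\mathrm{id}$ outside produces the extension. For $n=1$, rectifiability is automatic, any asymptotic rational homology $\R^3$ is parallelizable by \cite[Proposition 5.5]{[Lescop2]}, and the same scheme applies since $\pi_1(\inj,\iota_0)=\pi_1(\s^2)$ is trivial.

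The step I expect to be the main obstacle is the obstruction-theoretic one of the third paragraph: verifying carefully that the defect map produced from the $\tau'$ of Lemma \ref{rect-lemma} carries no homotopy beyond the single class that is killed by rectifiability, and that this class is precisely the rectifiability obstruction under the identification coming from the definition of $\iota(\tau',\psi)$ — all while preserving the boundary condition $\tilde h|_{\voisinageinfini}=\mathrm{id}$ throughout the homotopy. The remaining steps are routine manipulations with the canonical trivialization, using $(\tau_0)_x=\mathrm{id}$.
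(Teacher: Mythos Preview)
Your argument for $n\geq 2$ is essentially the paper's: you form the defect map $h$, identify its homotopy class in $\pi_n(GL^+_{n+2}(\R))$ via the retraction of $(N_R^0,N_R^0\cap\voisinageinfini)$ onto $(\mathbb D^n,\s^{n-1})$, push it to $\pi_n(\inj)$ along $g\mapsto g\circ\iota_0$ where it becomes $[\iota(\tau',\psi)]=0$, and use injectivity of that map (fiber $\simeq\s^1$, $\pi_n(\s^1)=0$ for $n\geq 2$) to conclude $h$ is null-homotopic rel infinity. The paper phrases the final step as ``the desired $\tau$ on $N_R$ is homotopic to $\tau_1|_{N_R}$, hence extends because $\tau_1$ does'', which is cleaner than your collar-and-extend description, but the content is the same.

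The genuine gap is your $n=1$ case. You write ``the same scheme applies since $\pi_1(\inj,\iota_0)=\pi_1(\s^2)$ is trivial'', but that is exactly where the scheme breaks: the long exact sequence of the fibration $\s^1\to GL^+_3(\R)\to\inj$ reads $\pi_1(\s^1)=\mathbb Z\to\pi_1(GL^+_3(\R))=\mathbb Z/2\to\pi_1(\inj)=0$, so the map $\pi_1(GL^+_3(\R))\to\pi_1(\inj)$ is the zero map, not injective. Knowing that the image of the defect class in $\pi_1(\inj)$ vanishes therefore tells you nothing about the class itself in $\pi_1(GL^+_3(\R))\cong\mathbb Z/2$, and you have no argument that it is trivial. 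Lemma~\ref{rect-lemma} is also stated only for $n\geq 2$, so you do not even have a preferred $\tau'$ to start from. The paper handles $n=1$ by an entirely different route: it takes the pushforward parallelization $T\Theta\circ\tau_0$ on $N_R$ and invokes \cite[Proposition~18.3]{[Lescop2]} to assert directly that it extends to all of $\spamb$, bypassing the defect-class computation altogether.
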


\begin{proof}
Let us first assume $n\geq2$.

If there exists $\tau$ as in the lemma, then $T_{\psi_0(x)}\Theta\circ T_x\psi_0 = \tau_{\Theta(\psi_0(x))}\circ \iota_0 $, so $\tau_{\psi(x)}\circ \iota_0 = T_x\psi$. 
Therefore, $\iota(\tau,\psi) = \iota_0$ and $\psi$ is rectifiable.

Let us prove the converse.
Suppose that $\psi$ is rectifiable, and let $\tau_1$ be a parallelization given by Lemma \ref{rect-lemma} 
such that $\iota(\tau_1, \psi)$ is the constant map of value $\iota_0$.
Recall that if $(X,A)$ and $(Y,B)$ are two topological pairs, $[(X,A), (Y,B)]$ denotes the set of 
homotopy classes of maps from $X$ to $Y$ that map $A$ to $B$. 
Let $\mathbb D^n$ denote the unit ball of $\R^n$.
 The map $\left(x\in N_R^0 \mapsto (\tau_1)_{\Theta(x)}^{-1}\circ T_x\Theta\circ (\tau_0)_x\in GL_{n+2}^+(\R)\right)$ induces 
 an element $\kappa(\tau_1)$ of the set $[(N_R^0, N_R^0\cap \voisinageinfini), (GL_{n+2}^+(\R), I_{n+2}) ]\cong[(N_R^0\cap\bouleambiante, N_R^0\cap\partial \bouleambiante), (GL_{n+2}^+(\R), I_{n+2}) ]$. 
Since $N_R^0\cap \bouleambiante$ is a disk bundle over $\psi_0(\R^n)\cap B(\s^{n+2})=\psi_0(\mathbb D^n)$, the restriction map 
\begin{align*} [(N_R^0\cap\bouleambiante,  N_R^0\cap \partial\bouleambiante),  (GL_{n+2}^+(\R), I_{n+2})]& \\ \rightarrow [(\psi_0(\mathbb D^n), \psi_0(\partial\mathbb D^n)),(GL_{n+2}^+(\R), I_{n+2})]\cong &[(\mathbb D^n,\s^{n-1}),(GL_{n+2}^+(\R), I_{n+2})]\end{align*} 
is an isomorphism, and $[(N_R^0, N_R^0\cap \voisinageinfini), (GL_{n+2}^+(\R), I_{n+2}) ] \cong \pi_n(GL_{n+2}^+(\R), I_{n+2}) $. 
The fibers of the fibration $\left(g\in GL^+_{n+2}(\R) \mapsto g_{|\{0\}^2\times\R^n} \in \inj\right)$ have the homotopy type of $SO(2)$. 
Since $n\geq 2$, the long exact sequence of this fibration provides an injective map $\pi_n(GL_{n+2}^+(\R), I_{n+2}) \rightarrow \pi_n(\inj, \iota_0)$. 
The induced injection $[(N_R^0, N_R^0\cap \voisinageinfini), (GL_{n+2}^+(\R), I_{n+2}) ]\hookrightarrow \pi_n(\inj, \iota_0)$ maps $\kappa(\tau_1)$ to $[\iota(\tau_1,\psi)]$.
By definition of $\tau_1$, $\iota(\tau_1, \psi) = \iota_0$, and $\kappa(\tau_1)$ is trivial. 
This proves the existence of a parallelization $\tau\colon N_R\times \R^{n+2} \rightarrow TN_R$ 
homotopic to $\tau_1$ such that, for any $x\in N_R^0$, 
$\tau_{\Theta(x)}^{-1}\circ T_x\Theta\circ (\tau_0)_x = I_{n+2}$. 
Since ${\tau_1}_{|N_R\times \R^{n+2}}$ extends to $\ambientspace\times \R^{n+2}$, 
and since $\tau$ is homotopic to $\tau_1$, 
$\tau$ also extends to  $\ambientspace\times \R^{n+2}$.

If $n=1$, every long knot is rectifiable by definition. In this case, 
the parallelization $T\Theta\circ (\tau_0)_{|N_R^0\times \R^{n+2}}$ extends to $\spamb$ thanks to the last assertion of \cite[Proposition 18.3]{[Lescop2]}.
\end{proof}
The long exact sequence in the previous proof also yields the following lemma, since $GL_{n+2}^+(\R)$ and $SO(n+2)$ have the same homotopy type.

\begin{lm}\label{lmpin}
For $n\geq 3$, $\pi_n(\inj, \iota_0)$ is isomorphic to $ \pi_n(SO(n+2), I_{n+2})$, and $\pi_2(\mathcal I(\R^2, \R^4), \iota_0)$ is infinite cyclic.
\end{lm}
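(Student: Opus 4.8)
The plan is to read the lemma off the long exact homotopy sequence of the fibration that already appeared in the proof of Lemma~\ref{gauss-rect}, namely
\[p\colon GL_{n+2}^+(\R)\longrightarrow \inj,\qquad g\longmapsto g\circ\iota_0 ,\]
whose fiber over $\iota_0$ was shown there to have the homotopy type of $SO(2)\simeq\s^1$ (it is $\{g\mid g|_{\{0\}^2\times\R^n}=\mathrm{id}\}\cong GL_2^+(\R)\times\R^{2n}$, which retracts onto the standard block $SO(2)\subset SO(n+2)$). Its long exact sequence is
\[\cdots\to\pi_j(\s^1)\to\pi_j\big(GL_{n+2}^+(\R)\big)\to\pi_j(\inj,\iota_0)\to\pi_{j-1}(\s^1)\to\cdots\]
and I would combine it with the deformation retraction $GL_{n+2}^+(\R)\simeq SO(n+2)$ coming from Gram--Schmidt.

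First, for $n\geq3$: both $\pi_n(\s^1)$ and $\pi_{n-1}(\s^1)$ vanish, so the portion of the sequence around degree $n$ collapses to an isomorphism $\pi_n\big(GL_{n+2}^+(\R)\big)\cong\pi_n(\inj,\iota_0)$, and the left-hand side is $\pi_n(SO(n+2),I_{n+2})$ by the retraction. This yields the first assertion.

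Next, for $n=2$: the relevant segment is
\[\pi_2(\s^1)\to\pi_2\big(GL_4^+(\R)\big)\to\pi_2\big(\mathcal I(\R^2,\R^4),\iota_0\big)\xrightarrow{\ \partial\ }\pi_1(\s^1)\to\pi_1\big(GL_4^+(\R)\big).\]
I would use that $\pi_2(\s^1)=0$ and $\pi_2\big(GL_4^+(\R)\big)\cong\pi_2(SO(4))=0$ (the second homotopy group of a compact Lie group is trivial), so that $\partial$ is injective; it therefore identifies $\pi_2\big(\mathcal I(\R^2,\R^4),\iota_0\big)$ with the kernel of the homomorphism $\pi_1(\s^1)\cong\mathbb Z\to\pi_1\big(GL_4^+(\R)\big)\cong\pi_1(SO(4))\cong\mathbb Z/2\mathbb Z$ induced by $SO(2)\hookrightarrow SO(4)$. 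Since the source is infinite and the target finite, this homomorphism is not injective, so its kernel is a nonzero subgroup of $\mathbb Z$, hence infinite cyclic; this gives the second assertion.

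I do not expect a genuine obstacle: once the fibration and its fiber are in hand (both already established in the previous proof), the lemma is bookkeeping with the exact sequence. The only external inputs are the standard facts $\pi_j(\s^1)=0$ for $j\geq2$, the vanishing of $\pi_2$ of a Lie group, and the finiteness of $\pi_1(SO(4))$; in fact only the last of these is needed to conclude, since a nonzero subgroup of $\mathbb Z$ is automatically infinite cyclic, so one need not compute the kernel in the $n=2$ case explicitly.
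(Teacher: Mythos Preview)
Your proposal is correct and follows exactly the approach indicated in the paper: the paper simply says the lemma follows from the long exact sequence of the fibration $GL_{n+2}^+(\R)\to\inj$ with fiber $\simeq SO(2)$ used in the proof of Lemma~\ref{gauss-rect}, together with $GL_{n+2}^+(\R)\simeq SO(n+2)$, and you have supplied precisely those details.
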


Suppose now that $\psi$ is rectifiable.
\begin{nt}\label{notations} 
Let $\tau$ be a parallelization as given by Lemma \ref{gauss-rect}. 
For any $r\in [1,R]$, let $E_r$ denote the closure of $\ambientspace \setminus N_r$. With the identification induced by $\Theta$, $\ambientspace$ reads $N_r^0\cup E_r$.

In $N_R=N_R^0$, use the coordinates $x= (x_1, x_2, \overline x)\in \R\times\R\times\R^n$.
 For $r\in [1,R]$, $\mur$ denotes the disk $\{(x_1,x_2, \overline 0) \mid  {x_1}^2+{x_2}^2\leq r^2\}\subset N_r$. For $\theta\in \R$, set $L_\theta^\pm(r) = \{\pm(\rho\cos(\theta), \rho\sin(\theta), \overline 0) \mid \rho \geq \frac{2R^2}r\}$, $\DR = L_\theta^+(r) +L_\theta^-(r)$, and orient these half-lines by $\d \rho$. 
\end{nt}

Let $\Theta_2$ denote the diffeomorphism $C_2(N_R^0) \rightarrow C_2(N_R)$ induced by $\Theta$.
We now define the main objects of this article, which will be used to compute $\Z[]$.

\begin{df}\label{adm-prop}Fix $\theta \in \R$. 
Fix two isotopic Seifert surfaces $\Sigma^+$ and $\Sigma^-$ with disjoint interiors such that 
$\Sigma^\pm\cap N_R = \Theta(({}_\theta\Sigma^\pm)^0\cap N_R^0)$, where $({}_\theta\Sigma^\pm)^0
\subset \R^{n+2}$ is the hypersurface 
$\{\pm(r\cos(\theta), r\sin(\theta), \overline x) \mid\overline x\in \R^n, r\geq 0\}$.
For any $r\in[1,R]$, let $\Sigma(r)$ denote the submanifold $E_r\cap (\Sigma^+\cup \Sigma^-)$. 
The restriction of the Gauss map of $C_2(\R^{n+2})$ to $C_2(N_1^0)$ 
and the identification $C_2(N_1)\cong C_2(N_1^0)$ given by $\Theta_2$ 
induce a map $G_0\colon C_2(N_1)\rightarrow \s^{n+1}$.

An external propagator $B$ is called $R$-\emph{admissible} 
(with respect to $(\Sigma^+, \Sigma^-, \psi)$) if:
\begin{itemize}
\item $B\cap p_b^{-1}(N_1\times N_1) = \frac12 {G_0}^{-1}(\{-(\cos(\theta), \sin(\theta), \overline 0),
 + (\cos(\theta), \sin(\theta), \overline 0)\})$.
\item If $p_2$ denotes 
the smooth map $p_2\colon \configM \rightarrow C_1(\ambientspace)$ that extends 
the map $\big((x,y)\in C_2^0(\ambientspace)\mapsto y \in \ambientspace\subset C_1(\ambientspace)\big)$, 
and if $c\in B\cap \overline{p_b^{-1}(\psi(\R^n)\times \punct M)}$, then $p_2(c)$ lies in the closure 
$\overline{ \Sigma^-\cup\Sigma^+}$ of $\Sigma^-\cup\Sigma^+$ in $C_1(\ambientspace)$.
\item For any $r\in ]1,R-1]$, 
\[  B\cap p_b^{-1}(N_r\times E_{r+1}) =\frac12\left( \mur[r] \times \Sigma(r+1)
-
\overline{p_b^{-1}(\DR[r]\times E_{r+1})}\right).
\]
\item If $T$ denotes the smooth map of $\configM$ such that for any $(x,y)\in C_2^0(\ambientspace)$, $T(x,y)=(y,x)$, then $T(B)=B$. In particular, for any $r\in ]1,R-1]$, \[B\cap p_b^{-1}(E_{r+1}\times N_r) = \frac12\left( \Sigma(r+1) \times \mur[r]
+(-1)^{n+1}
\overline{p_b^{-1}( E_{r+1}\times\DR[r])}\right).\]
\end{itemize}
\end{df}

In Section \ref{Section 3}, we prove the following technical lemma. 

\begin{lm}\label{th-prop}

Fix an integer $\degk\geq1$ and a null-homologous rectifiable long knot $\psi$, and define $N_R$ and $N_R^0$ as above.
For any pairwise distinct numbers $(\theta_i)_{i\in\und{2\degk}}$ in $[0,\pi[$, 
and any pairwise isotopic Seifert surfaces $(\Sigma_i^-,\Sigma_i^+)_{i\in\und{2\degk}}$ with disjoint interiors
 such that $\Sigma_i^\pm\cap N_R = ({}_{\theta_i}\Sigma^\pm)^0\cap N_R^0 $, there exist external propagators $\left(B^R(\Sigma_i^+, \Sigma_i^-, \psi)\right)_{i\in\und{2\degk}}$, such that for any $i\in\und{2\degk}$, 
the propagator $B^R(\Sigma_i^+, \Sigma_i^-, \psi)$ is $R$-admissible with respect to $(\Sigma_i^+, \Sigma_i^-,  \psi)$.

Furthermore, we can fix such propagators $\left(B^R(({}_{\theta_i}\Sigma^+)^0, ({}_{\theta_i}\Sigma^-)^0, \psi_0) \right)_{i\in\und{2\degk}}$ for the trivial knot such that $\Theta_2$ maps the chain $B^R\left(({}_{\theta_i}\Sigma^+)^0, ({}_{\theta_i}\Sigma^-)^0, \psi_0\right)\cap p_b^{-1}(N_R^0\times N_R^0)$ to $B^R(\Sigma^+_i, \Sigma^-_i, \psi)\cap p_b^{-1}(N_R\times N_R)$ for any $i\in\und{2\degk}$.
\end{lm}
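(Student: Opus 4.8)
The plan is to construct each propagator $B^R(\Sigma_i^+,\Sigma_i^-,\psi)$ separately, since the admissibility conditions of Definition~\ref{adm-prop} imposed on it involve only the triple $(\Sigma_i^+,\Sigma_i^-,\theta_i)$; so I fix $i$, abbreviate $\theta=\theta_i$, $\Sigma^\pm=\Sigma_i^\pm$, $\Sigma(r)=E_r\cap(\Sigma^+\cup\Sigma^-)$, and take $x_B=(\cos\theta,\sin\theta,\overline 0)\in\s^{n+1}$. I work with a parallelization $\tau$ as in Lemma~\ref{gauss-rect}, so that $G_\tau$ on the part of $\partial\configM$ lying over $N_R$ coincides with the Euclidean Gauss map of $C_2(\R^{n+2})$ read through $\Theta$; the surfaces $\Sigma^\pm$ exist because $\psi$ is null-homologous (Lemma~\ref{lmS}). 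I will repeatedly use the nesting $N_1\subset\cdots\subset N_r\subset\cdots\subset N_R$ and $E_R\subset\cdots\subset E_1$ coming from the formula for $N_r^0$, and the fact $\psi(\R^n)\subset N_1$.

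First I would write down $B:=B^R(\Sigma_i^+,\Sigma_i^-,\psi)$ on the forced region $\mathcal R$, the union of $p_b^{-1}(N_1\times N_1)$ and of $p_b^{-1}(N_r\times E_{r+1})$, $p_b^{-1}(E_{r+1}\times N_r)$ for $r\in ]1,R-1]$, using the formulas of Definition~\ref{adm-prop} verbatim, and check that they glue to a single embedded $(n+3)$-chain. On an overlap of the $r$- and $r'$-formulas ($r\le r'$), which is $p_b^{-1}(N_r\times E_{r'+1})$, the elementary identities $\mathbb D_\mu(r')\cap N_r=\mathbb D_\mu(r)$, $L_\theta(r')\cap N_r=L_\theta(r)$ and $\Sigma(r+1)\cap E_{r'+1}=\Sigma(r'+1)$ show the two formulas agree; along $\psi(\R^n)$ the $N_1\times N_1$ formula reduces to $y\in({}_\theta\Sigma^+)^0\cup({}_\theta\Sigma^-)^0$, which matches the limit of the $p_b^{-1}(N_r\times E_{r+1})$ pieces; the $T$-images match by construction. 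Then I would check $\partial(B|_{\mathcal R})=\frac12 (G_\tau)^{-1}(\{-x_B,x_B\})$ on $\partial\configM\cap\overline{\mathcal R}$: over the diagonal face this is exactly the choice of $\tau$ and reduces to the boundary of the Euclidean model propagator of $C_2(\R^{n+2})$ recalled in Section~\ref{S.1}, and over the faces at infinity it is the same computation, $\Sigma^\pm$, $\mathbb D_\mu(r)$ and $L_\theta(r)$ being asymptotically linear. What is left of $\partial(B|_{\mathcal R})$ is an $(n+2)$-cycle supported on the artificial boundary $\partial\mathcal R\setminus\partial\configM$, to be capped off afterwards.

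The next and harder step is to extend $B$ to all of $\configM$ with $\partial B=\frac12 (G_\tau)^{-1}(\{-x_B,x_B\})$ while keeping $T(B)=B$ and the constraint from the second condition of Definition~\ref{adm-prop}: $p_2(c)\in\overline{\Sigma^-\cup\Sigma^+}$ whenever $p_b(c)\in\psi(\R^n)\times\punct M$. Since $\psi(\R^n)\subset N_1$, that constraint already holds on $\mathcal R$ (on $N_1\times N_1$ by the computation above; on $p_b^{-1}(N_r\times E_{r+1})$ because $\psi(\R^n)\cap\mathbb D_\mu(r)$ is a single point and $\Sigma(r+1)\subset\Sigma^+\cup\Sigma^-$), so the only place it must be built in by hand is the part of $p_b^{-1}(N_R\times N_R)$ outside $\mathcal R$ --- a neighborhood of the diagonal over $N_R$ together with the part where the two points lie at comparable heights of the filtration $(N_r)$ near the knot. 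Over that region I would extend $B$ by the $\Theta_2$-image of an explicit chain in $C_2(\R^{n+2})$ adapted to the standard Seifert surfaces $({}_\theta\Sigma^\pm)^0$: this chain must interpolate between the boundary datum $\frac12 G^{-1}(\{\pm x_B\})$ along the diagonal face and the product chains $\mathbb D_\mu(\cdot)\times\Sigma(\cdot)$ along $\partial\mathcal R$, and must have its second-point map land on $({}_\theta\Sigma^+)^0\cup({}_\theta\Sigma^-)^0$ whenever the first point is on $\psi_0(\R^n)$; it is obtained by tracking, as the second point $y$ moves near the knot, the finitely many first points of $({}_\theta\Sigma^\pm)^0$ paired with it (the points $x_\Sigma^s(y),x_\Sigma^c(y),\ldots$ of the figures below). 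Writing down and controlling this chain is the technical heart of the construction and the main obstacle: the constraint $p_2(c)\in\Sigma^\pm$ is of codimension $n$ and would be destroyed by a generic extension, so the model cannot be chosen carelessly. Once $B$ is fixed over $p_b^{-1}(N_R\times N_R)$ and over the knot-free part of $\mathcal R$, I would extend it over the rest of $\configM$ with no further constraint; the obstruction to doing so with the prescribed total boundary lies in an $H_{n+2}$ of $\configM$ relative to the already-filled region, and vanishes with rational coefficients by the same homological computations that underlie the general existence of propagators (\cite[Section~4.2]{article1}). The whole extension is carried out $T$-equivariantly --- legitimate because the first and third conditions of Definition~\ref{adm-prop} are $T$-stable, the fourth is the $T$-image of the third, and the second restricted to $\mathcal R$ is $T$-symmetric --- which yields $T(B)=B$.

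Finally, for the trivial knot I take $\punct M=\R^{n+2}$, $\Theta=\mathrm{id}$, $\Theta_2=\mathrm{id}$, $\Sigma^\pm=({}_\theta\Sigma^\pm)^0$, and run the same construction using the very same $\Sigma^\pm$-adapted chain over $p_b^{-1}(N_R^0\times N_R^0)$. Because $\tau$ was chosen with $T_x\Theta\circ(\tau_0)_x=\tau_{\Theta(x)}$ on $N_R$, every admissibility condition restricted to $p_b^{-1}(N_R^0\times N_R^0)$ is carried by $\Theta_2$ onto the corresponding condition for $\psi$, so $\Theta_2$ maps $B^R(({}_\theta\Sigma^+)^0,({}_\theta\Sigma^-)^0,\psi_0)\cap p_b^{-1}(N_R^0\times N_R^0)$ onto $B^R(\Sigma_i^+,\Sigma_i^-,\psi)\cap p_b^{-1}(N_R\times N_R)$, as required; and the remaining extension over $C_2(\R^{n+2})\setminus p_b^{-1}(\mathrm{int}(N_R^0)\times\mathrm{int}(N_R^0))$ proceeds exactly as above, now with the simpler homology of $C_2(\R^{n+2})$.
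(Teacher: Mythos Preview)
Your outline follows the paper's overall strategy (construct on a forced region, build an explicit transition piece near the knot, then extend homologically), and your handling of the forced region $\mathcal R$ and of the $\Theta_2$-matching with the trivial knot is correct. There is, however, a genuine gap in the last extension step.

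You write that the obstruction to extending $B$ over the complement of the already-filled region ``lies in an $H_{n+2}$ of $\configM$ relative to the already-filled region, and vanishes with rational coefficients by the same homological computations that underlie the general existence of propagators.'' This is not so. The complement in question deformation retracts onto $W_1=p_b^{-1}(E_1\times E_1)\simeq C_2(E_1)$, and since $E_1$ has the homotopy type of the knot complement (so $H_1(E_1)=\mathbb Q$ and nothing higher), one computes $H_{n+2}(W_1)\cong\mathbb Q$, generated by the restriction $M_W=U\spamb_{|\partial\mathbb D_\mu(2)}$ of the unit tangent bundle over a meridian. The general existence of propagators uses the vanishing of $H_{n+2}(\configM)$, which is a different space; here the relevant group is one-dimensional, and the class $[\delta_W]$ of the leftover boundary cycle is \emph{a priori} a nonzero multiple of $[M_W]$. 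The paper devotes all of Section~\ref{Section43} to showing it is actually zero: one constructs an explicit dual $(n+2)$-chain $D_W=D_1+D_2+D_3$ with $\langle D_W,M_W\rangle=\pm1$, and then checks $\langle D_W,\delta_W\rangle=0$ by a delicate cancellation between a sum of linking numbers (via Lemma~\ref{D3}, giving $(-1)^{n+1}\tfrac{1-\chi(\Sigma')}2$) and the degree of the normal map $x\mapsto\tau_x^{-1}(n_x)$ on an auxiliary Seifert surface $\Sigma'$ (Lemma~\ref{lm419}, a Poincar\'e--Hopf argument giving $\tfrac{\chi(\Sigma')-1}2$). When $n$ is even one even needs to choose $\Sigma'$ with $\mathcal T_{\Sigma'}'(1)=0$ for this to work. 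None of this is visible from the general propagator existence argument, and your sketch does not supply it.

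A secondary point: the ``explicit chain adapted to the standard Seifert surfaces'' you invoke over the transition region is the paper's $B_{Y_1}$ (and its $T$-image $B_{Y_2}$), built in Section~\ref{41} from the curves $D^1(y,-e_1)$, the arcs $\gamma_c,\gamma_s$, and the annuli $A(y)$. Your description (``tracking the finitely many first points \ldots'') is in the right spirit, but the precise shape of these pieces is what produces the cycle $\delta_W$ whose class you must then kill; writing them down is not optional, since the intersection $\langle D_W,\delta_W\rangle$ depends on them.
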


\subsection{Use of admissible propagators to compute \texorpdfstring{$\Z[]$}{Zk}}\label{Section3.2}
\subsubsection{Three reduction lemmas}\label{setting}

Fix the following setting for Section \ref{Section3.2}.
\begin{sett}\label{def-pert}
\begin{itemize}
\item The integer $\degk\geq 2$ is fixed.
\item The real number $R$ of the previous subsection is fixed to some arbitrary value $R\geq k+1$.
\item The numbers $(\theta_i)_{i\in\indices}$ are such that $0\leq\theta_1<\theta_2<\cdots < \theta_{2\degk}<\pi$.
\item For any $i\in\indices$, $(\Sigma_i^\pm)^0= ({}_{\theta_i}\Sigma^\pm)^0$.
\item $\Sigma$ is a fixed Seifert surface for $\psi$.
\item $(\Sigma_i^\pm)_{i\in \indices}$ are pairwise parallel\footnote{They only meet along the knot.} Seifert surfaces for $\psi$, all isotopic to $\Sigma$, 
such that $\Sigma_i^\pm \cap N_{R} = (\Sigma_i^{\pm})^0\cap N_{R}$,
and such that for any $(i,\epsilon)\in\und{2\degk}\times\{\pm\}\setminus\{(1,+)\}$, 
$\Sigma_i^\epsilon\cap E_{1}$ is obtained from $\Sigma_1^+\cap E_{1}$ by pushing it in the positive normal direction, 
so that the order of these surfaces in the positive normal direction is 
$(\Sigma_1^+, \ldots, \Sigma_{2k}^+, \Sigma_1^-, \ldots, \Sigma_{2k}^-)$.
\item With the notations of Lemma \ref{th-prop}, $F_*^0=(A_i, B_i^0)_{i\in\indices}$ is a $\degk$-family of propagators of $(\R^{n+2}, \tau_0)$ in general position for $\psi_0$, such that for any $i\in\indices$, $B_i^0$ is an arbitrarily small perturbation of $B^R((\Sigma^+_i)^0, (\Sigma^-_i)^0,  \psi_0)$.
\item With the notations of Lemma \ref{th-prop}, $F_*=(A_i, B_i)_{i\in\indices}$ is a $\degk$-family of propagators of $(\ambientspace, \tau)$ in general position for $\psi$ such that for any $i\in\indices$, $B_i$ is an arbitrarily small perturbation of $B^R(\Sigma^+_i, \Sigma^-_i,  \psi)$ and $B_i\cap {p_b}^{-1}(N_{R}\times N_{R})$ is the image of $B_i^0\cap {p_b}^{-1}(N_{R}^0\times N_{R}^0)$ under the identification $\Theta_2$. 

\end{itemize}

\end{sett}

For any edge $e$ of a numbered degree $k$ BCR diagram $(\Gamma,\sigma)$ as in Definition \ref{Def-numb}, define the chains $D_{e,\sigma}\subset C_\Gamma(\psi)$ and $D^0_{e,\sigma}\subset C_\Gamma(\psi_0)$ as \[D_{e,\sigma}= \begin{cases} {p_e}^{-1}(A_{\sigma(e)})& \text{if $e$ is internal,} \\ {p_e}^{-1}(B_{\sigma(e)})& \text{if $e$ is external,}\end{cases} \text{ and } D_{e,\sigma}^0=\begin{cases}{p_e}^{-1}(A_{\sigma(e)})&\text{if $e$ is internal,}\\ {p_e}^{-1}(B^0_{\sigma(e)})&\text{if $e$ is external.}\end{cases}\]

\begin{lm}\label{transv1}

If $\Gamma$ has an external edge from an internal vertex to an internal vertex, then, for any numbering $\sigma$, $\bigcap\limits_{f\in E(\Gamma)} D_{f, \sigma}=\emptyset$, and $\bigcap\limits_{f\in E(\Gamma)} D^0_{f, \sigma}=\emptyset$. 
\end{lm}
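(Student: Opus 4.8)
The statement to prove is Lemma \ref{transv1}: if a BCR diagram $\Gamma$ has an external edge $e_0$ joining two internal vertices, then for every numbering $\sigma$ the intersection $\bigcap_{f\in E(\Gamma)} D_{f,\sigma}$ (and similarly $\bigcap_{f} D^0_{f,\sigma}$) is empty. The plan is to exploit the structure of BCR diagrams together with the second defining property of $R$-admissibility (Definition \ref{adm-prop}), which forces the "$y$-endpoint" of a configuration landing in the external propagator $B_i$ to lie on the Seifert surface $\overline{\Sigma^-\cup\Sigma^+}$ when its "$x$-endpoint" lies on the knot $\psi(\R^n)$.

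**Key steps.**

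First, I would recall from Definition \ref{Def-BCR} that an external edge $e_0$ from an internal vertex to an internal vertex is impossible among the five listed local behaviors: an internal vertex has an \emph{incoming} external edge only if it is trivalent (case 2) or bivalent of type 4, and in both cases that incoming external edge is explicitly required to come from a \emph{univalent} vertex, not from an arbitrary internal vertex; moreover cases 3 and 5 have the internal vertex as the \emph{tail} of an external edge whose head is (by cases 1 or 2) an external or trivalent internal vertex. So in fact I should first check whether the hypothesis can even be met — but re-reading Definition \ref{Def-BCR}, an "external edge from an internal vertex to an internal vertex" would need its tail at an internal vertex (cases 3 or 5) and its head at an internal vertex (cases 2 or 4), and cases 2 and 4 demand the head's incoming external edge come from a univalent vertex. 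A univalent vertex is bivalent-free, i.e. has only the one outgoing external edge (case 3), so it is internal but has valence one; the tail of $e_0$ being a univalent internal vertex is consistent with case 3, and the head being a trivalent internal vertex (case 2) or a type-4 bivalent vertex is consistent \emph{provided} the tail is univalent. So such edges \emph{can} occur: $e_0$ is a leg whose head is internal. The point of the lemma is that the Gauss-map / propagator constraints on such a leg are incompatible with the cycle structure.

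The real argument: suppose $c \in \bigcap_f D_{f,\sigma}$. Both endpoints $v, w$ of $e_0$ are internal, so $c(v), c(w)$ lie on $\psi(\R^n)$. Since $p_{e_0}(c)\in B_{\sigma(e_0)}$, and $B_{\sigma(e_0)}$ is a small perturbation of an $R$-admissible propagator, the defining condition on $B\cap\overline{p_b^{-1}(\psi(\R^n)\times\punct M)}$ forces $c(w)$ to lie on $\overline{\Sigma^-_{\sigma(e_0)}\cup\Sigma^+_{\sigma(e_0)}}$. But $\psi(\R^n)\cap(\Sigma^-\cup\Sigma^+)$ — i.e. the knot's intersection with the two pushed-off Seifert surfaces — is empty (the $\Sigma^\pm_i$ have disjoint interiors from each other and from the other surfaces, and by the collar description near the knot $\Sigma^\pm_i\cap N_R$ is a half-hyperplane meeting the knot only along $\psi(\R^n)$ itself, which is the boundary, not the interior; the second admissibility condition lands $p_2(c)$ in the \emph{closure}, hence possibly on $\psi(\R^n)$). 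Here I must be careful: the closure $\overline{\Sigma^-\cup\Sigma^+}$ in $C_1(\ambientspace)$ does contain (a lift of) $\psi(\R^n)$. So the naive contradiction fails, and the correct argument instead uses the first bullet of Definition \ref{adm-prop}: near the knot, inside $p_b^{-1}(N_1\times N_1)$, the propagator is $\tfrac12 G_0^{-1}$ of the two antipodal directions $\pm(\cos\theta_{\sigma(e_0)},\sin\theta_{\sigma(e_0)},\overline 0)$, i.e. $e_0$ behaves like an ordinary direction-constrained edge with direction transverse to the knot's tangent plane $\{0\}^2\times\R^n$. The hard part is then a local/combinatorial argument: tracing the constraint around the cycle. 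Since $e_0$ is a leg with internal head $w$, by case 2 or 4 the vertex $w$ also lies on the cycle with an incoming internal edge $f'$ and outgoing internal edge; internal edges are constrained by internal propagators $A_{\sigma(f')}$, which near the relevant locus are direction constraints in $\s^{n-1}$ on $c_i(\sommetsinternes)\subset\R^n$. Following the chain of internal edges along the cycle, one finds the images $c_i$ of the internal vertices are forced to lie along a fixed affine line in $\R^n$ in prescribed cyclic order, while the leg directions (being transverse, in the full $\R^{n+2}$, with fixed $\overline 0$ component) impose that certain pairs of these points coincide or are ordered incompatibly — giving the contradiction. I expect this cyclic-order / dimension-counting bookkeeping to be the main obstacle, and it is essentially the same mechanism as in the companion analysis of the other diagram types; I would organize it by first isolating the sub-cycle of internal edges containing $w$, then applying the direction constraints edge by edge.

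**The $\psi_0$ case and conclusion.** The argument for $D^0_{f,\sigma}$ is identical, in fact simpler, since for $\psi_0$ the propagators $B_i^0$ are genuine admissible propagators for the trivial knot in $\R^{n+2}$ and the Seifert surfaces are the literal half-hyperplanes $({}_{\theta_i}\Sigma^\pm)^0$, so the direction and incidence constraints are exactly as in the $\R^{n+2}$ model and the same contradiction applies verbatim. Since the conclusion holds for every numbering $\sigma$, the lemma follows. The main thing I would want to get right in the writeup is a clean statement of why a leg with internal head, subject to a generic direction constraint with $\overline 0$ last two coordinates, cannot coexist with the internal-propagator direction constraints propagated around the cycle — keeping the argument at the level of "directions lie in a fixed finite transverse set, hence the internal vertices are collinear in a way the legs forbid" rather than grinding through all five vertex types.
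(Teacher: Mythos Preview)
You have identified the right admissibility condition but then overcomplicate the argument. The paper's proof is one line: since both endpoints $v,w$ of the external edge $e$ are internal, both $c(v)$ and $c(w)$ lie on $\psi(\R^n)\subset N_1$, hence $(c(v),c(w))\in N_1\times N_1$ and the first bullet of Definition \ref{adm-prop} forces $G_0(p_e(c))=\pm(\cos\theta_{\sigma(e)},\sin\theta_{\sigma(e)},\overline 0)\in\s^1\times\{0\}^n$. But the knot locally sits in $\{0\}^2\times\R^n$, so the Gauss direction between two of its points lies in $\{0\}^2\times\s^{n-1}$. These two subsets of $\s^{n+1}$ are disjoint, so already $D_{e,\sigma}=\emptyset$ for this single edge, and hence the full intersection is empty. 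Stability under perturbation follows from compactness.

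You in fact wrote down this contradiction yourself (``direction transverse to the knot's tangent plane $\{0\}^2\times\R^n$'') and then did not use it; instead you declared a ``hard part'' and embarked on tracing internal edges around the cycle. That cycle-tracing machinery is exactly what is needed for the \emph{next} lemma (Lemma \ref{transv2}, where the external edges run between external vertices and one must propagate constraints through a chain), but it is irrelevant here. Your detour through the second admissibility bullet (the Seifert-surface constraint) is also unnecessary for this lemma, and as you correctly noticed it does not by itself give a contradiction since the closure $\overline{\Sigma^-\cup\Sigma^+}$ contains $\psi(\R^n)$. Drop both the Seifert-surface attempt and the cycle-tracing; the proof is the single-edge direction incompatibility you already stated.
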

\begin{proof}
We first ignore the perturbations of the external propagators. 
Let $e= (v,w)$ be an external edge between two internal vertices of a numbered BCR diagram $(\Gamma,\sigma)$. For a configuration $c$ in $ D_{e, \sigma}$, set $c(v)=p_v(c)$ and $c(w)=p_w(c)$, where $p_v$ and $p_w$ are the maps defined in Theorem \ref{cgamma}. Since $v$ and $w$ are internal, we have $G_0(p_e(c))\in\{0\}^2\times\s^{n-1}$.
On the other hand, by Definition \ref{adm-prop} of admissible propagators, since $(c(v), c(w))\in N_1\times N_1$, we have $G_0(p_e(c)) = \pm (\cos(\theta_{\sigma(e)}), \sin(\theta_{\sigma(e)}), \overline 0)$.
Thus, $D_{e,\sigma}=\emptyset$ and $\bigcap\limits_{f\in E(\Gamma)} D_{f, \sigma}=\emptyset$. Similarly, $\bigcap\limits_{f\in E(\Gamma)} D^0_{f, \sigma}=\emptyset$. 

Now, note that the property $D_{e,\sigma}= \emptyset$ is stable under small perturbations 
since the map $p_e$ is continuous and since the propagators are compact. \qedhere
\end{proof}

\begin{lm}\label{transv2}
Let $\Gamma\in \graphes\setminus\{\Gamma_\degk\}$, where $\Gamma_\degk$ is the degree $\degk$ BCR diagram of Figure \ref{Gk}.
For any numbering $\sigma$ of $\Gamma$, $\bigcap\limits_{e\in E(\Gamma)} D_{e, \sigma}=\emptyset$ and $\bigcap\limits_{e\in E(\Gamma)} D^0_{e, \sigma}=\emptyset$. 
\end{lm}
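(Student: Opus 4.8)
Here is how I would prove Lemma~\ref{transv2}.

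The plan is to combine Lemma~\ref{transv1} with a structural classification of the diagram $\Gamma$ and a geometric localization of the vertices of a hypothetical intersection point. First I would apply Lemma~\ref{transv1} to reduce to diagrams $\Gamma$ having no external edge between two internal vertices; in particular this excludes trivalent internal vertices (vertices of type~2 in Definition~\ref{Def-BCR}), since the external edge incident to such a vertex comes, by definition, from a univalent --- hence internal --- vertex. Under this reduction every internal vertex is univalent or bivalent (types 3, 4, 5), every internal edge runs directly from a type-4 vertex to a type-5 vertex, and every external vertex, being of type~1, carries exactly one incoming leg. The structural description of Section~\ref{S.1} (see Figure~\ref{fig-BCR2}) then shows that a diagram with no internal edge at all must be a $\degk$-cycle of type-1 external vertices each carrying one leg, i.e. $\Gamma=\Gamma_\degk$; hence, as $\Gamma\ne\Gamma_\degk$, the diagram $\Gamma$ contains at least one type-5 internal vertex $b$. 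Write $e'=(b,u)$ for the external edge leaving $b$; by the reduction $u$ is external, hence of type~1, hence carries a leg $\ell=(v,u)$ with $v$ internal, and $\sigma(e')\ne\sigma(\ell)$.

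By a compactness argument (spelled out in the last paragraph) it is enough to prove $\bigcap_{f\in\aretes}p_f^{-1}(X_{f,\sigma})=\emptyset$ when each external propagator $B_i$ equals the $R$-admissible propagator $B^R(\Sigma_i^+,\Sigma_i^-,\psi)$ of Lemma~\ref{th-prop}; I assume this from now on, and pin down the image $c(u)$ of a hypothetical $c\in\bigcap_{f\in\aretes}D_{f,\sigma}$. Since $v$ and $b$ are internal, $c(v)$ and $c(b)$ lie on $\psi(\R^n)$, so $p_\ell(c)$ and $p_{e'}(c)$ lie in $B_{\sigma(\ell)}\cap\overline{p_b^{-1}(\psi(\R^n)\times\ambientspace)}$ and $B_{\sigma(e')}\cap\overline{p_b^{-1}(\psi(\R^n)\times\ambientspace)}$ respectively; the second bullet of Definition~\ref{adm-prop} then yields $c(u)\in\overline{\Sigma_{\sigma(\ell)}^+\cup\Sigma_{\sigma(\ell)}^-}$ and $c(u)\in\overline{\Sigma_{\sigma(e')}^+\cup\Sigma_{\sigma(e')}^-}$. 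But the $4\degk$ surfaces $\Sigma_i^\pm$ of Setting~\ref{def-pert} have pairwise disjoint interiors and meet one another only along $\psi(\R^n)$, and near $\infty$ only along the directions tangent to the knot, since there they are linear half-hyperplanes at pairwise distinct angles $\theta_i$; so the closures of any two of these four surfaces intersect exactly in the closure $\overline K$ of $\psi(\R^n)$ in $C_1(\ambientspace)$. As $\sigma(e')\ne\sigma(\ell)$, this forces $c(u)\in\overline K$: either $c(u)\in\psi(\R^n)$, which lies entirely in $N_1$, or $c(u)$ is a unit vector of $\s^{n+1}_\infty$ tangent to the knot.

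The contradiction then comes from the behaviour of the admissible propagators near the knot. In the generic case, $c(u)\in\psi(\R^n)\setminus\{c(v)\}$ with $c(u),c(v)\in N_1$, so by the first bullet of Definition~\ref{adm-prop} the Gauss direction $G_0(p_\ell(c))$ equals $\pm(\cos\theta_{\sigma(\ell)},\sin\theta_{\sigma(\ell)},\overline 0)$; but $c(v)$ and $c(u)$ both have the form $(0,0,\cdot)$ in the coordinates of Notation~\ref{notations}, so that direction lies in $\{0\}^2\times\s^{n-1}$ --- impossible. The remaining configurations lie on boundary strata of $\confignoeud$: those with $c(u)=c(v)$ (and, iterating the same step with $e'=(b,u)$ and the $T$-invariance of $B_{\sigma(e')}$, those where $c(u)$, $c(v)$ and $c(b)$ collide at one point of the knot), and those with $c(u)\in\s^{n+1}_\infty$. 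On each of these the same mechanism applies: the leg $\ell$ and the edge $e'$ force the relevant Gauss directions, unit relative positions, or values of $G_\tau$ at infinity into the normal $2$-plane $\R^2\times\{0\}^n$ at the two distinct angles $\theta_{\sigma(\ell)}\ne\theta_{\sigma(e')}$, while having $c(u),c(v),c(b)$ over the knot forces those same vectors to be tangent to the knot, i.e. into $\{0\}^2\times\R^n$ (using that the relative displacement of two internal vertices always stays tangent to $\psi(\R^n)$); since $\R^2\times\{0\}^n$ and $\{0\}^2\times\R^n$ meet only in $0$, and the two normal directions are linearly independent, every such configuration is excluded. I expect this stratified near-knot bookkeeping --- over the faces at infinity and the multi-collision faces of $\confignoeud$, where one must examine the admissible propagators of Definition~\ref{adm-prop} away from the bulk $N_1\times N_1$ region --- to be the main difficulty; the rest is the short observation that $c(u)$ is pinned to two disjoint Seifert surfaces and hence to the knot.

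Finally I would transfer the conclusion. Since $\confignoeud$ is compact, the maps $p_f$ are continuous, and the admissible propagators $B^R(\Sigma_i^+,\Sigma_i^-,\psi)$ are compact, the emptiness of $\bigcap_{f\in\aretes}p_f^{-1}(X_{f,\sigma})$ for these propagators persists under the arbitrarily small perturbations of Setting~\ref{def-pert}, exactly as at the end of the proof of Lemma~\ref{transv1}; this gives $\bigcap_{f\in\aretes}D_{f,\sigma}=\emptyset$. For the trivial knot the identical argument applies with the linear surfaces $({}_{\theta_i}\Sigma^\pm)^0$ and the propagators $B^R(({}_{\theta_i}\Sigma^+)^0,({}_{\theta_i}\Sigma^-)^0,\psi_0)$, whose behaviour near $N_R^0$ agrees with that of the $B_i$ near $N_R$ through $\Theta_2$ by Lemma~\ref{th-prop}; hence $\bigcap_{f\in\aretes}D^0_{f,\sigma}=\emptyset$ as well.
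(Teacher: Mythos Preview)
Your argument is correct and uses the same core mechanism as the paper's proof: admissibility forces an external vertex onto $\overline{\psi(\R^n)}$ (as the intersection of the closures of two distinct parallel Seifert surfaces), and then a Gauss direction must lie simultaneously in the normal plane $\R^2\times\{0\}^n$ and in the tangent space $\{0\}^2\times\R^n$, which is impossible. The difference is tactical. You work locally at the first external vertex $u$ after a type-5 internal vertex $b$, using only the two edges $e'=(b,u)$ and $\ell=(v,u)$; the paper instead takes the full maximal run $a\to w_1\to\cdots\to w_p\to b$ of consecutive external vertices (your $b$ is its $a$, your $u$ its $w_1$), shows by induction that every $c(w_i)\in\overline{\psi(\R^n)}$, and obtains the contradiction from the telescoping identity $c_t(b)-c_t(a)=\sum_i(c_t(x_i)-c_t(x_{i-1}))$, which lies in $\{0\}^2\times\R^n$ since $a,b$ are internal but whose summands all have normal-plane Gauss direction. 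Your route avoids both the induction along the chain and the summation; the paper's route buys that the ``along-knot'' side of the dichotomy is automatic for every approximating configuration $c_t$ (because $a$ and $b$ are \emph{internal} vertices, hence on the knot by the definition of $\confignoeud$ itself rather than by a derived constraint), so the boundary-stratum casework you correctly flag as the main difficulty --- in particular the triple collision $c(u)=c(v)=c(b)$ --- is absorbed into a single limit argument rather than handled face by face.
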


\begin{proof}
Fix a numbering $\sigma$.
If $\Gamma$ has only internal vertices, conclude with Lemma \ref{transv1}. 
If $\Gamma\neq \Gamma_\degk$ and $\sommetsexternes\neq\emptyset$, then $\Gamma$ contains a maximal sequence $(w_1,\ldots, w_p)$ of consecutive external vertices with $p\in\und{k-1}$ like in Figure \ref{Figtr2}. Let $a$ be the bivalent vertex such that there is an external edge from $a$ to $w_1$ and let $b$ be the bivalent vertex such that there is an external edge from $w_p$ to $b$, and note that $a\neq b$.
\begin{figure}[H]
\centering
\begin{tikzpicture}[yscale=-1]
\edgi (-.9,0)-- (-.1,0);
\fill (0,0) \crc (8,0) \crc; 
\draw (0,0) ++ (60:.35) node {$a$} (8,0) ++ (60:.35) node {$b$};
\draw (1,0) \crc (4,0) \crc (7,0) \crc;
\draw (1,0) ++ (30:.45) node {$w_1$} (4,0) ++ (30:.45) node {$w_i$} (7,0) ++ (30:.45) node {$w_p$};
\fill (1,1) \crc (4,1) \crc (7,1) \crc;
\draw (1,1) ++ (30:.35) node {$v_1$} (4,1) ++ (30:.35) node {$v_i$} (7,1) ++ (30:.35) node {$v_p$};
\edge (0.1, 0) -- (0.9, 0);
\draw (0.5, -.3) node{$f_1$};
\draw[dashed] (1.1,0)--(1.8,0);
\draw (1.5, -.3) node{$f_2$};
\draw[dotted] (2.3, 0)-- (2.8, 0);
\draw[dotted] (5.3, 0)-- (5.8, 0);
\edge (3.1, 0)--(3.9,0);
\draw (3.5, -.3) node{$f_i$};
\edge (4.1, 0)--(4.9,0);
\draw (4.5, -.3) node{$f_{i+1}$};
\edge (1, 0.9) -- (1, 0.1);
\draw (.7, .5) node{$\ell_1$};
\edge (4, 0.9) -- (4, 0.1);
\draw (3.7, .5) node{$\ell_i$};
\edge (7, 0.9) -- (7, 0.1);
\draw (6.7, .5) node{$\ell_p$};
\edge (6.1, 0)--(6.9,0);
\draw (6.5, -.3) node{$f_p$};
\edge (7.1, 0)--(7.9,0);
\draw (7.5, -.3) node{$f_{p+1}$};
\draw(8.1,0)-- (8.9,0);
\end{tikzpicture}
\caption{Notations for Lemma \ref{transv2}}
\label{Figtr2}
\end{figure}
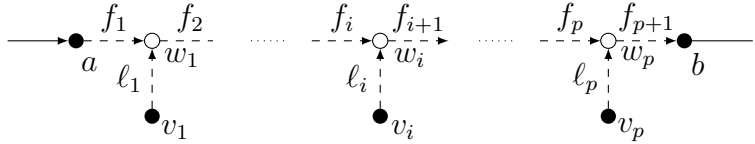

As in the previous proof, we first ignore the perturbations.
Let $c$ in $\bigcap\limits_{e\in E(\Gamma)} D_{e, \sigma}$. For any $i\in\und p$, since $p_{\ell_i}(c) \in B_{\sigma(\ell_i)}$ and $v_i$ is internal, $c(w_i)=p_{w_i}(c)$ lies in the closure $\overline{\Sigma^+_{\sigma(\ell_i)}\cup\Sigma^-_{\sigma(\ell_i)}}$ of $\Sigma^+_{\sigma(\ell_i)}\cup\Sigma^-_{\sigma(\ell_i)}$ in $ C_1(\ambientspace)$. Similarly, since $a$ is internal, $c(w_1)\in \overline{\Sigma^+_{\sigma(f_1)}\cup\Sigma^-_{\sigma(f_1)}}$. Then, $c(w_1)$ lies in the closure $\overline{\psi(\R^n)}$ of $\psi(\R^n)$ in $C_1(\ambientspace)$. The same argument now proves that $c(w_2)$ is in $\overline{\psi(\R^n)}$. By induction, $c(w_i)$ lies in $\overline{\psi(\R^n)}$ for any $i\in\und p$.

By construction of $C_{\Gamma}(\psi)$, $c$ is the limit of configurations $(c_t)_{t\in ]0,1]}$ 
of $C_{\Gamma}^0(\psi)$ when $t$ approaches $0$. 
Since $c(w_i)$ is in $\overline{\psi(\R^n)}$ for any $i\in\und p$ when $t=0$, 
we can assume that $c_t$ maps all the vertices $(w_i)_{i\in\und p}$ in $N_1\subset \R^{n+2}$, 
for any $t\in]0,1]$. 
For any $t\in ]0,1]$, the vector $c_t(b)-c_t(a)$ is the sum of the vectors 
$c_t(w_1)-c_t(a), c_t(w_2)-c_t(w_1), \ldots, c_t(w_p)-c_t(w_{p-1})$, and $c_t(b)-c_t(w_p)$. 
Since the propagators are admissible, 
and since $c(a)$, $c(b)$ and the $(c(w_i))_{i\in\und p}$ are in $N_1$, 
$G_0(c_t(a), c_t(b))$ is a linear combination 
of the vectors $((\cos(\theta_{\sigma(f_i)}), \sin(\theta_{\sigma(f_i)}), \overline 0))_{i\in\und{p+1}}$. 
Thus, $G_0(c_t(a), c_t(b))$ is in $\s^1\times \{0\}^n$ for any $t\in ]0,1]$. 
But since $a$ and $b$ are internal, for any $t\in ]0,1]$, 
$G_0(c_t(a), c_t(b))$ reads $(0, 0, \overline x_t)$ for some $\overline x_t\in \s^{n-1}$. 
This is a contradiction, so $\bigcap\limits_{e\in E(\Gamma)} D_{e, \sigma}=\emptyset$. 
Similarly, $\bigcap\limits_{e\in E(\Gamma)} D^0_{e, \sigma}=\emptyset$.

This property is stable under small enough perturbations since the propagators and configuration spaces are compact
and since the maps $p_e$ are continuous.
\end{proof}
The two above lemmas allow us to reduce our study to the graph $\Gamma_k$. 
The following lemma will help us to study 
the contribution of $\Gamma_k$ in the next subsection.
\begin{lm}\label{mainlemma}

Let $\Gamma_k$ be the BCR diagram of Figure \ref{Gk}.
If $c$ is a configuration of $\bigcap\limits_{e\in E(\Gamma_k)} D_{e,\sigma}$ (resp. of $\bigcap\limits_{e\in E(\Gamma_k)}D^0_{e,\sigma}$), and if there exists $j\in\zk$ such that $c(w_j)\in E_{k+1}$ (resp. $c(w_j)\in E_{k+1}^0$), then $c(w_i)\not\in N_2$ for any $i\in\zk$.
\end{lm}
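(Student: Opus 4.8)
The plan is to argue by the same kind of "edge direction constraint" propagation used in Lemmas \ref{transv1} and \ref{transv2}, now exploiting the graph $\Gamma_\degk$'s cyclic structure and the explicit local form of the admissible propagators on $p_b^{-1}(N_r\times E_{r+1})$. Suppose $c\in\bigcap_{e\in E(\Gamma_\degk)}D_{e,\sigma}$ and that $c(w_j)\in E_{\degk+1}$ for some $j$, while, for contradiction, $c(w_{i_0})\in N_2$ for some $i_0\in\zk$. The external edges $f_1,\dots,f_\degk$ of the cycle join $w_j$ to $w_{\plusun j}$, and since $c$ lies on the preimage of the admissible propagators $B_{\sigma(f_j)}$, for each $j$ the pair $(c(w_j), c(w_{\plusun j}))$ is constrained by Definition \ref{adm-prop}: as soon as one of $c(w_j), c(w_{\plusun j})$ lies in $N_r$ and the other in $E_{r+1}$ (for a suitable $r\in]1,\degk]$), the configuration lands on $\frac12(\mur[r]\times\Sigma(r+1) - \overline{p_b^{-1}(\DR[r]\times E_{r+1})})$, which forces $c(w_j)\in \DR[r]\cup \mur[r]$, i.e.\ $c(w_j)$ lies in the two-plane $\R^2\times\{\overline 0\}$, or its image in the closure. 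Likewise the legs $\ell_j$ force $c(w_j)\in\overline{\Sigma^+_{\sigma(\ell_j)}\cup\Sigma^-_{\sigma(\ell_j)}}$, which on $N_R^0$ is the half-hyperplane $\{\pm(r\cos\theta_{\sigma(\ell_j)},r\sin\theta_{\sigma(\ell_j)},\overline x)\}$.

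The key step is a counting/pigeonhole argument along the cycle. Walking around the cycle $w_1,w_2,\dots,w_\degk,w_1$, there are $\degk$ "wall radii'' available, namely $r=2,3,\dots,\degk,$ and the single-plane constraint coming from an edge $f_j$ with one endpoint in $N_r$ and the other in $E_{r+1}$ can only fire if the two consecutive vertices $w_j,w_{\plusun j}$ lie on opposite sides of the wall $\partial N_r$ (more precisely, one in $N_r$, the other outside $N_{r+1}$). Since $c(w_{i_0})\in N_2$ and $c(w_j)\in E_{\degk+1}$, and $R\ge \degk+1$, as we travel from $w_{i_0}$ to $w_j$ along the cycle the sequence of "radial levels'' $\rho(w_1),\dots$ must cross all the walls $\partial N_2,\dots,\partial N_{\degk+1}$, but there are only $\degk$ edges $f_1,\dots,f_\degk$ and only $\degk-1$ vertices strictly between the "innermost'' and "outermost'' ones in either arc; a careful accounting shows that at least one edge $f_j$ has both endpoints far from any wall, yet is still forced by admissibility to put $c(w_j)$ in the plane $\R^2\times\{\overline 0\}$. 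Intersecting that with the leg constraint $c(w_j)\in\overline{\Sigma^+_{\sigma(\ell_j)}\cup\Sigma^-_{\sigma(\ell_j)}}=\{\pm(\rho\cos\theta_{\sigma(\ell_j)},\rho\sin\theta_{\sigma(\ell_j)},\overline x)\}$ only leaves the line $\R\cdot(\cos\theta_{\sigma(\ell_j)},\sin\theta_{\sigma(\ell_j)},\overline 0)$, and then the same rigidity applied to the neighbouring edge $f_{\plusun j}$ (whose other endpoint $w_{\plusun{\plusun j}}$ is similarly pinned) forces consecutive direction vectors to be parallel, which propagates around the whole cycle — contradicting that $c(w_j)$ and $c(w_{i_0})$ lie at genuinely different radial scales.

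I would carry this out in the following order. First, record precisely, from Definition \ref{adm-prop}, the three mutually exclusive possibilities for $(c(w_j),c(w_{\plusun j}))$: both in $N_1$ (direction $\pm(\cos\theta_{\sigma(f_j)},\sin\theta_{\sigma(f_j)},\overline 0)$), both in a region where the propagator is the perturbed diagonal-type chain, or one in $N_r$ and one in $E_{r+1}$ with the product form above; note that a small perturbation does not destroy the conclusion because the relevant sets are closed and the $p_e$ are continuous, exactly as at the end of Lemmas \ref{transv1} and \ref{transv2}. Second, pass to an approximating family $(c_t)_{t\in]0,1]}$ of honest configurations in $C^0_{\Gamma_\degk}(\psi)$, so that the vectors $c_t(w_{\plusun j})-c_t(w_j)$ are genuine vectors in $\R^{n+2}$ that we may add around the cycle (their sum is $0$). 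Third, run the pigeonhole/telescoping argument on radial scales $x_1^2+x_2^2$ and $\|x\|$ to locate an edge both of whose endpoints are interior to a single region $N_r\setminus N_{r-1}$ far from the knot, and derive the planarity of the consecutive direction vectors. Fourth, close the loop: summing planar, pairwise-parallel (after the leg constraints) vectors whose sum is zero forces all the $c_t(w_i)$ to stay at a bounded scale, contradicting $c(w_j)\in E_{\degk+1}$. The statement for $\psi_0$ is identical since, by the last sentence of Lemma \ref{th-prop} and Setting \ref{def-pert}, the propagators $B_i^0$ and $B_i$ agree under $\Theta_2$ on $p_b^{-1}(N_R^0\times N_R^0)$, and outside $N_R$ the trivial knot is literally the standard one.

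The main obstacle I anticipate is the bookkeeping in the pigeonhole step: one must be careful that the "wall'' radii defining $N_2,\dots,N_{\degk+1}$ are chosen so that an edge $f_j$ whose endpoints straddle a wall really does put $c(w_j)$ into $\mur[r]\cup\DR[r]$ (this is where $R\ge \degk+1$ from Setting \ref{def-pert} is used), and that the cyclic walk really is forced to visit all scales — it is conceivable that the vertices oscillate, so the clean statement is that there exist two \emph{consecutive} vertices $w_j,w_{\plusun j}$ with $c(w_j)\in N_2$ and $c(w_{\plusun j})\in E_{\degk+1}$ or vice versa, and that edge alone, being of the "one endpoint in $N_2$, other in $E_3$ — hence in $N_r\times E_{r+1}$ for $r=2$'' type, already forces the plane constraint; combining it with the leg and with the neighbour then suffices, so the full telescoping may be avoidable. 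Isolating that minimal combinatorial observation is the crux.
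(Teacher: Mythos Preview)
Your plan contains the right ingredients but assembles them in a far more complicated way than necessary, and the complicated parts (the pigeonhole step, the telescoping of direction vectors) are not clearly correct as you have stated them. The paper's proof is a three-line backward induction that you are one observation away from.

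Here is what you are missing. You correctly note that the leg $\ell_j$ forces $c(w_j)\in\overline{\Sigma^+_{\sigma(\ell_j)}\cup\Sigma^-_{\sigma(\ell_j)}}$ for \emph{every} $j$, and that if $(c(w_j),c(w_{\plusun j}))\in B_{\sigma(f_j)}\cap p_b^{-1}(N_r\times E_{r+1})$ then either $c(w_j)\in\mur[r]$ and $c(w_{\plusun j})\in\Sigma_{\sigma(f_j)}(r+1)$, or $c(w_j)\in L_{\theta_{\sigma(f_j)}}(r)$. But you then discard the constraint $c(w_{\plusun j})\in\Sigma_{\sigma(f_j)}(r+1)$ in the first branch and try to propagate a weak ``planarity'' conclusion instead. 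Keep that constraint: it contradicts the leg constraint on $w_{\plusun j}$, because the parallel Seifert surfaces $\Sigma_{\sigma(f_j)}(r+1)$ and $\Sigma_{\sigma(\ell_{\plusun j})}(r+1)$ are disjoint. And in the second branch, $L_{\theta_{\sigma(f_j)}}(r)$ is disjoint from $\overline{\Sigma^\pm_{\sigma(\ell_j)}}$ since $\theta_{\sigma(f_j)}\neq\theta_{\sigma(\ell_j)}$, contradicting the leg constraint on $w_j$. So \emph{both} branches are impossible: if $c(w_{\plusun j})\in E_{r+1}$ then $c(w_j)\notin N_r$.

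This single step is the whole proof. Relabel so that $c(w_k)\in E_{k+1}$; the step with $r=k$ gives $c(w_{k-1})\notin N_k$, hence $c(w_{k-1})\in E_k$; then $r=k-1$ gives $c(w_{k-2})\notin N_{k-1}$; and so on down to $c(w_1)\notin N_2$. Since $N_2\subset N_{i+1}$ for every $i\ge 1$, each $c(w_i)\notin N_{i+1}$ implies $c(w_i)\notin N_2$. No pigeonhole, no summing of vectors, no approximating family $c_t$ is needed. Your ``minimal combinatorial observation'' at the end --- that some two \emph{adjacent} vertices sit at scales $N_2$ and $E_{k+1}$ --- is false in general (the vertices may step gradually through intermediate scales), and it is precisely the one-radius-at-a-time induction that replaces it. The perturbation stability is exactly as you say, by compactness.
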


\begin{proof}It suffices to prove the statement on $\bigcap\limits_{e\in E(\Gamma_k)} D_{e,\sigma}$, the proof for $\bigcap\limits_{e\in E(\Gamma_k)} D^0_{e,\sigma}$ is the same.
Let us first ignore the perturbations, and assume without loss of generality that $j=k$, so that $c(w_k)\in E_{k+1}$.

Let us prove that $c(w_{k-1})\not\in N_k$. Since $v_{k-1}$ and $v_k$ are internal, 
$c(w_{k-1}) \in \overline{\Sigma^+_{\sigma(\ell_{k-1})}  \cup \Sigma^-_{\sigma(\ell_{k-1})}     }$
and $c(w_k)\in \Sigma_{\sigma(\ell_k)}(k+1)$.
Since the surfaces $\Sigma_{\sigma(f_{k-1})}(k+1)$ and $\Sigma_{\sigma(\ell_k)}(k+1)$ are disjoint,
$c(w_k) \not\in \Sigma_{\sigma(f_{k-1})}(k+1)$.
Since $\overline{L_{\theta_{\sigma(f_{k-1})}}(k)}$ and $\overline{\Sigma^+_{\sigma(\ell_{k-1})}  \cup \Sigma^-_{\sigma(\ell_{k-1})}     }$ 
do not intersect in $C_1(\ambientspace)$,
$c(w_{k-1})\not\in \overline{L_{\theta_{\sigma(f_{k-1})}}(k)}$, 
so that $p_{f_{k-1}}(c)= (c(w_{k-1}), c(w_k)) \not\in B_{\sigma(f_{k-1})} \cap (N_k\times E_{k+1})$
and $c(w_{k-1}) \not\in N_k$.
By induction, we prove that $c(w_{i})\not\in N_{i+1}$ for any $i\in \zk$.

Since the set ${p_{w_k}}^{-1}(E_{k+1})\cap\left( \bigcup\limits_{j\in \zk}{p_{w_j}}^{-1}(N_2)\right)$ is compact, the property of the lemma is stable under small perturbations and the lemma is therefore true for small enough perturbations.\qedhere

\end{proof}

\subsubsection{A first formula for \texorpdfstring{$Z_k$}{Zk}}\label{Subs333}

We prove the following lemma until the end of this subsection.
 \begin{lm}\label{contributionGamma1}Label $\Gamma_k$ as in Figure \ref{Gk}.
Fix a pair $(\Ba, \Bb)$ of dual bases of $\overline{H}_*(\Sigma_1^+)$ and set $\Ba=([\bb_i^\dimd])_{\dimd\in\und n, i\in \und{b_\dimd}}$ and $\Bb=([\ba_i^\dimd])_{\dimd\in\und n, i \in \und{b_\dimd}}$. For any $i \in \zk$, set \[\plusun{i}=\begin{cases} i+1 & \text{if $i<k$,}\\ 1 & \text{if $i=\degk$.}\end{cases}\]For any $i\in\zk$, any numbering $\sigma$ of $\Gamma_k$, and any $\hat\epsilon\colon \zk \rightarrow \{\pm1\}$, set $\sigma_{\hat\epsilon}(\ell_i)= \sigma(\ell_i) + (1-\hat\epsilon(i))k$ and \[\epsilon_{\hat\epsilon,\sigma}(i)= \begin{cases}
+1 & \text{if $\sigma_{\hat\epsilon}(\ell_i)< \sigma_{\hat\epsilon}(\ell_{\plusun{i}})$,}\\ -1 & \text{otherwise.}\end{cases}\]

The difference $Z_k(\psi) = Z^{F_*}_k(\psi)-Z^{F_*^0}_k(\psi_0) $ reads\footnote{In the even-dimensional case, the formula holds for $Z^{F_*}_k(\psi)-Z^{F_*^0}_k(\psi_0)$, 
regardless of Theorem \ref{thu*}, which implies that this quantity is $Z_k(\psi)$.}
\[ \Z[](\psi) = \frac1{k(2k)!2^k}\sum\limits_{\sigma\in \mathrm{Num}(\Gamma_k)}\sum\limits_{\hat\epsilon\colon \und k \rightarrow \{\pm\}}\sum\limits_{\dimd\in\und n}\sum\limits_{p \colon \zk\rightarrow\underline{b_\dimd}}
(-1)^{d+k+n}
 \prod\limits_{j\in\zk} \lk\left(\bb_{p(j)}^\dimd, (\ba_{p(\plusun{j})}^{n+1-\dimd})^{\epsilon_{\hat\epsilon,\sigma}(j)}\right)  ,\]
where $\mathrm{Num}(\Gamma_k)$ denotes the set of numberings of $\Gamma_k$.
\end{lm}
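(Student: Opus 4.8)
\emph{Step 1: reduction to $\Gamma_k$.} By Lemma \ref{transv2}, $\bigcap_{e\in E(\Gamma)}D_{e,\sigma}=\bigcap_{e\in E(\Gamma)}D^0_{e,\sigma}=\emptyset$ for every $\Gamma\in\graphes\setminus\{\Gamma_k\}$ and every numbering, so $I_{F_*}(\Gamma,\sigma,\psi)=I_{F_*^0}(\Gamma,\sigma,\psi_0)=0$ for all such $(\Gamma,\sigma)$ and only the numbered diagrams isomorphic to $(\Gamma_k,\sigma)$ contribute to $Z_k^{F_*}(\psi)$ and to $Z_k^{F_*^0}(\psi_0)$. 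The group $\mathrm{Aut}(\Gamma_k)$ is the cyclic group of order $k$ generated by the simultaneous rotation $v_j\mapsto v_{\plusun j}$, $w_j\mapsto w_{\plusun j}$, $f_j\mapsto f_{\plusun j}$, $\ell_j\mapsto\ell_{\plusun j}$ (a rotation fixing all the $f_j$ is the identity), and it acts freely on the $(2k)!$ numberings of $\Gamma_k$; hence each isomorphism class of numbered diagram supported on $\Gamma_k$ is represented by exactly $k$ numberings. Since $I_{F_*}(\Gamma,\sigma,\psi)$ depends only on the isomorphism class of $(\Gamma,\sigma)$ (this is built into the definition of $Z_k^{F_*}$), we obtain
\[Z_k^{F_*}(\psi)-Z_k^{F_*^0}(\psi_0)=\frac{1}{k(2k)!}\sum_{\sigma\in\mathrm{Num}(\Gamma_k)}\bigl(I_{F_*}(\Gamma_k,\sigma,\psi)-I_{F_*^0}(\Gamma_k,\sigma,\psi_0)\bigr),\]
so it remains to evaluate the inner difference for a fixed $\sigma$.

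\emph{Step 2: structure of the configurations and the cancellation.} Fix $\sigma$. Each $v_j$ being internal, $p_{v_j}(c)\in\psi(\R^n)$ for any $c\in\bigcap_{e}D_{e,\sigma}$; feeding this into the admissibility of $B_{\sigma(\ell_j)}$ along the leg $\ell_j$ (first and second bullets of Definition \ref{adm-prop}) forces $p_{w_j}(c)$ onto one of the two parallel surfaces $\Sigma^{\pm}_{\sigma(\ell_j)}$, and $p_{v_j}(c)$ is then determined by $p_{w_j}(c)$. Recording which surface through a sign $\hat\epsilon(j)\in\{\pm\}$, and letting $\sigma_{\hat\epsilon}(\ell_j)$ be the rank of that surface in the stacking order $(\Sigma_1^+,\dots,\Sigma_{2k}^+,\Sigma_1^-,\dots,\Sigma_{2k}^-)$ fixed in Setting \ref{def-pert}, explains the sum over $\hat\epsilon\colon\zk\to\{\pm\}$ and the integers $\sigma_{\hat\epsilon}(\ell_j)$. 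The identification $\Theta_2$ of Lemma \ref{th-prop} carries the configurations all of whose vertices stay in $N_R$ bijectively, with equal signs and weights, onto those of $C_{\Gamma_k}(\psi_0)$ all of whose vertices stay in $N_R^0$ (on $p_b^{-1}(N_R\times N_R)$ the propagators $B_i$ and $B_i^0$ correspond through $\Theta_2$, by Setting \ref{def-pert}, and $\Gamma_k$ has no internal edges), so these cancel in the difference. For every remaining configuration some $w_j$ lies in $E_R\subset E_{k+1}$, hence Lemma \ref{mainlemma} (applicable since $R\geq k+1$) forces every $w_i$ into $E_2$, i.e.\ onto the topologically non-trivial part of a Seifert surface.

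\emph{Step 3: the linking-number count.} For such a surviving configuration, each cycle-edge constraint $(p_{w_j}(c),p_{w_{\plusun j}}(c))\in B_{\sigma(f_j)}$ is analysed via the product description of $R$-admissible propagators in the regions $N_r\times E_{r+1}$ and $E_{r+1}\times N_r$ (Definition \ref{adm-prop}), which reduces it, after projecting along the wall factor, to an incidence between the cycle swept by $w_j$ on $\Sigma_{\sigma(\ell_j)}$ and a push-off near $w_{\plusun j}$ of a complementary-dimensional cycle, the push-off direction being dictated by whether $w_j$'s surface lies below or above $w_{\plusun j}$'s surface in the stacking order, i.e.\ by $\epsilon_{\hat\epsilon,\sigma}(j)$. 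Expanding these cycles in the dual bases $\Ba,\Bb$ of $\overline H_*(\Sigma_1^+)$ produces the sums over $d\in\und n$ and $p\colon\zk\to\und{b_d}$, and, invoking Lemma \ref{lk} to convert external-propagator incidence numbers into linking numbers, the algebraic count around the cycle becomes $\prod_{j\in\zk}\lk\bigl(\bb_{p(j)}^d,(\ba_{p(\plusun j)}^{n+1-d})^{\epsilon_{\hat\epsilon,\sigma}(j)}\bigr)$. The coefficient $\frac1{2^k}$ collects the coefficient $\frac12$ carried by each of the $k$ propagators $B_{\sigma(f_j)}$ on the relevant pieces, while the leg edges contribute only the choice of surface already recorded by $\hat\epsilon$; the global sign $(-1)^{d+k+n}$ comes from an orientation comparison combining the orientation of $C_{\Gamma_k}(\psi)$ of Lemma \ref{orex}, the $(-1)^{(n-1)k}$ of the general-position convention of Section \ref{confspace}, the signs $(-1)^{d+1}$ of Lemma \ref{lk}, and the orientation conventions for the Seifert matrices. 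Summing over $\sigma$ and dividing by $k(2k)!$ gives the formula. (For $\psi_0$ with its contractible Seifert surface $\overline H_*=0$, so the subtracted term is automatically $0$, consistently with $Z_k^{F_*^0}(\psi_0)=0$.)

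\emph{Expected main obstacle.} The delicate part is Step 3 together with the signs: one must show that, once the legs are resolved, the simultaneous intersection $\bigcap_e D_{e,\sigma}$ genuinely factorises as a transfer-matrix-type product of linking numbers, that no boundary strata of the compactification $C_{\Gamma_k}(\psi)$ contribute (here the dimension count $\dim C_{\Gamma_k}(\psi)=2k(n+1)=\sum_e\codim D_{e,\sigma}$ is essential), and that the orientation of $C_{\Gamma_k}(\psi)$, transported through the disk/Seifert-surface decomposition of the admissible propagators, yields exactly $(-1)^{d+k+n}$; keeping all orientations coherent throughout is the most technical point.
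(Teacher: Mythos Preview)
Your Steps 1 and 2 are correct and match the paper's argument (Lemmas \ref{transv2}, \ref{lmenk}, \ref{mainlemma}, and the opening of Lemma \ref{exprY}). The genuine gap is in Step 3. After Lemma \ref{mainlemma} has pushed every $w_j$ into $E_2$, the pair $(c(w_j),c(w_{\plusun j}))$ lives in $p_b^{-1}(E_2\times E_2)$, and on that region Definition \ref{adm-prop} says \emph{nothing} about $B_{\sigma(f_j)}$: the product description $\frac12\bigl(\mathbb D_\mu(r)\times\Sigma(r+1)-\overline{p_b^{-1}(L_\theta(r)\times E_{r+1})}\bigr)$ is only prescribed on $N_r\times E_{r+1}$ (and its $T$-image), where one point is near the knot. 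So you cannot ``project along the wall factor'' at a configuration where both endpoints are far from the knot, and there is no direct mechanism turning the edge constraint into a linking number of cycles on a single surface.

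The paper circumvents this with a homological argument (Lemmas \ref{exprY}--\ref{formule1}). First the leg constraints are absorbed into a diffeomorphism $\phi\colon\bigcap_jD^{(2)}_{\ell_j,\sigma}\to Y(\sigma)=\prod_j\Sigma_{\sigma(\ell_j)}(2)$, leaving only chains $P_{i,\sigma}=\pi_i^{-1}\bigl(B_{\sigma(f_i)}\cap Y_i(\sigma)\bigr)$. These have boundary, so their intersection number is not homological. The key step is to glue each $\Sigma_{\sigma(\ell_j)}(2)$ to its trivial-knot counterpart along their common boundary, obtaining closed manifolds $S_{\sigma(\ell_j)}$ (Lemma \ref{hpi}), and to form genuine \emph{cycles} $\overline{P_{i,\sigma}}=P_{i,\sigma}-P_{i,\sigma}^0+\widehat{P_{i,\sigma}}$ by subtracting the $\psi_0$-chain and capping with a knot-independent piece. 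Only then does Lemma \ref{lk} apply: pairing $[\overline{B_{i,\sigma}}]$ against $a_p^d\times z_q^{n+1-d}$ yields the linking numbers that appear as K\"unneth coefficients (Lemma \ref{decomp-prop}), and the intersection $\langle\overline{P_{1,\sigma}},\ldots,\overline{P_{k,\sigma}}\rangle$ becomes the transfer-matrix product (Lemma \ref{interpii}). A separate argument (Lemma \ref{formule1}) shows that the cross terms involving $P_{i,\sigma}^0$ or $\widehat{P_{i,\sigma}}$, as well as the $d\in\{0,n+1\}$ terms of $R^B_{i,\sigma}$, are knot-independent and hence cancel after evaluation on $\psi_0$. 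The sign $(-1)^{d+k+n}$ is the concatenation of the explicit signs in Lemmas \ref{orex}, \ref{exprY}, \ref{decomp-prop}, \ref{interpii}; it is not recoverable from the ingredients you list without actually performing those computations.
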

As an immediate corollary of Theorem \ref{conn-sum}, $Z_k(\psi_0)= 0$.
For any $(\Gamma, \sigma)\in \graphesnum$, set $\Delta_{\Gamma,\sigma}\Z[]= I_{F_*}(\Gamma,\sigma, \psi) - I_{F_*^0}(\Gamma,\sigma,\psi_0)$, so that \[\Z[](\psi) = \Z[](\psi) -\Z[](\psi_0) = \frac1{(2k)!}\sum\limits_{(\Gamma,\sigma)\in \graphesnum}\Delta_{\Gamma,\sigma}\Z[].\] 
Lemma \ref{transv2} implies that $\Delta_{\Gamma,\sigma}\Z[]=0$ if $\Gamma$ is not isomorphic to $\Gamma_k$. 
Because of the symmetry of $\Gamma_k$, each numbered graph $(\Gamma,\sigma)$ where $\Gamma$ is isomorphic to $\Gamma_k$ yields $k$ numberings of $\Gamma_k$, so that 
\[\Z[](\psi) = \frac1{k(2k)!} \sum\limits_{\sigma\in\Num(\Gamma_k)} \Delta_{\Gamma_k,\sigma}\Z[].
\]

Since $R\geq \degk+1$, $B_i\cap {p_b}^{-1}(N_{\degk+1}\times N_{\degk+1})= \Theta_2(B_i^0\cap{p_b}^{-1}(N_{\degk+1}^0\times N_{\degk+1}^0))$. This yields the following lemma.
\begin{lm}\label{lmenk}
Let $\langle\cdot,\ldots, \cdot\rangle_X$ denote the algebraic intersection of several chains of a manifold $X$ such that their codimensions add up to $\dim(X)$.
Let $X_1(\Gamma_\degk)$ (respectively $X^0_1(\Gamma_\degk))$ denote the subset of $C_{\Gamma_\degk}(\psi)$ (respectively $C_{\Gamma_\degk}(\psi_0)$), whose elements are the configurations that map at least one vertex to $E_{\degk+1}$. 
For any edge $e$ of $\Gamma_k$, set $ D_{e, \sigma}^{(1)}= D_{e,\sigma} \cap X_1(\Gamma_\degk)$ and $D_{e, \sigma}^{(1),0}= D_{e,\sigma}^0 \cap X^0_1(\Gamma_\degk)$. 

The chains $(D_{e,\sigma}^{(1)})_{e\in E(\Gamma_\degk)}$ and $(D^{(1),0}_{e,\sigma})_{e\in E(\Gamma_\degk)}$ are transverse, and
\begin{align*}\Delta_{\Gamma_k,\sigma}\Z[]= \epsilon_{n,k}\langle D^{(1)}_{\ell_1,\sigma},\ldots,  D^{(1)}_{\ell_k,\sigma},&
D^{(1)}_{f_1,\sigma},\ldots,  D^{(1)}_{f_k,\sigma}
\rangle_{X_1(\Gamma_\degk)}\\
&-\epsilon_{n,k}\langle D^{(1),0}_{\ell_1,\sigma}, \ldots, D^{(1),0}_{\ell_k,\sigma},
D^{(1),0}_{f_1,\sigma}, \ldots,  D^{(1),0}_{f_k,\sigma}
\rangle_{X_1^0(\Gamma_\degk)},\end{align*}
where the sign $\epsilon_{n, k}$ is $+1$ if $n$ is odd, and $(-1)^{k-1}\epsilon_{n,k}$ is the signature of the permutation $\sigma_{F,L}=\begin{pmatrix}
f_1 & \ell_1&  f_2 & \ell_2 &  \ldots& \ldots &  f_k&  \ell_k \\
\ell_1 & \ell_2 & \ldots &\ldots  &\ell_k &f_1 & \ldots & f_k\end{pmatrix}$ if $n$ is even.
\end{lm}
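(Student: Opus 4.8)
The plan is to expand $\Delta_{\Gamma_k,\sigma}\Z[]=I_{F_*}(\Gamma_k,\sigma,\psi)-I_{F_*^0}(\Gamma_k,\sigma,\psi_0)$ as a difference of two signed point-counts, one over $\bigcap_{e\in E(\Gamma_k)}D_{e,\sigma}$ and one over $\bigcap_{e\in E(\Gamma_k)}D^0_{e,\sigma}$, to split each of these finite sets according to whether all vertices of the configuration stay in $N_{k+1}$ (respectively $N_{k+1}^0$) or one vertex is sent into $E_{k+1}$ (respectively $E_{k+1}^0$), to check that the parts ``near the knot'' are matched by the diffeomorphism $\Theta$ and hence cancel in the difference, and to read off the reordering sign $\epsilon_{n,k}$ relating what remains to the ordered algebraic intersection numbers of the statement. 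First I would record that $\Gamma_k$ has $2k$ edges, all external, and each $X_{e,\sigma}=B_{\sigma(e)}$ has codimension $n+1$ in $\configM$, so $\sum_e\codim D_{e,\sigma}^{(1)}=2k(n+1)=\dim C_{\Gamma_k}(\psi)=\dim X_1(\Gamma_k)$; thus the right-hand intersection numbers are $0$-dimensional, hence well defined. Transversality of the families $(D_{e,\sigma}^{(1)})_e$ and $(D_{e,\sigma}^{(1),0})_e$ follows from the general position of $F_*$ for $\psi$ and of $F_*^0$ for $\psi_0$ (Setting \ref{def-pert}): for a generic choice of the perturbations, the finitely many points of $\bigcap_e D_{e,\sigma}$ avoid $\partial C_{\Gamma_k}(\psi)$ as well as the codimension-one locus $\{c:\exists v,\ c(v)\in\partial N_{k+1}\}$, so near each of them $X_1(\Gamma_k)$ is an open subset of $C_{\Gamma_k}(\psi)$ carrying the induced orientation, and likewise for $X_1^0(\Gamma_k)$.

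For the cancellation, the inputs are the identity $B_i\cap p_b^{-1}(N_{k+1}\times N_{k+1})=\Theta_2\big(B_i^0\cap p_b^{-1}(N_{k+1}^0\times N_{k+1}^0)\big)$ noted just before the lemma (which uses $R\geq k+1$ and the last item of Setting \ref{def-pert}) together with the fact that $\Theta\colon N_R^0\to N_R$ is an orientation-preserving diffeomorphism (being the identity on $N_R^0\cap\nbdinf$) with $\Theta\circ\psi_0=\psi$, restricting to a bundle isomorphism over $\psi_0(\mathbb D^n)$. Since $\Gamma_k$ has no internal edge and its internal vertices are parametrized by the common copy of $\R^n$, $\Theta$ induces an orientation-preserving diffeomorphism from the open subset of $C_{\Gamma_k}(\psi_0)$ of configurations with all vertices in $N_{k+1}^0$ onto the corresponding subset of $C_{\Gamma_k}(\psi)$, intertwining every $p_e$ with $\Theta_2$. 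Hence it carries $\bigcap_e D_{e,\sigma}^0\cap\{c \text{ with all vertices in } N_{k+1}^0\}$ bijectively onto $\bigcap_e D_{e,\sigma}\cap\{c \text{ with all vertices in } N_{k+1}\}$, preserving the sign $\epsilon(c)$ and the product of coefficients $\prod_e w_{B_{\sigma(e)}}(p_e(c))$. These contributions therefore cancel in $I_{F_*}(\Gamma_k,\sigma,\psi)-I_{F_*^0}(\Gamma_k,\sigma,\psi_0)$, and $\Delta_{\Gamma_k,\sigma}\Z[]$ equals the difference of $\sum_{c\in\bigcap_e D_{e,\sigma}^{(1)}}\epsilon(c)\prod_e w_{B_{\sigma(e)}}(p_e(c))$ and its analogue over $\bigcap_e D_{e,\sigma}^{(1),0}$.

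It then remains to match each of these two signed sums with an ordered algebraic intersection number. At a point $c$ of $\bigcap_e D_{e,\sigma}^{(1)}$, both are the same product $\prod_e w_{B_{\sigma(e)}}(p_e(c))$ times a sign: the intersection-number sign compares $\bigoplus_e\N_{p_e(c)}B_{\sigma(e)}$, taken in the order $(\ell_1,\ldots,\ell_k,f_1,\ldots,f_k)$, with the orientation of $T_c X_1(\Gamma_k)=T_c C_{\Gamma_k}(\psi)$, whereas $\epsilon(c)$ makes the same comparison with the summands ordered by the canonical order on $E(\Gamma_k)$ of Lemma \ref{orientationar}, which by Lemma \ref{orex} equals $(-1)^{k-1}$ times the order $(f_1,\ell_1,\ldots,f_k,\ell_k)$. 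Passing from $(f_1,\ell_1,\ldots,f_k,\ell_k)$ to $(\ell_1,\ldots,\ell_k,f_1,\ldots,f_k)$ is the permutation $\sigma_{F,L}$, so the two orderings of the $2k$ summands differ by a permutation of sign $(-1)^{k-1}\operatorname{sgn}(\sigma_{F,L})$; since each coorientation $\N_{p_e(c)}B_{\sigma(e)}$ has dimension $n+1$, this reordering multiplies the orientation of the direct sum by $\big((-1)^{k-1}\operatorname{sgn}(\sigma_{F,L})\big)^{n+1}$. Hence $\epsilon(c)$ equals $\epsilon_{n,k}=\big((-1)^{k-1}\operatorname{sgn}(\sigma_{F,L})\big)^{n+1}$ times the intersection-number sign at $c$, and summing over $c$ (and over the analogous points for $\psi_0$) gives the claimed equality. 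One has $\epsilon_{n,k}=+1$ when $n$ is odd ($n+1$ even), and $(-1)^{k-1}\epsilon_{n,k}=\operatorname{sgn}(\sigma_{F,L})$ when $n$ is even ($n+1$ odd).

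The step I expect to be the main obstacle is the cancellation: one has to check carefully that the $\Theta$-correspondence between the two ``near the knot'' loci is orientation-preserving, that it intertwines all the structure maps $p_e$ through $\Theta_2$, that it preserves the rational coefficients carried by the propagators, and that a generic perturbation genuinely keeps every intersection point off the wall $\partial N_{k+1}$, where the piecewise description of the admissible propagators of Definition \ref{adm-prop} changes form. The remaining ingredients are the dimension count, the general-position hypotheses of Setting \ref{def-pert}, and the permutation-sign computation above.
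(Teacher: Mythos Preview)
Your proof is correct and follows the same approach as the paper, which is essentially a one-liner: the cancellation of the ``near the knot'' contributions is set up by the observation immediately preceding the lemma (the $\Theta_2$-identification of $B_i\cap p_b^{-1}(N_{k+1}\times N_{k+1})$ with $B_i^0\cap p_b^{-1}(N_{k+1}^0\times N_{k+1}^0)$), and the paper's proof text only records that the sign in the even case comes from Lemma~\ref{orex}. You have unpacked both steps in full, including the dimension count, the orientation-preserving nature of the $\Theta$-correspondence on configuration spaces, and the block-reordering computation $\epsilon_{n,k}=\big((-1)^{k-1}\operatorname{sgn}(\sigma_{F,L})\big)^{n+1}$, which is exactly the content the paper leaves implicit.
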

\begin{proof}
When $n$ is even, the sign follows from Lemma \ref{orex}.\qedhere
\end{proof}

The rest of this section is devoted to the computation of the above intersection number.

\begin{lm}\label{exprY}
Let $Y(\sigma)$ denote the manifold $\prod\limits_{i\in\zk} \Sigma_{\sigma(\ell_i)}(2)$. Similarly, set $Y^0(\sigma) = \prod\limits_{i\in\zk} \Sigma^0_{\sigma(\ell_i)}(2)$. For any $i\in\zk$, set $Y_{i}(\sigma)= \Sigma_{\sigma(\ell_i)}(2)\times \Sigma_{\sigma(\ell_{\plusun{i}})}(2)$ and $B_{i,\sigma} = B_{\sigma(f_i)}\cap Y_i(\sigma)$ and similarly define $Y_i^0(\sigma)$ and $B^0_{i,\sigma} = B^0_{\sigma(f_i)}\cap Y_i^0(\sigma)$. Let $\pi_i$ denote the projection map $Y(\sigma)\rightarrow Y_i(\sigma)$, and set $\Pii=\pi_i^{-1}(B_{i,\sigma})$. Similarly define $\pi_i^0\colon Y^0(\sigma)\rightarrow Y_i^0(\sigma)$ and $\Pii^0$.

 The chains $(\Pii)_{i\in \zk}$ are transverse, the chains $(\Pii^0)_{i\in \zk}$ are transverse, and \[\Delta_{\Gamma_k,\sigma}Z_k=(-1)^{1+k(n+1)}\frac{1}{2^k}\left( \langle \Pii[{1}], \cdots, \Pii[{k}]\rangle_{Y(\sigma)} - \langle \Pii[{1}]^0, \ldots , \Pii[{k}]^0\rangle_{Y^0(\sigma)}\right).\] 

\end{lm}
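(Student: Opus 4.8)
The plan is to push every intersection point counted in Lemma \ref{lmenk} down to $Y(\sigma)$ through the smooth map $\Phi\colon C_{\Gamma_\degk}(\psi)\to(\punct M)^\degk$, $c\mapsto(c(w_j))_{j\in\zk}$, built from the projections $p_{w_j}$ of Theorem \ref{cgamma}, and to check that $\Phi$ realises a sign- and weight-preserving bijection between the points of $X_1(\Gamma_\degk)\cap\bigcap_{e\in E(\Gamma_\degk)}D_{e,\sigma}$ and the points counted by $\langle\Pii[1],\ldots,\Pii[\degk]\rangle_{Y(\sigma)}$ that carry some coordinate in $E_{\degk+1}$; the contributions supported inside $N_{\degk+1}$ will cancel between $\psi$ and $\psi_0$, exactly as in Lemma \ref{lmenk}.

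First I would treat the leg edges. Let $c\in X_1(\Gamma_\degk)\cap\bigcap_{e\in E(\Gamma_\degk)}D_{e,\sigma}$. Since $c(w_{j_0})\in E_{\degk+1}$ for some $j_0$, Lemma \ref{mainlemma} gives $c(w_j)\in E_2$ for all $j$. Each $v_j$ is internal, so $c(v_j)\in\psi(\R^n)$ and $p_{\ell_j}(c)\in\overline{p_b^{-1}(\psi(\R^n)\times\punct M)}$; hence, by the clause of Definition \ref{adm-prop} governing $B\cap\overline{p_b^{-1}(\psi(\R^n)\times\punct M)}$, $c(w_j)=p_2(p_{\ell_j}(c))\in\overline{\Sigma^+_{\sigma(\ell_j)}\cup\Sigma^-_{\sigma(\ell_j)}}$, and as $\partial\Sigma^\pm_{\sigma(\ell_j)}=\psi(\R^n)\subset N_2$ this forces $c(w_j)\in\Sigma_{\sigma(\ell_j)}(2)$. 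Thus $\Phi(c)$ lies in the open subset $Y'(\sigma)\subset Y(\sigma)$ of tuples with some coordinate in $E_{\degk+1}$. Conversely, using the shape of the admissible propagator near $\psi(\R^n)\times\punct M$ produced in Section \ref{Section 3} (Lemma \ref{th-prop}): off a measure-zero subset, a point $y_j\in\Sigma_{\sigma(\ell_j)}(2)$ is the $p_2$-image of exactly one point of $B_{\sigma(\ell_j)}\cap\overline{p_b^{-1}(\psi(\R^n)\times\punct M)}$, of weight $\frac12$, which reconstructs $c(v_j)$ from $c(w_j)$; this is a two-sided inverse to $\Phi$. Hence $\Phi$ restricts to a diffeomorphism $X_1(\Gamma_\degk)\cap\bigcap_{j\in\zk}D_{\ell_j,\sigma}\to Y'(\sigma)$ carrying the weights $\frac12$ of the $\degk$ leg propagators into a global factor $\frac1{2^\degk}$, and under it $c\in D_{f_i,\sigma}$, i.e. $(c(w_i),c(w_{\plusun i}))\in B_{\sigma(f_i)}$, becomes $(y_i,y_{\plusun i})\in B_{\sigma(f_i)}\cap Y_i(\sigma)=B_{i,\sigma}$, that is $(y_j)_j\in\Pii$. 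So $\Phi$ maps $X_1(\Gamma_\degk)\cap\bigcap_{e}D_{e,\sigma}$ bijectively onto $Y'(\sigma)\cap\bigcap_{i\in\zk}\Pii$, and transversality of $(\Pii)_{i\in\zk}$ on $Y'(\sigma)$ (and of $(\Pii^0)_i$ via the analogous $\Phi^0$) follows by transporting the general position of $F_*$ (resp. $F_*^0$).

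Next I would recover the intersection over all of $Y(\sigma)$. A point of $\bigcap_{i\in\zk}\Pii$ outside $Y'(\sigma)$ has every $y_j\in N_{\degk+1}\subset N_R$, hence all the $y_j$ and all pairs $(y_i,y_{\plusun i})$ lie in a region on which, by Setting \ref{def-pert}, $\Sigma_{\sigma(\ell_j)}(2)$ and $B_{\sigma(f_i)}$ are respectively $\Theta$- and $\Theta_2$-identified with $\Sigma^0_{\sigma(\ell_j)}(2)$ and $B^0_{\sigma(f_i)}$; so $\Theta_2$ matches such points (with signs and weights) to those of $\bigcap_i\Pii^0$ outside $Y'(\sigma)$, and they cancel in $\langle\Pii[1],\ldots,\Pii[\degk]\rangle_{Y(\sigma)}-\langle\Pii[1]^0,\ldots,\Pii[\degk]^0\rangle_{Y^0(\sigma)}$ (this also settles transversality of the $(\Pii)_i$ there). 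Feeding the bijection of the previous step into Lemma \ref{lmenk} and collecting the weights yields
\[\Delta_{\Gamma_\degk,\sigma}Z_\degk=\pm\frac1{2^\degk}\Big(\langle\Pii[1],\ldots,\Pii[\degk]\rangle_{Y(\sigma)}-\langle\Pii[1]^0,\ldots,\Pii[\degk]^0\rangle_{Y^0(\sigma)}\Big)\]
for a global sign still to be pinned down.

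The last and most delicate step is to identify that sign with $(-1)^{1+\degk(n+1)}$. This is an orientation computation: write the orientation $\orientationconfig[\Gamma_\degk]$ of $C_{\Gamma_\degk}(\psi)$ in the coordinates $(c_i(v_j),c(w_j))_j$ as in Lemma \ref{orex}; split, for each $j$, the $w_j$-block into the coorientation of $\Sigma_{\sigma(\ell_j)}(2)$ in $\punct M$ and a tangential part; observe that the conormal of $D_{\ell_j,\sigma}$ absorbs the $v_j$-block together with that coorientation, while each conormal of $D_{f_i,\sigma}$ becomes the conormal of $\Pii$ in $Y(\sigma)$; then reorder these blocks to match the edge order $(\ell_1,\dots,\ell_\degk,f_1,\dots,f_\degk)$ and sign $\epsilon_{n,\degk}$ of Lemma \ref{lmenk}, the orientation of $Y(\sigma)$, and the ordering in $\langle\cdot\rangle_{Y(\sigma)}$. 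The resulting permutation sign should be $(-1)^{1+\degk(n+1)}$ — the $\degk(n+1)$ parity coming from commuting the $\degk$ rank-$(n+1)$ conormals, the extra $-1$ from $\epsilon_{n,\degk}$ and the coorientation conventions. Since the $\psi_0$-version of every step above is literally identical, it suffices to carry out this bookkeeping once; that bookkeeping is the only real obstacle, the rest being the geometric identification via $\Phi$.
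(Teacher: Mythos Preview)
Your approach is essentially the paper's: the map $\Phi$ is exactly the paper's $\phi$, the localisation via Lemma \ref{mainlemma} and the cancellation of the $N_{\degk+1}$-supported contributions against $\psi_0$ are the same, and your weight accounting (one factor $\tfrac12$ per leg from the $\tfrac12\,\mur[1]\times\Sigma(2)$ piece of Definition \ref{adm-prop}, while the $f_i$-propagators transfer untouched to $B_{i,\sigma}$) is equivalent to the paper's device of setting $D_{e,\sigma}^{(2)}=2D_{e,\sigma}\cap X_2(\Gamma_\degk)$ and then tracking a global $2^{-2\degk}\cdot 2^{\degk}$. The only substantive part you leave undone is the orientation bookkeeping that pins down $(-1)^{1+\degk(n+1)}$; the paper carries this out explicitly by writing $T_cC_{\Gamma_\degk}(\psi)$ in the form of Lemma \ref{orex}, splitting each $T_{c(w_j)}\punct M$ as $\R\cdot n_j\times T_{c(w_j)}\Sigma_{\sigma(\ell_j)}$, and reordering, which produces the sign $\eta_{n,\degk}$ with $\eta_{n,\degk}=-1$ for $n$ odd and $\eta_{n,\degk}=(-1)^{\degk-1}\epsilon_{n,\degk}$ for $n$ even, whence $D_{L,\sigma}$ is oriented as $(-1)^{1+\degk(n+1)}\epsilon_{n,\degk}\,\phi^{-1}(Y(\sigma))$; combining with the $\epsilon_{n,\degk}$ already present in Lemma \ref{lmenk} gives the claimed sign. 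Your sketch of this step is on the right track but does not actually execute the permutation count, and your heuristic ``$\degk$ rank-$(n+1)$ conormals commute'' is not quite the right description of where the $\degk(n+1)$ comes from --- it arises rather from commuting the $(n+1)$-dimensional $T_{c(w_j)}\Sigma$ blocks past the $(n+1)$-dimensional $T_{c(v_j)}\psi(\R^n)\times\R\cdot n_j$ blocks when separating $\N D_{L,\sigma}$ from $T D_{L,\sigma}$.
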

\begin{proof}
Let $X_2(\Gamma_\degk)$ denote the set of configurations such that all the external vertices are mapped to $E_2$, and set $D_{e,\sigma}^{(2)}= 2 D_{e,\sigma} \cap X_2(\Gamma_\degk)$. For simplicity, we assume that $D_{e,\sigma}^{(2)}$ is a manifold. 
Similarly define $X_2^0(\Gamma_\degk)$ and $D_{e,\sigma}^{(2),0}$. Lemma \ref{mainlemma} ensures that the intersection of the supports of the $D_{e,\sigma}^{(1)}$ is contained in $\bigcap\limits_{e\in E(\Gamma_\degk)} D_{e,\sigma}^{(2)}$.
Since the propagators are chosen as in Lemma \ref{th-prop},\[\left\langle  (D_{e,\sigma}^{(1)} )_{e\in E(\Gamma_\degk)}\right\rangle_{X_1(\Gamma_\degk)} = \frac1{2^{2\degk}}\left\langle (D_{e,\sigma}^{(2)})_{e\in E(\Gamma_\degk)}\right\rangle_{X_2(\Gamma_\degk)} +  R, \]
where $R$ is independent of the knot.
This implies that
\[\Delta_{\Gamma_\degk,\sigma}\Z[] =\frac1{2^{2\degk}}\left( \left\langle (D_{e,\sigma}^{(2)})_{e\in E(\Gamma_\degk)}\right\rangle_{X_2(\Gamma_\degk)} - \left\langle (D_{e,\sigma}^{(2),0})_{e\in E(\Gamma_\degk)}\right\rangle_{X_2^0(\Gamma_\degk)}\right).\]

Let us check that $\left(\phi\colon c\in \bigcap\limits_{i\in\zk} D_{\ell_i,\sigma}^{(2)} \mapsto (c(w_i))_{i\in\zk}\in Y(\sigma)\right)$ is well-defined and let us see how it acts on the orientation.

If $c\in  \bigcap\limits_{i\in\zk} D_{\ell_i,\sigma}^{(2)}$, then $(c(v_i), c(w_i))\in B_{\sigma(\ell_i)}$ for any $i\in \zk$. Therefore, for any $i\in \zk$, since $v_i$ is internal and because of Definition \ref{adm-prop} of admissible propagators, $c(w_i)\in \Sigma_{\sigma(\ell_i)}(2)$. This implies that $\phi$ is actually valued in $Y(\sigma)$. It is a diffeomorphism since the disk $\mur[1]$ meets the knot in exactly one point.

Let us study how $\phi$ acts on the orientations. Let $n_i(x)$ denote the positive normal direction to $\Sigma_{\sigma(\ell_i)}$ at $x$. 
The normal bundle to $\bigcap\limits_{i\in\zk} D_{\ell_i,\sigma}^{(2)}$ at $c$ is \[
\N_c\left( \bigcap\limits_{i\in\zk} D_{\ell_{i},\sigma}^{(2)}\right)= \prod\limits_{i\in \zk} \left(T_{c(v_i)}\psi(\R^n)\times \R .n_i(c(w_i))\right),\] and we proved in Lemma \ref{orex} that $C_{\Gamma_\degk}(\psi)$ is oriented as \begin{eqnarray*}
T_c C_{\Gamma_\degk}(\psi) & = & (-1)^{k+n(k+1)} \prod\limits_{i\in\zk} \left( T_{c(v_i)} \psi(\R^n) \times T_{c(w_i)} \spamb  \right)\\
& = & (-1)^{k+n(k+1)} \prod\limits_{i\in\zk} \left( T_{c(v_i)} \psi(\R^n) \times \R. n_i(c(w_i)) \times T_{c(w_i)} \Sigma_{\sigma(\ell_i)} \right)\\
&=& \eta_{n,k}
\left(\prod\limits_{i\in\zk} \left( T_{c(v_i)} \psi(\R^n) \times \R. n_i(c(w_i)) \right)\right) \times\left( \prod\limits_{i\in\zk} T_{c(w_i)} \Sigma_{\sigma(\ell_i)} \right),
\end{eqnarray*}
where the last equality involves a sign $\eta_{n,k}=\pm1$. If $n$ is odd, then $\eta_{n,k} = -1$.
If $n$ is even, then $\eta_{n,k} = (-1)^k \times \epsilon(\sigma'_{F,L})$, where $
\sigma'_{F, L} = \begin{pmatrix}
f_1 & f_2 & \ldots &\ldots  &f_k &\ell_1 & \ldots & \ell_k\\
f_1 & \ell_1&  f_2 & \ell_2 &  \ldots& \ldots &  f_k&  \ell_k
\end{pmatrix}$.
Note that $\epsilon(\sigma'_{F,L}) = 
(-1)^k \epsilon( \sigma_{F,L})$ with the permutation $\sigma_{F,L}$
of Lemma \ref{lmenk}, 
so that $\eta_{n,k} =\epsilon(\sigma_{F,L}) =(-1)^{k-1} \epsilon_{n,k}$ when $n$ is even. 
This proves that $D_{L,\sigma}=\bigcap\limits_{i\in\zk} D_{\ell_i,\sigma}^{(2)}$ 
is oriented as $(-1)^{1+k(n+1)}\epsilon_{n,k}\phi^{-1}(Y(\sigma))$. 

Let us state without proof the following easy lemma, which we will use in the rest of this proof.

\begin{lm}
Let $P$ and $Q$ be two transverse oriented submanifolds of an oriented manifold $R$. Let $\N^Q(Q\cap P)$ denote the normal bundle of $Q\cap P$ as a submanifold of $Q$. For any $x\in Q\cap P$, \[\N^Q_x(Q\cap P) =\N_x P.\]
\end{lm}
For any $i\in\zk$, the coorientation of the submanifolds $ D_{L,\sigma}\cap D_{f_i, \sigma}^{(2)}$ in $D_{L, \sigma}$ and of the submanifolds $D_{f_i, \sigma}^{(2)}$ in $\confignoeud$ coincide.
This yields 
\begin{align*}
\left\langle\left( D_{e, \sigma}^{(2)}\right)_{e\in E(\Gamma_\degk)}\right\rangle_{X_1(\Gamma_\degk)} 
&= 
\epsilon_{n,k}\langle  D_{L,\sigma},  D^{(2)}_{f_1,\sigma},\ldots,  D^{(2)}_{f_k,\sigma}
\rangle_{X_2(\Gamma_\degk)}\\
&=
\epsilon_{n,k}\left\langle D_{L,\sigma}\cap  D_{f_1, \sigma}^{(2)}, \ldots,  D_{L,\sigma}\cap D_{f_\degk, \sigma}^{(2)}
\right\rangle_{D_{L,\sigma}}\\
&= (-1)^{1+k(n+1)} \left\langle \phi( D_{L,\sigma}\cap D_{f_1, \sigma}^{(2)}), \ldots, 
\phi( D_{L,\sigma}\cap D_{f_\degk, \sigma}^{(2)})
\right\rangle_{Y(\sigma)},
\end{align*}
where the third equality comes from the action of $\phi$ on the orientation.

Now, for any $i\in\zk$, $\phi( D_{L,\sigma}\cap D_{f_i, \sigma}^{(2)})$ is cooriented as $D_{f_i,\sigma}=p_{f_i}^{-1}(B_{\sigma(f_i)})$, i.e. as $B_{\sigma(f_i)}$ in $\configM$. On the other hand, $\Pii$ is cooriented as $B_{\sigma(f_i)}\cap (\Sigma_{\sigma(\ell_i)}(2)\times \Sigma_{\sigma(\ell_{\plusun{i}})}(2))$ in $\Sigma_{\sigma(\ell_i)}(2)\times \Sigma_{\sigma(\ell_{\plusun{i}})}(2)$, i.e. as $B_{\sigma(f_i)}$ in $\configM$.
Because of the $2$ factors in the definition of the $D_{f_i, \sigma}^{(2)}$, this yields \[\left\langle\left( D_{e, \sigma}^{(2)}\right)_{e\in E(\Gamma_\degk)}\right\rangle_{X_2(\Gamma_\degk)}
= (-1)^{1+k(n+1)}{2^\degk}\langle \Pii[1], \cdots, \Pii[\degk] \rangle_{Y(\sigma)}.\]

Similarly, $\left\langle\left( D_{e, \sigma}^{(2),0}\right)_{e\in E(\Gamma_k)}\right\rangle_{X_2^0(\Gamma_\degk)}
= (-1)^{1+k(n+1)}{2^\degk}\langle \Pii[1]^0, \cdots, \Pii[\degk]^0 \rangle_{Y^0(\sigma)}.$ \qedhere
\end{proof}

We are going to define a manifold $\overline {Y(\sigma)}$ without boundary in which the chains $P_{i,\sigma}$ and $P_{i,\sigma}^0$ embed, in order to compute intersection numbers of the previous chains with boundaries in terms of intersection numbers of cycles inside one common manifold.

\begin{lm}\label{hpi}
For $i\in\und{2\degk}$, let $S^\pm_{i}$ denote the gluing of $\Sigma^\pm_{i}\cap E_2$ and $-\left((\Sigma^\pm_{i})^0\cap E_2^0\right)$ along their boundaries, set $S_{i}= S^+_{i}\sqcup S^-_{i}$, and let $S_{i}^{\leq3}$ denote the set of points of $S_{i}$ that come from a point in $N_3$ or $N_3^0$ before the gluing. For any $i\in\zk$, set $\overline{Y_i}(\sigma) = S_{\sigma(\ell_i)}\times S_{\sigma(\ell_{\plusun{i}})}$, and set $\overline Y(\sigma) = \prod\limits_{i\in\zk} S_{\sigma(\ell_i)}$. There exist canonical projection maps $\overline{\pi_i} \colon \overline Y(\sigma) \rightarrow \overline{Y_i}(\sigma)$ for any $i\in\zk$.
The chains $(B_{i,\sigma})_{i\in\zk}$ and $(B_{i,\sigma}^0)_{i\in\zk}$ naturally embed into $\overline {Y_i}(\sigma)$, so that the chains $(P_{i,\sigma})_{i\in\zk}$ and $(P_{i,\sigma}^0)_{i\in\zk}$ naturally embed into $\overline {Y}(\sigma)$. With these notations, \begin{itemize}
\item the boundaries $\partial B_{i,\sigma}$ and $\partial B_{i, \sigma}^0$ lie in $ S_{\sigma(\ell_i)}^{\leq3}\times S_{\sigma(\ell_{\plusun{i}})}^{\leq3}$,
\item for any $i\in\zk$, there exists an $(n+1)$-chain $\hat B_{i, \sigma}$ in $S_{\sigma(\ell_i)}^{\leq3}\times S_{\sigma(\ell_{\plusun{i}})}^{\leq3}$ such that $\partial \hat B_{i,\sigma} = \partial B_{i,\sigma}^0-\partial B_{i,\sigma}$.
\end{itemize} The manifold $S_{\sigma(\ell_i)}^{\leq3}\times S_{\sigma(\ell_{\plusun{i}})}^{\leq3}$ does not depend on the knot. The chains $(\hat{B}_{i,\sigma})_{i\in\zk}$ can be chosen such that they do not depend on the knot, either.

\end{lm}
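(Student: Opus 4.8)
The plan is to establish the five claims roughly in the order stated, the real content being the localization of the boundaries $\partial B_{i,\sigma}$ and the construction of the chains $\hat B_{i,\sigma}$.

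\emph{Setup and the natural embeddings.} First I would record, from the defining inequalities and $R\geq k+1\geq3$, that $N_3^0\subset N_R^0$; hence on $N_3$ the surfaces $\Sigma_j^\pm$ coincide with $(\Sigma_j^\pm)^0$ through $\Theta$, so that $S_j^{\leq3}$ is on either side of the gluing the fixed surface piece $\Theta\bigl((\Sigma_j^\pm)^0\cap N_3^0\cap E_2^0\bigr)$. This gives at once the independence of $S_{\sigma(\ell_i)}^{\leq3}\times S_{\sigma(\ell_{\plusun i})}^{\leq3}$ on $\psi$, and it exhibits $S_j^{\leq3}$ as a ``neck'': a collar of the gluing locus $\Sigma_j\cap\partial N_2$, with a real half $\Sigma_j\cap N_3\cap E_2$ and a model half $-\bigl((\Sigma_j)^0\cap N_3^0\cap E_2^0\bigr)$, identified through $\Theta$ and carrying the involution $\rho_j$ swapping the two halves and fixing $\Sigma_j\cap\partial N_2$. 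The embeddings of the first claim are then just unwinding definitions: $\Sigma_j(2)=E_2\cap(\Sigma_j^+\cup\Sigma_j^-)$ lies in the real half of $S_j$, so $Y_i(\sigma)\subset\overline{Y_i}(\sigma)$ and $B_{i,\sigma}=B_{\sigma(f_i)}\cap Y_i(\sigma)$ embeds in $\overline{Y_i}(\sigma)$; similarly $B_{i,\sigma}^0$ embeds in the model half (with the sign carried by the $-$ in the definition of $S_j$); and $\overline{\pi_i}$ restricts over these images to $\pi_i$, resp. $\pi_i^0$, so $P_{i,\sigma}$ and $P_{i,\sigma}^0$ embed in $\overline Y(\sigma)$.

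\emph{Localization of the boundaries.} Since the $\Sigma_j^\pm$ are pairwise parallel with disjoint interiors and meet only along $\psi(\R^n)\subset N_2$, the surfaces $\Sigma_{\sigma(\ell_i)}(2)$ and $\Sigma_{\sigma(\ell_{\plusun i})}(2)$ are disjoint and avoid a neighborhood of $\infty$, so $Y_i(\sigma)$ lies in the interior of $\configM$; hence $\partial B_{i,\sigma}=\pm B_{\sigma(f_i)}\cap\partial Y_i(\sigma)$, with no contribution from $\partial B_{\sigma(f_i)}$. Now $\partial Y_i(\sigma)$ is the union of $(\partial\Sigma_{\sigma(\ell_i)}(2))\times\Sigma_{\sigma(\ell_{\plusun i})}(2)$ and $\pm\,\Sigma_{\sigma(\ell_i)}(2)\times(\partial\Sigma_{\sigma(\ell_{\plusun i})}(2))$, and $\partial\Sigma_j(2)=\Sigma_j\cap\partial N_2\subset\Sigma_j\cap N_3\subset S_j^{\leq3}$. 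It remains to see that in a pair $(y,y')\in B_{\sigma(f_i)}\cap\partial Y_i(\sigma)$ the other point also lies in $N_3$. Suppose $y\in\partial\Sigma_{\sigma(\ell_i)}(2)\subset\partial N_2$ and $y'\in E_3$; then the $N_r\times E_{r+1}$ clause of Definition \ref{adm-prop}, with $r=2$, forces either $y\in\mur[2]$ and $y'\in\Sigma_{\sigma(f_i)}(3)$, or $y\in L_{\theta_{\sigma(f_i)}}(2)$. The first case is impossible since $\Sigma_{\sigma(f_i)}(3)\cap\Sigma_{\sigma(\ell_{\plusun i})}(2)=\emptyset$ (two distinct parallel surfaces, away from the knot); the second is impossible since $L_{\theta_{\sigma(f_i)}}(2)$ lies at $\overline x=\overline0$ in the direction $\theta_{\sigma(f_i)}\neq\theta_{\sigma(\ell_i)}$, hence misses $\Sigma_{\sigma(\ell_i)}$. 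So $y'\in N_3\cap E_2\subset S_{\sigma(\ell_{\plusun i})}^{\leq3}$; the $E_{r+1}\times N_r$ clause handles the second piece of $\partial Y_i(\sigma)$ symmetrically, and the same discussion applies verbatim to $\partial B_{i,\sigma}^0$. This is the first bullet.

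\emph{The interpolating chains.} By the previous step both $\partial B_{i,\sigma}$ and $\partial B_{i,\sigma}^0$ are carried by $S_{\sigma(\ell_i)}^{\leq3}\times S_{\sigma(\ell_{\plusun i})}^{\leq3}\subset N_3\times N_3\subset N_R\times N_R$, where $B_{\sigma(f_i)}$ is $\Theta_2(B_{\sigma(f_i)}^0)$ and $\Sigma_j$ is $\Theta((\Sigma_j)^0)$ by Setting \ref{def-pert} and Lemma \ref{th-prop}. Transporting the trivial-knot picture by $\Theta$ and the neck involutions therefore yields $\partial B_{i,\sigma}^0=\pm(\rho_{\sigma(\ell_i)}\times\rho_{\sigma(\ell_{\plusun i})})_*\partial B_{i,\sigma}$, the sign being the one forced by the orientation conventions and by the $-$ in the definition of $S_j$. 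I would then obtain $\hat B_{i,\sigma}$ as the track of a homotopy that sweeps each factor across its neck, from the real half to the model half, using the collar identification $S_j^{\leq3}\cong(\Sigma_j\cap\partial N_2)\times[-1,1]$; as $\partial B_{i,\sigma}$ is itself a cycle, this track has boundary exactly $\partial B_{i,\sigma}^0-\partial B_{i,\sigma}$ once the global sign is absorbed, and, every ingredient ($\partial B_{i,\sigma}$ read off inside $N_R$, the neck, the collar) being model data, $\hat B_{i,\sigma}$ can be chosen independent of $\psi$, as can $S_{\sigma(\ell_i)}^{\leq3}\times S_{\sigma(\ell_{\plusun i})}^{\leq3}$.

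The main obstacle is the localization step: this is where the genericity built into Setting \ref{def-pert} — the distinct angles $\theta_j$ and the distinct parallel Seifert surfaces — is really used, to annihilate every term of the admissibility formulas that would otherwise let $\partial B_{i,\sigma}$ escape $N_3\times N_3$. A secondary nuisance is sign bookkeeping: tracking the orientation flip hidden in the $-$ sign of the definition of $S_j$ through the involution $\rho_j$, so that $\partial\hat B_{i,\sigma}$ comes out exactly as $\partial B_{i,\sigma}^0-\partial B_{i,\sigma}$.
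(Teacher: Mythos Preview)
Your localization step is essentially the paper's argument: the paper states in one line that ``since the Seifert surfaces are parallel, the chain $B_{i,\sigma}$ does not meet $Y_i(\sigma)\cap(N_2\times E_3)$ or $Y_i(\sigma)\cap(E_3\times N_2)$'', and you have correctly unpacked what this means by running through the two terms of the admissibility formula with $r=2$ and killing each with the distinctness of the $\theta_j$ and of the parallel surfaces.

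Your construction of $\hat B_{i,\sigma}$, however, follows a genuinely different route. The paper argues homologically: it shows that $S_{\sigma(\ell_i)}^{\leq3}\times S_{\sigma(\ell_{\plusun i})}^{\leq3}$ retracts onto $(\ell^+_{\sigma(\ell_i)}\sqcup\ell^-_{\sigma(\ell_i)})\times(\ell^+_{\sigma(\ell_{\plusun i})}\sqcup\ell^-_{\sigma(\ell_{\plusun i})})$, reads off $H_n\cong\mathbb Q^8$ with an explicit basis of $n$-spheres $s_j$ lying on the gluing locus, and then observes that the diffeomorphism $\Theta_T\colon T_1^0\to T_1$ induced by $\Theta$ sends $\partial B_{i,\sigma}^0$ to $\partial B_{i,\sigma}$ while fixing each $s_j$; hence both boundaries have the same coordinates in $H_n$, and their difference bounds. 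Your approach instead produces an explicit $\hat B_{i,\sigma}$ as the track of the collar homotopy from the identity to the neck involution $\rho\times\rho$, bypassing the homology computation altogether. This is more constructive and arguably cleaner, but it does hinge on the sign you flag: you need $(\rho\times\rho)_*\partial B_{i,\sigma}=+\partial B_{i,\sigma}^0$, not $-$. This holds because on the $N_3$-parts one has $(\rho\times\rho)\circ\iota=\iota_0\circ\Theta_2^{-1}$ as maps into $\overline{Y_i}(\sigma)$, and $\Theta_2$ is orientation-preserving; the paper's homological argument sidesteps this verification entirely.
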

\begin{proof}Fix $i\in\zk$.
Since the Seifert surfaces are parallel, the chain $B_{i,\sigma}$ does not meet $Y_i(\sigma)\cap(N_2\times E_3)$ or $Y_i(\sigma)\cap(E_3\times N_2)$. 
The chain $\partial B_{i,\sigma}$ is therefore contained in $\partial Y_i(\sigma) \cap (N_3\times N_3)$. 
The same argument proves that $\partial B_{i,\sigma}^0$ is contained in $\partial Y_i^0(\sigma) \cap (N_3^0\times N_3^0)$. Therefore, the chain $Q_{i,\sigma} = \partial B_{i,\sigma}^0-\partial B_{i,\sigma}$ is a cycle of $S_{\sigma(\ell_i)}^{\leq3}\times S_{\sigma(\ell_{\plusun{i}})}^{\leq3}$. Since the propagators are standard inside ${p_b}^{-1}(N_3\times N_3)$, the cycle $Q_{i,\sigma}$ does not depend on the knot.

For any $j\in\zk$, let  $\ell^\pm_{j}$ denote the boundary $\partial (\Sigma_{j}^\pm \cap E_2)$, which is involved in the gluing in the definition of $S^\pm_{j}$, and let $x_{j}^\pm\in\ell^\pm_{j}$. 
Since the product $S_{\sigma(\ell_i)}^{\leq3}\times S_{\sigma(\ell_{\plusun{i}})}^{\leq3}$ retracts onto $(\ell^+_{\sigma(\ell_i)}\sqcup\ell^-_{\sigma(\ell_{i})})\times(\ell^+_{\sigma(\ell_{\plusun{i}})}\sqcup\ell^-_{\sigma(\ell_{\plusun{i}})})$, $H_n(S_{\sigma(\ell_i)}^{\leq3}\times S_{\sigma(\ell_{\plusun{i}})}^{\leq3}) = \mathbb Q^8$, with a basis given by the eight spheres $[\ell_{\sigma(\ell_{i})}^\epsilon\times  x^{\epsilon'}_{\sigma(\ell_{\plusun{i}})}]$ and $ [x^\epsilon_{\sigma(\ell_i)} \times \ell_{\sigma(\ell_{\plusun{i}})}^{\epsilon'}]$ for $\epsilon, \epsilon'\in\{\pm\}$. Let $(s_j)_{j\in\und{8}}$ denote these spheres.

The manifold $S_{\sigma(\ell_i)}^{\leq3}\times S_{\sigma(\ell_{\plusun{i}})}^{\leq3}$ contains $T_1^0 = (S_{\sigma(\ell_i)}^{\leq3}\times S_{\sigma(\ell_{\plusun{i}})}^{\leq3})\cap  (\Sigma^0_{\sigma(\ell_i)}(2)\times\Sigma^0_{\sigma(\ell_{\plusun{i}})}(2) )$ and $T_1 = (S_{\sigma(\ell_i)}^{\leq3}\times S_{\sigma(\ell_{\plusun{i}})}^{\leq3})\cap  (\Sigma_{\sigma(\ell_i)}(2)\times\Sigma_{\sigma(\ell_{\plusun{i}})}(2) )$. $T_1^0$ and $T_1$ are diffeomorphic to each other because the surfaces $\Sigma_j^0$ and $\Sigma_j$ are identical inside $N_3$ for any $j$. Denote by $\Theta_T\colon T_1^0\rightarrow T_1$ the induced diffeomorphism.

The eight spheres $(s_j)_{j\in \und{8}}$ also define 
bases $([s_j])_{j\in\und{8}}$ of $H_n(T_1^0 )$ and $H_n( T_1)$. 
The definition of the spheres $(s_j)_{j\in\und{8}}$ implies that $\Theta_T(s_j) = s_j$ for any $j\in\und{8}$. 
Since the propagators do not depend on the knot inside $N_3\times N_3$, $\Theta_T(\partial B_{i,\sigma}^0) =\partial B_{i,\sigma}$. 
The cycle $\partial B_{i,\sigma}^0$ defines a class in $H_n( T_1^0 )= H_n( S_{\sigma(\ell_i)}^{\leq3}\times S_{\sigma(\ell_{\plusun{i}})}^{\leq3}  )$. 
This class reads $[\partial B_{i,\sigma}^0] = \sum\limits_{j=1}^8\alpha_{i,j}[s_j]$ for some rational numbers $(\alpha_{i,j})_{j\in\und {8}}$.  
Apply $\Theta_T$ to this identity to get $[\partial B_{i,\sigma}] = \sum\limits_{j=1}^8\alpha_{i,j}[s_j]$. 
This implies that $[Q_i]=0$ in $H_n(S_{\sigma(\ell_i)}^{\leq3}\times S_{\sigma(\ell_{\plusun{i}})}^{\leq3} ) $ 
and proves the existence of $\hat{B}_{i,\sigma}$. 

Since the cycle $Q_i\subset S_{\sigma(\ell_i)}^{\leq3}\times S_{\sigma(\ell_{\plusun{i}})}^{\leq3}$ is independent of the knot, the chain $\hat{B}_{i,\sigma}$ can be chosen independently of the knot.\qedhere

\end{proof}

\begin{lm}\label{sigmas}Let $b_{\dimd}$ denote the $\dimd$-th Betti number of $S^+_{1}$.
It is possible to choose two families of cycles $((\bb_{1,j}^\dimd)^+)_{0\leq \dimd \leq n+1,j\in \und{b_\dimd}}$ and $((\ba_{1,j}^\dimd)^+)_{0\leq \dimd \leq n+1,j\in\und{b_\dimd}}$ in $S_{1}^+$ such that:\begin{itemize}

\item For any $\dimd\in\und n$, and any $j\in \und{ b_\dimd}$, $[(\bb_{1,j}^\dimd)^+] = [\bb_j^\dimd]$ and  $[(\ba_{1,j}^\dimd)^+] = [\ba_j^\dimd]$, where the cycles $(\bb_j^\dimd)_{j,\dimd}$ and $(\ba_j^\dimd)_{j,\dimd}$ are defined in Lemma \ref{contributionGamma1}. 

For $d=n+1$, $(\bb_{1,1}^{n+1})^+ = (\ba^{n+1}_{1,1})^+= S_{1}^+$.
For $d=0$, $(\bb_{1,1}^{0})^+ = (\ba^{0}_{1,1})^+$ is a point.

 In particular, for any $\dimd\in \{0,\dots, n+1\}$,
$([(\bb_{1,j}^\dimd)^+])_{j \in \und{b_\dimd}}$ and $([(\ba_{1,j}^\dimd)^+])_{j\in \und{b_\dimd}}$ are bases of $H_\dimd(S_{1}^+)$, and for any $(j,j')\in\und{b_\dimd}^2$, we have the duality relation $\langle [(\bb_{1,j}^{\dimd})^+] ,
[(\ba_{1,j'}^{n+1-\dimd})^+]\rangle_{S^+_{1}} = \delta_{j,j'}$.
\item For any $\dimd\in\und n$, the cycles $((\bb_{1,j}^\dimd)^+)_{j\in \und{b_\dimd}}$ and $((\ba_{1,j}^\dimd)^+)_{j\in \und{b_\dimd}}$ are contained in $\Sigma^+_{1}\cap E_{\degk+1}\subset S^+_{1}$.
\item The point $(\bb_{1,1}^0)^+= (\ba_{1,1}^0)^+$ is in $ \partial (\Sigma^+_{1} \cap E_2)$.
\end{itemize}

Since all the Seifert surfaces $(\Sigma_i^\pm)_{i\in \und k}$ are obtained from $\Sigma_1^+$ by pushing $\Sigma_1^+$ in the positive normal direction, these families yield similar families $((\bb_{i, j}^\dimd)^\pm)_{0\leq \dimd \leq n+1, j\in \und{b_\dimd}}$ and $((\ba_{i, j}^\dimd)^\pm)_{0\leq \dimd \leq n+1, j\in \und{b_\dimd}}$ in $H_*(S^\pm_{i})$.

\end{lm}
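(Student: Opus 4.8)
The plan is to first determine the topology and homology of the closed manifold $S_1^+$, then build the required cycles in degrees $1,\dots,n$ by representing the prescribed homology classes inside the collar $\Sigma_1^+\cap E_{k+1}$, and finally handle the two extreme degrees $0$ and $n+1$ by hand.

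\emph{Step 1: the manifold $S_1^+$.} By construction $S_1^+$ is glued from the compact oriented $(n+1)$-manifold $\Sigma_1^+\cap E_2$ (carrying the orientation of $\Sigma_1^+$) and from $-\bigl((\Sigma_1^+)^0\cap E_2^0\bigr)$ along their common boundary, the $n$-sphere $\partial(\Sigma_1^+\cap E_2)$; hence $S_1^+$ is a closed connected oriented $(n+1)$-manifold, and the inclusion $\Sigma_1^+\cap E_2\hookrightarrow S_1^+$ is orientation-preserving. Writing $(\Sigma_1^+)^0\cap E_2^0$ in the coordinates $(x_1,x_2,\overline x)$ exhibits it as a convex subset of $\R^{n+1}$, hence as an $(n+1)$-ball with boundary $\s^n$. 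The Mayer--Vietoris sequence of this decomposition, together with the fact that $[\partial(\Sigma_1^+\cap E_2)]$ is null-homologous in $\Sigma_1^+\cap E_2$, gives $H_d(S_1^+)\cong H_d(\Sigma_1^+\cap E_2)$ for $1\le d\le n$, and $H_0(S_1^+)=H_{n+1}(S_1^+)=\mathbb Q$ (the case $n=1$ needs only a trivial change in the bookkeeping). Since $\Sigma_1^+\cap E_{k+1}$ is a deformation retract of $\Sigma_1^+\cap E_2$ (the difference being a collar of the boundary contained in $N_{k+1}\setminus N_2$), which in turn is homotopy equivalent to $\Sigma_1^+$, this identifies $H_d(S_1^+)$ with $H_d(\Sigma_1^+)$ for $1\le d\le n$ compatibly with the inclusions; in particular $b_d=\dim H_d(\Sigma_1^+)$ for such $d$, and $b_0=b_{n+1}=1$.

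\emph{Step 2: degrees $1$ to $n$.} For $d\in\{1,\dots,n\}$, I would use the surjectivity from Step 1 to represent each class $[\bb_j^d]$ and $[\ba_j^d]$ of Lemma \ref{contributionGamma1} by a smooth submanifold cycle lying in $\Sigma_1^+\cap E_{k+1}\subset S_1^+$, and call these representatives $(\bb_{1,j}^d)^+$ and $(\ba_{1,j}^d)^+$. By Step 1 their classes form bases of $H_d(S_1^+)$. For the duality relation $\langle[(\bb_{1,j}^d)^+],[(\ba_{1,j'}^{n+1-d})^+]\rangle_{S_1^+}=\delta_{j,j'}$, note that both cycles lie in the open subset $\Sigma_1^+\cap E_2$ of $S_1^+$, so their algebraic intersection number in $S_1^+$ equals the one computed inside $\Sigma_1^+\cap E_2$; since this submanifold carries the orientation of $\Sigma_1^+$ and is homotopy equivalent to it, that number is exactly $\langle[\bb_j^d],[\ba_{j'}^{n+1-d}]\rangle_{\Sigma_1^+}=\delta_{j,j'}$ by the duality of the pair $(\Ba,\Bb)$ from Definition \ref{bases duales}. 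Pushing these cycles through the normal-direction diffeomorphisms relating $\Sigma_1^+$ to the other surfaces $\Sigma_i^\pm$ of Setting \ref{def-pert} then yields the analogous families $((\bb_{i,j}^d)^\pm)$ and $((\ba_{i,j}^d)^\pm)$ in $S_i^\pm$.

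\emph{Step 3: degrees $0$ and $n+1$, and conclusion.} I would set $(\bb_{1,1}^{n+1})^+=(\ba_{1,1}^{n+1})^+$ equal to the fundamental cycle of $S_1^+$, and $(\bb_{1,1}^0)^+=(\ba_{1,1}^0)^+$ equal to a point of $\partial(\Sigma_1^+\cap E_2)\subset S_1^+$; these represent generators of $H_{n+1}(S_1^+)$ and $H_0(S_1^+)$, so they are (singleton) bases, and $\langle[\mathrm{pt}],[S_1^+]\rangle_{S_1^+}=1$ for the chosen orientation, which is the duality relation in these complementary degrees. Together with Steps 1 and 2 this gives all the assertions of the lemma. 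The only delicate point is the orientation bookkeeping in Step 2: one must check, using the ``outward normal first'' and coorientation conventions, that the nondegenerate intersection pairing of $S_1^+$ really restricts to the Seifert pairing $\langle\cdot,\cdot\rangle_{\Sigma_1^+}$ (and not its negative) on classes supported in $\Sigma_1^+\cap E_2$; this is where the orientation reversal of $(\Sigma_1^+)^0\cap E_2^0$ in the gluing, and the sign conventions of Definition \ref{bases duales}, have to be reconciled. Beyond locality of intersection numbers, no new idea is needed.
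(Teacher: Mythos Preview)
Your proposal is correct and follows the same approach as the paper. The paper's own proof is a single sentence asserting that the inclusion-induced map $H_d(\Sigma_1^+\cap E_{k+1})\to H_d(S_1^+)$ is an isomorphism for $d\in\{0,\dots,n\}$; you have simply supplied the details (the Mayer--Vietoris argument via the contractibility of $(\Sigma_1^+)^0\cap E_2^0$, and the locality of intersection numbers for the duality relation) that the paper leaves implicit.
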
\begin{proof}
It is possible to choose two families such that the first three properties hold because the map $H_\dimd(\Sigma^+_{1}\cap E_{k+1})\rightarrow H_\dimd( S^+_{1})$ induced by the inclusion is an isomorphism for $d\in\{0,\ldots, n\}$.\qedhere
\end{proof}

\begin{lm}\label{decomp-prop}
For any $i\in\zk$, define the cycle $\overline{B_{i,\sigma}} = B_{i,\sigma}- B^0_{i,\sigma} +\hat B_{i,\sigma}$ of $\overline{Y_i}(\sigma)$. 

Its class in $H_{n+1}(\overline{Y_i}(\sigma))$ reads $\left[\overline{B_{i,\sigma}}\right]=R^B_{i,\sigma}+$
\[\sum\limits_{\dimd\in\und n}\sum\limits_{(p,q,\epsilon,\epsilon')\in(\und{b_\dimd})^2\times\{\pm\}^2} 
(-1)^{n(d+1)} 
\lk \left((\bb^{\dimd}_{\sigma(\ell_i),p})^{\epsilon}, (\ba^{n+1-\dimd}_{\sigma(\ell_{\plusun{i}}),q})^{\epsilon'}\right)
 \left[(\ba^{n+1-\dimd}_{\sigma(\ell_i),p})^{\epsilon} \times(\bb^{\dimd}_{\sigma(\ell_{\plusun{i}}),q})^{\epsilon'}\right],\]
where $R^B_{i,\sigma}$ reads $\sum\limits_{\dimd\in\{0,n+1\}} \sum\limits_{(\epsilon,\epsilon')\in\{\pm\}^2}  \alpha_{\dimd, 1, 1, \epsilon, \epsilon'}^{(i)}
 \left[(\ba^{n+1-\dimd}_{\sigma(\ell_i),1})^{\epsilon} \times(\bb^{\dimd}_{\sigma(\ell_{\plusun{i}}),1})^{\epsilon'}\right]$, with rational coefficients $(\alpha_{\dimd, 1, 1, \epsilon, \epsilon'}^{(i)})_{\dimd\in\{0,n+1\},( \epsilon, \epsilon')\in\{\pm\}^2}$ independent of the knot.
\end{lm}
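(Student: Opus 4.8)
The plan is to compute the homology class $[\overline{B_{i,\sigma}}]$ in $H_{n+1}(\overline{Y_i}(\sigma))$ by first identifying the homology of the ambient space $\overline{Y_i}(\sigma) = S_{\sigma(\ell_i)} \times S_{\sigma(\ell_{i^+})}$ via a Künneth decomposition, and then pinning down the coefficients of $[\overline{B_{i,\sigma}}]$ against a dual basis using the boundary conditions that the admissible propagators satisfy (Definition \ref{adm-prop}) together with Lemma \ref{lk} relating external propagators to linking numbers. First I would recall that each $S_{\sigma(\ell_i)}^\pm$ is a closed $(n+1)$-manifold (the gluing of $\Sigma^\pm \cap E_2$ with a copy of the standard piece), with homology bases in each degree $\dimd \in \{0,\ldots,n+1\}$ given by the cycles $(\bb^\dimd_{\cdot,j})^\pm$ and $(\ba^\dimd_{\cdot,j})^\pm$ of Lemma \ref{sigmas}, which are Poincaré-dual to each other. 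By Künneth, $H_{n+1}(\overline{Y_i}(\sigma))$ decomposes as a direct sum over $\dimd$ of tensor products $H_\dimd(S_{\sigma(\ell_i)}) \otimes H_{n+1-\dimd}(S_{\sigma(\ell_{i^+})})$, and a basis is given by the product cycles $(\ba^{n+1-\dimd}_{\sigma(\ell_i),p})^\epsilon \times (\bb^\dimd_{\sigma(\ell_{i^+}),q})^{\epsilon'}$ (the switch from $\bb$ to $\ba$ on the first factor and the indexing by the complementary degree is exactly what makes these Poincaré-dual to the cycles $(\bb^\dimd_{\sigma(\ell_i),p})^\epsilon \times (\ba^{n+1-\dimd}_{\sigma(\ell_{i^+}),q})^{\epsilon'}$ up to the standard Künneth sign).

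Next, $\overline{B_{i,\sigma}} = B_{i,\sigma} - B^0_{i,\sigma} + \hat B_{i,\sigma}$ is genuinely a cycle in $\overline{Y_i}(\sigma)$: by Lemma \ref{hpi} the three boundary terms cancel ($\partial \hat B_{i,\sigma} = \partial B^0_{i,\sigma} - \partial B_{i,\sigma}$), so there is a well-defined class. To extract its coefficient on a given basis element, I would intersect $[\overline{B_{i,\sigma}}]$ with the Poincaré-dual product cycle $(\bb^\dimd_{\sigma(\ell_i),p})^\epsilon \times (\ba^{n+1-\dimd}_{\sigma(\ell_{i^+}),q})^{\epsilon'}$ inside $\overline{Y_i}(\sigma)$. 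Since these transverse representatives are chosen (Lemma \ref{sigmas}) to live inside $\Sigma^\pm \cap E_{k+1}$, far from the region $N_3 \times N_3$ where the correction chains $\hat B_{i,\sigma}$ and the standard pieces of $B^0_{i,\sigma}$ are concentrated, the intersection number against $\overline{B_{i,\sigma}}$ equals the intersection number against $B_{i,\sigma}$ alone (the $-B^0_{i,\sigma}$ and $+\hat B_{i,\sigma}$ contributions vanish for degree reasons when $\dimd \in \und n$, being supported in the $S^{\leq 3}$ region where only the degree-$0$ and degree-$(n+1)$ homology is detected — this is precisely what forces those terms into the knot-independent remainder $R^B_{i,\sigma}$). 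Finally, $B_{i,\sigma} = B_{\sigma(f_i)} \cap Y_i(\sigma)$ is the restriction of an external propagator to $\Sigma_{\sigma(\ell_i)}(2) \times \Sigma_{\sigma(\ell_{i^+})}(2)$, so $\langle (\bb^\dimd_{\sigma(\ell_i),p})^\epsilon \times (\ba^{n+1-\dimd}_{\sigma(\ell_{i^+}),q})^{\epsilon'}, B_{i,\sigma}\rangle$ is a linking number of cycles of the Seifert surfaces by Lemma \ref{lk}: it equals $(-1)^{\dimd+1}$ times $\lk\big((\bb^\dimd_{\sigma(\ell_i),p})^\epsilon, (\ba^{n+1-\dimd}_{\sigma(\ell_{i^+}),q})^{\epsilon'}\big)$, up to the sign bookkeeping. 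Combining the Künneth sign $(-1)^{\dimd(n+1-\dimd)}$ (or the appropriate variant from our orientation conventions), the Lemma \ref{lk} sign $(-1)^{\dimd+1}$, and a sign from matching coorientations, one collects everything into the stated overall sign $(-1)^{n(\dimd+1)}$.

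The main obstacle I expect is the sign computation: tracking the Künneth sign against the "outward normal first" and coorientation conventions fixed earlier, composing it correctly with the sign in Lemma \ref{lk}, and verifying that the degree-$0$ and degree-$(n+1)$ contributions — where the correction chain $\hat B_{i,\sigma}$ and the standard parts genuinely do contribute — can all be absorbed into a knot-independent term $R^B_{i,\sigma}$ of the asserted shape. The homological input (Künneth, Poincaré duality on closed manifolds, the isomorphism $H_\dimd(\Sigma^+\cap E_{k+1}) \cong H_\dimd(S^+)$ from Lemma \ref{sigmas}) is routine; the bookkeeping that every non-$\{0,n+1\}$-degree piece is knot-\emph{dependent} and captured by a linking number, while the $\{0,n+1\}$-degree pieces are knot-\emph{independent}, rests on the fact (Lemma \ref{hpi}) that $B_{i,\sigma}$ and $B^0_{i,\sigma}$ agree inside $p_b^{-1}(N_3\times N_3)$ and that $\partial B^0_{i,\sigma}$ represents the same class as $\partial B_{i,\sigma}$ there, so the discrepancy lives entirely in $H_0 \oplus H_{n+1}$ of the relevant product.
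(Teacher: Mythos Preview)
Your approach matches the paper's: decompose $H_{n+1}(\overline{Y_i}(\sigma))$ via Künneth using the bases of Lemma \ref{sigmas}, read off each coefficient by intersecting with the Poincaré-dual product cycle, and invoke Lemma \ref{lk} to identify the $d\in\und n$ coefficients as linking numbers. One minor imprecision: $B^0_{i,\sigma}$ is not supported in the $S^{\leq 3}$ region; it lives on the whole trivial-knot half $Y^0_i(\sigma)$ of the gluing, and the reason it misses the dual cycles for $d\in\und n$ is simply that those cycles sit in $\Sigma^\pm\cap E_{k+1}$ on the \emph{knot} half of $S_{\sigma(\ell_i)}$.

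The real gap is in the knot-independence of the $d\in\{0,n+1\}$ coefficients. Knowing that $B_{i,\sigma}$ and $B^0_{i,\sigma}$ agree on $p_b^{-1}(N_3\times N_3)$ only helps once you know that the pairing $\langle B_{i,\sigma},\,(\bb^0_{\sigma(\ell_i),1})^\epsilon \times S^{\epsilon'}_{\sigma(\ell_{i^+})}\rangle$ is computed entirely inside that region; your phrase ``the discrepancy lives entirely in $H_0\oplus H_{n+1}$'' does not establish this and is not how the argument runs. The paper supplies the missing geometric step: since $(\bb^0_{\sigma(\ell_i),1})^\epsilon\in\partial(\Sigma^\epsilon_{\sigma(\ell_i)}\cap E_2)\subset\partial N_2$, an intersection point in $\partial N_2\times E_3$ would have to lie in $\big(\mur[2]\times\Sigma_{\sigma(f_i)}(3)\big)\cup\big(L_{\theta_{\sigma(f_i)}}(2)\times E_3\big)$ by the description of the admissible propagator $B_{\sigma(f_i)}$ on $N_2\times E_3$, and both pieces are excluded because the Seifert surfaces are pairwise parallel (so $\Sigma_{\sigma(f_i)}(3)\cap\Sigma^{\epsilon'}_{\sigma(\ell_{i^+})}(2)=\emptyset$) and $\Sigma^\epsilon_{\sigma(\ell_i)}(2)\cap L_{\theta_{\sigma(f_i)}}(2)=\emptyset$. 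Only after this localization does the agreement on $p_b^{-1}(N_3\times N_3)$ give knot-independence; the case $d=n+1$ is symmetric.
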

\begin{proof}
The families of chains $((\bb_{\sigma(\ell_i),p}^\dimd)^+)_{0\leq \dimd \leq n+1, p \in\und{ b_\dimd}}$ and $((\ba_{\sigma(\ell_{\plusun{i}}),p}^\dimd)^+)_{0\leq \dimd \leq n+1, p \in\und{ b_\dimd}}$ induce 
the two following bases of $H_{n+1}( \overline {Y_i}(\sigma))$: 
\[\left(\left[(\ba^{n+1-\dimd}_{\sigma(\ell_i),p})^{\epsilon} \times(\bb^{\dimd}_{\sigma(\ell_{\plusun{i}}),q})^{\epsilon'}\right]\right)_{0\leq \dimd \leq n+1, 1\leq p, q \leq b_{\dimd}, (\epsilon,\epsilon')\in\{\pm\}^2},\] \[\text{and }\left(\left[(\bb^{\dimd}_{\sigma(\ell_i),p})^{\epsilon} \times(\ba^{n+1-\dimd}_{\sigma(\ell_{\plusun{i}}),q})^{\epsilon'}\right]\right)_{0\leq \dimd\leq n+1, 1\leq p, q\leq b_\dimd, (\epsilon,\epsilon')\in\{\pm\}^2}.\] 
These bases are dual is the sense that for any $p,p',q,q',\dimd, \dimd',\epsilon, \epsilon', \eta, \eta'$, 
 \[\left\langle \left[(\ba^{n+1-\dimd}_{\sigma(\ell_i),p})^{\epsilon} \times(\bb^{\dimd}_{\sigma(\ell_{\plusun{i}}),q})^{\epsilon'}\right] , 
\left[(\bb^{\dimd'}_{\sigma(\ell_i),p'})^{\eta} \times(\ba^{n+1-\dimd'}_{\sigma(\ell_{\plusun{i}}),q'})^{\eta'}
\right] \right\rangle_{\overline Y(\sigma)}
 = 
(-1)^{(n+1)\dimd} 
 \delta_{(\dimd, p, q , \epsilon,\epsilon')}^{(\dimd',p',q', \eta, \eta')} ,\] where $\delta_x^y$ is the Kronecker delta.
There exist coefficients such that \[\left[\overline{B_{i,\sigma}}\right]=\sum\limits_{\dimd=0}^{n+1}\sum\limits_{(p,q,\epsilon,\epsilon')\in(\und{b_\dimd})^2\times\{\pm\}^2}  \alpha_{\dimd,p,q,\epsilon, \epsilon'}^{(i)} \left[(\ba^{n+1-\dimd}_{\sigma(\ell_i),p})^{\epsilon} \times(\bb^{\dimd}_{\sigma(\ell_{\plusun{i}}),q})^{\epsilon'}\right] .\]
For any $\dimd\in\und n$, and any $(p,q,\epsilon,\epsilon')\in(\und{b_\dimd})^2\times\{\pm\}^2$, 
\begin{eqnarray*}\alpha_{\dimd,p,q,\epsilon,\epsilon'}^{(i)}&=& 
(-1)^{(n+1)\dimd} 
\left\langle \left[\overline {B_{i,\sigma}}\right] ,\left[(\bb^{\dimd}_{\sigma(\ell_i),p})^{\epsilon} \times(\ba^{n+1-\dimd}_{\sigma(\ell_{\plusun{i}}),q})^{\epsilon'}\right] \right\rangle_{\overline {Y_i}(\sigma) }\\
&=& 
(-1)^{(n+1)\dimd} 
\left\langle B_{i,\sigma} ,\left[(\bb^{\dimd}_{\sigma(\ell_i),p})^{\epsilon} \times(\ba^{n+1-\dimd}_{\sigma(\ell_{\plusun{i}}),q})^{\epsilon'}\right] \right\rangle_{Y_i(\sigma) }\\
&=& (-1)^{(n+1)\dimd} 
\left\langle B_{\sigma(f_i)} ,\left[(\bb^{\dimd}_{\sigma(\ell_i),p})^{\epsilon} \times(\ba^{n+1-\dimd}_{\sigma(\ell_{\plusun{i}}),q})^{\epsilon'}\right] \right\rangle_{\configM} \\
&=& (-1)^{(n+1)\dimd} (-1)^{(n+3)(n+1)}
\left\langle\left[(\bb^{\dimd}_{\sigma(\ell_i),p})^{\epsilon} \times(\ba^{n+1-\dimd}_{\sigma(\ell_{\plusun{i}}),q})^{\epsilon'}\right],B_{\sigma(f_i)} \right\rangle_{\configM} \\
&=& (-1)^{n(d+1)}   
\lk \left((\bb^{\dimd}_{\sigma(\ell_i),p})^{\epsilon}, (\ba^{n+1-\dimd}_{\sigma(\ell_{\plusun{i}}),q})^{\epsilon'}\right),\end{eqnarray*}
where the first equality comes from the duality of the bases above, the second one comes from the second point of Lemma \ref{sigmas}, and the fifth one comes from Lemma \ref{lk}.

Set $R^B_{i,\sigma}=\sum\limits_{\dimd\in\{0,n+1\}} \sum\limits_{(\epsilon,\epsilon')\in\{\pm\}^2}  \alpha_{\dimd, 1, 1, \epsilon, \epsilon'}^{(i)}
 \left[(\ba^{n+1-\dimd}_{\sigma(\ell_i),1})^{\epsilon} \times(\bb^{\dimd}_{\sigma(\ell_{\plusun{i}}),1})^{\epsilon'}\right]$.
The duality allows us to compute the coefficients that appear in $R^B_{i,\sigma}$, too. 
First, $\alpha_{0,1,1, \epsilon,\epsilon'}^{(i)} = 
\langle [\overline{B_{i,\sigma}} ], 
[ (\bb_{\sigma(\ell_{i}),1}^0)^{\epsilon}\times S_{\sigma(\ell_{\plusun{i}})}^{\epsilon'}]
 \rangle_{\overline {Y_i}(\sigma)}$. 
 The chain $(\bb_{\sigma(\ell_{i}),1}^0)^{\epsilon}\times S_{\sigma(\ell_{\plusun{i}})}^{\epsilon'}$ 
 is contained in 
 $\partial (\Sigma_{\sigma(\ell_{i})}^{\epsilon}\cap E_2) \times S_{\sigma(\ell_{\plusun{i}})}^{\epsilon'} $,
 so it only meets $B_{i, \sigma}$ 
 inside $\partial(\Sigma^\epsilon_{\sigma(\ell_i)}\cap E_2)\times \Sigma_{\sigma(\ell_{\plusun{i}})}^{\epsilon'} $. 
Let us prove that $B_{i,\sigma}\cap(\partial(\Sigma^\epsilon_{\sigma(\ell_i)}\cap E_2)\times \Sigma_{\sigma(\ell_{\plusun{i}})}^{\epsilon'} )$ lies in $p_b^{-1}(N_3\times N_3)$.
If a configuration in this intersection was in $\partial N_2\times E_3$, 
it would be in $( \mur[2] \times \Sigma_{\sigma(f_i)}(3)) \cup ( L_{\theta_{\sigma(f_i)}}(2)\times E_3)$. 
Since $\Sigma_{\sigma(f_i)}(3) \cap \Sigma_{\sigma(\ell_{\plusun{i}})}(2) = \emptyset$ and $\Sigma_{\sigma(\ell_{i})}(2)\cap L_{\theta_{\sigma(f_i)}}(2)=\emptyset$, this is impossible.
Therefore,  $\langle B_{i,\sigma} , (\bb_{\sigma(\ell_{i}),1}^0)^{\epsilon}\times S_{\sigma(\ell_{\plusun{i}})}^{\epsilon'} \rangle_{S_{\sigma(\ell_i)} \times S_{\sigma(\ell_{\plusun{i}})}}$ only counts intersection points in $p_b^{-1}(N_3\times N_3) $. 
By construction, this implies that this intersection number does not depend on the knot. 
Similarly, $\langle B^0_{i,\sigma} ,(\bb_{\sigma(\ell_{i}),1}^0)^{\epsilon}\times S_{\sigma(\ell_{\plusun{i}})}^{\epsilon'} \rangle_{S_{\sigma(\ell_i)} \times S_{\sigma(\ell_{\plusun{i}})}}$ does not depend on the knot, and $\langle \hat{B}_{i,\sigma} ,(\bb_{\sigma(\ell_{i}),1}^0)^{\epsilon}\times S_{\sigma(\ell_{\plusun{i}})}^{\epsilon'} \rangle_{S_{\sigma(\ell_i)} \times S_{\sigma(\ell_{\plusun{i}})}}$ does not depend on the knot because of Lemma \ref{hpi}. This proves that $\alpha_{0,1,1, \epsilon,\epsilon'}^{(i)}$ does not depend on the knot.
The same argument proves that $\alpha_{n+1,1,1,\epsilon,\epsilon'}^{(i)}$ does not depend on the knot, so that the coefficients of $R^B_{i,\sigma}$ do not depend on the knot.
\end{proof}
Lemma \ref{decomp-prop} yields the following lemma.

\begin{lm}\label{decomppii}
Set $\overline\eta_i= 1$ for $i\in\und{k-1}$, and $\overline\eta_k= (-1)^{(k+1)(n+1)}$.
Let $J$ denote the set of tuples $(\dimd,p,q,\hat\epsilon)$ such that $\dimd\in\und n$, $(p,q)\in (\und{b_{\dimd}})^2$, and $\hat\epsilon$ is a map $\hat\epsilon\colon\zk\rightarrow \{\pm\}$. For any $i\in\zk$, set $\overline{\Pii} = \Pii- \Pii^0 + \overline{\pi_i}^{-1}(\hat B_{i,\sigma})$.
With these notations, \begin{multline*}
[\overline{\Pii}] = R_{i,\sigma}
+
 \sum\limits_{(\dimd,p,q,\hat\epsilon)\in J}
 (-1)^{n(d+1)}
\lk\left((\bb_{\sigma(\ell_i), p}^{\dimd})^{\hat\epsilon(i)}, (\ba_{\sigma(\ell_{\plusun{i}}),q}^{n+1-\dimd})^{\hat\epsilon(\plusun{i})}\right) \\
\left[
\overline\eta_i.
(\ba_{\sigma(\ell_i), p}^{n+1-\dimd})^{\hat\epsilon(i)} \times(\bb_{\sigma(\ell_{\plusun{i}}),q}^{\dimd})^{\hat\epsilon(\plusun{i})}\times \prod\limits_{j\not\in\{i,\plusun{i}\}} S_{\sigma(\ell_j)}^{\hat\epsilon(j)}\right]
,\end{multline*}
where $R_{i,\sigma}$ reads \[R_{i,\sigma}=\sum\limits_{\dimd\in\{0,n+1\}}\sum\limits_{\hat\epsilon\colon\und k \rightarrow \{\pm\}}\alpha_{\dimd, \hat\epsilon}^{(i)}
\left[ 
\overline\eta_i.
(\ba_{\sigma(\ell_i), 1}^{n+1-\dimd})^{\hat\epsilon(i)} \times(\bb_{\sigma(\ell_{\plusun{i}}),1}^{\dimd})^{\hat\epsilon(\plusun{i})}\times\prod\limits_{j\not\in\{i,\plusun{i}\}} S_{\sigma(\ell_j)}^{\hat\epsilon(j)}\right],\]
with coefficients $(\alpha_{\dimd, \hat\epsilon}^{(i)})_{\dimd,\hat\epsilon}$ that do not depend on the knot.
\end{lm}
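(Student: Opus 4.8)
The idea is to push the homology identity of Lemma~\ref{decomp-prop} forward from $\overline{Y_i}(\sigma)$ to $\overline Y(\sigma)$ along the projection $\overline{\pi_i}$, keeping book of the orientations of the product factors of $\overline Y(\sigma)=\prod_{j\in\zk}S_{\sigma(\ell_j)}$.

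First I would observe that, by construction, $\Pii=\pi_i^{-1}(B_{i,\sigma})$, $\Pii^0=\pi_i^{-1}(B_{i,\sigma}^0)$ and $\overline{\pi_i}^{-1}(\hat B_{i,\sigma})$ all split, up to a reordering of the factors of $\overline Y(\sigma)$, as the product of a chain of $\overline{Y_i}(\sigma)=S_{\sigma(\ell_i)}\times S_{\sigma(\ell_{\plusun{i}})}$ with ``fibre'' chains over the remaining factors: the fibre of $\Pii$ is $\prod_{j\notin\{i,\plusun{i}\}}\Sigma_{\sigma(\ell_j)}(2)$, that of $\Pii^0$ is $\prod_{j\notin\{i,\plusun{i}\}}\Sigma^0_{\sigma(\ell_j)}(2)$, and that of $\overline{\pi_i}^{-1}(\hat B_{i,\sigma})$ is $\prod_{j\notin\{i,\plusun{i}\}}S_{\sigma(\ell_j)}$. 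Using the gluings of Lemma~\ref{hpi}, which identify the fundamental chain of $S_{\sigma(\ell_j)}$ with $\Sigma_{\sigma(\ell_j)}(2)-\Sigma^0_{\sigma(\ell_j)}(2)$, I would expand $\overline{\Pii}=\Pii-\Pii^0+\overline{\pi_i}^{-1}(\hat B_{i,\sigma})$ and check that, in $H_*(\overline Y(\sigma))$, $[\overline{\Pii}]$ equals $[\overline{\pi_i}^{-1}(\overline{B_{i,\sigma}})]$ plus a class whose nonzero coordinates in the basis of Lemma~\ref{decomppii} only involve the directions $\dimd\in\{0,n+1\}$ in the factors of index $i$ or $\plusun{i}$; moreover, arguing exactly as in the $\dimd\in\{0,n+1\}$ part of the proof of Lemma~\ref{decomp-prop}, the corresponding intersection numbers only count configurations lying over $p_b^{-1}(N_3\times N_3)$, so they are knot-independent. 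This is the part of $[\overline{\Pii}]$ that gets absorbed into $R_{i,\sigma}$.

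Next I would compute $[\overline{\pi_i}^{-1}(Z)]\in H_*(\overline Y(\sigma))$ for $Z$ one of the product cycles of $H_{n+1}(\overline{Y_i}(\sigma))$ occurring in Lemma~\ref{decomp-prop}, namely $Z=(\ba^{n+1-\dimd}_{\sigma(\ell_i),p})^{\epsilon}\times(\bb^{\dimd}_{\sigma(\ell_{\plusun{i}}),q})^{\epsilon'}$. Since $\overline{\pi_i}$ is the projection of $\prod_{j\in\zk}S_{\sigma(\ell_j)}$ onto its $i$-th and $\plusun{i}$-th factors, the coorientation-preserving preimage of $Z$ is obtained by inserting the two cycles in positions $i$ and $\plusun{i}$ of the product and leaving the full $S_{\sigma(\ell_j)}^{\hat\epsilon(j)}$ elsewhere; reordering so that the two cycles come first and the surviving $(n+1)$-dimensional factors follow in increasing order of $j$ produces the sign $\overline\eta_i$. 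For $i\in\und{k-1}$ the factors $i$ and $\plusun{i}=i+1$ are already consecutive and in order, no reordering of whole $(n+1)$-blocks is needed, and $\overline\eta_i=1$; for $i=k$ the cyclic identification $\plusun{k}=1$ forces bringing the first factor back past the last $k-1$ factors of dimension $n+1$, which gives $\overline\eta_k=(-1)^{(k+1)(n+1)}$.

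Finally I would substitute the decomposition of $[\overline{B_{i,\sigma}}]$ from Lemma~\ref{decomp-prop} into $[\overline{\Pii}]=[\overline{\pi_i}^{-1}(\overline{B_{i,\sigma}})]+(\text{knot-independent correction})$. The summand over $\dimd\in\und n$ is transported verbatim: it acquires the factor $\overline\eta_i$ and the extra factors $\prod_{j\notin\{i,\plusun{i}\}}S_{\sigma(\ell_j)}^{\hat\epsilon(j)}$, the signs $\hat\epsilon\colon\zk\to\{\pm\}$ recording which of the two parallel copies $\Sigma^\pm_{\sigma(\ell_j)}$ of each surface enters, so that it becomes precisely the displayed sum over $(\dimd,p,q,\hat\epsilon)\in J$ with coefficient $(-1)^{n(d+1)}\lk\big((\bb^{\dimd}_{\sigma(\ell_i),p})^{\hat\epsilon(i)},(\ba^{n+1-\dimd}_{\sigma(\ell_{\plusun{i}}),q})^{\hat\epsilon(\plusun{i})}\big)$. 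The term $R^B_{i,\sigma}$ is transported to a sum over $\dimd\in\{0,n+1\}$ of the same shape, and together with the knot-independent correction from the first step it makes up $R_{i,\sigma}$; its coefficients $\alpha^{(i)}_{\dimd,\hat\epsilon}$ are knot-independent because both contributions are. The main obstacle is the first step: making rigorous that the chain-level discrepancy between $\Pii$ and $\overline{\pi_i}^{-1}(B_{i,\sigma})$ — the difference between the fibres $\prod\Sigma_{\sigma(\ell_j)}(2)$ and $\prod S_{\sigma(\ell_j)}$ — only changes $[\overline{\Pii}]$ by a knot-independent class, which requires combining the gluing bookkeeping of Lemma~\ref{hpi} with the locality argument (intersections occurring only over $p_b^{-1}(N_3\times N_3)$) already used for the $\dimd\in\{0,n+1\}$ coefficients in Lemma~\ref{decomp-prop}.
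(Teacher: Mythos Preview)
Your approach coincides with the paper's: both view the lemma as the pullback of Lemma~\ref{decomp-prop} along $\overline{\pi_i}$, the only real content being the reordering sign $\overline\eta_i$, which you derive exactly as the paper does (by comparing the order $S_{\sigma(\ell_i)}\times S_{\sigma(\ell_{\plusun{i}})}\times\prod_{j\notin\{i,\plusun{i}\}}S_{\sigma(\ell_j)}$ with the canonical order $\prod_{j\in\zk}S_{\sigma(\ell_j)}$). The fibre-discrepancy issue you flag as the ``main obstacle'' is not treated in the paper's proof at all --- the paper's argument is literally just the coorientation computation --- and that point is effectively deferred to Lemma~\ref{formule1}, where the chain-level expansion of $\overline{\Pii}$ into $\Pii(0),\Pii(1),\Pii(2)$ together with the locality argument you sketch is carried out directly on the intersection number rather than on a homology class.
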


\begin{proof}
If $i<k$, then
the chain $(\ba_{\sigma(\ell_i), p}^{n+1-d})^{\hat\epsilon(i)} \times(\bb_{\sigma(\ell_{\plusun{i}}),q}^{d})^{\hat\epsilon(\plusun{i})}
\times \prod\limits_{j\not\in\{i,\plusun{i}\}} S_{\sigma(\ell_j)}^{\hat\epsilon(j)}$ is cooriented by $(-1)^{n+1-\dimd}\N(\ba_{\sigma(\ell_i), p_i}^{n+1-\dimd})^{\hat\epsilon(i)} \times\N(\bb_{\sigma(\ell_{\plusun{i}}),q_i}^{\dimd})^{\hat\epsilon(\plusun{i})}$.
If $i=k$, then the coorientation of
$(\ba_{\sigma(\ell_k), p}^{n+1-d})^{\hat\epsilon(k)} \times(\bb_{\sigma(\ell_{1}),q}^{d})^{\hat\epsilon(1)}
\times \prod\limits_{j=2}^{k-1} S_{\sigma(\ell_j)}^{\hat\epsilon(j)}$
 is  $(-1)^{(k+1)(n+1)}$ $ (-1)^{n+1-\dimd}
 \N(\ba_{\sigma(\ell_k), p_k}^{n+1-\dimd})^{\hat\epsilon(k)} \times\N(\bb_{\sigma(\ell_{1}),q_k}^{\dimd})^{\hat\epsilon(1)}$ because 
$S_{\sigma(\ell_k)}^{\hat\epsilon(k)}\times S_{\sigma(\ell_1)}^{\hat\epsilon(1)} \times \prod\limits_{j=2}^{k-1} S_{\sigma(\ell_j)}^{\hat\epsilon(j)}$ is oriented as $(-1)^{(k+1)(n+1)} \prod\limits_{j\in\und k} S_{\sigma(\ell_j)}^{\hat\epsilon(j)}$.
\qedhere

\end{proof}

\begin{lm}\label{interpii}
Let $J'$ denote the set of tuples $(\dimd, p, q, \hat\epsilon)$ such that $\dimd\in\{0,\ldots, n+1\}$, $(p,q)\in (\und{b_\dimd})^2$, and $\hat\epsilon\colon\zk\rightarrow\{\pm\}$. (We have $J\subset J'$.)
For $(\dimd_i, p_i, q_i, \hat\epsilon_i)_{i\in\zk}\in(J')^{\zk}$,
\begin{multline*}\left\langle\left(
\left[
\overline\eta_i.
(\ba_{\sigma(\ell_i), p_i}^{n+1-\dimd_i})^{\hat\epsilon_i(i)} \times(\bb_{\sigma(\ell_{\plusun{i}}),q_i}^{\dimd_i})^{\hat\epsilon_i(\plusun{i})}\times \prod\limits_{j\not\in\{i,\plusun{i}\}} S_{\sigma(\ell_j)}^{\hat\epsilon_i(j)}
\right]\right)_{i \in \zk}
\right\rangle_{\overline Y(\sigma)}
\\= \begin{cases} (-1)^{(n+1-d_1)(kn+1)}
&\text{if \(\dimd_1=\ldots = \dimd_k\), \(\hat\epsilon_1=\ldots=\hat\epsilon_k\), and for any \(i\in\zk\), \(q_{i} = p_{\plusun{i}}\),}\\
0
&\text{otherwise.} 
\end{cases}\end{multline*}
\end{lm}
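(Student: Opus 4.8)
\textbf{Proof proposal for Lemma \ref{interpii}.}

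The statement is a homological identity: $\overline Y(\sigma)=\prod_{j\in\zk}S_{\sigma(\ell_j)}$ is a closed oriented manifold of dimension $k(n+1)$, and each of the $k$ classes being paired has codimension $(n+1-\dimd_i)+\dimd_i=n+1$, so the left-hand side is the $k$-fold intersection pairing of $\overline H_*(\overline Y(\sigma))$, which may be computed with any pairwise transverse family of representatives. The plan is to use the evident product representatives together with the duality of Lemma \ref{sigmas}, and then to collect signs. First I would reduce to a single connected component: since $S_j=S_j^+\sqcup S_j^-$, the manifold $\overline Y(\sigma)$ is the disjoint union, over maps $\eta\colon\zk\to\{\pm\}$, of its connected pieces $\prod_{j}S^{\eta(j)}_{\sigma(\ell_j)}$; and the $i$-th cycle in the statement, which equals the whole factor $S^{\hat\epsilon_i(j)}_{\sigma(\ell_j)}$ in every slot $j\notin\{i,\plusun i\}$ and a subcycle of $S^{\hat\epsilon_i(i)}_{\sigma(\ell_i)}$, resp. of $S^{\hat\epsilon_i(\plusun i)}_{\sigma(\ell_{\plusun i})}$, in the slots $i$ and $\plusun i$, is entirely supported in the piece indexed by $\hat\epsilon_i$. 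Hence the intersection is empty unless $\hat\epsilon_1=\dots=\hat\epsilon_k=:\hat\epsilon$, which accounts for the value $0$ in all those cases. From now on I assume this, write $M_j=S^{\hat\epsilon(j)}_{\sigma(\ell_j)}$, and work inside $M:=\prod_{j\in\zk}M_j$.

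Next I would factor the intersection over the $k$ factors. Each cycle is a product cycle $C_i=\prod_j C_i^{(j)}$ with $C_i^{(j)}=M_j$ except $C_i^{(i)}=(\ba^{n+1-\dimd_i}_{\sigma(\ell_i),p_i})^{\hat\epsilon(i)}$ and $C_i^{(\plusun i)}=(\bb^{\dimd_i}_{\sigma(\ell_{\plusun i}),q_i})^{\hat\epsilon(\plusun i)}$. Choosing the representatives of Lemma \ref{sigmas} pairwise transverse in each $M_j$ makes the $C_i$ pairwise transverse in $M$, with $\bigcap_i C_i=\prod_j\bigl(\bigcap_i C_i^{(j)}\bigr)$. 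Let $\moinsun j$ denote the predecessor of $j$ in the cyclic order $\zk$, so that $\plusun{(\moinsun j)}=j$. In the $j$-th factor, the only non-ambient constraints come from $C_j$ (the slot $i=j$) and from $C_{\moinsun j}$ (the slot $\plusun{(\moinsun j)}=j$), so $\bigcap_i C_i^{(j)}=(\ba^{n+1-\dimd_j}_{\sigma(\ell_j),p_j})^{\hat\epsilon(j)}\cap(\bb^{\dimd_{\moinsun j}}_{\sigma(\ell_j),q_{\moinsun j}})^{\hat\epsilon(j)}$ inside $M_j$. Computing the sign of an intersection point of the $C_i$ with the paper's convention (comparing $\bigoplus_{i\in\zk}\N C_i$ with $TM$), the comparison splits over the $M_j$ once one reorders the $2k$ normal blocks of the product cycles into the $k$ pairs grouped by factor; the $j$-th pair contributes the sign of an intersection point of $(\ba^{n+1-\dimd_j}_{\sigma(\ell_j),p_j})^{\hat\epsilon(j)}$ with $(\bb^{\dimd_{\moinsun j}}_{\sigma(\ell_j),q_{\moinsun j}})^{\hat\epsilon(j)}$ in $M_j$. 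Summing over intersection points yields $\prod_{j\in\zk}\bigl\langle(\ba^{n+1-\dimd_j}_{\sigma(\ell_j),p_j})^{\hat\epsilon(j)},(\bb^{\dimd_{\moinsun j}}_{\sigma(\ell_j),q_{\moinsun j}})^{\hat\epsilon(j)}\bigr\rangle_{M_j}$, up to a global reordering sign. By Lemma \ref{sigmas}, $\langle(\bb^{\dimd}_{\sigma(\ell_j),q})^{\hat\epsilon(j)},(\ba^{n+1-\dimd}_{\sigma(\ell_j),p})^{\hat\epsilon(j)}\rangle_{M_j}=\delta_{p,q}$, and this vanishes unless the degrees are complementary; hence the $j$-th factor is $0$ unless $\dimd_{\moinsun j}=\dimd_j$ and $q_{\moinsun j}=p_j$, in which case it equals $(-1)^{\dimd_j(n+1-\dimd_j)}$ after swapping the two cycles. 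Running over all $j$ in the cyclic order forces $\dimd_1=\dots=\dimd_k$ and $q_i=p_{\plusun i}$ for all $i$ — exactly the non-degenerate case of the statement — and there the factors multiply to $(-1)^{k\,\dimd_1(n+1-\dimd_1)}$.

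It then remains to assemble all signs into $(-1)^{(n+1-\dimd_1)(kn+1)}$. This is the only substantive part of the argument: one must combine (i) the $(-1)^{k\,\dimd_1(n+1-\dimd_1)}$ just obtained, (ii) the coorientation conventions already built into the cycles of the statement, namely the per-cycle factors $(-1)^{n+1-\dimd_i}$ and the cyclic correction $\overline\eta_k=(-1)^{(k+1)(n+1)}$ recorded in Lemma \ref{decomppii} (equivalently Lemma \ref{decomp-prop}), and (iii) the shuffle sign for permuting the $2k$ normal blocks $\bigl(\N(\ba^{n+1-\dimd_i}_{\sigma(\ell_i),p_i})\ \text{in slot}\ i,\ \N(\bb^{\dimd_i}_{\sigma(\ell_{\plusun i}),q_i})\ \text{in slot}\ \plusun i\bigr)_{i\in\zk}$ into the consecutive pairs $\bigl(\N(\ba^{n+1-\dimd_j}_{\sigma(\ell_j),p_j}),\N(\bb^{\dimd_{\moinsun j}}_{\sigma(\ell_j),q_{\moinsun j}})\bigr)_{j\in\zk}$, a shuffle of blocks of dimensions $\dimd_1$ and $n+1-\dimd_1$. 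Using the congruence $e(n+1-e)\equiv ne\pmod 2$ with $e=n+1-\dimd_1$, one checks that items (i)–(iii) telescope to $(-1)^{(n+1-\dimd_1)(kn+1)}$, which finishes the proof. I expect this final bookkeeping to be the main obstacle; everything preceding it is a formal consequence of the product structure of $\overline Y(\sigma)$ and the duality relation of Lemma \ref{sigmas}.
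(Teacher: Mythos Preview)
Your approach coincides with the paper's: restrict to a single component by forcing $\hat\epsilon_1=\dots=\hat\epsilon_k$, factor the intersection over the $k$ slots of $\overline Y(\sigma)=\prod_jM_j$, use the duality of Lemma~\ref{sigmas} in each slot to produce the Kronecker conditions $\dimd_{\moinsun j}=\dimd_j$ and $q_{\moinsun j}=p_j$, and then reorder the $2k$ normal blocks to extract the global sign. The paper simply performs the sign step you leave as ``one checks'': it uses that each cycle (with its $\overline\eta_i$) is cooriented by $(-1)^{n+1-\dimd}\,\N(\ba)\times\N(\bb)$, cyclically reorders $\prod_i\bigl(\N(\ba)^{(i)}\times\N(\bb)^{(\plusun i)}\bigr)$ so that the two blocks living in slot $j$ become adjacent, and reads off $(-1)^{k(n+1-\dimd)}(-1)^{(n+1-\dimd)((k-1)(n+1)+\dimd)}=(-1)^{(n+1-\dimd)(kn+1)}$ directly.

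One caution on your bookkeeping in item~(ii): the factors $\overline\eta_i$ are \emph{exactly} what is needed so that every cycle, including the $i=k$ one, has coorientation $(-1)^{n+1-\dimd}\,\N(\ba)\times\N(\bb)$ --- that is the content of the proof of Lemma~\ref{decomppii}. So once you record the per-cycle $(-1)^{n+1-\dimd}$, the correction $\overline\eta_k$ has already been absorbed and must not be added again; counting it separately would flip the parity by $(k+1)(n+1)$ and break the telescoping you announce. With that adjustment, your (i)+(ii)+(iii) is literally the paper's computation.
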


\begin{proof}If we do not have $\hat\epsilon_1=\ldots=\hat\epsilon_k$ the intersection is empty. If we do not have $\dimd_1=\ldots = \dimd_k$, there exists an integer $i\in\zk$ such that $\dimd_{i}<\dimd_{\plusun{i}}$ and the chains $(\bb_{\sigma(\ell_{\plusun{i}}), q_i}^{\dimd_i})^{\hat\epsilon_i(i)}$ and $(\ba_{\sigma(\ell_{\plusun{i}}), p_{\plusun{i}}}^{n+1-\dimd_{\plusun{i}}})^{\hat\epsilon_{\plusun{i}}(i)}$ do not intersect, up to small perturbations, so the intersection number of the lemma is zero. Let us now assume $\hat\epsilon_1=\ldots=\hat\epsilon_k=\hat\epsilon$ and $\dimd_1=\ldots = \dimd_k=\dimd$.
The chain $
\overline\eta_i
(\ba_{\sigma(\ell_i), p_i}^{n+1-\dimd})^{\hat\epsilon(i)} \times(\bb_{\sigma(\ell_{\plusun{i}}),q_i}^{\dimd})^{\hat\epsilon(\plusun{i})}\times \prod\limits_{j\not\in\{i,\plusun{i}\}} S_{\sigma(\ell_j)}^{\hat\epsilon(j)}$ 
is cooriented by $(-1)^{n+1-\dimd}\N(\ba_{\sigma(\ell_i), p_i}^{n+1-\dimd})^{\hat\epsilon(i)} \times\N(\bb_{\sigma(\ell_{\plusun{i}}),q_i}^{\dimd})^{\hat\epsilon(\plusun{i})}$. 
If the intersection
of the lemma is non-empty,
it consists on the $(x_1, \ldots, x_k)$ such that $x_i \in 
(\bb_{\sigma(\ell_{i}),q_{\moinsun{i}}}^{\dimd})^{\hat\epsilon(i)} \cap
(\ba_{\sigma(\ell_{i}), p_{i}}^{n+1-\dimd})^{\hat\epsilon(i)}$ for any $i\in\und k$.
The normal bundle at $(x_1, \ldots, x_k)$ of the intersection of the lemma is oriented as \begin{align*} &\prod\limits_{i\in\zk}\left((-1)^{n+1-\dimd} \N(\ba_{\sigma(\ell_i), p_i}^{n+1-\dimd})^{\hat\epsilon(i)} \times\N(\bb_{\sigma(\ell_{\plusun{i}}),q_i}^{\dimd})^{\hat\epsilon(\plusun{i})} \right)\\
=&(-1)^{\degk (n+1-\dimd)}
\N(\ba_{\sigma(\ell_1), p_1}^{n+1-\dimd})^{\hat\epsilon(1)}\times
\left(\prod\limits_{i=1}^{\degk-1}\left( \N(\bb_{\sigma(\ell_{\plusun{i}}),q_{i}}^{\dimd})^{\hat\epsilon(\plusun{i})}\times
\N(\ba_{\sigma(\ell_{\plusun{i}}), p_{\plusun{i}}}^{n+1-\dimd})^{\hat\epsilon(\plusun{i})} \right) \right) \times
\N(\bb_{\sigma(\ell_1), q_k}^{\dimd})^{\hat\epsilon(1)}
\\
=& (-1)^{\degk(n+1- d)}(-1)^{(n+1-d)( (k-1)(n+1) + d)} \N(\bb_{\sigma(\ell_1), q_k}^{\dimd})^{\hat\epsilon(1)}\times \N(\ba_{\sigma(\ell_1), p_1}^{n+1-\dimd})^{\hat\epsilon(1)}\\
& \ \ \ \ \ \ \ \ \ \ \ \ \ \ \ \ \ \ \ \ \ \ \ \ \ \ \ \ \ \ \ \ \ \ \ \ \ \ \ \ \ \ \ \ \  \times
\prod\limits_{i=1}^{\degk-1}\left(
\N(\bb_{\sigma(\ell_{\plusun{i}}),q_{i}}^{\dimd})^{\hat\epsilon(\plusun{i})}\times\N(\ba_{\sigma(\ell_{\plusun{i}}), p_{\plusun{i}}}^{n+1-\dimd})^{\hat\epsilon(\plusun{i})} \right)\\
=& (-1)^{(n+1-d)(kn+1)} 
\prod\limits_{i\in\zk}\left(\N(\bb_{\sigma(\ell_{i}),q_{\moinsun{i}}}^{\dimd})^{\hat\epsilon(i)}
 \times\N(\ba_{\sigma(\ell_{i}), p_{i}}^{n+1-\dimd})^{\hat\epsilon(i)}
\right),
\end{align*}
where $\moinsun{i}$ is the element of $\und k$ such that $\plusun{(\moinsun{i})}=i$.
This proves that the intersection number of the lemma is $(-1)^{(n+1-d)(kn+1)}\prod\limits_{i\in\zk}
\left\langle (\bb_{\sigma(\ell_{i}),q_{\moinsun{i}}}^{\dimd})^{\hat\epsilon(i)},
(\ba_{\sigma(\ell_{i}), p_{i}}^{n+1-\dimd})^{\hat\epsilon(i)}\right\rangle_{S_{\sigma(\ell_i)}^{\hat\epsilon(i)}}$.
\qedhere 
\end{proof}

\begin{lm}\label{formule1}
For any numbering $\sigma$ of $\Gamma_k$, \[\Delta_{\Gamma_k, \sigma}Z_k = \frac{1}{2^k}
\sum\limits_{d\in\und n} \sum\limits_{\hat\epsilon \colon \zk \rightarrow \{\pm\}} \sum\limits_{p\colon \zk \rightarrow \und{b_d}}
(-1)^{d+k+n}
\prod\limits_{i\in\zk} \lk \left((\bb_{\sigma(\ell_i), p(i)}^{d})^{\hat\epsilon(i)}, (\ba_{\sigma(\ell_{\plusun{i}}), p(\plusun{i})}^{n+1-d})^{\hat\epsilon(\plusun{i})}\right)
.\]
\end{lm}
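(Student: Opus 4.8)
The plan is to assemble Lemma \ref{formule1} from the three preceding lemmas (\ref{decomppii}, \ref{interpii}) together with Lemma \ref{exprY}, by expanding the intersection number $\langle \Pii[1], \ldots, \Pii[k]\rangle_{Y(\sigma)}$ into a linking-number sum. First I would recall from Lemma \ref{exprY} that
\[\Delta_{\Gamma_k,\sigma}Z_k = (-1)^{1+k(n+1)}\frac{1}{2^k}\left(\langle \Pii[1],\ldots,\Pii[k]\rangle_{Y(\sigma)} - \langle \Pii[1]^0,\ldots,\Pii[k]^0\rangle_{Y^0(\sigma)}\right),\]
and observe that, by construction, $\overline{\Pii} = \Pii - \Pii^0 + \overline{\pi_i}^{-1}(\hat B_{i,\sigma})$ is a cycle in the closed manifold $\overline Y(\sigma)$ into which all the chains embed. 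Since the extra pieces $\overline{\pi_i}^{-1}(\hat B_{i,\sigma})$ and the terms $R_{i,\sigma}$ in Lemma \ref{decomppii} are \emph{independent of the knot}, the difference of intersection numbers above equals the intersection number $\langle \overline{\Pii[1]},\ldots,\overline{\Pii[k]}\rangle_{\overline Y(\sigma)}$ up to a knot-independent constant; and since $\Delta_{\Gamma_k,\sigma}Z_k$ vanishes for the trivial knot (as $Z_k(\psi_0)=0$, recalled after Lemma \ref{contributionGamma1}, and all linking numbers vanish for $\psi_0$), that constant is zero. So it suffices to compute $\langle \overline{\Pii[1]},\ldots,\overline{\Pii[k]}\rangle_{\overline Y(\sigma)}$ on homology.

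Next I would substitute the homology class formula of Lemma \ref{decomppii} for each $[\overline{\Pii}]$ and expand multilinearly. Each factor contributes a sum over $(\dimd_i, p_i, q_i, \hat\epsilon_i) \in J'$ of a coefficient (either a linking number $\lk((\bb_{\sigma(\ell_i),p_i}^{\dimd_i})^{\hat\epsilon_i(i)}, (\ba_{\sigma(\ell_{\plusun{i}}),q_i}^{n+1-\dimd_i})^{\hat\epsilon_i(\plusun i)})$ with sign $(-1)^{n(d_i+1)}$ when $\dimd_i \in \und n$, or a knot-independent $\alpha$-coefficient when $\dimd_i \in \{0, n+1\}$) times a basis class of the form appearing in Lemma \ref{interpii}. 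Plugging into the multilinear intersection pairing, Lemma \ref{interpii} kills every term unless $\dimd_1 = \cdots = \dimd_k =: \dimd$, $\hat\epsilon_1 = \cdots = \hat\epsilon_k =: \hat\epsilon$, and $q_i = p_{\plusun i}$ for all $i$; on the surviving terms it produces the sign $(-1)^{(n+1-\dimd)(kn+1)}$. The terms with $\dimd \in \{0, n+1\}$ are knot-independent (all $R_{i,\sigma}$-contributions, and any cross term mixing an $R$-term with linking-number terms survives the pairing only under the same matching constraints, forcing $\dimd \in \{0,n+1\}$ throughout), hence contribute $0$ by the vanishing argument above. After relabelling $p(i) = p_i$ (so $q_{\moinsun i} = p_i$ becomes $q_i = p_{\plusun i}$, consistent with Lemma \ref{interpii}), the surviving sum is
\[(-1)^{1+k(n+1)}\frac{1}{2^k}\sum_{\dimd\in\und n}\sum_{\hat\epsilon}\sum_{p\colon\zk\to\und{b_\dimd}} (-1)^{kn(d+1)}(-1)^{(n+1-d)(kn+1)}\prod_{i\in\zk}\lk\!\left((\bb_{\sigma(\ell_i),p(i)}^{\dimd})^{\hat\epsilon(i)},(\ba_{\sigma(\ell_{\plusun i}),p(\plusun i)}^{n+1-\dimd})^{\hat\epsilon(\plusun i)}\right),\]
using that each of the $k$ factors carries a sign $(-1)^{n(d+1)}$ and that the $\overline\eta_i$ signs multiply to $(-1)^{(k+1)(n+1)}$, which combined with the leading $(-1)^{1+k(n+1)}$ and the Lemma \ref{interpii} sign must be checked to collapse to $(-1)^{d+k+n}$.

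The main obstacle I anticipate is the sign bookkeeping: one must carefully verify that
\[(-1)^{1+k(n+1)}\cdot(-1)^{(k+1)(n+1)}\cdot(-1)^{kn(d+1)}\cdot(-1)^{(n+1-d)(kn+1)} = (-1)^{d+k+n}\]
as an identity of parities (working modulo $2$ in the exponents, and using that $Z_k$ vanishes whenever $k\equiv n \bmod 2$, so only the parity class $k \not\equiv n$ matters for a nonzero statement, which simplifies several of these exponents). A secondary subtlety is tracking the $\overline\eta_i$ and the cyclic-product orientation sign $(-1)^{(k+1)(n+1)}$ through the expansion, and confirming that the cross terms involving $\dimd\in\{0,n+1\}$ really do drop out — this uses both the knot-independence assertions of Lemmas \ref{hpi} and \ref{decomp-prop} and the vanishing of $\Delta_{\Gamma_k,\sigma}Z_k$ for $\psi_0$. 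Once the sign identity is pinned down, Lemma \ref{formule1} follows, and summing over numberings $\sigma$ via $Z_k(\psi) = \frac{1}{k(2k)!}\sum_\sigma \Delta_{\Gamma_k,\sigma}Z_k$ will give the formula of Lemma \ref{contributionGamma1}.
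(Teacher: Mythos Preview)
Your overall strategy matches the paper's: compute $\langle \overline{\Pii[1]},\ldots,\overline{\Pii[k]}\rangle_{\overline Y(\sigma)}$ on homology via Lemmas \ref{decomppii} and \ref{interpii}, then relate this to $\langle \Pii[1],\ldots,\Pii[k]\rangle - \langle \Pii[1]^0,\ldots,\Pii[k]^0\rangle$ up to knot-independent terms, and finally kill the constant by evaluating at the trivial knot. However, there is a genuine gap in your second step.

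You write that because $\overline{\pi_i}^{-1}(\hat B_{i,\sigma})$ is knot-independent, the difference of intersection numbers equals $\langle \overline{\Pii[1]},\ldots,\overline{\Pii[k]}\rangle$ up to a knot-independent constant. This is not automatic. Expanding $\overline{\Pii} = \Pii - \Pii^0 + \widehat{\Pii}$ multilinearly, the cross terms such as $\langle \Pii[1], \widehat{\Pii[2]}, \Pii[3],\ldots\rangle$ involve factors $\Pii[1], \Pii[3],\ldots$ that \emph{do} depend on the knot, so their knot-independence is not immediate from that of $\widehat{\Pii[2]}$. The paper supplies the missing argument: it shows by an inductive localisation (using the structure of the admissible propagators on $N_r\times E_{r+1}$) that if any factor is $\widehat{\Pii}$, the entire intersection is confined to $\prod_i S_{\sigma(\ell_i)}^{\leq R}$, a region where \emph{all} the chains (including the restrictions of $\Pii[j]$ for $j\neq i$) are standard and hence knot-independent. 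This localisation is the key technical step and cannot be skipped; without it, the reduction to the homology computation on $\overline Y(\sigma)$ is unjustified.

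On the signs: the $\overline\eta_i$ are already absorbed into the statement of Lemma \ref{interpii} (they are precisely what makes the coorientation of each factor equal to $(-1)^{n+1-d}\N\ba\times\N\bb$, as used in the proof of that lemma), so your displayed sign product should not contain the extra $(-1)^{(k+1)(n+1)}$. The correct exponent is $(n+1-d)(kn+1) + kn(d+1) + k(n+1) + 1$, which one checks reduces to $d+k+n$ modulo $2$ without needing $k\not\equiv n$.
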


\begin{proof}
Lemmas \ref{decomppii} and \ref{interpii} imply that \begin{align*}\langle& \overline{\Pii[1]}, \cdots, \overline{\Pii[k]}\rangle_{\overline Y(\sigma)}\\&=
\sum\limits_{\dimd\in \und n} \sum\limits_{\hat\epsilon \colon \und k \rightarrow \{\pm\}} \sum\limits_{p\colon \und k \rightarrow \und{b_{\dimd}}}
(-1)^{(n+1-d)(kn+1) +kn(d+1)} \prod\limits_{i\in\zk}\lk \left((\bb_{\sigma(\ell_i), p(i)}^{\dimd})^{\hat\epsilon(i)}, (\ba_{\sigma(\ell_{\plusun{i}}), p(\plusun{i})}^{n+1-\dimd})^{\hat\epsilon(\plusun{i})}\right) + \rho_1,\end{align*} where $\rho_1 = \langle R_{1,\sigma} , \ldots, R_{k, \sigma}\rangle_{\overline Y(\sigma)}$ does not depend on the knot. 
For any $i\in \und k$ and $j\in \{0,1,2\}$, set \[\Pii[i](j) = \begin{cases} \Pii & \text{if $j=0$,}\\ \Pii^0 & \text{if $j=1$,}\\ \widehat{\Pii}={\overline{\pi_i}}^{-1}(\hat B_{i,\sigma}) & \text{if $j=2$,} \end{cases}\] so that $\overline\Pii = \Pii(0)-\Pii(1)+\Pii(2)$. 
By transversality, $\bigcap\limits_{i\in\zk}\mathrm{Supp}(\overline\Pii) \subset \bigcap\limits_{i\in\zk}\big(\Int(\Pii(0))\sqcup\Int(\Pii(1))\sqcup\Int(\Pii(2))\big)$, 
so that \[\left\langle \overline{\Pii[1]}, \cdots, \overline{\Pii[k]}\right\rangle_{\overline Y(\sigma)}= 
\sum\limits_{j\colon\und k \rightarrow \{0,1,2\}} (-1)^{j(1)+\ldots +j(k)} \left\langle (\Pii(j(i)))_{i\in\zk} \right\rangle_{\overline Y(\sigma)}
.\]

For $r\in [3, R-1]$, let $S_i^{\leq r}$ denote the set of points of $S_i$ that come from a point of $N_r$ or $N_r^0$ in the gluing that defines $S_i$ in Lemma \ref{hpi}. Let us prove that for a configuration $c\in \bigcap\limits_{i\in\zk}\Pii(j(i))$, if $c(w_p)\in S_{\sigma(\ell_p)}^{\leq r}$ for some $r\in [3, R-1]$ and $p\in\und k$, then $ c(w_{\plusun{p}})\in S_{\sigma(\ell_{\plusun{p}})}^{\leq r+1}$. 
\begin{itemize}
\item If $j(p)=0$, then $(c(w_p), c(w_{\plusun{p}}))\in B_{\sigma(f_p)}$. 
If $c(w_{\plusun{p}})$ were in $E_{r+1}$, 
then we would have $(c(w_p), c(w_{\plusun{p}}))\in {p_b}^{-1}(N_r\times E_{r+1})$, so $c(w_p)\in L_{\theta_{\sigma(f_p)}}(r)$ or $c(w_{\plusun{p}})\in \Sigma_{\sigma(f_p)}(r+1)$. 
Since the Seifert surfaces $(\Sigma_j^\epsilon)_{j\in \und k, \epsilon = \pm}$ are pairwise parallel, 
this is impossible and $c(w_{\plusun{p}})\in \Sigma_{\sigma(\ell_{\plusun{p}})}(2)\cap N_{r+1}\subset S_{\sigma(\ell_{\plusun{p}})}^{\leq r+1}$.
\item If $j(p)=1$, we similarly prove that $c(w_{\plusun{p}})\in \Sigma^0_{\sigma(\ell_{\plusun{p}})}(2)\cap N^0_{r+1}\subset S_{\sigma(\ell_{\plusun{p}})}^{\leq r+1}$.
\item If $j(p)=2$, since $\hat B_{p,\sigma}= \overline{\pi_i}(\widehat{P_{p,\sigma}} ) \subset S_{\sigma(\ell_{p})}^{\leq 3}\times S_{\sigma(\ell_{\plusun{p}})}^{\leq 3} $ and $r\geq 2$, 
we have $c(w_{\plusun{p}}) \in S_{\sigma(\ell_{\plusun{p}})}^{\leq 3}\subset S_{\sigma(\ell_{\plusun{p}})}^{\leq r+1}$
\end{itemize}
Since $R\geq k+1$, a finite induction proves that if $j$ takes the value $2$, 
then the intersection number $\big\langle (\Pii(j(i)))_{i\in\zk}\big\rangle_{\overline Y(\sigma)}$ 
only counts configurations in $\prod\limits_{i\in\zk} S_{\sigma(\ell_i)}^{\leq R}$, 
where these chains are independent of the knot. This implies that 
\[\langle \overline{\Pii[1]}, \cdots, \overline{\Pii[ k]}\rangle_{Y(\sigma)}= 
\langle \Pii[1], \cdots, \Pii[k]\rangle_{Y(\sigma)}+ (-1)^k \langle \Pii[1]^0, \cdots, \Pii[ k]^0\rangle_{Y(\sigma)} + \rho_2,
\] where $\rho_2$ is independent of the knot. 
The quantity $\rho_3=-\rho_2+ ((-1)^{\degk+1}-1) \langle \Pii[1]^0, \cdots, \Pii[k]^0\rangle_{Y(\sigma)}$ 
does not depend on the knot. Lemma \ref{exprY} reads
\begin{eqnarray*}
\Delta_{\Gamma_k, \sigma}Z_k &=& \frac{(-1)^{k(n+1)+1}}{2^k}\left( \langle \Pii[1], \cdots, \Pii[k]\rangle_{Y(\sigma)} - \langle \Pii[1]^0, \ldots , \Pii[ k]^0\rangle_{Y^0(\sigma)}\right)\\
&=& \frac{(-1)^{k(n+1)+1}}{2^k}\left( \langle \overline{\Pii[ 1]}, \cdots, \overline{\Pii[k]}\rangle_{Y(\sigma)} + \rho_3 \right)\\
&=& \frac{1}{2^k}
\sum\limits_{\dimd\in\und n} \sum\limits_{\hat\epsilon \colon \zk \rightarrow \{\pm\}} \sum\limits_{p\colon \zk \rightarrow \und{b_\dimd}}
(-1)^{d+k+n}
 \prod\limits_{i\in\zk} \lk \left((\bb_{\sigma(\ell_i), p(i)}^{\dimd})^{\hat\epsilon(i)}, (\ba_{\sigma(\ell_{\plusun{i}}), p(\plusun{i})}^{n+1-\dimd})^{\hat\epsilon(\plusun{i})}\right) \\&&
 +(-1)^{k(n+1)+1}\frac{\rho_3+\rho_1}{2^k}.
\end{eqnarray*}
If $\psi$ is the trivial knot, this formula yields $\rho_1+\rho_3=0$. This concludes the proof of Lemma \ref{formule1}.
\end{proof}
\begin{proof}[Proof of Lemma \ref{contributionGamma1}]
Note that  
for any $(\sigma,i)\in\mathrm{Num}(\Gamma_\degk)\times\zk$, 
if $\Sigma^+$ denote the surface obtained from $\Sigma$ by pushing $\Sigma$ in the positive normal direction, 
then $(\Sigma_{\sigma(\ell_i)}^{\hat\epsilon(i)}, \Sigma_{\sigma(\ell_{\plusun{i}})}^{\hat\epsilon(\plusun{i})})$ 
is isotopic to $ (\Sigma, \Sigma^+)$ if $\epsilon_{\hat\epsilon, \sigma(i)}=+1$ 
and to $ (\Sigma^+, \Sigma)$ if $\epsilon_{\hat\epsilon, \sigma(i)}=-1$.
Therefore, Lemma \ref{formule1} and the definition of the Seifert surfaces in Setting \ref{def-pert} 
imply Lemma \ref{contributionGamma1}.
\end{proof}

\subsection{Proof of Theorem \ref{th0}}

Lemma \ref{contributionGamma1} can be rephrased as follows in terms of Seifert matrices. 

\begin{lm}\label{defpoids}
Fix a pair $(\Ba,\Bb)$ of dual bases of $\overline{H_*}(\Sigma)$, and set $\Ba=([\bb_i^\dimd])_{0\leq \dimd \leq n, i\in\und{b_\dimd}}$ and $\Bb=([\ba_i^\dimd])_{0\leq \dimd \leq n, i\in\und{ b_\dimd}}$. 
For any $\dimd\in\und n$, define the matrices $V_\dimd^\pm = V_\dimd^\pm(\Ba,\Bb)$ as in Definition \ref{bases duales}.
For any numbering $\sigma$ of $\Gamma_\degk$ and any map $\hat\epsilon\colon \zk \rightarrow \{\pm\}$, let $\epsilon_{\hat\epsilon, \sigma}$ be defined as in Lemma \ref{contributionGamma1}, 
and let $N(\hat\epsilon, \sigma)$ be the number of integers $i\in\zk$ 
such that $\epsilon_{\hat\epsilon, \sigma}(i)=+1$.
For any $\nu\in \{0, \ldots, \degk\}$, set \[\poids_{\degk,\nu}^{(0)}=\frac{1}{2^\degk(2\degk)!}
\Card\left(\{(\hat\epsilon,\sigma)\in \{\pm\}^{\zk}\times \mathrm{Num}(\Gamma_\degk)\mid N(\hat\epsilon, \sigma) = \nu\}\right).\]
 With these notations, \[\Z[](\psi) = \frac1k\sum\limits_{\nu=1}^{\degk-1}\sum\limits_{\dimd=1}^n
(-1)^{d+k+n}
\poids^{(0)}_{\degk,\nu}\Tr((V_\dimd^+)^{\nu}(V_\dimd^-)^{\degk-\nu}).\]

\end{lm}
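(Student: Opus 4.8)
The plan is to read off the right-hand side of the formula of Lemma~\ref{contributionGamma1} as a sum of traces of words in the Seifert matrices. I would work with the pair of dual bases of $\overline H_*(\Sigma_1^+)$ appearing in Lemma~\ref{contributionGamma1}; since $\Sigma_1^+$ is isotopic to the surface $\Sigma$ of Setting~\ref{def-pert}, it transports to a pair of dual bases of $\overline H_*(\Sigma)$ with the same Seifert matrices, and since any change of dual bases conjugates each $V_\dimd^\pm$, the traces below are independent of the chosen pair, which gives the statement for an arbitrary pair $(\Ba,\Bb)$. The first step is the push-off identity $\lk(z^+,w)=\lk(z,w^-)$ for disjoint cycles $z,w$ of $\Sigma$ (slide a collar of $\Sigma$ slightly in the negative normal direction); applied to the generators it gives $\lk(\bb_i^\dimd,(\ba_j^{n+1-\dimd})^+) = (V_\dimd^+)_{ij}$ and $\lk(\bb_i^\dimd,(\ba_j^{n+1-\dimd})^-) = \lk((\bb_i^\dimd)^+,\ba_j^{n+1-\dimd}) = (V_\dimd^-)_{ij}$, so that for every $\sigma$, $\hat\epsilon$, $\dimd$ and $p$,
\[\prod_{j\in\zk}\lk\!\left(\bb_{p(j)}^\dimd,(\ba_{p(\plusun j)}^{n+1-\dimd})^{\epsilon_{\hat\epsilon,\sigma}(j)}\right)\ =\ \prod_{j\in\zk}\left(V_\dimd^{\epsilon_{\hat\epsilon,\sigma}(j)}\right)_{p(j),\,p(\plusun j)}.\]

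Next I would sum over $p\colon\zk\to\und{b_\dimd}$. This is exactly the entrywise expansion of the trace of a cyclic product, $\sum_p\prod_j (M_j)_{p(j),\,p(\plusun j)} = \Tr(M_1\cdots M_k)$, so with $M_j=V_\dimd^{\epsilon_{\hat\epsilon,\sigma}(j)}$ the sum over $p$ equals $\Tr\!\big(V_\dimd^{\epsilon_{\hat\epsilon,\sigma}(1)}\cdots V_\dimd^{\epsilon_{\hat\epsilon,\sigma}(k)}\big)$. By the relation $V_\dimd^--V_\dimd^+=I_{b_\dimd(\Sigma)}$ recorded in Definition~\ref{bases duales}, the matrices $V_\dimd^+$ and $V_\dimd^-$ commute, so this trace collapses to $\Tr\!\big((V_\dimd^+)^{N(\hat\epsilon,\sigma)}(V_\dimd^-)^{k-N(\hat\epsilon,\sigma)}\big)$, where $N(\hat\epsilon,\sigma)$ counts the $j\in\zk$ with $\epsilon_{\hat\epsilon,\sigma}(j)=+1$, exactly the quantity $N(\hat\epsilon,\sigma)$ of the statement. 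Substituting into Lemma~\ref{contributionGamma1} and grouping the double sum over $(\hat\epsilon,\sigma)$ by the common value $\nu=N(\hat\epsilon,\sigma)$ produces, in front of $\Tr((V_\dimd^+)^\nu(V_\dimd^-)^{k-\nu})$, the coefficient $\tfrac1{2^k(2k)!}\,\Card\{(\hat\epsilon,\sigma)\mid N(\hat\epsilon,\sigma)=\nu\}=\poids_{k,\nu}^{(0)}$, together with the global factor $\tfrac1k$ and the sign $(-1)^{\dimd+k+n}$ already present in Lemma~\ref{contributionGamma1}.

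Finally I would check that the summation range is $\nu\in\{1,\dots,k-1\}$, i.e.\ $\poids_{k,0}^{(0)}=\poids_{k,k}^{(0)}=0$: the numbers $\sigma_{\hat\epsilon}(\ell_1),\dots,\sigma_{\hat\epsilon}(\ell_k)$ are pairwise distinct and $j\mapsto\plusun j$ is a single $k$-cycle, so going once around it one meets at least one ascent (just before the largest value) and at least one descent (just before the smallest value), whence $N(\hat\epsilon,\sigma)\notin\{0,k\}$ for every $(\hat\epsilon,\sigma)$. This cyclic observation is the only genuinely combinatorial point, and it is immediate; the rest is bookkeeping. I do not expect a real obstacle here — the only thing requiring care is to carry the orientation conventions correctly through the push-off identity and the matrix commutation so that the sign $(-1)^{\dimd+k+n}$ of Lemma~\ref{contributionGamma1} is reproduced unchanged.
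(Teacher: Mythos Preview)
Your proposal is correct and follows the same route as the paper, which treats the lemma as an immediate rephrasing of Lemma~\ref{contributionGamma1} and only records that $\poids_{k,0}^{(0)}=\poids_{k,k}^{(0)}=0$. You have simply spelled out the intermediate steps (the identification of the linking numbers with Seifert-matrix entries, the trace expansion over $p$, and the commutation $[V_d^+,V_d^-]=0$ coming from $V_d^--V_d^+=I$) that the paper leaves implicit.
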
\begin{proof}
Note that for any $k\geq2$, $\poids_{\degk,0}^{(0)}=\poids_{\degk,\degk}^{(0)}=0$.\qedhere
\end{proof}
In order to prove Theorem \ref{th0}, it suffices to prove the following lemma and to use 
Lemma \ref{symkn}, which implies that the above formula vanishes when $k-n$ is even, and allows us 
to replace $d+k+n$ with $d+1$ in the signs of Lemma \ref{defpoids}.

\begin{lm}\label{lambda10}
For any $\degk\geq2$ and any $\nu\in\und{\degk}$, $\poids^{(0)}_{\degk, \nu}= \poids_{\degk, \nu}$ with the notations of Theorem \ref{th0}.
\end{lm}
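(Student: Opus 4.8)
The plan is to read both coefficients as counts of combinatorial objects and to identify each with (a rescaling of) an Eulerian number.

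First I would note that $N(\hat\epsilon,\sigma)$ depends on $(\hat\epsilon,\sigma)$ only through the relative order of the $\degk$ integers $\sigma_{\hat\epsilon}(\ell_1),\dots,\sigma_{\hat\epsilon}(\ell_\degk)$: since $\epsilon_{\hat\epsilon,\sigma}(i)=+1$ exactly when $\sigma_{\hat\epsilon}(\ell_i)<\sigma_{\hat\epsilon}(\ell_{\plusun{i}})$, if $\pi\in\Sym_\degk$ is the permutation with $\pi(i)$ equal to the rank of $\sigma_{\hat\epsilon}(\ell_i)$ among the $\sigma_{\hat\epsilon}(\ell_j)$, then $N(\hat\epsilon,\sigma)=\Card\{i\in\mathbb Z/\degk\mathbb Z:\pi(i)<\pi(i+1)\}=:\mathrm{casc}(\pi)$, the number of cyclic ascents of $\pi$ (with the convention $\pi(\degk+1)=\pi(1)$).

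Next I would show that, for each fixed $\pi\in\Sym_\degk$, the number of pairs $(\hat\epsilon,\sigma)\in\{\pm\}^{\zk}\times\mathrm{Num}(\Gamma_\degk)$ with order type $\pi$ equals the constant $\degk!\,2^\degk\binom{2\degk}{\degk}$. Indeed $\sigma_{\hat\epsilon}(\ell_i)\in\{1,\dots,2\degk\}$ if $\hat\epsilon(i)=+$ and $\sigma_{\hat\epsilon}(\ell_i)\in\{2\degk+1,\dots,4\degk\}$ if $\hat\epsilon(i)=-$, and reducing $v_i:=\sigma_{\hat\epsilon}(\ell_i)$ modulo $2\degk$ recovers both $\sigma(\ell_i)$ and $\hat\epsilon(i)$; hence giving a pair amounts to choosing an injection $i\mapsto v_i$ into $\{1,\dots,4\degk\}$ whose values have pairwise distinct residues mod $2\degk$, together with the restriction of $\sigma$ to $\{f_1,\dots,f_\degk\}$ ($\degk!$ choices). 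Imposing the order type $\pi$ forces $(v_1,\dots,v_\degk)$ to be the unique rearrangement, in the pattern $\pi$, of a $\degk$-subset of $\{1,\dots,4\degk\}$ with distinct residues, and there are $2^\degk\binom{2\degk}{\degk}$ such subsets (choose $\degk$ residues among $2\degk$, then a lift in $\{r,r+2\degk\}$ for each). Summing over the $\pi$ with $\mathrm{casc}(\pi)=\nu$ gives
\[\poids^{(0)}_{\degk,\nu}=\frac{\degk!\,2^\degk\binom{2\degk}{\degk}}{2^\degk(2\degk)!}\,\Card\{\pi\in\Sym_\degk:\mathrm{casc}(\pi)=\nu\}=\frac{1}{\degk!}\,\Card\{\pi\in\Sym_\degk:\mathrm{casc}(\pi)=\nu\}.\]

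Finally I would compare cyclic ascents in $\Sym_\degk$ with linear ascents in $\Sym_{\degk-1}$ via the classical $\degk$-to-$1$ map $\Sym_\degk\to\Sym_{\degk-1}$: to $\pi$ associate the word $\tilde\pi(1)\cdots\tilde\pi(\degk-1)$, where $\tilde\pi$ is the unique cyclic rotation of $\pi$ with $\tilde\pi(\degk)=\degk$. The $\degk$ cyclic rotations of $\pi$ are pairwise distinct (a permutation has no nontrivial cyclic symmetry for $\degk\geq2$), exactly one places the value $\degk$ last, $\mathrm{casc}$ is rotation invariant, and for $\tilde\pi$ the cyclic position $\degk$ is a descent while position $\degk-1$ is an ascent, so $\mathrm{casc}(\pi)=\mathrm{casc}(\tilde\pi)$ equals the number of ascents of $\tilde\pi(1)\cdots\tilde\pi(\degk-1)$ in $\und{\degk-1}$ plus $1$. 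Hence $\Card\{\pi\in\Sym_\degk:\mathrm{casc}(\pi)=\nu\}=\degk\cdot\Card\{\rho\in\Sym_{\degk-1}:\rho\text{ has }\nu-1\text{ ascents}\}$, and substituting into the displayed identity yields $\poids^{(0)}_{\degk,\nu}=\frac{1}{(\degk-1)!}\Card\{\rho\in\Sym_{\degk-1}:\rho\text{ has }\nu-1\text{ ascents}\}=\poids_{\degk,\nu}$, which is the claim; the boundary cases $\nu\in\{0,\degk\}$ both give $0$ on each side (no permutation of $\und\degk$ is cyclically monotone), consistently with $\poids^{(0)}_{\degk,0}=\poids^{(0)}_{\degk,\degk}=0$. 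The main obstacle is the middle step: making the "forgetful" correspondence between pairs $(\hat\epsilon,\sigma)$ and $\degk$-subsets of $\{1,\dots,4\degk\}$ precise and checking it is exactly $\degk!\,2^\degk\binom{2\degk}{\degk}$-to-$1$ onto each order type; the cyclic-rotation bijection in the last step is elementary but also needs care with indices.
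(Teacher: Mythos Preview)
Your proof is correct and follows essentially the same route as the paper: reduce $N(\hat\epsilon,\sigma)$ to the cyclic-ascent statistic on the order type $\pi\in\Sym_\degk$, show the fibers of the ``order type'' map all have the same size (you count them directly as $\degk!\,2^\degk\binom{2\degk}{\degk}=\frac{2^\degk(2\degk)!}{\degk!}$, while the paper deduces equinumerosity from the free $\Sym_\degk$-action permuting the $\ell_i$), and then pass from cyclic ascents in $\Sym_\degk$ to linear ascents in $\Sym_{\degk-1}$ via the $\degk$-to-$1$ rotation map. The only cosmetic difference is your explicit fiber count versus the paper's group-action argument; both yield the same intermediate identity $\poids^{(0)}_{\degk,\nu}=\frac{1}{\degk!}\Card\{\pi\in\Sym_\degk:\mathrm{casc}(\pi)=\nu\}$.
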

\begin{proof}
For any $(\hat\epsilon,\sigma) \in \{\pm\}^{\zk} \times \Num(\Gamma_\degk)$, define \[F_0(\hat\epsilon, \sigma)\colon i \in \zk \mapsto \sigma(\ell_i)+(1-\epsilon(i))k\in \und{4\degk},\] and let $F_1(\hat\epsilon,\sigma)\in\Sym_{\zk}$ be the permutation such that for any $i\in\zk$, \[F_1(\hat\epsilon,\sigma)(i) = 1+ \Card\left(\{j\in\zk \mid F_0(\hat\epsilon, \sigma)(j)<F_0(\hat\epsilon, \sigma)(i)\}\right) .\] By definition, $N(\hat\epsilon, \sigma)$ is the number of elements $i\in\zk$ such that $F_0(\hat\epsilon,\sigma)(i)<F_0(\hat\epsilon,\sigma)(\plusun{i})$. Since for any $i\in\zk$, \[\left(F_0(\hat\epsilon,\sigma)(i)<F_0(\hat\epsilon,\sigma)(\plusun{i})\right) \Leftrightarrow \left(F_1(\hat\epsilon,\sigma)(i)<F_1(\hat\epsilon,\sigma)(\plusun{i})\right),\] $N(\hat\epsilon, \sigma)= N_1(F_1(\hat\epsilon,\sigma))$, where for any $\rho\in \Sym_{\zk}$, \[N_1(\rho)=\Card(\{i\in\zk\mid \rho(i)<\rho(\plusun{i})\}).\] Let $\rho\in\Sym_{\zk}$ act on $(\hat\epsilon, \sigma)\in\{\pm\}^{\zk} \times \Num(\Gamma_\degk)$ as \[\rho\cdot (\hat\epsilon,\sigma) = (\hat\epsilon\circ \rho^{-1}, \sigma_\rho),\text{ where for any $i\in\zk$,}\begin{cases}
\sigma_\rho(f_i) = \sigma(f_i),\\
\sigma_\rho(\ell_i)= \sigma(\ell_{\rho^{-1}(i)}).\end{cases}\]
With these notations, $F_1(\rho\cdot (\hat\epsilon,\sigma)) = F_1(\hat\epsilon,\sigma)\circ \rho^{-1}$. This implies that all the fibers of $F_1$ have same cardinality $\frac{2^k(2k)!}{k!}$, so that \[\poids_{\degk,\nu}^{(0)} = \frac{1}{\degk!}\Card\left(\{\sigma\in\Sym_{\zk} \mid N_1(\sigma) =\nu\}\right).\]

Let $\sigma_+\in\Sym_{\zk}$ denote the permutation such that $\sigma_+(i)=\plusun{i}$ for any $i\in\zk$.
The subgroup $G$ generated by $\sigma_+$ in $\Sym_{\zk}$ is cyclic of order $\degk$. Let $G$ act on $\Sym_{\zk}$ in such a way that $\sigma_+\cdot \sigma = \sigma \circ (\sigma_+)^{-1}$ for any $\sigma\in\Sym_{\zk}$. We have $N_1(\sigma)=N_1(\sigma_+\cdot \sigma)$ for any $\sigma\in\Sym_{\zk}$, and each orbit is of cardinality $\degk$. The subgroup $G'=\{\sigma\in\Sym_{\zk}\mid \sigma(k)=k\}$ contains exactly one element of each orbit. For any $\sigma\in G'$, $N_1(\sigma) = 1+  N_2(\sigma_{|\und{\degk-1}})$, where for any $\sigma'\in\Sym_{\und{\degk-1}}$, $N_2(\sigma') = \Card(\{ i \in \und{\degk-2}\mid \sigma'(i)<\sigma'(i+1)\})$
 Therefore, \[\poids_{\degk,\nu}^{(0)} = \frac1{(\degk-1)!}\Card\left(\{\sigma' \in\Sym_{\und{\degk-1}} \mid N_2(\sigma') =\nu-1\}\right)=\poids_{\degk,\nu}.\qedhere\]
\end{proof}
\section{Proof of Theorem \ref{Reidth}}\label{Section6}
\subsection{A generating series for the \texorpdfstring{$(\poids_{\degk,\nu})_{k\geq2, \nu\in\und{\degk-1}}$}{lambda k , nu}}

In this section, we prove the following result for the coefficients $(\poids_{\degk,\nu})_{k\geq2, \nu\in\und{\degk-1}}$ of Theorem \ref{th0}.

\begin{lm}\label{lm5.1}
For any $\degk\geq2$, set $L_\degk(X) = \sum\limits_{\nu=1}^{\degk-1} \poids_{\degk,\nu} X^\nu$, and set $L_1(X) = \frac{X+1}2$. 
The formal power series $L(X, Y) = \sum\limits_{\degk\geq1} L_\degk(X) Y^{\degk-1}\in\QQ[[X,Y]]$ reads 
\[ L(X,Y) = \frac{1-X}2\frac{1+X\exp((1-X)Y)}{1-X\exp((1-X)Y)}.\] 
\end{lm}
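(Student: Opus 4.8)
The plan is to reduce the identity to an easy first-order linear PDE for $L(X,Y)$ that the claimed closed form visibly satisfies, after first recognizing $\poids_{\degk,\nu}$ as (normalized) Eulerian numbers. Writing $a_m(j)$ for the number of $\sigma\in\Sym_m$ with exactly $j$ ascents (positions $i<m$ with $\sigma(i)<\sigma(i+1)$), the definition of $\poids_{\degk,\nu}$ in Theorem \ref{th0} reads $\poids_{\degk,\nu}=\frac{a_{\degk-1}(\nu-1)}{(\degk-1)!}$. Extending the coefficients by $\poids_{1,0}=\poids_{1,1}=\frac12$ and $\poids_{\degk,\nu}=0$ for $\nu\notin\{1,\dots,\degk-1\}$, the classical insertion recurrence $a_m(j)=(j+1)a_{m-1}(j)+(m-j)a_{m-1}(j-1)$ yields, for all $\degk\geq2$,
\[(\degk-1)\poids_{\degk,\nu}=\nu\,\poids_{\degk-1,\nu}+(\degk-\nu)\,\poids_{\degk-1,\nu-1},\]
and the ad hoc value $L_1(X)=\frac{X+1}2$ is exactly what makes this hold at $\degk=2$. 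Multiplying by $X^\nu$, summing over $\nu$, and using $\sum_\nu\nu\,\poids_{\degk-1,\nu}X^\nu=XL_{\degk-1}'(X)$ together with $\sum_\nu(\degk-\nu)\poids_{\degk-1,\nu-1}X^\nu=(\degk-1)XL_{\degk-1}(X)-X^2L_{\degk-1}'(X)$, I obtain the one-step recurrence
\[(\degk-1)L_{\degk}(X)=X(1-X)L_{\degk-1}'(X)+(\degk-1)X\,L_{\degk-1}(X),\qquad \degk\geq2.\]

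Next I would package this into a PDE. Multiplying by $Y^{\degk-2}$ and summing over $\degk\geq2$, and using $\sum_{\degk\geq2}(\degk-1)L_{\degk}(X)Y^{\degk-2}=\partial_YL$, $\sum_{\degk\geq2}L_{\degk-1}'(X)Y^{\degk-2}=\partial_XL$, and $\sum_{\degk\geq2}(\degk-1)L_{\degk-1}(X)Y^{\degk-2}=L+Y\,\partial_YL$, the recurrence becomes
\[(1-XY)\,\partial_YL=X(1-X)\,\partial_XL+XL,\qquad L(X,0)=\frac{X+1}2 .\]
Since the recurrence determines $L_{\degk}$ uniquely from $L_{\degk-1}$ for every $\degk\geq2$, this equation has a unique solution in $\QQ[[X,Y]]$; equivalently, extracting the coefficient of $Y^{\degk-2}$ in the PDE recovers the recurrence verbatim. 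So it remains only to check that $\Lambda(X,Y):=\frac{1-X}2\cdot\frac{1+XE}{1-XE}$, with $E:=e^{(1-X)Y}$, lies in $\QQ[[X,Y]]$ (the constant term of $XE$ vanishes, so $1-XE$ is a unit) and satisfies this PDE with this initial condition. At $Y=0$ we have $E=1$, so $\Lambda(X,0)=\frac{1+X}2$; and using $\partial_YE=(1-X)E$ and $\partial_XE=-YE$ one computes $\partial_Y\Lambda=\frac{X(1-X)^2E}{(1-XE)^2}$ and $\partial_X\Lambda=-\frac{1+XE}{2(1-XE)}+\frac{(1-X)E(1-XY)}{(1-XE)^2}$, whereupon $X(1-X)\partial_X\Lambda+X\Lambda=\frac{X(1-X)^2E(1-XY)}{(1-XE)^2}=(1-XY)\partial_Y\Lambda$, the two $\frac{X(1-X)(1+XE)}{2(1-XE)}$ terms cancelling. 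By uniqueness $L=\Lambda$, which is the claimed formula. (Alternatively one can bypass the PDE: $(\degk-1)!\poids_{\degk,\nu}$ being the Eulerian number $a_{\degk-1}(\nu-1)$, we have $L_{\degk}(X)=\frac{X}{(\degk-1)!}A_{\degk-1}(X)$ for $\degk\geq2$ with $A_m$ the $m$th Eulerian polynomial, and the known exponential generating function $\sum_{m\geq0}A_m(X)\frac{Y^m}{m!}=\frac{(1-X)e^{(1-X)Y}}{1-Xe^{(1-X)Y}}$ gives the formula after adding the $\degk=1$ term $\frac{X+1}2$ and simplifying.)

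The obstacle here is bookkeeping rather than anything conceptual: pinning down the combinatorial recurrence and, in particular, checking that the extended value $L_1(X)=\frac{X+1}2$ makes it valid at $\degk=2$, and then translating the two-index recurrence into the PDE without index slips. The concluding verification that $\Lambda$ solves the PDE is a routine differentiation, so I would state the two derivatives and the one cancellation and leave the arithmetic to the reader.
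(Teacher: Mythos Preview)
Your proof is correct, but the route differs from the paper's. The paper derives a \emph{quadratic} one-variable ODE: decomposing $\sigma\in\Sym_{\degk-1}$ at the position of its maximum yields the convolution identity $(\degk-1)L_\degk=\sum_{r=1}^{\degk-1}L_rL_{\degk-r}$ (with $L_1=\frac{X+1}2$ absorbing the boundary cases), hence the Riccati equation $\partial_YL=L^2-\bigl(\tfrac{1-X}2\bigr)^2$, which is then solved by separation of variables. You instead use the \emph{linear} insertion recurrence for Eulerian numbers, obtaining the first-order linear PDE $(1-XY)\partial_YL=X(1-X)\partial_XL+XL$, and verify the closed form directly. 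Your approach has the advantage of identifying the $\poids_{\degk,\nu}$ explicitly as normalized Eulerian numbers, which immediately offers the alternative of quoting the classical EGF $\sum_{m\geq0}A_m(X)\tfrac{Y^m}{m!}=\tfrac{(1-X)e^{(1-X)Y}}{1-Xe^{(1-X)Y}}$; the paper's approach is entirely self-contained and perhaps makes the $\tanh$-like structure of $L$ more transparent via the Riccati form. Both are short and clean; yours is marginally more elementary since it avoids integrating a nonlinear ODE.
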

In order to prove Lemma \ref{lm5.1}, we first obtain an induction formula for the coefficients $(\poids_{\degk,\nu})_{k\geq2, \nu\in\und{\degk-1}}$ in Lemma \ref{lm5.2}. We next derive an induction formula for the polynomials $(L_\degk)_{\degk\geq1}$ in Lemma \ref{lm5.3}, and a differential equation satisfied by $L(X,Y)$ in Lemma \ref{lm5.4}.
\begin{lm}\label{lm5.2}
Extend the definition of the coefficients $(\poids_{\degk,\nu})_{k\geq2, \nu\in\und{\degk-1}}$ to $(\degk, \nu)\in\mathbb Z\times\mathbb Z$ by setting $\poids_{\degk, \nu}= 0$ when $\degk\leq 1$ or $\nu\not\in\{1,\ldots, \degk-1\}$.
For any $\degk\geq 3$, \[(\degk-1)\poids_{\degk,\nu} = \poids_{\degk-1,\nu}+\poids_{\degk-1,\nu-1} +\sum\limits_{r=2}^{\degk-2}\sum\limits_{p\geq0} \poids_{r,p}\poids_{k-r, \nu-p}.\]
\end{lm}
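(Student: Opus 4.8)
The plan is to recognize the coefficients $\poids_{\degk,\nu}$ as (normalized) Eulerian numbers and to deduce the stated recursion from an elementary identity for those numbers, proved by classifying permutations according to the position of their largest entry. Concretely, by the definition of $\poids_{\degk,\nu}$ in Theorem \ref{th0}, we have $(\degk-1)!\,\poids_{\degk,\nu}=N(\degk-1,\nu-1)$, where for $m\geq0$ we set $N(m,j)=\#\{\sigma\in\Sym_m\mid \#\{i\mid\sigma(i)<\sigma(i+1)\}=j\}$, with $N(0,0)=1$ and $N(m,j)=0$ whenever $m<0$ or (for $m\geq1$) $j\notin\{0,\dots,m-1\}$. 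These vanishing conventions match the ones prescribed in the statement for the only range that matters, namely $\degk\geq3$: in that range no term with first index $\leq1$ occurs (on the right we only see $\poids_{\degk-1,\cdot}$ with $\degk-1\geq2$, and $\poids_{r,p}\poids_{\degk-r,\nu-p}$ with $2\leq r\leq\degk-2$), and the vanishing of $\poids_{\degk-1,\degk-1}$, $\poids_{\degk-1,0}$, $\poids_{r,p}$ for $p\notin\{1,\dots,r-1\}$, etc., corresponds exactly to the vanishing of the matching $N(m,j)$.

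Writing $m=\degk-1$, $j=\nu-1$, $s=r-1$, $q=p-1$ and clearing denominators with $\binom{\degk-2}{r-1}=\frac{(\degk-2)!}{(r-1)!\,(\degk-r-1)!}$, the claimed recursion is equivalent to the identity
\[ N(m,j)=N(m-1,j)+N(m-1,j-1)+\sum_{s=1}^{m-2}\binom{m-1}{s}\sum_{q\geq0}N(s,q)\,N(m-1-s,\,j-1-q),\qquad m\geq2. \]
I would prove this by splitting the $\sigma\in\Sym_m$ counted by $N(m,j)$ according to the position $p$ of the value $m$. If $p=m$, then $\sigma(1)\cdots\sigma(m-1)$ is a permutation of $\{1,\dots,m-1\}$ and position $m-1$ is a forced ascent, so these contribute $N(m-1,j-1)$. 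If $p=1$, then $\sigma(2)\cdots\sigma(m)$ is a permutation of $\{1,\dots,m-1\}$ and position $1$ is a forced descent, so these contribute $N(m-1,j)$. If $1<p<m$, the prefix $\sigma(1)\cdots\sigma(p-1)$ and the suffix $\sigma(p+1)\cdots\sigma(m)$ form an ordered set-partition of $\{1,\dots,m-1\}$; after standardizing, the prefix realizes an arbitrary element of $\Sym_{p-1}$ and the suffix an arbitrary element of $\Sym_{m-p}$, independently and with unchanged ascent statistics, and $\#\mathrm{asc}(\sigma)=\#\mathrm{asc}(\text{prefix})+1+\#\mathrm{asc}(\text{suffix})$ (the $+1$ from the ascent into $m$ at position $p-1$; position $p$, out of $m$, is a descent). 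Summing over the prefix size $s=p-1\in\{1,\dots,m-2\}$ and the $\binom{m-1}{s}$ choices of its underlying set yields the convolution term, and adding the three cases gives the identity.

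Finally, I would divide that identity by $(\degk-2)!$ and substitute back $N(\degk-1,\cdot)=(\degk-1)!\,\poids_{\degk,\cdot}$, $N(\degk-2,\cdot)=(\degk-2)!\,\poids_{\degk-1,\cdot}$, and $N(r-1,p-1)N(\degk-r-1,\nu-p-1)=(r-1)!\,(\degk-r-1)!\,\poids_{r,p}\poids_{\degk-r,\nu-p}$, which turns the three terms into $(\degk-1)\poids_{\degk,\nu}$, $\poids_{\degk-1,\nu}+\poids_{\degk-1,\nu-1}$, and $\sum_{r=2}^{\degk-2}\sum_{p}\poids_{r,p}\poids_{\degk-r,\nu-p}$ respectively; this is exactly the asserted formula for $\degk\geq3$.

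The main obstacle is the bookkeeping in the interior case $1<p<m$: one must verify carefully that exactly one ascent is forced by the maximum (and that the entry immediately after $m$ produces a descent, not an ascent), and one must justify that a word on an arbitrary $s$-element subset of $\{1,\dots,m-1\}$ contributes $N(s,q)$ — i.e.\ that the number of ascents is invariant under standardization — and that the prefix and suffix statistics are independent. Everything else is routine reindexing, although one should keep an eye on the degenerate case $m=2$ (empty convolution sum, corresponding to $\degk=3$) and on the fact, noted above, that $\poids_{1,\nu}$ is never invoked when $\degk\geq3$, so the convention $\poids_{k,\nu}=0$ for $k\leq1$ is harmless here.
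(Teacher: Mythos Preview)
Your proof is correct and follows essentially the same approach as the paper: both arguments classify permutations of $\Sym_{\degk-1}$ according to the position of their largest entry, split into the three cases (maximum at the last position, at the first position, or strictly in the interior), and use standardization of the prefix and suffix together with the $\binom{\degk-2}{r-1}$ choices for the underlying set of the prefix. Your write-up is simply more explicit about the translation between the raw Eulerian counts $N(m,j)$ and the normalized coefficients $\poids_{\degk,\nu}$, but the combinatorial core is identical.
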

\begin{proof}
By definition, for any $\degk\geq3$ and any $\nu \in \und{\degk-1}$, $\poids_{\degk,\nu} = \frac1{(\degk-1)!} \Card(\{\sigma\in\Sym_{\degk-1} \mid N(\sigma) = \nu-1
\})$, where $N(\sigma) = \Card\{i\in \und{\degk-2} \mid \sigma(i)<\sigma(i+1)\}$.

Let $\sigma\in\Sym_{\degk-1}$, and set $r_\sigma = \sigma^{-1}(\degk-1)$, $I_\sigma = \{1,\ldots, r_\sigma-1\}$, and $J_\sigma=\{r_\sigma+1, \ldots, \degk-1\}$. Let $i_\sigma\colon \sigma(I_\sigma)\rightarrow I_\sigma$ and $j_\sigma\colon \sigma(J_\sigma)\rightarrow J_\sigma$ denote the two only such maps that are strictly increasing bijections. The permutation $\sigma$ induces two permutations $\sigma_1=i_\sigma\circ\sigma_{|I_\sigma}\in\Sym_{I_\sigma}$ and $\sigma_2=j_\sigma\circ\sigma_{|J_\sigma}\in\Sym_{J_\sigma}$. \begin{itemize}
\item If $r_\sigma=\degk-1$, $N(\sigma_1) = N(\sigma)-1$ and $\sigma_2\in\Sym_\emptyset$.
\item If $r_\sigma = 1$, $N(\sigma_2) = N(\sigma)$, and $\sigma_1\in\Sym_\emptyset$.
\item If $r_\sigma\in\{2,\ldots, \degk-2\}$, then $N(\sigma) = 1+ N(\sigma_1)+N(\sigma_2)$, since the elements $i\in\und{k-2}$ such that $\sigma(i)<\sigma(i+1)$ are taken into account in $N(\sigma_1)$ if $i<r_\sigma-1$, in $N(\sigma_2)$ if $i \geq r_\sigma+1$, and since $\sigma(r_\sigma-1) < \sigma(r_\sigma)$ and $\sigma(r_\sigma) > \sigma(r_\sigma+1)$.\end{itemize}
Now, note that $\sigma$ is equivalent to the data of $(r_\sigma, \sigma(I_\sigma), \sigma_1,\sigma_2)$, and that  
there are $\binom{\degk-2}{r_{\sigma}-1}$ possible choices of $\sigma(I_\sigma)$
for a given $r_\sigma$. This yields the induction formula of the lemma.\qedhere
\end{proof}

\begin{lm}\label{lm5.3}
The polynomial $L_2(X)$ is $X$, and for any $\degk \geq 3$, 
\[L_\degk(X) = \frac1{\degk-1}\sum\limits_{r=1}^{\degk-1}L_r(X) L_{\degk-r}(X).\]
\end{lm}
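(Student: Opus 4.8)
The plan is to derive the quadratic recursion for the $L_\degk$ directly from the linear-plus-convolution recursion of Lemma \ref{lm5.2}, by turning it into an identity between generating polynomials in $X$. The base value $L_2(X)=X$ comes for free from the definition: by Theorem \ref{th0} one has $L_2(X)=\poids_{2,1}X$, and $\poids_{2,1}=\frac1{1!}\Card(\Sym_1)=1$, since the ascent condition on $\sigma\in\Sym_1$ ranges over the empty index set $\und 0$ and is satisfied exactly for $\nu-1=0$.

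For $\degk\geq3$, I would multiply the identity of Lemma \ref{lm5.2} by $X^\nu$ and sum over all $\nu\in\mathbb Z$; this is legitimate because $\poids_{\degk,\nu}$ vanishes outside $\{1,\dots,\degk-1\}$, so every sum involved is finite. The left-hand side gives $(\degk-1)L_\degk(X)$. The two terms $\poids_{\degk-1,\nu}+\poids_{\degk-1,\nu-1}$ produce $L_{\degk-1}(X)+XL_{\degk-1}(X)=(1+X)L_{\degk-1}(X)$ after the reindexing $\nu\mapsto\nu-1$ in the second summand. For fixed $r$, the double sum $\sum_{\nu}\sum_{p\geq0}\poids_{r,p}\poids_{\degk-r,\nu-p}X^\nu$ factors, after setting $\mu=\nu-p$, as $\bigl(\sum_p\poids_{r,p}X^p\bigr)\bigl(\sum_\mu\poids_{\degk-r,\mu}X^\mu\bigr)=L_r(X)L_{\degk-r}(X)$, so the convolution term contributes $\sum_{r=2}^{\degk-2}L_r(X)L_{\degk-r}(X)$. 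Altogether this yields $(\degk-1)L_\degk(X)=(1+X)L_{\degk-1}(X)+\sum_{r=2}^{\degk-2}L_r(X)L_{\degk-r}(X)$.

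The last step is to absorb the linear term into the sum: since $L_1(X)=\frac{X+1}2$, we have $(1+X)L_{\degk-1}(X)=2L_1(X)L_{\degk-1}(X)=L_1(X)L_{\degk-1}(X)+L_{\degk-1}(X)L_1(X)$, which are exactly the missing $r=1$ and $r=\degk-1$ terms; hence $(\degk-1)L_\degk(X)=\sum_{r=1}^{\degk-1}L_r(X)L_{\degk-r}(X)$, as desired (the argument covers $\degk=3$ as well, the sum $\sum_{r=2}^{\degk-2}$ being then empty). I do not anticipate a genuine obstacle: the only care needed is the bookkeeping around the index ranges and the convention $\poids_{\degk,\nu}=0$ outside $\und{\degk-1}$, which is precisely what makes the termwise factorization of the convolution and the interchange of summations valid.
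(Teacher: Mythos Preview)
Your proof is correct and follows essentially the same approach as the paper's own proof: both multiply the recursion of Lemma~\ref{lm5.2} by $X^\nu$, sum over $\nu$, and then absorb the $(1+X)L_{\degk-1}(X)$ term into the convolution using $L_1(X)=\frac{X+1}2$.
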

\begin{proof}
The first point of the lemma is immediate since $\poids_{2,1} = 1$.
For $\degk\geq3$,
\begin{eqnarray*}
(\degk-1) L_\degk(X) &=& \sum\limits_{\nu\in\und{\degk-1}}(\degk-1) \poids_{\degk,\nu} X^\nu \\
&=& \sum\limits_{\nu\in\und{\degk-1}} \left(\poids_{\degk-1,\nu}+\poids_{\degk-1,\nu-1} 
+\sum\limits_{r=2}^{\degk-2}\sum\limits_{p\geq0} \poids_{r,p}\poids_{k-r, \nu-p}\right) X^\nu \\
&=& L_{\degk-1}(X) + X L_{\degk-1}(X) + \sum\limits_{r=2}^{\degk-2}\sum\limits_{\nu\in\und{\degk-1}}\sum\limits_{p\geq0} \poids_{r,p}X^p\poids_{k-r, \nu-p} X^{\nu-p}\\
&=& (X+1)L_{\degk-1}(X) + \sum\limits_{r=2}^{\degk-2}L_r(X) L_{\degk-r}(X)\\
&=&\sum\limits_{r=1}^{k-1}L_r(X) L_{\degk-r}(X),
\end{eqnarray*}
since $L_1(X)L_{\degk-1}(X) + L_{\degk-1}(X)L_1(X) = (X+1)L_{\degk-1}(X)$.
\end{proof}


\begin{lm}\label{lm5.4}
$L(X,Y)$ satisfies the differential equation \[\frac{\partial L}{\partial Y}(X,Y) = \left(L(X,Y)\right)^2- \left(\frac{1-X}2\right)^2.\]
\end{lm}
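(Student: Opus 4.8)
The plan is to prove the differential equation by comparing the two sides coefficient by coefficient as formal power series in $Y$, using the quadratic recursion for the polynomials $L_\degk$ established in Lemma \ref{lm5.3}.

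First I would expand the left-hand side. Writing $L(X,Y) = \sum_{\degk\geq1} L_\degk(X) Y^{\degk-1}$, we get $\frac{\partial L}{\partial Y}(X,Y) = \sum_{\degk\geq2} (\degk-1) L_\degk(X) Y^{\degk-2}$, so that the coefficient of $Y^{\degk-2}$ in $\frac{\partial L}{\partial Y}$ is $(\degk-1)L_\degk(X)$ for every $\degk\geq2$. Next I would expand the right-hand side: by the Cauchy product,
\[\bigl(L(X,Y)\bigr)^2 = \sum\limits_{\degk\geq2}\Bigl(\sum\limits_{r=1}^{\degk-1} L_r(X)\, L_{\degk-r}(X)\Bigr) Y^{\degk-2},\]
so the coefficient of $Y^{\degk-2}$ in $\bigl(L(X,Y)\bigr)^2$ is $\sum_{r=1}^{\degk-1} L_r(X) L_{\degk-r}(X)$.

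Then I would match coefficients in each degree. For $\degk\geq3$, Lemma \ref{lm5.3} asserts exactly that $\sum_{r=1}^{\degk-1} L_r(X) L_{\degk-r}(X) = (\degk-1) L_\degk(X)$, so the coefficients of $Y^{\degk-2}$ on the two sides of $\bigl(L(X,Y)\bigr)^2 = \frac{\partial L}{\partial Y}(X,Y) + \bigl(\tfrac{1-X}{2}\bigr)^2$ agree and the difference $\bigl(L(X,Y)\bigr)^2 - \frac{\partial L}{\partial Y}(X,Y)$ has no term of positive degree in $Y$. It therefore only remains to compare the constant terms in $Y$: the coefficient of $Y^0$ in $\frac{\partial L}{\partial Y}$ is $L_2(X) = X$ (Lemma \ref{lm5.3}), and the coefficient of $Y^0$ in $\bigl(L(X,Y)\bigr)^2$ is $L_1(X)^2 = \bigl(\tfrac{X+1}{2}\bigr)^2$, whose difference is
\[\Bigl(\tfrac{X+1}{2}\Bigr)^2 - X = \frac{X^2-2X+1}{4} = \Bigl(\tfrac{1-X}{2}\Bigr)^2.\]
Hence $\bigl(L(X,Y)\bigr)^2 - \frac{\partial L}{\partial Y}(X,Y) = \bigl(\tfrac{1-X}{2}\bigr)^2$, which is the asserted equation.

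I do not expect any genuine obstacle here; the computation is entirely routine once Lemma \ref{lm5.3} is in hand. The only subtlety to keep track of is that the recursion of Lemma \ref{lm5.3} holds only for $\degk\geq3$, so the low-degree terms ($\degk=1,2$) must be treated by hand, and it is precisely the mismatch $L_1(X)^2 - L_2(X)$ in degree $0$ that accounts for the constant $\bigl(\tfrac{1-X}{2}\bigr)^2$ on the right-hand side.
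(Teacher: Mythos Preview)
Your proof is correct and follows essentially the same approach as the paper: both expand $\partial L/\partial Y$ and $L^2$ as power series in $Y$, match coefficients of $Y^{\degk-2}$ for $\degk\geq3$ using Lemma \ref{lm5.3}, and handle the constant term separately via the identity $L_1(X)^2 - L_2(X) = \bigl(\tfrac{1-X}{2}\bigr)^2$.
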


\begin{proof}
Indeed, \begin{eqnarray*}
\frac{\partial L}{\partial Y}(X,Y) &= & \sum\limits_{\degk\geq2} L_\degk(X) (\degk-1)Y^{\degk-2}\\ 
&=&  L_2(X)+ \sum\limits_{\degk\geq3}\sum\limits_{r=1}^{\degk-1}L_r(X)Y^{r-1} L_{\degk-r}(X)Y^{\degk-r-1}\\
&=&  L_2(X)+\sum\limits_{\degk\geq1} t_k(X) Y^k,
\end{eqnarray*}
where $(L(X,Y))^2= \sum\limits_{\degk\geq0} t_k(X) Y^k$. Since $L(X,Y) = \frac{(X+1)}2 + \sum\limits_{\degk\geq2}L_\degk(X)Y^{k-1} $, we have $t_0(X) = \frac{(X+1)^2}4$, and 
\begin{eqnarray*}
\frac{\partial L}{\partial Y}(X,Y) &=& X + L(X,Y)^2 - \frac{(X+1)^2}4
= L(X,Y)^2- \left(\frac{1-X}2\right)^2.\qedhere
\end{eqnarray*}
\end{proof}
\begin{proof}[Proof of Lemma \ref{lm5.1}]
Since $|L_k(x)|\leq 1$ for any $x\in [-1,1]$, $L(X,Y)$ defines a power series that converges at least on $]-1,1[^2$.
Fix $x\in \left]0,\frac12\right[$, and set $u_x(t) = L(x, t)$ for any $t\in]-1,1[$. The function $u_x$ satisfies the equation $u_x' = \left(u_x\right)^2- \left(\frac{1-x}2\right)^2$. Set $a= \frac{1-x}2$, and note that \[\int_0^t\frac{u_x'(t)}{(u_x(t))^2-a^2}\d t  =\frac1{2a}\left(\Ln\left(\frac{u_x(t)-a}{u_x(t)+a}\right)-\Ln\left(\frac{u_x(0)-a}{u_x(0)+a}\right)\right),\] so that, for any $t$, \[\Ln\left(\frac{(u_x(t)-a)(u_x(0)+a)}{(u_x(t)+a)(u_x(0)-a)}\right) = 2at.\]
Since $u_x(0) = \frac{x+1}2$ and $a=\frac{1-x}2$, this yields the formula of Lemma \ref{lm5.1}. Both sides of the formula of Lemma \ref{lm5.1} are power series with a convergence domain containing a disk around $(0,0)$, so that the formula also holds for the formal power series.
\end{proof}
\subsection{The formula with the Reidemeister torsion}\label{SectionReid}
\begin{lm}\label{lemmefinal}
For any virtually rectifiable null-homologous long knot $\psi$ of an asymptotic $\mathbb K$-homology $\R^{n+2}$, 
\[\sum\limits_{\degk\geq2}Z_\degk(\psi)h^\degk = (-1)^n \Ln( \mathcal T_\psi(e^{h})) .\]

\end{lm}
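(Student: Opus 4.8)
The plan is to feed the Seifert-matrix formula of Corollary \ref{Zklemma} into the generating-series computation of Lemma \ref{lm5.1} and then recognize the outcome as a logarithm of the Reidemeister torsion. Fix a Seifert surface $\Sigma$ for $\psi$ (it exists by Lemma \ref{lmS}) and a pair $(\Ba,\Bb)$ of dual bases of $\overline H_*(\Sigma)$; write $V_d^\pm = V_d^\pm(\Ba,\Bb)$ and $b_d = b_d(\Sigma)$. Corollary \ref{Zklemma} and Definition \ref{bases duales} give
\[\sum_{k\geq2}Z_k(\psi)h^k = \sum_{d=1}^n(-1)^{d+1}\sum_{k\geq2}\frac{h^k}{k}\sum_{\nu=1}^{k-1}\poids_{k,\nu}\,\Tr\bigl((V_d^+)^\nu(V_d^-)^{k-\nu}\bigr).\]
Since $V_d^- = V_d^+ + I_{b_d}$ (Definition \ref{bases duales}), triangularizing $V_d^+$ over $\mathbb C$ makes $V_d^-$ triangular in the same basis, so $\Tr\bigl((V_d^+)^\nu(V_d^-)^{k-\nu}\bigr) = \sum_\mu \mu^\nu(\mu+1)^{k-\nu}$, the sum running over the eigenvalues $\mu$ of $V_d^+$ with multiplicity.

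The heart of the argument is the scalar identity, valid in $\mathbb Q[\mu][[h]]$,
\[\sum_{k\geq2}\frac{h^k}{k}\sum_{\nu=1}^{k-1}\poids_{k,\nu}\,\mu^\nu(\mu+1)^{k-\nu} = -\mu h - \Ln\bigl(1-\mu(e^h-1)\bigr).\]
To obtain it I rewrite the left-hand side as $\sum_{k\geq2}\frac{q^k}{k}L_k(X)$ with $X = \tfrac{\mu}{\mu+1}$ and $q = (\mu+1)h$, where $L_k$ is the polynomial of Lemma \ref{lm5.1} and $L_k(X) = \sum_\nu\poids_{k,\nu}X^\nu$ for $k\geq2$. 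Because $\partial_Y L(X,Y) = \sum_{k\geq1}L_k(X)Y^{k-1}$, this sum equals $\int_0^q L(X,s)\,\d s - L_1(X)q$; inserting the closed form of $L$ from Lemma \ref{lm5.1} and computing the elementary (partial-fraction) integral yields $\tfrac{(1-X)q}{2} + \Ln\tfrac{1-X}{1-Xe^{(1-X)q}}$, and after subtracting $L_1(X)q = \tfrac{X+1}{2}q$ and substituting back $X = \tfrac{\mu}{\mu+1}$ — so that $(1-X)q = h$ and $\tfrac{1-X}{1-Xe^{(1-X)q}} = \tfrac{1}{1-\mu(e^h-1)}$ — one reads off the right-hand side. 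The substitution $X = \tfrac{\mu}{\mu+1}$ looks suspicious at $\mu=-1$, but both sides have polynomial-in-$\mu$ coefficients in each power of $h$, so checking the identity for $\mu$ near $0$ suffices. Applying $\Tr$ (i.e. summing over the eigenvalues of $V_d^+$) then gives
\[\sum_{k\geq2}\frac{h^k}{k}\sum_{\nu=1}^{k-1}\poids_{k,\nu}\,\Tr\bigl((V_d^+)^\nu(V_d^-)^{k-\nu}\bigr) = -h\,\Tr(V_d^+) - \Ln\det\bigl(I-(e^h-1)V_d^+\bigr).\]

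It remains to identify the determinant. Writing $t^{1/2}V_d^- - t^{-1/2}V_d^+ = t^{1/2}\bigl(I+(1-t^{-1})V_d^+\bigr)$, Theorem \ref{Levine-th} gives $\Delta_{d,\Sigma}(t) = t^{b_d/2}\det\bigl(I+(1-t^{-1})V_d^+\bigr)$, hence, specializing $t = e^{-h}$ and using Lemma \ref{211},
\[\det\bigl(I-(e^h-1)V_d^+\bigr) = e^{hb_d/2}\,\Delta_{d,\Sigma}(e^{-h}) = e^{hb_d/2}\,\Delta_{n+1-d,\Sigma}(e^{h}).\]
Substituting this above, multiplying by $(-1)^{d+1}$ and summing over $d$, the Alexander-polynomial part is $\sum_d(-1)^{d+1}\Ln\Delta_{n+1-d,\Sigma}(e^h)$, which the reindexing $d\mapsto n+1-d$ turns into $(-1)^n\Ln\mathcal T_\Sigma(e^h)$ by the definition of $\mathcal T_\Sigma$; the leftover terms are linear in $h$, namely $-h\sum_d(-1)^{d+1}\Tr(V_d^+) - \tfrac h2\sum_d(-1)^{d+1}b_d$, and using $\Tr(V_d^-)-\Tr(V_d^+)=b_d$, the relation $\chi(\Sigma)-1 = \sum_d(-1)^d b_d$, and Lemma \ref{dernierlemme}, they collapse to $-h\,\mathcal T_\Sigma'(1)$. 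Thus $\sum_{k\geq2}Z_k(\psi)h^k = (-1)^n\Ln\mathcal T_\Sigma(e^h) - h\,\mathcal T_\Sigma'(1)$, and Definition \ref{Reidemeister} finishes the proof: if $n$ is odd then $\mathcal T_\Sigma'(1)=0$ and $\torsion = \mathcal T_\Sigma$, while if $n$ is even then $\torsion(t) = \mathcal T_\Sigma(t)\,t^{-\mathcal T_\Sigma'(1)}$ and $(-1)^n=1$, so in both cases the right-hand side equals $(-1)^n\Ln(\torsion(e^h))$.

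The step I expect to be the main obstacle is twofold: first, carrying out the scalar generating-function identity cleanly (integrating the closed form of $L(X,Y)$ and justifying the substitution $X=\mu/(\mu+1)$); and second, the sign bookkeeping in passing from $\det(I-(e^h-1)V_d^+)$ to $\Delta_{n+1-d,\Sigma}(e^h)$ and in the reindexing $d\mapsto n+1-d$, where the $t\leftrightarrow t^{-1}$ flip of Lemma \ref{211} and the resulting factor $(-1)^n$ must be tracked carefully. Matching the linear remainder $-h\,\mathcal T_\Sigma'(1)$ with the even-dimensional normalization $t^{-\mathcal T_\Sigma'(1)}$ is the other place needing care, but it is a direct application of Lemma \ref{dernierlemme}.
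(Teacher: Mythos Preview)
Your argument is correct and follows essentially the same route as the paper: integrate the closed form of $L(X,Y)$ from Lemma \ref{lm5.1}, recognize the result as a logarithm of a determinant, and identify that determinant with an Alexander polynomial. The only cosmetic differences are that the paper keeps the computation matrix-valued (defining $M(X,Y)=\sum_{k\ge2}\sum_\nu\frac{1}{k}\lambda_{k,\nu}X^\nu Y^{k-\nu}$ and evaluating $M(hV_d^+,hV_d^-)$ via $\Tr\Ln=\Ln\det$) rather than first triangularizing to reduce to a scalar identity in the eigenvalue $\mu$, and that the paper lands directly on $\Delta_{d,\Sigma}(e^{-h})$ and then invokes the torsion symmetry $\torsion(t^{-1})=\torsion(t)^{(-1)^{n+1}}$, whereas you use Lemma \ref{211} to flip to $\Delta_{n+1-d,\Sigma}(e^h)$ and reindex; similarly, the paper recognizes the linear term as $\frac{h}{2}\Tr(V_d^++V_d^-)=h\,\Delta_{d,\Sigma}'(1)$ directly, while you route it through Lemma \ref{dernierlemme}. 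These are interchangeable choices and do not constitute a genuinely different strategy.
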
\begin{proof}
Let $\Sigma$ be a Seifert surface for $\psi$, let $(\Ba,\Bb)$ be a pair of dual bases of $\overline H_*(\Sigma)$, and set $V_\dimd^\pm=V_\dimd^\pm(\Ba,\Bb)$. Corollary \ref{Zklemma} yields \begin{eqnarray*}
\sum\limits_{\degk\geq2 }Z_\degk(\psi)h^\degk
&=&
\sum\limits_{d\in\und n}\sum\limits_{\degk\geq2}\sum\limits_{\nu\in\und{\degk-1}}
(-1)^{d+1}
\poids_{\degk,\nu}\frac{h^\degk}{\degk}
\Tr\left((V_\dimd^+)^\nu(V_\dimd^-)^{\degk-\nu}\right)\\
& = & \sum\limits_{\dimd\in\und n} (-1)^{\dimd+1} \Tr\left(
M(hV_\dimd^+, hV_\dimd^-)
\right),\end{eqnarray*}
where $M(X,Y) = \sum\limits_{\degk\geq 2} \sum\limits_{\nu\in\und{\degk-1}} \frac1\degk \poids_{\degk,\nu} X^\nu Y^{\degk-\nu}$. Note that \[ M(X,Y) = \int_0^Y \left( L(XY^{-1},T)-L_1(XY^{-1})\right) \d T.\] 
Lemma \ref{lm5.1} and basic integral calculus yield 
\begin{eqnarray*}
M(X,Y) &=& \int_0^Y\left( \frac{1-XY^{-1}}2\frac{1+XY^{-1}\exp( (1-XY^{-1})T )}{1-XY^{-1}\exp( (1-XY^{-1})T )} - \frac{XY^{-1}+1}2 \right)\d T\\
&=& \int_0^Y\left( \frac{1-XY^{-1}}2 \left( 1 + \frac{2X\exp((1-XY^{-1})T)}{Y-X\exp((1-XY^{-1})T)}\right) - \frac{XY^{-1}+1}2 \right) \d T\\
&=& \int_0^Y\left( - XY^{-1} + \frac{X(1-XY^{-1})\exp((1-XY^{-1})T)}{Y-X\exp((1-XY^{-1})T)} \right)\d T \\
&=& -X - \Ln\left(\frac{Y-X\exp(Y-X)}{Y-X}\right).
\end{eqnarray*}
For any square matrices $A,B$ with $B-A=I$ and any $h$ arbitrarily small, this yields 
\[M(hA,hB) =  - hA - \Ln(B-e^{h}A),\]
where $\Ln(C)$ is defined for $C-I$ sufficiently small as $\sum\limits_{k\geq1} \frac{(-1)^{k-1}}k (C-I)^k$.
Therefore, \begin{eqnarray*}
M(hV_d^+, hV_d^-)
&=& 
 - hV_\dimd^+ -\Ln\left(V_\dimd^- - e^{h}V_\dimd^+ \right) \\
&=& 
- \frac{h}2I_{b_d} - hV_\dimd^+ - \Ln\left(e^{-\frac{h}2}V_\dimd^- - e^{\frac{h}2}V_\dimd^+ \right)
\\
&=& 
- \frac{h}2(V_\dimd^+ + V_\dimd^-) - \Ln\left(e^{-\frac{h}2}V_\dimd^- - e^{\frac{h}2}V_\dimd^+ \right)
,\end{eqnarray*}
so that \begin{eqnarray*}\sum\limits_{\degk\geq2}Z_\degk(\psi)h^\degk & = & \sum\limits_{\dimd\in\und n} 
(-1)^{\dimd}\Tr\left(\frac{h}2(V_\dimd^+ + V_\dimd^-) + \Ln\left(e^{-\frac{h}2}V_\dimd^- - e^{\frac{h}2}V_\dimd^+ \right)
\right)\\
&=& \sum\limits_{\dimd\in\und n}(-1)^{\dimd}\left(\Delta_{d,\Sigma}'(1) h +  \Ln( \Delta_{d,\Sigma}(e^{-h}) )\right) ,
\end{eqnarray*}
where the second equality uses
the formula $\Tr(\Ln(I+H)) = \Ln(\det(I+H))$ for $H\in\mathcal M_n(\mathbb C)$ sufficiently small.
Since $\torsion'(1)=0$, and $\torsion(t^{-1}) = (\torsion(t))^{(-1)^{n+1}}$, the lemma follows.\qedhere
\end{proof}
\section{On virtual rectifiability}\label{Section4}
In this section, $n$ is an integer $\geq2$.
\subsection{Proof of Lemma \ref{rect-lemma}}\label{Section4-1}
\begin{lm}\label{relev}

Let $(h_t)_{0\leq t \leq 1}$ be a homotopy of maps $(\R^n, \R^n\setminus \mathbb D^n) \rightarrow (\inj, \iota_0)$ with $h_0(\R^n)=\{\iota_0\}$. 
Let $\mathcal G$ denote the space of smooth maps from $\R^n$ to $GL_{n+2}(\R)$ that map $\R^n\setminus\mathbb D^n$ to $I_{n+2}$.

There exists a continous map $\big(t\in [0,1]\mapsto g_t\in\mathcal G\big)$, such that for any $(t,x)\in [0,1]\times\R^n$, $h_t(x)= g_t(x)\circ h_0(x)$, and such that, for any $x\in \R^n$, $g_0(x) = I_{n+2}$.
\end{lm}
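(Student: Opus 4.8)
The statement is a parametrized lifting problem: the map $\big(g\in \mathcal G\mapsto (x\mapsto g(x)\circ h_0(x))\big)$ from $\mathcal G$ to the space of maps $(\R^n,\R^n\setminus\mathbb D^n)\to(\inj,\iota_0)$, where $h_0\equiv \iota_0$, is exactly the fibration $\mathrm{Map}\big((\R^n,\R^n\setminus\mathbb D^n),(GL_{n+2}^+(\R),I_{n+2})\big)\to \mathrm{Map}\big((\R^n,\R^n\setminus\mathbb D^n),(\inj,\iota_0)\big)$ induced by post-composition with the fibration $p\colon g\in GL_{n+2}^+(\R)\mapsto g_{|\{0\}^2\times\R^n}\in\inj$ (whose fiber is homotopy equivalent to $SO(2)$, as recalled just before Lemma \ref{lmpin}). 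So the plan is: first verify that $p$ is a fibration (it is a smooth surjective submersion, in fact a fiber bundle, since $GL_{n+2}^+(\R)$ acts transitively on $\inj$ with stabilizer of $\iota_0$ the subgroup fixing the last $n$ coordinates), hence the induced post-composition map on mapping spaces with the relative constraint is again a fibration by the standard exponential-law argument. Then the data $(h_t)_{t\in[0,1]}$ is precisely a path in the base starting at the image of the constant map $I_{n+2}\in\mathcal G$, and a lift starting at that point is exactly the desired $(g_t)_{t\in[0,1]}$ with $g_0\equiv I_{n+2}$. This gives the existence of $g_t$ as a \emph{continuous} family of maps $\R^n\to GL_{n+2}^+(\R)$; one then observes that $GL_{n+2}(\R)$ may be used in place of $GL_{n+2}^+(\R)$ since everything stays in the identity component.

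The step that needs a little care is \emph{smoothness in the space variable $x$}: the abstract homotopy lifting property produces maps $g_t$ that are only continuous, whereas $\mathcal G$ is defined as smooth maps $\R^n\to GL_{n+2}(\R)$. To fix this I would instead run the lifting argument at the level of smooth maps: $\inj$ and $GL_{n+2}^+(\R)$ are finite-dimensional smooth manifolds, $p$ is a smooth fiber bundle, and a homotopy of \emph{smooth} maps $h_t$ through the fixed boundary condition can be lifted through a smooth fiber bundle to a homotopy of smooth maps using a smooth Ehresmann-type connection on $p$ (parallel transport along the paths $t\mapsto h_t(x)$, smoothly in $x$ because the connection and the $h_t$ are smooth and because on $\R^n\setminus\mathbb D^n$ all paths are constant at $\iota_0$, so parallel transport there is constant at $I_{n+2}$, giving the relative boundary condition for free). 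Concretely: pick a smooth bundle metric / horizontal distribution on $p\colon GL_{n+2}^+(\R)\to\inj$, let $g_t(x)$ be the horizontal lift at time $t$ of the path $s\mapsto h_s(x)$ starting at $g_0(x)=I_{n+2}$ (legitimate since $h_0(x)=\iota_0$ and $p(I_{n+2})=\iota_0$); smooth dependence of solutions of ODEs on parameters gives joint smoothness in $(t,x)$, and $g_t(x)=I_{n+2}$ for $x\notin\mathbb D^n$ because the path $s\mapsto h_s(x)$ is then constant.

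The main obstacle — really the only nontrivial point — is this interplay between the homotopy-lifting abstraction and the smoothness requirement built into $\mathcal G$; once one commits to the connection/parallel-transport formulation the rest is routine. The construction of the horizontal distribution is elementary: $GL_{n+2}^+(\R)\to\inj$ is a principal-like bundle (the structure group being the subgroup of $GL_{n+2}^+(\R)$ fixing $\{0\}^2\times\R^n$ pointwise), so a connection exists by an averaging/partition-of-unity argument, or one can simply take the obvious linear-algebraic section over the (contractible, in fact Stiefel-like) base locally and patch. I would state it with the connection, note that $\R^n\setminus\mathbb D^n$ is mapped to $I_{n+2}$ automatically, and conclude. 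Finally, the relation $h_t(x)=g_t(x)\circ h_0(x)$ holds by definition of $p$ and of the lift, completing the proof.
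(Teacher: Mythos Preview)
Your approach is correct and genuinely different from the paper's. You invoke the homotopy lifting property of the fiber bundle $p\colon GL_{n+2}^+(\R)\to\inj$, $g\mapsto g_{|\{0\}^2\times\R^n}$, and then upgrade to smoothness in $x$ via parallel transport along a chosen connection on $p$. The paper instead builds $g_t(x)$ by hand: it writes $\R^{n+2}=P_{t,x}\oplus h_t(x)(\R^n)$ with $P_{t,x}$ the orthogonal complement of $h_t(x)(\R^n)$, and defines $g_t(x)$ so that it acts as $h_t(x)$ on the last $n$ coordinates and as a composition of successive orthogonal projections $\pi_{t,x}\circ\pi_{t_k,x}\circ\cdots\circ\pi_{t_0,x}$ on the first two, where the $t_j$ are chosen close enough (by a uniform-continuity argument on a compact set) that each projection between consecutive normal planes is an isomorphism. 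In effect, the paper is performing a \emph{discrete} version of your parallel transport with respect to the obvious Euclidean connection, carried out in finitely many steps rather than as an ODE.

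What each buys: the paper's argument is entirely elementary and self-contained, needing only linear algebra and compactness, at the cost of a somewhat ad hoc piecewise formula. Your argument is cleaner conceptually and explains \emph{why} the lift exists (it is the standard covering-homotopy property), but relies on the existence of a connection and smooth dependence of ODE solutions on parameters. Both run into the same small wrinkle about smoothness versus continuity of the $h_t$ in $x$; the paper quietly handles this in the next proof by replacing $(g_t)$ with a smooth approximation $(\tilde g_t)$, so neither approach is penalized here.
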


\begin{proof} 

Set $g_0(x) = I_{n+2}$ for any $x$. 
Endow $\R^{n+2}$ with its canonical Euclidean structure and let $P_{t,x}$ denote the orthogonal complement of $h_t(x)(\R^n)$ in $\R^{n+2}$. Let $\pi_{t,x}$ denote the orthogonal projection on $P_{t,x}$.
Set $f(t, s, x) = \min\limits_{z\in P_{s,x}, ||z|| = 1} || \pi_{t,x}(z)|| $. Since $f$ maps the complement of the compact $[0,1]^2\times \mathbb B^n$ to $1$, it is uniformly continuous. Fix $\delta>0$ so that for any $(t,s,x)$ and $(t', s', x')$ with $|t-t'|+|s-s'| + ||x-x'||< \delta$, 
$| f(t', s', x')-f(t, s, x)|<\frac12$, 
and for any $j \in \mathbb N$, set $t_j = \min (j\frac{\delta}2, 1)$. We are going to define $g_t$ on each $[t_j,t_{j+1}]$.

Note that $P_{0, x} \cap P_{t, x}^{\perp}=\{0\}$ if and only if $f(t, 0, x) >0$. Since $f(0,0,x)=1$ for any $x$, we have $P_{0, x} \cap P_{t, x}^{\perp}=\{0\}$ for $t\in [0,t_1]$. 
For $t\in [0,t_1]$, define $g_t(x)$ by the following formula: $$\forall z=(z_1,z_2,\overline z) \in\R^{n+2}=\R\times\R\times \R^n, g_t(x)(z_1,z_2,\overline z) = \pi_{t,x}(\pi_{0, x}(z)) + h_t(x)(\overline z)$$
Since $P_{0, x} \cap P_{t, x}^{\perp}=\{0\}$, $\pi_{t,x}$ defines an isomorphism from $P_{0,x}$ to $P_{t,x}$. Thus, $g_t(x)$ is an isomorphism. 
For $t\in [t_k , t_{k+1}]$ and $x\in\R^n$ define $g_t(x)$ so that $$\forall z=(z_1,z_2,\overline z) \in\R^{n+2}, g_t(x)(z_1,z_2,\overline z) = \pi_{t,x}(\pi_{t_k, x}(\cdots \pi_{t_0, x}(z) \cdots)) + h_t(x)(\overline z).$$
Since $f(t,  t_k, x) \geq f(t_k, t_k, x) -\frac 12= \frac12$, the above method proves that $g_t(x)$ is an isomorphism. This defines a family $(g_t)_{0\leq t \leq 1}$ as required by the lemma. 
\end{proof}

\begin{proof}[Proof of Lemma \ref{rect-lemma}.]
Let $\tau_e$ be a parallelization such that the class $[\iota(\tau_e,\psi)]$ of Lemma \ref{obstructionlemma}
is zero, so that there exists $(h_t)_{0\leq t \leq 1}$ as in Lemma \ref{relev} 
with $h_1= \iota(\tau_e,\psi)$.
Let $(\tilde g_t)_{0\leq t \leq 1}$ be a smooth approximation of the map $(g_t)_{0\leq t \leq 1}$ of Lemma \ref{relev}, such that for any $x\in\R^n$, $\tilde g_0(x)= I_{n+2}$ and $h_1(x)=\iota(\tau_e,\psi)(x) = \tilde g_1(x)\circ h_0(x)$. Assume without loss of generality that $\left( t\in[0,1]\mapsto \tilde g_t\in \mathcal G \right) $ is constant on a neighborhood of $\{0,1\}$.
Take a tubular neighborhood $N$ of $\psi(\R^n)$ and identify $N$ with $\psi(\R^n)\times \mathbb D^2\subset \psi(\R^n)\times \mathbb C$ with coordinates $(\psi(x), r e^{i\theta})$.
For any $y=(\psi(x), r e^{i\theta})\in N$, set $\tau^0_y =  (\tau_e)_y\circ\tilde g_{1-r}(x)$. This defines a map $\tau^0\colon N\times \R^{n+2}\rightarrow TN$, which extends to a map $\tau\colon \ambientspace\times \R^{n+2}\rightarrow T\ambientspace$, by setting ${\tau}_y = (\tau_e)_y$ when $y\not\in N$.
This construction ensures that $\iota(\tau, \psi)= \iota_0$, and $\tau$ is a parallelization of $\ambientspace$.\qedhere

\end{proof}
\subsection{Proof of Lemma \ref{threct}}
We use the following Bott periodicity theorem, which is proved in \cite{[Bott2]}. 
\begin{theo}\label{Bt}[Bott]
For any $k\geq 0$, and any $N \geq 1$,$$\pi_N(\mathrm{SO}(N+2+k),I_{N+2+k}) = \left\{\begin{array}{lll} 0 & \text{if $N\equiv 2, 4, 5$ or $6 \mod 8$,}\\
\mathbb Z/2\mathbb Z& \text{if $N\equiv 0$ or $1 \mod 8$,}\\
\mathbb Z& \text{if $N\equiv 3$ or $7 \mod 8$.}\end{array}\right.$$
\end{theo}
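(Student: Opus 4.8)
The plan is to reduce the statement to the stable homotopy groups of the orthogonal group and then quote Bott periodicity. The first step is to observe that, since $k\geq 0$, we have $N\leq (N+2+k)-2$, so $N$ lies in the stable range for $\mathrm{SO}(N+2+k)$. Concretely, for any $m\geq N+2$ the fibre bundle $\mathrm{SO}(m)\hookrightarrow \mathrm{SO}(m+1)\to \s^m$ gives the exact sequence
\[\pi_{N+1}(\s^m)\longrightarrow \pi_N(\mathrm{SO}(m))\longrightarrow \pi_N(\mathrm{SO}(m+1))\longrightarrow \pi_N(\s^m),\]
whose two outer terms vanish because $N+1\leq m-1<m$. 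Hence the inclusion induces an isomorphism $\pi_N(\mathrm{SO}(m))\cong \pi_N(\mathrm{SO}(m+1))$ for every $m\geq N+2$; iterating (the inequality $N\leq m-2$ only becomes slacker as $m$ grows) shows that $\pi_N(\mathrm{SO}(N+2+k))$ is independent of $k$ and equals the stable group $\pi_N(\mathrm{SO})$.

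The second step is to invoke Bott's periodicity theorem from \cite{[Bott2]}, which computes $\pi_N(\mathrm{SO})$: these groups depend only on $N\bmod 8$, and for $N\equiv 1,2,3,4,5,6,7,0 \pmod 8$ they are respectively $\mathbb{Z}/2\mathbb{Z},\ 0,\ \mathbb{Z},\ 0,\ 0,\ 0,\ \mathbb{Z},\ \mathbb{Z}/2\mathbb{Z}$. Since the statement only concerns $N\geq 1$, the anomaly $\pi_0(\mathrm{O})=\mathbb{Z}/2\mathbb{Z}\neq \pi_0(\mathrm{SO})=0$ is irrelevant: the case $N\equiv 0 \pmod 8$ is covered by $N=8,16,\dots$. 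Reading this table against the residue classes in the statement produces the three cases exactly: $\mathbb{Z}/2\mathbb{Z}$ for $N\equiv 0,1$, then $0$ for $N\equiv 2,4,5,6$, and $\mathbb{Z}$ for $N\equiv 3,7$.

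There is no genuine obstacle: the mathematical content is Bott periodicity, which is cited rather than reproved, and the only point demanding attention is the bookkeeping in the stabilization argument — one must confirm that the tower of inclusions stays in the stable range and that the periodic pattern is aligned with the residues as written, both of which are immediate.
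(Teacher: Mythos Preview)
Your argument is correct. The paper itself does not prove this theorem: it states it and cites \cite{[Bott2]} as the source, treating it entirely as a black box. Your proposal supplies exactly the standard derivation one would give if unpacking that citation --- the stabilization step via the fibrations $\mathrm{SO}(m)\hookrightarrow\mathrm{SO}(m+1)\to\s^m$ to identify $\pi_N(\mathrm{SO}(N+2+k))$ with the stable group $\pi_N(\mathrm{SO})$, followed by reading off Bott's periodic table. There is nothing to compare beyond this: you have written out what the paper leaves implicit in the reference.
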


\subsubsection{Case \texorpdfstring{$n\equiv5 \mod 8$}{n = 5 mod 8}}

\begin{cor}\label{cor-un}
Suppose $n\equiv 5\mod 8$, and let $\ambientspace$ be an asymptotic homology $\R^{n+2}$.
If $\ambientspace$ is parallelizable, then all long knots $\psi\colon \R^n\hookrightarrow \punct M$ are rectifiable.
Therefore, for any long knot $\psi$ in a (possibly non-parallelizable) asymptotic homology $\R^{n+2}$, $\psi\sharp\psi$ is rectifiable. 
 
\end{cor}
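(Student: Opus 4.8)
The plan is to read off Corollary \ref{cor-un} directly from Bott periodicity (Theorem \ref{Bt}) together with the homotopy-group identification of Lemma \ref{lmpin}, and then to pass from the parallelizable case to the general one by a connected-sum argument.

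First I would note that, since $n\equiv 5\mod 8$, Theorem \ref{Bt} applied with $N=n$ and $k=0$ gives $\pi_n(\mathrm{SO}(n+2),I_{n+2})=0$. As $n\equiv 5\mod 8$ forces $n\geq 5\geq 3$, Lemma \ref{lmpin} identifies $\pi_n(\inj,\iota_0)$ with $\pi_n(\mathrm{SO}(n+2),I_{n+2})$, so $\pi_n(\inj,\iota_0)=0$. Next, given a long knot $\psi\colon\R^n\hookrightarrow\spamb$ of a parallelizable asymptotic homology $\R^{n+2}$, Lemma \ref{obstructionlemma} shows that its rectifiability obstruction $\iota(\psi)=[\iota(\tau,\psi)]$ is a well-defined element of the trivial group $\pi_n(\inj,\iota_0)$, hence is zero; by Definition \ref{rectifiabilitydef}, $\psi$ is therefore rectifiable. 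This proves the first assertion.

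For the second assertion I would take an arbitrary (possibly non-parallelizable) asymptotic homology $\R^{n+2}$, say $\punct M$, and a long knot $\psi$ of it; then $\psi\sharp\psi$ is a long knot of $\punct M\sharp\punct M$. Since $n\equiv 5\mod 8$ is odd, Proposition \ref{conn-sum2} shows that $\punct M\sharp\punct M$ is parallelizable, so the first assertion applies to $\psi\sharp\psi$ and yields that it is rectifiable.

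There is no real obstacle here: the statement is a short corollary of facts already established. The only points requiring a word of care are that Lemma \ref{lmpin} needs $n\geq 3$ and Proposition \ref{conn-sum2} needs $n$ odd, both of which are automatic from $n\equiv 5\mod 8$, and that one should keep track of which parallelization is used when invoking Lemma \ref{obstructionlemma}; this last point is harmless since the class $[\iota(\tau,\psi)]$ is independent of $\tau$ anyway.
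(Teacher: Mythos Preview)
Your proof is correct and follows essentially the same approach as the paper: use Lemma \ref{lmpin} and Theorem \ref{Bt} to conclude $\pi_n(\inj,\iota_0)=0$, deduce rectifiability in the parallelizable case from Definition \ref{rectifiabilitydef}, and handle the general case via Proposition \ref{conn-sum2}. Your added remarks that $n\equiv 5\mod 8$ ensures both $n\geq 3$ (for Lemma \ref{lmpin}) and $n$ odd (for Proposition \ref{conn-sum2}) are helpful clarifications the paper leaves implicit.
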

\begin{proof}
 As stated in Lemma \ref{lmpin}, $\pi_n(\inj, \iota_0)=\pi_n(SO(n+2), I_{n+2})$. Since $n\equiv 5 \mod 8$, $\pi_n(SO(n+2),I_{n+2}) = 0$.
Then $\pi_n(\inj, \iota_0) = 0$, and, if $\ambientspace$ is parallelizable, 
the rectifiability obstruction $\iota(\psi)$ of Definition \ref{rectifiabilitydef} is trivial.

In the non-parallelizable case, $\ambientspace\sharp\ambientspace$ is parallelizable because of Proposition \ref{conn-sum2}, and the previous argument applies to $\psi\sharp \psi$.
\end{proof}

\subsubsection{Case \texorpdfstring{$n\equiv1 \mod 8$}{n = 1 mod 8} and connected sum of long knots}\label{diese}

\begin{lm}\label{lmfin}
When $n\equiv1\mod 8$, for any long knot $\psi$ in a parallelizable asymptotic homology $\R^{n+2}$, the connected sum $\psi\sharp \psi$ is rectifiable.
Therefore, for any long knot $\psi$ in a (possibly non-parallelizable) asymptotic homology $\R^{n+2}$, the connected sum $\psi\sharp \psi\sharp\psi\sharp\psi $ is rectifiable.
\end{lm}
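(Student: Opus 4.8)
The statement to prove is Lemma \ref{lmfin}: when $n\equiv 1\mod 8$, for any long knot $\psi$ in a parallelizable asymptotic homology $\R^{n+2}$, the connected sum $\psi\sharp\psi$ is rectifiable; and consequently $\psi\sharp\psi\sharp\psi\sharp\psi$ is rectifiable for arbitrary $\psi$. The plan is to analyze the rectifiability obstruction $\iota(\psi)\in\pi_n(\inj,\iota_0)$ of Definition \ref{rectifiabilitydef}, which by Lemma \ref{lmpin} is isomorphic to $\pi_n(SO(n+2),I_{n+2})$. Since $n\equiv1\mod 8$, Bott periodicity (Theorem \ref{Bt}) gives $\pi_n(SO(n+2),I_{n+2})=\mathbb Z/2\mathbb Z$. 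So the obstruction lives in a group of order $2$, and the natural strategy is to show that $\iota(\psi\sharp\psi)=\iota(\psi)+\iota(\psi)=2\,\iota(\psi)=0$.

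The key step I would carry out first is to establish that the rectifiability obstruction is additive under connected sum, i.e. that $\iota(\psi_1\sharp\psi_2)=\iota(\psi_1)+\iota(\psi_2)$ in $\pi_n(\inj,\iota_0)$ (identified with $\pi_n(SO(n+2))$, hence with the group structure of that homotopy group). This should follow from the explicit description of the connected sum in Section 2.9 of \cite{article1} (recalled in the excerpt): the knot $\psi_1\sharp\psi_2$ is built by rescaling copies of $\psi_1$ and $\psi_2$ into two disjoint balls $\mathbb B_1,\mathbb B_2\subset\R^{n+2}$ and joining them along the standard line through $\voisinageinfinideux$, and the parallelization $\tau_1\sharp\tau_2$ coincides with $\tau_0$ away from these balls. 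Concretely, pick a parallelization $\tau$ realizing $\tau_1\sharp\tau_2$; then $\iota(\tau,\psi_1\sharp\psi_2)\colon \R^n\to\inj$ is, up to reparametrization of $\R^n$ by diffeomorphisms isotopic to the identity and supported in $\mathbb D^n$, a map that equals $\iota(\tau_1,\psi_1)$ on one half-ball of $\mathbb D^n$, equals $\iota(\tau_2,\psi_2)$ on the other half-ball, and equals $\iota_0$ elsewhere. Since $\pi_n(\inj,\iota_0)$ is computed as $[(\R^n,\R^n\setminus\mathbb D^n),(\inj,\iota_0)]$ — maps that are constant $\iota_0$ outside a ball — this "two disjoint bumps in a single ball" configuration represents exactly the sum of the two classes in the standard $\pi_n$ group structure. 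Writing this identification out carefully (choosing an explicit decomposition $\mathbb D^n = \mathbb D^n_- \cup \mathbb D^n_+$ and a homotopy pinching the two bumps into the top and bottom hemispheres) is the main technical content.

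Once additivity is in hand, the conclusion is immediate: by Lemma \ref{lmpin} and Theorem \ref{Bt}, $\iota(\psi)\in\mathbb Z/2\mathbb Z$, so $\iota(\psi\sharp\psi)=2\,\iota(\psi)=0$, hence $\psi\sharp\psi$ is rectifiable whenever the ambient space is parallelizable. For the last assertion, let $\psi$ be a long knot in a possibly non-parallelizable asymptotic homology $\R^{n+2}$. By Proposition \ref{conn-sum2} (here $n$ is odd since $n\equiv1\mod 8$), $\spamb\sharp\spamb$ is parallelizable, so $(\psi\sharp\psi)$ lives in a parallelizable asymptotic homology $\R^{n+2}$; applying the first part of the lemma to $\psi\sharp\psi$ shows that $(\psi\sharp\psi)\sharp(\psi\sharp\psi)=\psi\sharp\psi\sharp\psi\sharp\psi$ is rectifiable.

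The main obstacle will be the additivity step: one must be careful that the group operation on $\pi_n(\inj,\iota_0)$ induced via the isomorphisms of Lemma \ref{lmpin} with $\pi_n(SO(n+2),I_{n+2})$ is compatible with the "juxtaposition of two bumps" coming from the geometric connected sum, and that the reparametrizations and rescalings involved in the definition of $\psi_1\sharp\psi_2$ do not change the homotopy class of $\iota(\tau,\cdot)$. This is morally the statement that $\iota$ is a homomorphism from the monoid of long knots under connected sum, and it parallels the proof that $Z_k$ is additive (Theorem \ref{conn-sum}); the argument should be purely formal once the explicit local models near $\mathbb B_1$ and $\mathbb B_2$ are written down, but it requires patience with the normalizations.
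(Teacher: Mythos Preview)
Your proposal is correct and follows essentially the same approach as the paper: both argue that the connected sum ``stacks'' two copies of $\iota(\tau,\psi)$, so $[\iota(\tau\sharp\tau,\psi\sharp\psi)]=2[\iota(\tau,\psi)]$ in $\pi_n(\inj,\iota_0)\cong\mathbb Z/2\mathbb Z$ (via Lemma~\ref{lmpin} and Theorem~\ref{Bt}), and then invoke Proposition~\ref{conn-sum2} for the non-parallelizable case. The only omission is the trivial case $n=1$, where every long knot is rectifiable by Definition~\ref{rectifiabilitydef}; the paper dispatches this in one line.
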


\begin{proof}If $n=1$, it is true by definition.
Now assume $n\geq2$ and $n\equiv1\mod8$, and let $(\punct M, \tau)$ be a parallelized asymptotic homology $\R^{n+2}$, let $(\punct M\sharp \punct M, \tau\sharp \tau)$ be the induced connected sum, and fix a long knot $\psi\colon \R^n\hookrightarrow \punct M$.
Since $\psi \sharp \psi$ is defined by stacking two copies of the knot, $\iota(\tau\sharp \tau, \psi\sharp \psi)$ is the map defined by stacking two copies of $\iota(\tau, \psi)$. In terms of homotopy classes in $[(\R^n, \R^n\setminus \mathbb D^n), (\inj, \iota_0)]= \pi_n(\inj, \iota_0)$, this yields $[\iota(\tau\sharp \tau, \psi\sharp \psi)] = 2. [\iota(\tau, \psi)] $. Lemma \ref{lmpin} and Theorem \ref{Bt} imply that $\pi_n(\inj, \iota_0)= \mathbb Z/2\mathbb Z$. This yields $[\iota(\tau\sharp \tau, \psi\sharp \psi)]= 0$, so $\psi\sharp\psi$ is rectifiable.

In the non-parallelizable case, $\ambientspace\sharp\ambientspace$ is parallelizable because of Proposition \ref{conn-sum2}, and the previous argument applies to $\psi\sharp\psi$.\qedhere

\end{proof}

Since $\pi_n(SO(n+2), I_{n+2}) = \mathbb Z$ for $n\equiv 3 \mod 4$, the same method implies that $\psi\sharp\psi$ is virtually rectifiable if and only if $\psi\sharp\psi$ is rectifiable (otherwise the class $\iota(\psi\sharp \psi)$ of Definition~\ref{defiota} has infinite order). This argument together with Corollary \ref{cor-un} and Lemma \ref{lmfin} yields the following remark.

\begin{rmq}
Let $\ambientspace$ be an odd-dimensional asymptotic homology $\R^{n+2}$ and let $\psi$ be a long knot of $\ambientspace$. The long knot $\psi$ is virtually rectifiable if and only if $\psi\sharp \psi\sharp\psi \sharp \psi$ is rectifiable.
\end{rmq}

\subsubsection{Even-dimensional case}

Note that we have to keep the parallelizability hypothesis
in the following lemma, since Proposition \ref{conn-sum2} may not extend to the even-dimensional case\footnote{There is an obstruction in $\pi_{n+1}(SO(n+2), I_{n+2})$, which is not a torsion group when $n$ is even.}.
\begin{lm}\label{lmeven}
When $n$ is even, for any long knot $\psi$ in a parallelizable asymptotic homology $\R^{n+2}$, the connected sum $\psi\sharp \psi$ is rectifiable.
Furthermore, if $n$ is an even integer such that $n\not\in 8\mathbb Z$, then $\psi$ itself is rectifiable.
\end{lm}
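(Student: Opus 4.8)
The plan is to separate the case $n=2$, where $\pi_2(\inj,\iota_0)$ is infinite cyclic (Lemma \ref{lmpin}), from the case $n\ge 4$ even, where $\pi_n(\inj,\iota_0)$ is finite so a torsion argument applies.

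Assume first $n\ge 4$ and even. By Lemma \ref{lmpin}, $\pi_n(\inj,\iota_0)\cong\pi_n(SO(n+2),I_{n+2})$, and Theorem \ref{Bt}, applied with $N=n$ and $k=0$, gives $\pi_n(SO(n+2),I_{n+2})=0$ when $n\equiv2,4,6\pmod 8$ and $\pi_n(SO(n+2),I_{n+2})=\mathbb Z/2\mathbb Z$ when $n\equiv0\pmod 8$. When $n\not\in 8\mathbb Z$ this group is trivial, so the rectifiability obstruction $\iota(\psi)\in\pi_n(\inj,\iota_0)$ of Definition \ref{rectifiabilitydef} vanishes and $\psi$ is rectifiable; this settles the "furthermore" part for $n\ge 4$. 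In every subcase every element of $\pi_n(\inj,\iota_0)$ is killed by $2$; arguing exactly as in the proof of Lemma \ref{lmfin}, stacking two copies of $\psi$ yields $[\iota(\tau\sharp\tau,\psi\sharp\psi)]=2\,[\iota(\tau,\psi)]$ in $\pi_n(\inj,\iota_0)$, which is therefore $0$, so $\psi\sharp\psi$ is rectifiable.

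It remains to treat $n=2$, where I would prove the stronger statement that $\psi$ itself is rectifiable (this covers both assertions of the lemma since $2\notin 8\mathbb Z$). From the long exact sequence of the fibration $\big(g\in GL_4^+(\R)\mapsto g_{|\{0\}^2\times\R^2}\in\inj\big)$ used in the proof of Lemma \ref{gauss-rect} (whose fibre has the homotopy type of $SO(2)$), the connecting map $\partial\colon\pi_2(\inj,\iota_0)\to\pi_1(SO(2),I_2)$ is injective, since its left neighbour $\pi_2(GL_4^+(\R),I_4)\cong\pi_2(SO(4))$ is $0$. Hence $\psi$ is rectifiable as soon as $\partial[\iota(\tau,\psi)]=0$. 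But $\partial[\iota(\tau,\psi)]$ is the obstruction to lifting $x\mapsto\iota(\tau,\psi)(x)$ through this fibration, equivalently the obstruction to framing the normal bundle $\nu$ of $\psi(\R^2)$ in $\spamb$ compatibly with the standard framing near infinity; as in the $n=2$ argument inside the proof of Lemma \ref{lmS}, this obstruction is an integer multiple of the relative Euler number $e(\nu)$, which is the self-intersection $[\bar\psi]\cdot[\bar\psi]$ of the closed surface $\bar\psi=\psi(\R^2)\cup\{\infty\}$ in $M=\spamb\cup\{\infty\}$. Since $M$ is an integral homology $4$-sphere, $[\bar\psi]\in H_2(M;\mathbb Z)=0$, so $e(\nu)=0$, hence $\partial[\iota(\tau,\psi)]=0$ and, by injectivity of $\partial$, $[\iota(\tau,\psi)]=0$.

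The one genuinely delicate point is the $n=2$ case: because the relevant homotopy group is $\mathbb Z$ rather than torsion, one cannot conclude by doubling, and must instead identify the rectifiability obstruction with the normal Euler number and invoke the vanishing of self-intersections in an integral homology $4$-sphere. For $n\ge 4$ the argument is a routine combination of Lemma \ref{lmpin}, Bott periodicity (Theorem \ref{Bt}), and the doubling formula for $\iota$ established in the proof of Lemma \ref{lmfin}.
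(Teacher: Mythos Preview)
Your proof is correct and follows essentially the same strategy as the paper. For $n\ge 4$ the arguments are identical (Lemma \ref{lmpin}, Theorem \ref{Bt}, and the doubling formula from Lemma \ref{lmfin}). For $n=2$, the paper proceeds constructively: it uses the normal trivialisation $\nu$ from Lemma \ref{lmS} to build an explicit lift $g\colon(\R^2,\R^2\setminus\mathbb D^2)\to(GL_4^+(\R),I_4)$ of $\iota(\tau,\psi)$, and then kills $[g]$ using $\pi_2(SO(4))=0$. You instead package the same ingredients via the long exact sequence of the fibration $GL_4^+(\R)\to\inj$: the vanishing of $\pi_2(GL_4^+(\R))$ makes $\partial$ injective, and the vanishing of the relative Euler number (self-intersection in a homology $4$-sphere) says $\partial[\iota(\tau,\psi)]=0$. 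Both arguments rest on exactly the same two facts, just read in opposite directions along the exact sequence; one minor imprecision is your phrase ``an integer multiple of $e(\nu)$'' --- the connecting image is $\pm e(\nu)$ itself under the identification $\pi_1(SO(2))\cong\mathbb Z$ --- but this does not affect the conclusion.
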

\begin{proof}
For $n\geq4$, this follows from the same arguments as in the previous subsections, using Lemma \ref{lmpin} and Theorem \ref{Bt}.
Let us assume $n=2$. 
Lemma \ref{lmS} implies that the normal bundle to $\psi(\R^2)$ admits a trivialisation 
$\nu \colon \psi(\R^2)\times\R^2 \rightarrow \N \psi(\R^2)$ such that for any $(x,u) \in \R^2\times \R^2$, 
if $||x||\geq1$, then $\nu(\psi(x), u) = (u, 0, 0) \in \N_{(0,0,x)}\psi(\R^2)$.
For $x\in \R^2$, let $g(x)$ denote the map $\big( u\in \R^{4} \mapsto
\tau_{\psi(x)}^{-1}(\nu(\psi(x), u_1, u_2) + T_x\psi(u_3,u_4) ) \in \R^{4} \big)$.
This yields a homotopy class $[g]$ in the trivial group
$[(\R^2, \R^2\setminus \mathbb D^2), (GL^+(\R^4), I_4)]= \pi_2(SO(4),I_4)$.
Let $(g_t)_{t\in[0,1]}$ be a homotopy between $g$ and the constant map with value $I_4$ among maps such that $g_t(\R^2\setminus\mathbb D^2)=\{I_4\}$.
For any $t\in[0,1]$ and any $x\in\R^2$, set $h_t(x)=g_t(x)\circ \iota_0$.
This yields a homotopy between $\iota(\tau, \psi)$ and the constant map with value $\iota_0$, and thus proves that $\iota(\tau)$ is trivial.
\end{proof}
\section{Construction of admissible propagators}\label{Section 3}
\subsection{Preliminary setting}\label{conventions}

In this section, we prove Lemma \ref{th-prop}. It suffices to prove the following result.
\begin{lm}\label{th-prop0}
Fix a rectifiable null-homologous long knot $\psi\colon \R^n \hookrightarrow \ambientspace$ of an 
asymptotic $\mathbb K$-homology $\R^{n+2}$. 
Let the neighbordhoods $N_R$ and $N_R^0$ and the diffeomorphism $\Theta\colon N_R^0 \rightarrow N_R$ be as in Section \ref{Section31} 
and fix a parallelisation $\tau$ as in Lemma \ref{gauss-rect}. 
Fix two real numbers $\theta 
\in \R$, and $R\geq 3$, and fix isotopic
Seifert surfaces $\Sigma^\pm$ with disjoint interiors such that $\Theta$ maps $({}_\theta\Sigma^\pm)^0\cap N_R^0$ to $\Sigma^\pm\cap N_R $,
with the notations of Definition \ref{adm-prop}. 

Under these assumptions, there exist $R$-admissible propagators for $(\Sigma^+, \Sigma^-, \psi)$ as in 
Definition \ref{adm-prop}.
Furthermore, it is possible to choose $R$-admissible propagators $B$ (for $(\Sigma^+, \Sigma^-, \psi)$) 
and $B_0$ (for $( ({}_\theta\Sigma^+)^0, ({}_\theta\Sigma^-)^0, \psi_0)$) such that $\Theta_2(B_0 \cap 
p_b^{-1}(N_{R}^0\times N_{R}^0) )= B\cap p_b^{-1}(N_{R}\times N_{R})$, where $\Theta_2\colon C_2(N_R^0)
\rightarrow C_2(N_R)$ is the diffeomorphism induced by $\Theta\colon N_R^0\rightarrow N_R$.

\end{lm}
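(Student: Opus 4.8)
The plan is to build $B$ from its prescribed pieces: first pin down $B$ on the regions of $\configM$ where Definition~\ref{adm-prop} dictates its value, then check that these pieces glue to a chain $\hat B$ defined on an open neighbourhood $U$ of $\partial\configM$, and finally extend $\hat B$ across $\configM\setminus U$ with the required boundary; the trivial-knot statement and the $\Theta_2$-compatibility will come from carrying out the whole construction $\Theta$-equivariantly inside $p_b^{-1}(N_R\times N_R)$. Fix $x_B=(\cos\theta,\sin\theta,\overline 0)\in\s^{n+1}$. On $p_b^{-1}(N_1\times N_1)$ I would set $\hat B=\frac12 G_0^{-1}(\{-x_B,+x_B\})$: since $\tau_{\Theta(x)}=T_x\Theta\circ(\tau_0)_x$ on $N_R^0$ by Lemma~\ref{gauss-rect}, the map $G_\tau$ agrees with $G_0$ on the part of $\partial\configM$ lying over $N_1\times N_1$ (the diagonal face over $N_1$ and the faces at infinity, which are contained in $p_b^{-1}(N_1\times N_1)$), so this piece already realizes $\frac12 G_\tau^{-1}(\{\pm x_B\})$ along $\partial\configM\cap p_b^{-1}(N_1\times N_1)$. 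On a neighbourhood of $\overline{p_b^{-1}(\psi(\R^n)\times\spamb)}$, resp.\ of its transpose, I would take the chain whose second, resp.\ first, point lies on $\overline{\Sigma^-\cup\Sigma^+}$. On $p_b^{-1}(N_r\times E_{r+1})$ for $r\in\,]1,R-1]$ I would use the explicit chain $\frac12\big(\mur[r]\times\Sigma(r+1)-\overline{p_b^{-1}(\DR[r]\times E_{r+1})}\big)$, assembled into one chain depending smoothly on $r$ (a $\mur$-bundle times $\Sigma$, minus the cone chain over the half-lines $\DR$); and on $p_b^{-1}(E_{r+1}\times N_r)$ its $T$-transpose, with the sign $(-1)^{n+1}$ recorded in Definition~\ref{adm-prop}.

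The second step is to verify that these four prescriptions agree on their overlaps. Near $\partial N_r$ the surfaces $\Sigma^\pm$ coincide with $({}_{\theta}\Sigma^\pm)^0$ by hypothesis, so $\mur[r]\times\Sigma(r+1)$ matches $\frac12 G_0^{-1}(\{\pm x_B\})$ up to the cone correction $\overline{p_b^{-1}(\DR[r]\times E_{r+1})}$ — which is precisely what that term is there for; the transpose prescriptions match because of the $(-1)^{n+1}$ and the orientation behaviour of $T$ on $\configM$; and near $\overline{p_b^{-1}(\psi(\R^n)\times\spamb)}$ the Seifert-surface condition is compatible with the formula above because $\mur[r]$ meets $\psi(\R^n)$ transversally in one point and $\Sigma(r+1)\subset\overline{\Sigma^-\cup\Sigma^+}$. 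This yields a chain $\hat B$ on an open $U\supset\partial\configM$ with $\partial\hat B=\frac12 G_\tau^{-1}(\{\pm x_B\})+Z$, where $Z$ is a cycle supported in $\Int(\configM)$ away from $U$ (the inner collar of $U$). Since $G_\tau\circ T=-G_\tau$, the sets $G_0^{-1}(\{\pm x_B\})$ and $G_\tau^{-1}(\{\pm x_B\})$ are $T$-invariant, and pieces (iii)/(iv) are by construction $T$-transposes of one another, so $\hat B$ can be taken $T$-equivariant; then $Z$ is $T$-invariant as well.

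It remains to extend $\hat B$ across $\configM\setminus U$. Because $M$ is a $\mathbb K$-homology $(n+2)$-sphere, $H_*(\configM;\mathbb K)\cong H_*(C_2(\R^{n+2});\mathbb K)\cong H_*(\s^{n+1};\mathbb K)$, so the homological obstruction to extending a chain with prescribed boundary — which for $C_2(\R^{n+2})$ vanishes, this being exactly how external propagators are produced in \cite[Section~4.2]{article1} — also vanishes here; concretely, the class of $Z$ in the relevant relative group is zero, one picks a bounding chain $W$ (which may be taken $T$-invariant, since $Z$ is), and $B=\hat B+W$ is an $R$-admissible propagator with $T(B)=B$. For the last assertion I would run the three steps simultaneously for $(\Sigma^\pm,\psi)$ and for $(({}_{\theta}\Sigma^+)^0,({}_{\theta}\Sigma^-)^0,\psi_0)$: inside $N_R$ the parallelization $\tau$ corresponds under $\Theta$ to $\tau_0$ (Lemma~\ref{gauss-rect}), the surfaces $\Sigma^\pm$ to $({}_{\theta}\Sigma^\pm)^0$, and the $N_r$ to $N_r^0$, so all prescribed pieces lying in $p_b^{-1}(N_R\times N_R)$ are $\Theta_2$-related; I would first fix a common chain on $p_b^{-1}(N_R\times N_R)$ (the same up to $\Theta_2$ for both knots) realizing prescriptions (i)–(iv) there with a well-defined boundary on $p_b^{-1}(\partial(N_R\times N_R))$, and only then perform the outward extension separately in $\configM\setminus\Int\big(p_b^{-1}(N_R\times N_R)\big)$ for $\psi$ and in $C_2(\R^{n+2})\setminus\Int\big(p_b^{-1}(N_R^0\times N_R^0)\big)$ for $\psi_0$. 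These extensions leave $p_b^{-1}(N_R\times N_R)$ untouched, so $\Theta_2\big(B_0\cap p_b^{-1}(N_R^0\times N_R^0)\big)=B\cap p_b^{-1}(N_R\times N_R)$.

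The main obstacle is the second step together with the sign and orientation bookkeeping it requires: matching $G_\tau$ with $G_0$ through $\Theta$, tracking the $(-1)^{n+1}$ of the transpose condition, and orienting $\mur[r]\times\Sigma(r+1)$, $\overline{p_b^{-1}(\DR[r]\times E_{r+1})}$ and the faces of $\partial\configM$ consistently (using the conventions of Section~\ref{confspace}) so that the residual chain $Z$ is genuinely a cycle and so that it is null-homologous. This is the technical heart of Section~\ref{Section 3}.
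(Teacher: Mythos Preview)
Your outline captures the right architecture (prescribe on the admissibility regions, glue, then fill), but the final ``extension'' step has a genuine gap that is in fact the whole difficulty of the lemma.

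The prescribed regions $p_b^{-1}(N_1\times N_1)$, $p_b^{-1}(N_r\times E_{r+1})$, $p_b^{-1}(E_{r+1}\times N_r)$ do \emph{not} cover a neighbourhood of $\partial\configM$: the diagonal face $\partial_\Delta\configM=U\spamb$ over $E_1$ lies in none of them (if $x\in E_{r+1}$ with $r\le R-1$ then $x\notin N_r$, so $(x,x)$ is not in $p_b^{-1}(N_r\times E_{r+1})$). Consequently your residual cycle $Z$ is not supported in the interior; in the paper's notation it is the cycle $\delta_W$ of Lemma~\ref{deltaZ}, which lives in $\partial W$ and contains a piece $-G_\tau^{-1}(\{e_1\})\cap U E_1$. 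More importantly, the filling must take place in the complement $W$ of the prescribed regions, not in all of $\configM$: otherwise you would overwrite the admissibility conditions. And $H_{n+2}(W)\cong H_{n+2}(W_1)\cong\mathbb Q$, generated by the unit-sphere bundle $M_W=U\spamb_{|\partial\mur[2]}$; it is \emph{not} zero. So the appeal to $H_*(\configM)\cong H_*(\s^{n+1})$ is beside the point, and the reference to \cite[Section~4.2]{article1} only gives propagators without the interior constraints.

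The paper therefore has to prove that the particular cycle $\delta_W$ is null-homologous in $W$. It does so by building a dual chain $D_W=D_1+D_2+D_3$ out of an auxiliary Seifert surface $\Sigma'$ (Lemmas~\ref{D1}--\ref{D3}), checking $\langle D_W,M_W\rangle=\pm1$, and then computing $\langle D_W,\delta_W\rangle=0$. The last equality is not formal: it reduces to a comparison between $\sum\epsilon(x_i)$ (which equals $(-1)^{n+1}\tfrac{\chi(\Sigma')-1}{2}$ via Lemma~\ref{dernierlemme}) and the degree of the normal map $\phi_n\colon\Sigma'\to\s^{n+1}$, computed by a Poincar\'e--Hopf argument (Lemma~\ref{lm419}). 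Your sketch contains none of this, and without it the existence of the extension is unproven. A secondary point: the ``overlap'' regions you mention are actually \emph{gaps} ($Y_1$ and $Y_2$ in the paper's Figure~\ref{Fig2}); the paper spends Sections~\ref{411}--\ref{413} constructing explicit chains $B_{Y_1}$, $B_{Y_2}$ there, which is more than a compatibility check.
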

Lemma \ref{lmS} guarantees the existence of Seifert surfaces, neighborhoods and diffeomorphism as in the lemma.
From now on, we assume without loss of generality that $\theta=0$ and $R=3$, and we prove 
Lemma \ref{th-prop0} until the end of Section \ref{Section 3}.

We fix Seifert surfaces $\Sigma^\pm$ as in the lemma. 
We use $\Theta$ to identify a neighborhood $N_3$ of the knot with the neighborhood $N_3^0$ of the trivial knot in $\R^{n+2}$ defined as the union of the cylinder $\{x \in\R^{n+2} \mid {x_1}^2+{x_2}^2\leq 9\}$ and the complement of the open ball of center $0$ and radius $\frac{2.3^2}3=6$. 
Since $\tau$ is chosen as in Lemma \ref{gauss-rect}, $(G_\tau)_{|(\partial \configM) \cap p_b^{-1}(N_3\times N_3)}$ extends to a smooth map $G_0\colon p_b^{-1}(N_3\times N_3)\rightarrow \s^{n+1}$, which is the restriction of the Gauss map of $C_2(\R^{n+2})$ to  ${p_b}^{-1}(N_3\times N_3)={p_b}^{-1}(N_3^0\times N_3^0)\subset C_2(\R^{n+2})$. 

 Define the following subsets: \[X_0 = p_b^{-1}(N_1\times N_1) \ \ \ \ 
X_1 =p_b^{-1}\left( \bigcup\limits_{r\in[1,2]} E_{r+1}\times N_r \right) \ \ \ \ 
X_2 =p_b^{-1}\left( \bigcup\limits_{r\in[1,2]}  N_r\times E_{r+1}\right) , \]
\[ Y_1= p_b^{-1}( (N_2\cap E_1) \times N_1), \ \ \ \ \ \ \ Y_2= p_b^{-1}(N_1\times(N_2\cap E_1)), \] \ \ \ 
\[ X = X_0\cup X_1\cup X_2, \ \ \ \ \ \ \ Y=Y_1\cup Y_2, \ \ \ \ \ \ 
\text{and } W = \overline{\configM \setminus (X\cup Y) }.\]
Figure \ref{Fig2} shows this decomposition of $\configM$, where $X$ is in black, $Y$ in gray, and $W$ in white.
\begin{figure}[H]
\centering
\figureXYZ
\caption{The used decomposition of $\configM$}
\label{Fig2}
\end{figure}

Let $(e_i)_{1\leq i \leq n+2}$ denote the canonical basis of $\R^{n+2}$. The disks $\mur[r]$ and the half-lines $\Dr{\pm}{r}$ are defined in Notation \ref{notations}.
Define the following chains in $X$: \begin{itemize}
\item $B_{X_0} ={G_0}^{-1}(\{e_1\})\cap {p_b}^{-1}(N_1\times N_1)$
\item $B_{X_1}  = \bigcup\limits_{r\in[1,2]}\left((\Sigma^-\cap E_{r+1})\times \mur[r]+ 
(-1)^{n+1}
\overline{{p_b}^{-1}(E_{r+1}\times\Dr{+}{r}) } \right)$
\item $B_{X_2} = \bigcup\limits_{r\in[1,2]}\left(\mur[r]\times( \Sigma^+\cap E_{r+1})
-
\overline{{p_b}^{-1}(\Dr{-}{r}\times E_{r+1})}\right) $
\end{itemize}
Note that $B_{X_0}$ is naturally oriented. 
We orient $B_{X_1}$ such that the inclusions 
$(\Sigma^-\cap E_{r+1})\times \mur[r]+ (-1)^{n+1}\overline{p_b^{-1}(E_{r+1}\times\Dr{+}{r}) }\hookrightarrow B_{X_1}$ 
preserve the orientation for any $r\in [1,2]$. 
The chain $B_{X_2}$ is similarly oriented. 
We are going to define a chain $B\subset \configM$ such that $\partial B = {G_\tau}^{-1}(\{e_1\})$ and $B\cap \mathrm{int}(X_i)= \mathrm{int}(B_{X_i})$ for $i\in\{0,1,2\}$.

In $N_3= N_3^0\subset \R^{n+2}$, use the coordinates $x= (x_1,x_2, \overline x)\in\R\times\R\times\R^n$. In these coordinates, and for any $r\in [1, 3]$, $N_r= \{ (x_1, x_2,\overline x) \mid {x_1}^2+{x_2}^2\leq r^2 \text{ or } ||x||\geq \frac{18}r\}$. 

We also define the coordinates $x=(x_1, x_2, h_x.\Dir_x)$, such that $h_x\in \R_+$ and $\Dir_x\in \s^{n-1}$. This will help us in drawing the next figures in $\R^2\times \R^+$ with $\Dir_x$ fixed in $\s^{n-1}$. For example, Figure \ref{Fig3} depicts $N_r$.

\begin{figure}[H]
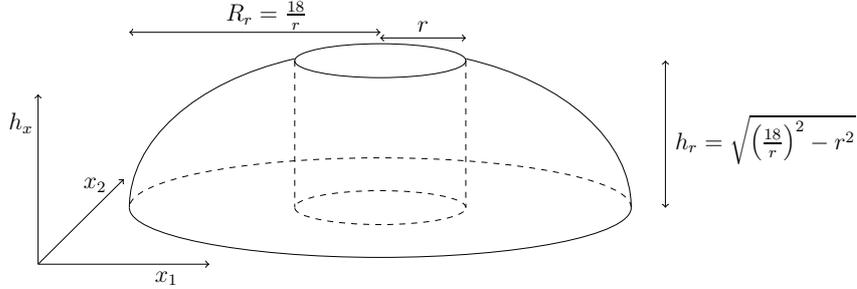

 \centering
 \figurea
\caption{The neighborhood $N_r$.}
\label{Fig3}
\end{figure}
Set $R_r= \frac{18}r$ and $h_r= \sqrt{{R_r}^2-r^2}$,
so that $\partial N_r = \partial_cN_r\cup \partial_sN_r$, with $\partial_cN_r = \{(x_1, x_2, h_x.\Dir_x) \mid {x_1}^2+{x_2}^2=r^2,h_x\leq h_r\}$ and $\partial_sN_r = \{(x_1, x_2, h_x.\Dir_x) \mid {x_1}^2+{x_2}^2= {R_r}^2-{h_x}^2,h_x\leq h_r\}$.

We are going to define a chain $B_{Y_1}\subset Y_1$ such that $\partial(B_{X_0}+B_{Y_1}+B_{X_1}) \subset \partial (X_0\cup Y_1\cup X_1)$. For any $y\in N_3$, define \[D^0(y, -e_1)= \{x\in N_1\mid\text{there exists $t\geq 0$ such that }  x =y-t.e_1\},\] so that $B_{X_0} = \{(x, y) \mid y \in N_1, x \in D^0(y, -e_1)\}$. We orient $D^0(y, -e_1)$ with $\d t$, so that $B_{X_0}$ is oriented by $-\d t \wedge \d y$.

\begin{lm}\label{bordX0}
The boundary $\partial B_{X_0}$ splits into three pieces $G_\tau^{-1}(\{e_1\}) \cap (\partial\configM\cap p_b^{-1}(N_1\times N_1))$, $\partial_{1} B_{X_0} = B_{X_0}\cap p_b^{-1}(\partial N_1\times N_1)$ and $\partial_2 B_{X_0} =B_{X_0} \cap p_b^{-1}(N_1\times \partial N_1)$. 
The piece $\partial_1 B_{X_0}$ is $\{(x, y) \mid x \in D^0(y, -e_1)\cap \partial N_1, y \in N_1\}$. For any $y \in N_1$, define the following points :
\begin{itemize}
\item if $h_y \leq h_1$, $ |y_2|\leq\sqrt{{R_1}^2-{h_y}^2}$, and $y_1\geq \sqrt{{R_1}^2-{y_2}^2-{h_y}^2}$, $x_s^-(y)$ and $x_s^+(y)$ are the two\footnote{They coincide when $|y_2|=\sqrt{{R_1}^2 - {h_y}^2}$.} intersection points of $D^0(y,- e_1)$ with the sphere $\{ x \mid ||x||  = R_1\}$,
\item if $h_y\leq h_1$, $|y_2| \leq 1$, and $y_1\geq \sqrt{{R_1}^2-{y_2}^2-{h_y}^2}$, $x_c^-(y)$ and $x_c^+(y)$ are the two\footnote{They coincide when $|y_2|=1$.} intersection points of $D^0(y,- e_1)$ with the cylinder $\{ x \mid x_1^2+x_2^2=1\}$,
\item if $h_y \leq h_1$, and ${y_1}^2+{y_2}^2\leq 1$, 
$x_c^-(y)$ and $x_s^-(y)$ are the two\footnote{They coincide when $h_y = h_1$.} intersection points of $D^0(y,- e_1)$ with the cylinder or the sphere as above.
\end{itemize}

More precisely, we have the formulas \[\begin{array}{ll}
x_c^\pm(y) = \left(\pm \sqrt{1- {y_2}^2} , y_2, \overline y\right) & \ \ \ \ \ \ \ \ \ \ 
x_s^\pm(y)=\left(\pm \sqrt{{R_1}^2-{y_2}^2 - {h_y}^2}, y_2, \overline y\right)\\
\end{array},\]

when they make sense.

For any $y\in N_1$,\[D^0(y, -e_1) \cap \partial N_1=
\begin{cases}\emptyset & \text{if $h_y > h_1$ or $|y_2|>\sqrt{{R_1}^2-{h_y}^2}$}\\
\empty &\ \ \  \text{or ( $y_1<0$ and $|| y || >{R_1} $),}\\
\{x_s^-(y), x_s^+(y)\} & \text{if $h_y \leq h_1$, $1<|y_2|\leq\sqrt{{R_1}^2-{h_y}^2}$ } \\
\empty &\ \ \  \text{  and $y_1\geq \sqrt{{R_1}^2-{y_2}^2-{h_y}^2}$,}\\
\{ x_c^-(y), x_c^+(y), x_s^-(y), x_s^+(y)\} & \text{if $h_y \leq h_1$, $|y_2|\leq 1$,}\\
\empty &\ \ \ \text{  and $y_1\geq \sqrt{{R_1}^2-{y_2}^2-{h_y}^2}$,}\\
\{x_s^-(y), x_c^-(y) \} & \text{if $h_y \leq h_1$ and ${y_1}^2+{y_2}^2 \leq1$.}
\end{cases}
\]
Figure \ref{Fig4} depicts the four possible cases, with the conventions of Section \ref{conventions}. 
\begin{figure}[H]
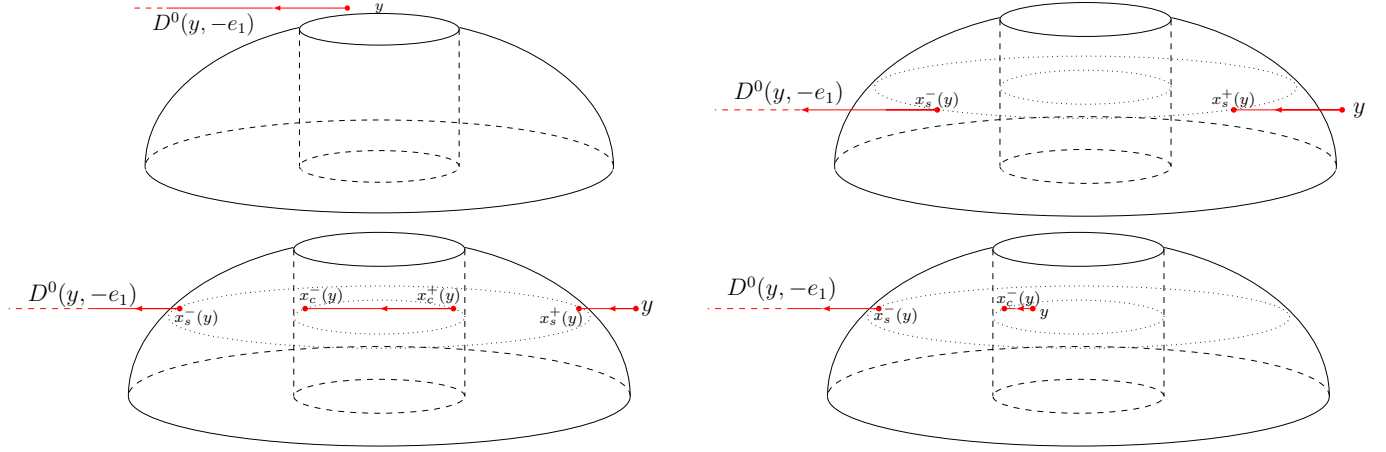

\figureba \ \ 
\figurebd \\
\figurebc \ \  
\figurebb \\
\caption{The four possible cases of Lemma \ref{bordX0}}
\label{Fig4}
\end{figure}
\end{lm}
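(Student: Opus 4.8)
The plan is to realize $B_{X_0}$ as the closure of an explicitly parametrized open manifold and then to list the faces that appear in the closure. Since $G_0$ is the restriction to $p_b^{-1}(N_3\times N_3)$ of the Gauss map of $\R^{n+2}$ and $e_1$ is a regular value of it (on the interior and on each blown-up face), the open part $B_{X_0}^0=B_{X_0}\cap C_2^0(\ambientspace)\cap p_b^{-1}(\Int N_1\times\Int N_1)$ is exactly the image of the embedding $(y,t)\mapsto(y-te_1,y)$ of the open $(n+3)$-manifold $P^0=\{(y,t)\mid y\in\Int N_1,\ t>0,\ y-te_1\in\Int N_1\}$. A boundary point of $B_{X_0}$ is by definition a point of $B_{X_0}\setminus B_{X_0}^0$, hence a configuration $(x,y)$ with $x,y\in N_1$ and $x\in D^0(y,-e_1)$ at which at least one of the following degeneracies occurs: $t=0$, i.e. $x=y$; $t\to+\infty$ or $y\to\infty$, i.e. $x$ or $y$ is the point at infinity; or $x\in\partial N_1$, or $y\in\partial N_1$. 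The first two kinds of degeneracy put the configuration on one of the blown-up faces $\partial_\Delta\configM$, $\partial_{\infty,\ambientspace}\configM$, $\partial_{\ambientspace,\infty}\configM$; there $G_\tau$ continuously extends $G_0$, so by continuity such points belong to $G_\tau^{-1}(\{e_1\})\cap\partial\configM\cap p_b^{-1}(N_1\times N_1)$, and conversely every point of that set is a limit of points of $B_{X_0}^0$. The remaining boundary, where $x$ or $y$ is on $\partial N_1$, is tautologically $\partial_1 B_{X_0}\cup\partial_2 B_{X_0}$ with $\partial_1 B_{X_0}=B_{X_0}\cap p_b^{-1}(\partial N_1\times N_1)$ and $\partial_2 B_{X_0}=B_{X_0}\cap p_b^{-1}(N_1\times\partial N_1)$; this is the claimed three-piece decomposition, and the identity $\partial_1 B_{X_0}=\{(x,y)\mid y\in N_1,\ x\in D^0(y,-e_1)\cap\partial N_1\}$ is then immediate from the description $B_{X_0}=\{(x,y)\mid y\in N_1,\ x\in D^0(y,-e_1)\}$ of its support.

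It remains to compute $D^0(y,-e_1)\cap\partial N_1$ for each $y\in N_1$, which is elementary coordinate geometry. In the coordinates $x=(x_1,x_2,h_x\Dir_x)$ with $h_x\geq0$, $\Dir_x\in\s^{n-1}$, the half-line $D^0(y,-e_1)$ has $x_2=y_2$, $h_x=h_y$, $\Dir_x=\Dir_y$ constant, while $x_1=y_1-t$ ranges over $(-\infty,y_1]$. Writing $\partial N_1=\partial_cN_1\cup\partial_sN_1$ with $\partial_cN_1=\{x_1^2+x_2^2=1,\ h_x\leq h_1\}$ and $\partial_sN_1=\{x_1^2+x_2^2=R_1^2-h_x^2,\ h_x\leq h_1\}$, a point of this half-line lies on $\partial_cN_1$ iff $h_y\leq h_1$, $|y_2|\leq1$ and $x_1=\pm\sqrt{1-y_2^2}$, which produces the candidates $x_c^\pm(y)$ of the stated formulas, and it lies on $\partial_sN_1$ iff $h_y\leq h_1$, $y_2^2+h_y^2\leq R_1^2$ and $x_1=\pm\sqrt{R_1^2-y_2^2-h_y^2}$, producing $x_s^\pm(y)$. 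A candidate is actually attained precisely when its $x_1$-coordinate is $\leq y_1$ (equivalently $t\geq0$). Using $\sqrt{1-y_2^2}\leq\sqrt{R_1^2-y_2^2-h_y^2}$, valid because $h_y^2\leq h_1^2=R_1^2-1$, the four candidates are ordered along the line by $x_s^+(y)\geq x_c^+(y)\geq x_c^-(y)\geq x_s^-(y)$ in the $x_1$-coordinate, and as $t$ increases $D^0(y,-e_1)$ enters and leaves $N_1$ through them in that order; which of them satisfy $x_1\leq y_1$ is governed exactly by the position of $y$ relative to the cylinder $x_1^2+x_2^2\leq1$ and the sphere $||x||=R_1$, so running through the possibilities yields the four listed cases, the footnoted coincidences corresponding to the tangency loci $|y_2|=1$, $|y_2|=\sqrt{R_1^2-h_y^2}$ and $h_y=h_1$. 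The remaining possibilities give an empty intersection: if $h_y>h_1$ then $\partial N_1$ is never met since both $\partial_cN_1$ and $\partial_sN_1$ require $h_x\leq h_1$; if $|y_2|>\sqrt{R_1^2-h_y^2}$ the whole line lies in $\{||x||>R_1\}\subset\Int N_1$; and if $y_1<0$ with $||y||>R_1$ the half-line stays in $\{||x||\geq R_1\}\subset N_1$ for all $t\geq0$. The figures record the four non-empty cases.

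The only delicate point is the bookkeeping on $\partial\configM$: one must check that near a point of $G_\tau^{-1}(\{e_1\})$ lying on, say, $\partial_\Delta\configM$, the chain $B_{X_0}$ really limits onto that piece of the face as claimed, rather than onto a proper subset of it. This follows from the local model of the blow-up of the diagonal, together with the hypothesis (Lemma \ref{gauss-rect}) that $\tau$ is the $\Theta$-pushforward of the canonical trivialization $\tau_0$ throughout $N_R$, so that on $p_b^{-1}(N_1\times N_1)$ the map $G_\tau$ is, on each blown-up face, literally the Gauss-type map attached to $C_2(\R^{n+2})$, for which the assertion is the standard computation behind Lemma \ref{Gauss}. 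Apart from this, the proof is pure elementary geometry, and I expect the only real work to be the organized case analysis of $D^0(y,-e_1)\cap\partial N_1$ above (and, downstream, the tracking of induced boundary orientations, which the present statement does not address).
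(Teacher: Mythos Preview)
Your proof is correct and complete. The paper states this lemma without proof, treating the boundary decomposition and the case analysis of $D^0(y,-e_1)\cap\partial N_1$ as an elementary geometric verification; your write-up supplies exactly that verification.
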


The piece $\partial_1B_{X_0}$ splits into six faces, which are \begin{itemize}
\item the faces $\partial_{s, o}^\pm B_{X_0} = \{ (x_s^\pm (y), y) \mid h_y \leq h_1, |y_2|\leq\sqrt{{R_1}^2-{h_y}^2}, y_1\geq \sqrt{{R_1}^2-{y_2}^2-{h_y}^2}\}$, oriented by $\mp \d y$, 
\item the faces $\partial_{c, o}^\pm B_{X_0} = \{ (x_c^\pm (y), y) \mid h_y \leq h_1, |y_2|\leq1, y_1\geq \sqrt{{R_1}^2-{y_2}^2-{h_y}^2}\}$, oriented by $\pm \d y$,
\item the face $\partial_{c, i} B_{X_0} = \{(x_c^- (y), y) \mid h_y \leq h_1,{ y_1}^2+{y_2}^2\leq 1\}$, oriented by $-\d y$, \item the face $\partial_{s, i} B_{X_0} = \{(x_s^-(y), y) \mid h_y \leq h_1, {y_1}^2+{y_2}^2\leq 1\}$, oriented by $+\d y$.\end{itemize}

\begin{lm}\label{bx1}
The boundary of $B_{X_1}$ is the union of\begin{itemize}
\item the face $\partial_\ell B_{X_1} = \partial(\Sigma^- \cap E_2) \times \mur[1]$,
\item the face\footnote{The union $(-1)^n\bigcup\limits_{r\in [1, 2] }\partial (\Sigma^- \cap E_{r+1})\times \partial \mur[r]$ is oriented as $(-1)^n[1,2]\times \partial(\Sigma^-\cap E_{r+1}) \times \partial\mur[r]$.} $\partial_{\ell, \mu} B_{X_1} =(-1)^n \bigcup\limits_{r\in [1, 2] }\partial (\Sigma^- \cap E_{r+1})\times \partial \mur[r]$,
\item the face $\partial_{\mu} B_{X_1} = (-1)^{n+1}(\Sigma^-\cap E_3) \times \partial\mur[2]$,
\item the face $\partial_{E,L} B_{X_1} = - \bigcup\limits_{ r \in [1, 2]}  (\partial E_{r+1} )\times\{( R_r, 0, \overline 0)\}$,
\item the face $\partial_{E} B_{X_1} = 
(-1)^{n+1}
\partial E_2 \times \Dr{+}{1}$,
\item the face\footnote{In $\ambientspace$, $\partial \Dr{+}{2}$ reduces to the point $({R_2},0,\overline 0)$ with a negative sign.} $\partial_{L} B_{X_1} = -E_3\times \partial \Dr{+}{2}$,
\item the face $\partial_\infty B_{X_1} = {G_\tau}^{-1}(\{e_1\}) \cap {p_b}^{-1}(E_2\times\{\infty\})$.
\end{itemize}
\end{lm}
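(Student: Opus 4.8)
The plan is to compute $\partial B_{X_1}$ directly, after writing $B_{X_1}=B_{X_1}^{(1)}+(-1)^{n+1}B_{X_1}^{(2)}$ with
\[B_{X_1}^{(1)}=\bigcup_{r\in[1,2]}(\Sigma^-\cap E_{r+1})\times\mur[r]
\qquad\text{and}\qquad
B_{X_1}^{(2)}=\bigcup_{r\in[1,2]}\overline{{p_b}^{-1}(E_{r+1}\times\Dr{+}{r})}.\]
The first ingredient is a purely local geometric remark: under the identification $N_R\cong N_R^0\subset\R^{n+2}$ the surface $\Sigma^-$ reads near the knot as the half‑hyperplane $\{(-\rho,0,\overline x)\mid\rho\ge0,\ \overline x\in\R^n\}$, so $\partial\Sigma^-=\psi(\R^n)$ lies in the interior of the cylindrical part of $N_{r+1}$ for every $r\in[1,2]$; hence $\partial(\Sigma^-\cap E_{r+1})=\Sigma^-\cap\partial N_{r+1}$, which splits into a cylindrical piece $\Sigma^-\cap\partial_c N_{r+1}$ and a spherical piece $\Sigma^-\cap\partial_s N_{r+1}$. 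I would also use that $\partial\mur[r]$ is the circle $\{x_1^2+x_2^2=r^2,\ \overline x=\overline0\}$, that $\partial\Dr{+}{r}$ reduces in $\ambientspace$ to the single point $(R_r,0,\overline0)$ (counted negatively), and that $\partial E_{r+1}=\partial N_{r+1}$.

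Next I would recognise $B_{X_1}^{(1)}$ and $B_{X_1}^{(2)}$ as bundles with varying fibre. For $x\in\Sigma^-$, let $\beta(x)\in[0,2]$ be the supremum of the $r\in[1,2]$ with $x\in E_{r+1}$ (so $\beta(x)=\min(2,\rho_x-1,18/\|x\|-1)$ when $x\in N_R$, with $\rho_x=\sqrt{x_1^2+x_2^2}$, and $\beta(x)=2$ when $x\notin N_R$, since $N_{r+1}\subset N_R$ for $r\le2$, while $\beta(x)<1$ means $x$ contributes nothing). Then $B_{X_1}^{(1)}$ is the bundle over $\Sigma^-\cap E_2=\{x\in\Sigma^-\mid\beta(x)\ge1\}$ with fibre $\mathbb D_\mu(\beta(x))$ over $x$; dually, $B_{X_1}^{(2)}$ is the closure of the bundle over $\Dr{+}{2}=\bigcup_{r\in[1,2]}\Dr{+}{r}$ whose fibre over $y=(\rho,0,\overline0)$ is $E_2$ for $\rho\ge R_1$ and is $E_{r+1}$, with $y=(R_r,0,\overline0)$, for $r\in[1,2]$ (i.e.\ $R_2\le\rho\le R_1$). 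The boundary of such a bundle consists of: the part lying over the boundary of the base; the fibrewise boundary over the locus where the fibre is maximal, namely the cap $(\Sigma^-\cap E_3)\times\partial\mur[2]$ respectively the wall $\partial E_2\times\Dr{+}{1}$; and the fibrewise boundary over the locus where the fibre varies, which sweeps out $\bigcup_{r\in(1,2)}(\Sigma^-\cap\partial N_{r+1})\times\partial\mur[r]$ respectively $\bigcup_{r\in(1,2)}\partial E_{r+1}\times\{(R_r,0,\overline0)\}$. Since $B_{X_1}^{(2)}$ is a closure, there is in addition one face at infinity, from configurations whose second point escapes to $\infty$ along $+e_1$; as $G_\tau(x,u)=u$ on $\partial_{\ambientspace,\infty}\configM$, this face is exactly $G_\tau^{-1}(\{e_1\})\cap{p_b}^{-1}(E_2\times\{\infty\})$. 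Matching these seven faces with the list in the statement — using $\partial(\Sigma^-\cap E_2)=\Sigma^-\cap\partial N_2$ for $\partial_\ell B_{X_1}$ and $\partial\Dr{+}{2}=\{(R_2,0,\overline0)\}$ for $\partial_L B_{X_1}$ — gives the asserted union.

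It then remains to pin down the orientations. I would orient $B_{X_1}^{(1)}$ and $B_{X_1}^{(2)}$ level by level, as prescribed just before the lemma, and compute each face's induced orientation from the outward‑normal‑first convention; this yields the signs $(-1)^n$ and $(-1)^{n+1}$ and the orientation $(-1)^n[1,2]\times\partial(\Sigma^-\cap E_{r+1})\times\partial\mur[r]$ on the mixed face $\partial_{\ell,\mu}B_{X_1}$. The main obstacle is precisely this sign bookkeeping, together with two small geometric checks: that over $\Sigma^-\cap E_3$ the fibre of $B_{X_1}^{(1)}$ really is the full disk $\mur[2]$ (indeed $x\in E_3$ forces $\beta(x)\ge2$, hence every $r\in[1,2]$ is admissible and $\bigcup_{r\in[1,2]}\mur[r]=\mur[2]$), and that the codimension‑two strata — where $\partial E_{r+1}$ meets $\partial\mur[r]$, or where $\Dr{+}{r}$ meets $\partial N_r$ — do not create extra faces; these strata are harmless because $B_{X_1}$ is a chain, a finite sum of submanifolds with corners, and they are exactly the loci along which $\partial B_{X_1}$ will later be glued to $\partial B_{X_0}$, $\partial B_{Y_1}$, and the remaining pieces of $B$.
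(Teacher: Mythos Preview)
The paper states this lemma without proof, treating it as a direct boundary computation; your proposal correctly supplies that computation by recognising $B_{X_1}^{(1)}$ and $B_{X_1}^{(2)}$ as bundles with varying fibre over $\Sigma^-\cap E_2$ and $\Dr{+}{2}$ respectively, and reading off the seven faces from the base boundaries, the fibrewise boundaries over the maximal and varying loci, and the closure at infinity. Your identification of each face and the handling of the codimension-two strata are sound, and the orientation bookkeeping you outline is the right procedure, so this is exactly the verification the paper leaves implicit.
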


Among these faces, $\partial_\ell B_{X_1}$ and $\partial_E B_{X_1}$ are contained in $\partial Y_1$.
We are going to extend the half-line $D^0(y, -e_1)$ inside $E_1$ in order to cancel the faces of $\partial_1 B_{X_0}$ and these faces $\partial_\ell B_{X_1}$ and $\partial_E B_{X_1}$. The goal of Section \ref{41} is to obtain the following lemma.

\begin{lm}\label{BY1}
There exists a chain $B_{Y_1}\subset Y_1$ such that the codimension $1$ faces of $B_{Y_1}$ are:
\begin{itemize}
\item the faces $-\partial_{c, i} B_{X_0}$, $-\partial_{s, i} B_{X_0}$, $-\partial_{s,o}^\pm B_{X_0}$, and $-\partial_{c, o}^\pm B_{X_0}$,
\item the faces $-\partial_\ell B_{X_1} $ and $-\partial_EB_{X_1}$,
\item the face $\partial_\infty B_{Y_1}=\{(x, y  = \infty, u = e_1) \mid x \in N_2\cap E_1\}
= {G_\tau}^{-1}(\{e_1\}) \cap {p_b}^{-1}((N_2\cap E_1)\times \{\infty\})$, which is oriented by $-\d x$,
\item faces $(\partial_iB_{Y_1})_{1\leq i \leq 3}$, which are contained in $p_b^{-1}((N_2\cap E_1)\times \partial N_1)\subset \partial W$ and which are described in Lemmas \ref{chaine1}, \ref{chaine2}, and \ref{chaine3}.
\end{itemize}

\end{lm}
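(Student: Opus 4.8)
The statement to prove is Lemma \ref{BY1}, which asserts the existence of a chain $B_{Y_1}$ inside $Y_1$ whose codimension-one faces are prescribed: the "inner" and "outer" faces $-\partial_{c,i}B_{X_0}$, $-\partial_{s,i}B_{X_0}$, $-\partial_{s,o}^\pm B_{X_0}$, $-\partial_{c,o}^\pm B_{X_0}$ of $\partial_1 B_{X_0}$, together with $-\partial_\ell B_{X_1}$ and $-\partial_E B_{X_1}$, plus a face at infinity $\partial_\infty B_{Y_1}$ and some further faces $(\partial_i B_{Y_1})$ living in $\partial W$. Let me think about how this chain should be built.

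Here is my plan. The chain $B_{X_0}$ is the union over $y\in N_1$ of the backward ray $D^0(y,-e_1)\cap N_1$; its boundary $\partial_1 B_{X_0}$ collects the endpoints of these rays on $\partial N_1$. The natural idea is to \emph{continue each ray $D^0(y,-e_1)$ beyond $\partial N_1$ into $E_1$}, i.e. to push the half-line $\{y - t e_1 \mid t\geq0\}$ further, and to take as $B_{Y_1}$ the union of the portions of these extended rays that lie in $N_2\cap E_1$. Since $Y_1 = p_b^{-1}((N_2\cap E_1)\times N_1)$, these extended-ray pieces indeed sit in $Y_1$. The boundary of such a ruled chain decomposes into: (i) the "entry" face where the ray crosses $\partial N_1$ — this is exactly $-\partial_1 B_{X_0}$, i.e. $-\partial_{c,i}B_{X_0} - \partial_{s,i}B_{X_0} - \partial_{s,o}^\pm B_{X_0} - \partial_{c,o}^\pm B_{X_0}$, because the orientation conventions flip sign when one looks at the face from the $Y_1$ side; (ii) the "exit" face where the ray leaves $N_2\cap E_1$, which one must arrange to coincide with $-\partial_\ell B_{X_1} - \partial_E B_{X_1}$ and with the faces $\partial_i B_{Y_1}$ in $\partial W$; (iii) the face at infinity, where the ray escapes to $\infty$ in the $y$-coordinate — but here the \emph{second} point $y$ is the one sent to $\infty$, so one gets $\partial_\infty B_{Y_1}$ as the set $\{(x,\infty,e_1)\mid x\in N_2\cap E_1\}$ with the direction $e_1$ coming from the limit of $-e_1$-rays seen in $C_2$, oriented by $-\d x$ as stated.

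The key step — and the main obstacle — is (ii): controlling the exit face. A naively-extended straight ray $y-te_1$ will not exit $N_2\cap E_1$ in a way that matches the prescribed faces $-\partial_\ell B_{X_1}$ (which lives over $\partial(\Sigma^-\cap E_2)\times\mathbb D_\mu(1)$) and $-\partial_E B_{X_1}$ (over $\partial E_2\times L_0^+(1)$), nor will it interface correctly with $W$. So the rays have to be \emph{bent}: I would replace the family of straight segments by a smooth family of embedded arcs $\gamma_y$, still starting tangent to $-e_1$ at $y\in\partial N_1$, but steered so that as $y$ ranges over the various boundary strata of $\partial N_1$ the far endpoints of $\gamma_y$ trace out precisely the supports of $\partial_\ell B_{X_1}$, $\partial_E B_{X_1}$, and the $W$-faces. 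This is a gluing/interpolation construction: one works stratum by stratum on $\partial N_1$ (cylinder part $\partial_c N_1$ versus sphere part $\partial_s N_1$), uses the product structure $N_2\cap E_1 \cong (\text{annular region})\times\s^{n-1}$ coming from the coordinates $(x_1,x_2,h_x\omega_x)$, and checks by a dimension count that there is no obstruction to interpolating the ray-field. The precise shape of the three residual faces $\partial_i B_{Y_1}$ in $\partial W$ is then \emph{read off} from this construction and recorded in the subsequent Lemmas \ref{chaine1}, \ref{chaine2}, \ref{chaine3}; so for the present lemma it suffices to produce the chain and verify that all faces other than those three are as listed.

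Concretely, the steps in order would be: (1) fix the product identification of $N_2\cap E_1$ compatible with $\Theta$ and with the coordinates of Section \ref{conventions}; (2) over each of the strata of $\partial_1 B_{X_0}$ listed in Lemma \ref{bordX0} (governed by whether the ray hit the cylinder $\partial_c N_1$ or the sphere $\partial_s N_1$, from inside or outside), define the bent continuation $\gamma_y$ of $D^0(y,-e_1)$ so that its terminal behaviour matches, respectively, $\partial_\ell B_{X_1}$, $\partial_E B_{X_1}$, or the $W$-boundary; (3) set $B_{Y_1} = \{(x,y)\mid y\in N_1\ \text{is the source of a}\ \gamma,\ x\in\gamma_y\cap(N_2\cap E_1)\}$ with the orientation induced by $\d t\wedge\d y$ transported along the bending, which is smooth since the bending is a smooth isotopy of the ruling; (4) compute $\partial B_{Y_1}$ by the usual "boundary of a ruled family = (boundary of fibre)-part $\sqcup$ (boundary of base)-part" decomposition, matching signs against Lemmas \ref{bordX0} and \ref{bx1}, and against the face at infinity where $p_b^{-1}(\cdot\times\{\infty\})$ contributes $\partial_\infty B_{Y_1}$ oriented by $-\d x$; (5) declare the unmatched faces to be $\partial_1 B_{Y_1},\partial_2 B_{Y_1},\partial_3 B_{Y_1}$, deferring their explicit description to the later lemmas. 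The only genuinely delicate point is the sign bookkeeping in step (4) together with the smoothness of the bent ruling in step (3); everything else is routine once the product structure is fixed.
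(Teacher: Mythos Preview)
Your overall idea --- extend each backward ray $D^0(y,-e_1)$ into $N_2\cap E_1$ and take the resulting ruled chain --- is exactly the paper's strategy. However, your concrete plan has a structural gap, and it also mislocates where the work happens.

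First, the logical structure. Lemmas \ref{chaine1}, \ref{chaine2}, \ref{chaine3} are not places where residual faces of an already-built $B_{Y_1}$ are ``recorded''; they \emph{are} the construction. Each one defines an explicit piece $B_c$, $B_s^{(1)}$, $B_s^{(2)}$ and computes all of its codimension-one faces. The proof of Lemma \ref{BY1} in the paper is then the single line $B_{Y_1}=B_c+B_s^{(1)}+B_s^{(2)}$. So you cannot defer anything: the content of the lemma \emph{is} the three explicit pieces.

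Second, and more seriously, your proposed chain --- ``a smooth family of embedded arcs $\gamma_y$'' with orientation $\d t\wedge\d y$ --- is a ruled chain with one-dimensional fibre over an $(n+2)$-dimensional base of $y$'s. Such a chain cannot produce the prescribed face $-\partial_E B_{X_1}=(-1)^n\,\partial E_2\times L_0^+(1)$: that face has $y$ confined to the one-dimensional half-line $L_0^+(1)$ while $x$ sweeps out the entire $(n+1)$-manifold $\partial E_2$. No face of a one-dimensional-fibre ruled chain has that shape. In the paper this is precisely why a third, qualitatively different piece is needed: $B_s^{(2)}$ is a family of \emph{two-dimensional annuli} $A(y)\subset\partial N_2$ over an $(n+1)$-dimensional base $\mathcal Y_s^0$ (where $y_2=0$). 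Its boundary simultaneously (i) cancels the unwanted face $\partial_3 B_s^{(1)}$ that the arc family $B_s^{(1)}$ creates as $y_2\to 0$ (the arcs $\gamma_s(y),\gamma_c(y)$ close up into full circles there), and (ii) supplies $-\partial_E B_{X_1}$ as $h_y\to 0$, where the annuli unfurl over all directions $\omega_x$ to cover $\partial E_2$. Your ``no obstruction to interpolating the ray-field'' sentence glosses over exactly this: the interpolation fails at $y_2=0$, and the repair is not another arc but a two-cell.

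In short: right idea, but the bent-ray ansatz is insufficient, and the actual proof is the explicit three-piece construction carried out in Lemmas \ref{chaine1}--\ref{chaine3}.
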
 
\subsection{Construction of the chain \texorpdfstring{$B_{Y_1}$}{BY1}}\label{41}
\subsubsection{\texorpdfstring{Cancellation of the faces $\partial _{c, i}B_{X_0}$ and $\partial_{s,i} B_{X_0}$}{Cancellation of the two first faces}}\label{411}

In this section, set $\mathcal Y_c = \{y\in N_1 \mid 0<h_y \leq h_1, {y_1}^2+{y_2}^2\leq 1\}$. Let $y$ be a point of $\mathcal Y_c$. 

If $h_y \geq h_2$, define $D^1(y, -e_1) = \{y - t e_1 \in N_2\cap E_1  \mid t >0 \}$, and orient it by $\d t$.
If $h_y \leq h_2$, define $D^1(y, -e_1)$ as the union of the following oriented arcs.

\begin{itemize}
\item The line segment $L_c^-(y)\subset N_2$ from $x_c^-(y)$ to $\partial N_2$ with direction $-e_1$. Let $x_L^c(y)= (-2 \cos(\eta_c), 2\sin(\eta_c), \overline y)$ be the intersection point of this line with $\partial N_2$ (with $\eta_c \in [-\frac{\pi}2, \frac{\pi}2]$).
\item The circular arc from $x_L^c(y)$ to $x_\Sigma^c(y) = \left(-2, 0, \overline y\right)$ given by the formula $\big(t\in [0,1]\mapsto (-2\cos((1-t)\eta_c), 2\sin( (1-t)\eta_c) , \overline y)\big)$.
\item The arc of longitude $\{x \in \partial (\Sigma^-\cap E_2) \mid h_x \geq h_y , \Dir_x=\Dir_y\}$, from $x_\Sigma^c(y)$ to $x_\Sigma^s(y) =  \left(-\sqrt{{R_2}^2-{h_y}^2}, 0, \overline y \right)$.
\item The circular arc from $x_\Sigma^s(y)$ to the point $x_L^s(y)= \left(- \sqrt{{R_2}^2 - {h_y}^2 - {y_2}^2}, y_2, \overline y\right)$, given by $ \left( t\in[0,1]\mapsto \left(-\sqrt{{R_2}^2 - {h_y}^2} \cos(t\eta_s), \sqrt{{R_2}^2 - {h_y}^2}\sin(t\eta_s), \overline y\right)\right)$, where $\eta_s\in [-\frac\pi2,\frac\pi2]$ satisfies $x_L^s(y)=\left(-\sqrt{{R_2}^2 - {h_y}^2} \cos(\eta_s), \sqrt{{R_2}^2 - {h_y}^2}\sin(\eta_s), \overline y\right)$.
\item The line segment $L_s^-(y)\subset N_2$ from $x_L^s(y)$ to $x_s^-(y)$, which has direction $-e_1$.
\end{itemize}

Figure \ref{Fig5} depicts the curve $D(y,- e_1) = D^0(y, -e_1)\cup D^1(y, -e_1)$, where the dotted part on the right is not in the plane but in the longitude.

\begin{figure}[H]
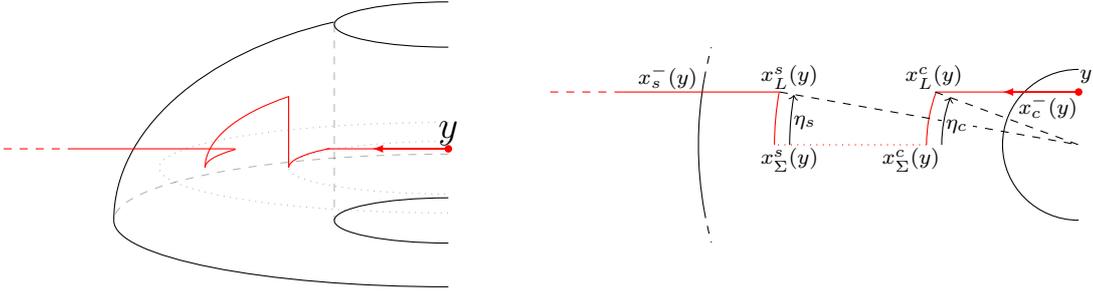

\centering
\figureca\figurecb\caption{The curve $D(y, -e_1)$ in $N_3$ (left) or in $\Pi_y=\{x\mid \overline x =\overline y\}$ (right)}\label{Fig5}

\end{figure}

\begin{lm}\label{chaine1}Set $B_c = \overline{\{(x, y) \mid y \in \mathcal Y_c, x \in D^1(y, -e_1)\}}$ and orient it by $-\d t \wedge \d y$, where $\d t$ represents the orientation of $D^1(y, -e_1)$. The codimension $1$ faces of $B_c$ are
\begin{itemize}
\item the face $-\partial_{c,i}B_{X_0}$,
\item the face $-\partial_{s,i}B_{X_0}$,
\item the face $\partial_1 B_{Y_1}=\{(x, y) \mid y \in \partial_c N_1, x \in D^1(y, -e_1)\}$, oriented by $\d t \wedge \Omega(\partial N_1)$, 
where $\Omega(\partial N_1)$ denotes the orientation of $\partial N_1$,
\item the face $-\partial_\ell B_{X_1}=-\partial (\Sigma^-\cap E_2) \times \mur[1]$.
\end{itemize}
\noindent If $(x,y)\in B_c$, and if $y\in \psi(\R^n)$, then $x\in\Sigma^-$.

\end{lm}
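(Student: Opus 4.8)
The plan is to exploit the fibered description of $B_c$: by construction it is the closure of the total space of the family of oriented arcs $\big(y\in\mathcal Y_c\mapsto D^1(y,-e_1)\big)$, each running from $x_c^-(y)$ to $x_s^-(y)$ in the direction of increasing $t$, and $D^1(y,-e_1)$ depends only on $(y_2,\overline y)$ and piecewise smoothly on $y$. I would first check that the two defining formulas agree across the seam $\{h_y=h_2\}$: there, because $R_2^2-h_2^2=4$, one has $x_\Sigma^c(y)=x_\Sigma^s(y)$ and $x_L^c(y)=x_L^s(y)$, so the longitude arc collapses to a point and the two circular arcs cancel, leaving exactly the straight segment $\{y-te_1\}\cap(N_2\cap E_1)$ of the other definition. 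Granting this, $\Int(B_c)$ is a smooth $(n+3)$-manifold fibering over $\Int(\mathcal Y_c)$ with one-dimensional fibers, so $\partial B_c$ consists of (i) the two endpoint faces from the ends of the fibers, (ii) the faces lying over $\partial\mathcal Y_c$, and (iii) the faces acquired in the closure over the degeneracy locus $\{h_y=0\}=\mur[1]$, where the unit direction $\Dir_y$ is undefined.

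For (i), I would use that for $h_y\ge h_2$ the ray $\{y-te_1\}$ leaves $N_1$ through the cylinder at $x_c^-(y)$, stays in $N_2\cap E_1$, and re-enters $N_1$ through the sphere $\{\|x\|=R_1\}$ at $x_s^-(y)$ (for $h_y\le h_2$ this is read off the list of arcs), so every fiber has ends $x_c^-(y)$ and $x_s^-(y)$. With $B_c$ oriented by $-\d t\wedge\d y$, the outward-normal-first rule gives the $x_s^-$ end the orientation $-\d y$ and the $x_c^-$ end the orientation $+\d y$; comparing with the orientations $+\d y$ of $\partial_{s,i}B_{X_0}$ and $-\d y$ of $\partial_{c,i}B_{X_0}$ identifies these faces with $-\partial_{s,i}B_{X_0}$ and $-\partial_{c,i}B_{X_0}$.

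For (ii), $\partial\mathcal Y_c=\{h_y=h_1\}\cup\partial_cN_1\cup\{h_y=0\}$. Over $\{h_y=h_1\}$ the fiber is a single point and contributes no codimension-one face, because there $x_c^-(y)$ lies on the ridge $\partial_cN_1\cap\partial_sN_1$ (as $R_1^2-h_1^2=1$ forces $x_1^2+x_2^2=1$ and $\|x\|=R_1$ simultaneously) and $x_s^-(y)=x_c^-(y)$. Over $\partial_cN_1$ one gets $\{(x,y):y\in\partial_cN_1,\ x\in D^1(y,-e_1)\}$, and a short computation using $-\d t\wedge\nu=\nu\wedge\d t$ together with the outward-normal-first definition of $\Omega(\partial N_1)$ shows its induced orientation is $\d t\wedge\Omega(\partial N_1)$, so this face is $\partial_1 B_{Y_1}$.

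The main obstacle is (iii). As $h_y\to0$ along a ray of direction $\Dir\in\s^{n-1}$, the two line segments and the two circular arcs of $D^1(y,-e_1)$ converge to curves lying in $\{\overline x=0\}$ that do not depend on $\Dir$, so over the two-dimensional disk $\mur[1]$ they sweep out only a set of dimension at most $3<n+2$ and give no face; but the longitude arc $\{x\in\partial(\Sigma^-\cap E_2):h_x\ge h_y,\ \Dir_x=\Dir_y\}$ converges to the radial arc of $\partial(\Sigma^-\cap E_2)$ in direction $\Dir$, and these radial arcs fill all of $\partial(\Sigma^-\cap E_2)$ as $\Dir$ ranges over $\s^{n-1}$. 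Hence the closure of $B_c$ over $\mur[1]$ acquires the face $\pm\,\partial(\Sigma^-\cap E_2)\times\mur[1]$; pinning down the sign requires tracking, inside the compactified configuration space $\configM$, how the blow-up coordinate $\Dir$ takes over the role of the normal direction to $\{h_y=0\}$ in the outward-normal-first boundary of $B_c$ — this bookkeeping, rather than a naive fibered-boundary formula, is the delicate point, and it yields $-\partial_\ell B_{X_1}=-\partial(\Sigma^-\cap E_2)\times\mur[1]$. The last assertion is then immediate: once $N_3$ is identified with $N_3^0$ so that $\psi(\R^n)\cap N_3=\{(0,0,\overline x)\}$ and $\Sigma^-\cap N_3=\{(x_1,0,\overline x):x_1\le0\}$, any $y\in\psi(\R^n)\cap\mathcal Y_c$ is $(0,0,\overline y)$, whence $x_c^-(y)=(-1,0,\overline y)$, the angles $\eta_c,\eta_s$ vanish, the circular arcs are trivial, and every arc composing $D^1(y,-e_1)$ lies in $\{x_2=0,\ x_1\le0\}=\Sigma^-\cap N_3$; the conclusion extends to all of $B_c$ by continuity.
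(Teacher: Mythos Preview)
Your argument is correct and follows the same fibered-boundary analysis as the paper's proof, only with considerably more detail; in particular your identification of the $h_y=0$ face via the longitude arc (the other pieces of $D^1(y,-e_1)$ losing their $\Dir_y$-dependence) and your verification of the final sentence are exactly the points the paper leaves implicit. One terminological quibble: the coordinate $\Dir_y$ is simply the spherical coordinate on $\R^n$, not a blow-up coordinate of the compactification $\configM$ (whose blow-ups occur only along the diagonal and at infinity, neither of which is in play at $h_y=0$); what you are really observing is that the parameter space $\{(y_1,y_2,h_y,\Dir_y,t)\}$ resolves the closure of $B_c$ in $\punct M\times\punct M$, and at $h_y=0$ this parametrization remains of full rank on the longitude-arc piece because $\Dir_y$ survives as $\Dir_x$.
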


\begin{proof}
The first two faces and their orientations directly follow from the construction of $D^1(y, -e_1)$. 
The next two faces correspond to ${y_1}^2+{y_2}^2=1$, and $h_y=0$, respectively. The face corresponding to $h_y= h_1$ is of codimension $2$, since $D^1(y, -e_1)$ reduces to a point. 
Note the cancellation at $h_y=h_2$ since $x_\Sigma^s(y)=x_\Sigma^c(y)$ and $x_L^s(y)=x_L^c(y)$ for such a $y$.
\end{proof}
\subsubsection{Cancellation of \texorpdfstring{$\partial_{c,o}^+B_{X_0}$, $\partial_{c,o}^-B_{X_0}$, $\partial_{s, o}^{+} B_{X_0}$, and $\partial_{s, o}^{-} B_{X_0}$}{the four next faces}}\label{412}

Set $\ys = \left\{y\in N_1 \ \Big|\ h_y \leq h_1 , 0<|y_2| \leq \sqrt{{R_1}^2-{h_y}^2}, y_1\geq \sqrt{{R_1}^2-{h_y}^2-{y_2}^2} \right\}$.

In this section, for any $y\in\ys$, we are going to extend $D^0(y, -e_1)$ to a curve $D(y, -e_1)$ in $N_2$ such that $\partial D(y, -e_1) =-\{y\}$ in $\ambientspace$. In order to do so, we will connect $x_s^-(y)$ and $x_s^+(y)$, and, when they exist, we will connect $x_c^-(y)$ and $x_c^+(y)$.

We split $\ys$ in three parts $\ys^1 = \left\{y\in \ys\ \Big|\ h_2\leq h_y \text{ or } |y_2| \geq \sqrt{{R_2}^2-{h_y}^2}\right\}$, 
$\ys^2 = \left\{y \in \ys\ \Big|\ h_y \leq h_2,2< |y_2| \leq \sqrt{{R_2}^2-{h_y}^2}\right\}$, 
and $\ys^3 = \{y \in \ys\mid h_y \leq h_2, 0<|y_2|\leq 2\}$.

\textbf{First case: $y\in \ys^1$}

In this case, the half-line starting at $y$ with direction $-e_1$ is contained in $N_2$, so we set $D(y, -e_1)= \{x \in N_2 \mid x= y - t e_1 \text{for some $t\in\R^+$}\}$.

\textbf{Second case: $y\in \ys^2$}

In this case, the half-line $\{x \in N_2 \mid x= y - t e_1 \text{for some $t\in\R^+$}\}$ meets $\partial_s N_2$ in two\footnote{They coincide if $|y_2|=\sqrt{{R_2}^2-{h_y}^2}$.}
 points $x_{s,2}^\pm(y)$ as in Figure \ref{Fig6}. 
Let $\gamma_s(y)$ denote the circular arc contained in the half-circle $\partial_s N_2 \cap \{x \mid x_2y_2>0, \overline x = \overline y\}$ from $x_{s,2}^+(y)$ to $x_{s,2}^-(y)$. 
The line $D(y, -e_1)$ is the union of $\{x \in N_2 \mid x= y - t e_1 \text{for some $t\in\R^+$}\}$ and $\gamma_s(y)$.

\textbf{Third case: $y\in \ys^3$}

In this case, the half-line $\{x \in N_2 \mid x= y - t e_1 \text{for some $t\in\R^+$}\}$ meets $\partial_s N_2$ in two points $x_{s,2}^\pm(y)$ and meets $\partial_c N_2$ in two \footnote{They coincide if $|y_2|=2$.} points $x_{c,2}^\pm(y)$ as in Figure \ref{Fig6}. 
Let $\gamma_s(y)$ be defined as in the previous case, and let $\gamma_c(y)$ be the circular arc from $x_{c, 2}^-(y)$ to $x_{c, 2}^+(y)$ in the half-circle $\partial_c N_2 \cap \{x \mid x_2y_2>0, \overline x = \overline y\}$. 
The line $D(y, -e_1)$ is the union of $\{x \in N_2 \mid x= y - t e_1 \text{for some $t\in\R^+$}\}$, $\gamma_c(y)$, and $\gamma_s(y)$.

Figure \ref{Fig6} depicts the curves $D^1(y, -e_1)=D(y, -e_1)\cap (N_2\cap E_1) $ in the plane $\Pi_y=\{x\mid\overline x = \overline y\}$ for different values of $y$ in $\ys$. The two plain circles depict the boundary of $N_1$ and the two dotted circles depict the boundary of $N_2$. The orientations are given in the picture by the arrows.

\begin{figure}[H]
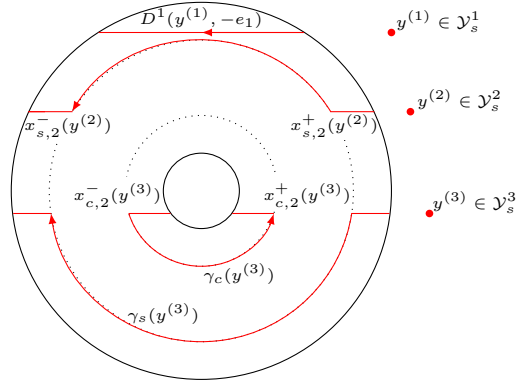

\centering
\figured
\caption{The curves $D^1(y, -e_1)$ in $ \Pi_y $.}
\label{Fig6}
\end{figure}

\begin{lm}\label{chaine2}
Set $B_s^{(1)} = \overline{\{(x, y) \mid y\in \ys, x\in D^1(y, -e_1)\}}\subset Y_1$, and orient it with $-\d t \wedge \d y$, where $\d t$ is the orientation of the lines $D^1(y, -e_1)$.
Set $\ys^0 = \{y \mid h_y \leq h_2,y_1\geq \sqrt{{R_1}^2-{h_y}^2}, y_2=0\}$.

The codimension $1$ faces of $B_s^{(1)}$ are
\begin{itemize}
\item the faces $-\partial_{s, o}^{+} B_{X_0}$, $-\partial_{s, o}^{-} B_{X_0}$, $-\partial_{c,o}^+B_{X_0}$, and $-\partial_{c,o}^-B_{X_0}$,
\item the face $\partial_\infty B_{Y_1}$ of Lemma \ref{BY1},
\item the face $\partial_2 B_{Y_1}=\overline{\left\{(x, y ) \ \Big| \ y \in \mathcal Y_s, y_1 =\sqrt{{R_1}^2-{h_y}^2-{y_2}^2}, x \in D^1(y, -e_1)\right\}}$, which is contained in ${p_b}^{-1}((N_2\cap E_1)\times \partial N_1)$, and which is oriented by 
$- \d t \wedge \d y_2 \wedge \d h_y \wedge \d \Dir_y $,
\item the face $\partial_3 B_s^{(1)}=\{(x, y) \mid y \in \ys^{0} , x \in C_s(y)\cup C_c(y)\}$ where $C_s(y)$ denotes $\Pi_y\cap\partial_s N_2$, oriented as a direct circle of the plane $\Pi_y$ (i.e. as the boundary of a disk), and $C_c(y)$ denotes the intersection $\Pi_y\cap\partial_c N_2$, with the opposite orientation. $\partial_3 B_s^{(1)}$ is oriented by $\d  t \wedge \d y_1 \wedge \d h_y \wedge \d \Dir_y$, where $\d t$ is the orientation of the circle in which $x$ lies.
\end{itemize}

\end{lm}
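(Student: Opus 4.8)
The plan is to read off $\partial B_s^{(1)}$ directly from the explicit parametrization. By construction $B_s^{(1)}$ is the closure of the image of the smooth map
\[ (t,y)\ \longmapsto\ \bigl(x(t,y),\ y\bigr)\in\configM, \]
where $y$ ranges over the $(n+2)$-dimensional region $\ys$ (with coordinates $(y_1,y_2,h_y,\Dir_y)\in\R\times\R\times\R_+\times\s^{n-1}$), $t$ parametrizes the $1$-dimensional curve $D^1(y,-e_1)$ described case by case in Section \ref{412}, and $x(t,y)$ is the corresponding point of that curve; thus $\dim B_s^{(1)}=n+3$, as expected for a piece of an external propagator. I would decompose $\partial B_s^{(1)}$ into three contributions: (i) the $t$-boundary of the curve $D^1(y,-e_1)$ at a fixed interior point $y\in\Int(\ys)$; (ii) the $y$-boundary coming from $\partial\ys$; (iii) the limits at infinity. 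In each case the orientation is obtained by applying the "outward normal first" rule to the chosen orientation $-\d t\wedge\d y$ of $B_s^{(1)}$.

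For (i): inspecting the three cases $y\in\ys^1,\ys^2,\ys^3$, the curve $D^1(y,-e_1)$ is always bounded (for $y\in\ys^1$ the ray $y-te_1$ re-enters $\Int N_1$ at $x_s^-(y)$, and for $y\in\ys^2\cup\ys^3$ the arcs $\gamma_s(y),\gamma_c(y)$ close the gaps where the ray leaves $N_2$), and all its endpoints lie on $\partial N_1$: they are exactly the points $x_s^\pm(y)$ and, when $|y_2|\le1$, $x_c^\pm(y)$ of Lemma \ref{bordX0}. The points $x_{s,2}^\pm(y),x_{c,2}^\pm(y)$ where $D^1(y,-e_1)$ meets $\partial N_2$, and the corners between segments and arcs, are only corner points of the $1$-chain $D^1(y,-e_1)$, not endpoints, so they contribute nothing (here one uses $h_y\le h_2$ on $\ys^2\cup\ys^3$ to keep the arcs nondegenerate and the inequality $y_1\ge\sqrt{R_1^2-h_y^2-y_2^2}$ defining $\ys$ to see that the curve starts exactly on $\partial N_1$). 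Matching the induced orientations against the orientations of $\partial_{s,o}^\pm B_{X_0}$ and $\partial_{c,o}^\pm B_{X_0}$ fixed just after Lemma \ref{bordX0}, these endpoints produce the faces $-\partial_{s,o}^\pm B_{X_0}$ and $-\partial_{c,o}^\pm B_{X_0}$; the minus signs are precisely what makes them cancel the corresponding faces of $B_{X_0}$ in $\partial(B_{X_0}+B_{Y_1}+\dots)$, which reflects that $D(y,-e_1)=D^0(y,-e_1)\cup D^1(y,-e_1)$ is a single curve with no genuine boundary on $\partial N_1$.

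For (ii): the faces of $\ys$ are $\{h_y=h_1\}$, $\{|y_2|=\sqrt{R_1^2-h_y^2}\}$, $\{y_1=\sqrt{R_1^2-h_y^2-y_2^2}\}$ and $\{y_2=0\}$ (from $0<|y_2|$). On the first two, $D^1(y,-e_1)$ degenerates to a point (as in the proof of Lemma \ref{chaine1}), so these are codimension-$2$ strata. The face $\{y_1=\sqrt{R_1^2-h_y^2-y_2^2}\}$ is where $y\in\partial N_1$; it gives $\partial_2B_{Y_1}\subset p_b^{-1}((N_2\cap E_1)\times\partial N_1)$, and a direct interior-product computation with $-\d t\wedge\d y_1\wedge\d y_2\wedge\d h_y\wedge\d\Dir_y$ yields the orientation $-\d t\wedge\d y_2\wedge\d h_y\wedge\d\Dir_y$. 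The face $\{y_2=0\}$ must be approached from both sides $y_2\to0^\pm$: for $y\in\ys^1$ the two limiting chains coincide (the curve lies in $\{x_2=0\}$) but carry opposite boundary orientations and cancel; for $y\in\ys^2\cup\ys^3$ the line-segment parts of $D^1(y,-e_1)$ cancel in the same way, while the arcs $\gamma_s(y)$ from the two sides glue into the full circle $C_s(y)=\Pi_y\cap\partial_sN_2$ oriented as the boundary of a disk, and the arcs $\gamma_c(y)$, which in the construction of Section \ref{412} are run in the opposite sense to $\gamma_s(y)$, glue into $C_c(y)=\Pi_y\cap\partial_cN_2$ with the opposite orientation. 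The resulting face, supported over $\ys^0=\{y_2=0,\ h_y\le h_2,\ y_1\ge\sqrt{R_1^2-h_y^2}\}$ (the $y_2\to0$ boundary of $\ys^2\cup\ys^3$), is $\partial_3B_s^{(1)}$ with the stated orientation $\d t\wedge\d y_1\wedge\d h_y\wedge\d\Dir_y$.

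For (iii): $\ys$ is unbounded only in the $y_1$-direction, so the sole limit is $y_1\to+\infty$ with $(y_2,\overline y)$ bounded, i.e.\ $y\to\infty$ along $e_1$; then $x(t,y)=y-te_1$ stays bounded, its range being governed by $\sqrt{R_1^2-y_2^2-h_y^2}$ independently of $y_1$, and by the Gauss-map conventions of Definition \ref{configM} (on $\partial_{\ambientspace,\infty}\configM=\ambientspace\times\s^{n+1}$ one has $G_\tau(x,u)=u$) the configuration $(x,y)$ limits to $(x,\infty,e_1)$ with $x$ sweeping out $N_2\cap E_1$; this is exactly $\partial_\infty B_{Y_1}$, oriented by $-\d x$, and since $D^1(y,-e_1)$ is bounded there is no face with the first point at infinity. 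Collecting (i)--(iii) gives precisely the list of faces in the statement, and the explicit parametrization shows there are no others. The main obstacle is the orientation bookkeeping of step (ii): getting the relative orientation of $C_s(y)$ and $C_c(y)$ and the sign of $\partial_2B_{Y_1}$ right requires careful use of the "outward normal first" convention together with the paper's sign conventions, and checking that the two one-sided limits at $\{y_2=0\}$ really glue as claimed; everything else is a routine, if lengthy, case analysis from the formulas of Section \ref{412}.
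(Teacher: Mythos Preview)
Your approach is essentially the paper's: read off $\partial B_s^{(1)}$ from the explicit parametrization, treating separately the $t$-endpoints of $D^1(y,-e_1)$, the faces of $\ys$, and the limit $y_1\to\infty$. The identifications you make for items (i), (ii), (iii) are correct, and your handling of the $y_2\to0$ face (the two one-sided limits of $\gamma_s$ and $\gamma_c$ gluing into the full circles $C_s(y)$, $C_c(y)$ with opposite orientations) matches the paper's conclusion.

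One point deserves more care than you give it. The definition of $D^1(y,-e_1)$ is piecewise on the three regions $\ys^1,\ys^2,\ys^3$, and in addition to the faces of $\ys$ itself you must rule out boundary contributions along the \emph{internal} interfaces where the recipe changes. The paper checks these explicitly: at $|y_2|=\sqrt{R_2^2-h_y^2}$ (between $\ys^1$ and $\ys^2$) the arc $\gamma_s(y)$ shrinks to a point, at $|y_2|=2$ (between $\ys^2$ and $\ys^3$) the arc $\gamma_c(y)$ shrinks to a point, and at $h_y=h_2$ (between $\ys^3$ and $\ys^1$) the arcs $\gamma_s(y)$ and $\gamma_c(y)$ coincide as sets with opposite orientations and cancel. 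You only allude to these checks in your final sentence; making them explicit is the one genuine gap in the write-up. (Two small slips: $\ys^0$ is the $y_2\to0$ boundary of $\ys^3$ alone, not of $\ys^2\cup\ys^3$, since $\ys^2$ requires $|y_2|>2$; and at $h_y=h_1$ the curve $D^1(y,-e_1)$ degenerates to at most two points rather than one, though the codimension-$2$ conclusion is unaffected.)
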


\begin{proof}
The first four faces follow from the fact that the line $D(y, -e_1)$ extends $D^0(y, -e_1)$.
When $y\in \mathcal Y_s^{3}$ and $h_y=h_2$, $\gamma_s(y)$ and $\gamma_c(y)$ cancel each other, so that there is no face corresponding to $h_y = h_2$. 
Note that there is no discontinuity when $|y_2| = \sqrt{{R_1}^2-{h_y}^2}$ since $\gamma_s(y)$ reduces to a point. There is no discontinuity when $|y_2|=2$ either since $\gamma_c(y)$ reduces to a point.

When $y_1$ approaches infinity, we obtain the face $\partial_\infty  B_{Y_1}$, when $y_1$ goes to $\sqrt{{R_1}^2-{h_y}^2-{y_2}^2}$, we obtain the face $\partial_2B_{Y_1}$, and when $y_2$ approaches $0$, we obtain the face $\partial_3 B_s^{(1)}$.
\end{proof}

\subsubsection{Cancellation of the face \texorpdfstring{$\partial_3 B_s^{(1)}$}{partial3Bs1}}\label{413}
For any $y\in \ys^{0}$ such that $h_y> 0$, define $A(y)$ as the annulus $\{x \in \partial N_2 \mid \Dir_x = \Dir_y,  h_x\geq h_y\}$ and orient it so that its boundary is $C_s(y)\cup  C_c(y)$.

\begin{lm}\label{chaine3}
Set $B_s^{(2)}= \overline{\{(x,y)\mid y \in \ys^{0}, h_y >0, x \in A(y)\}}$, and orient this chain by $-\Omega(A(y)) \wedge \d y_1 \wedge \d h_y \wedge \d \Dir_y$, where $\Omega(A(y))$ denotes the orientation of the annulus $A(y)$ in which $x$ lies. The codimension $1$ faces of $B_s^{(2)}$ are 
\begin{itemize}
\item the face $ -\partial_3 B_s^{(1)}$,
\item the face $ (-1)^n\partial E_2\times \Dr{+}{1}= - \partial_EB_{X_1}$,
\item the face $\partial_3 B_{Y_1}=\overline{\{(x,y) \ \Big| \ 
 0<h_y \leq h_2,y_1=\sqrt{{R_1}^2-{h_y}^2}, y_2=0, x \in A(y)\}}$, oriented by $\Omega(A(y)) \wedge \d h_y \wedge \d \Dir_y$, and contained in $p_b^{-1}((N_2\cap E_1)\times \partial N_1)$.
\end{itemize}

\end{lm}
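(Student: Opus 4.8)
The plan is to compute $\partial B_s^{(2)}$ by regarding $B_s^{(2)}$ as (the closure of) a fibered family and applying the Leibniz rule for boundaries of such families. The base is $\mathcal B=\{y\in\ys^{0}\mid h_y>0\}$, an $(n+1)$-dimensional manifold with corners with coordinates $(y_1,h_y,\Dir_y)$ subject to $0<h_y\leq h_2$ and $y_1\geq\sqrt{{R_1}^2-{h_y}^2}$, and the fiber over $y$ is the $2$-dimensional annulus $A(y)$. (That $A(y)$ is an annulus follows because, in the $(n-1)$-dimensional slice $\{\Dir_x=\Dir_y\}$, the hypersurface $\partial N_2$ is a cylindrical piece and a spherical piece glued along their common circle at height $h_2$, so intersecting with $\{h_x\geq h_y\}$ cuts off an annulus whose boundary circles are $C_c(y)$ and $C_s(y)$.) Hence $B_s^{(2)}$ has dimension $n+3$, and $\partial B_s^{(2)}$ is the union of the fiberwise boundary $\bigcup_{y\in\mathcal B}\partial A(y)$ and of the faces coming from the boundary of $\mathcal B$. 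The task is to identify these pieces and match orientations against the chosen orientation $-\Omega(A(y))\wedge\d y_1\wedge\d h_y\wedge\d\Dir_y$, using the "outward normal first" convention.

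First I would treat the fiberwise boundary. Since $A(y)$ carries, by construction, the orientation making $\partial A(y)$ equal to the chain $C_s(y)\cup C_c(y)$ with the orientations of Lemma \ref{chaine2} ($C_s(y)$ as a direct circle of $\Pi_y$, $C_c(y)$ with the opposite orientation), the fiberwise boundary over all of $\mathcal B$ is $-\Omega(\partial A(y))\wedge\d y_1\wedge\d h_y\wedge\d\Dir_y$, the sign $-1$ coming from the minus in front of the fiber orientation in $B_s^{(2)}$. By the definition of $\partial_3 B_s^{(1)}$ in Lemma \ref{chaine2}, this is exactly $-\partial_3 B_s^{(1)}$.

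Next I would enumerate the codimension $1$ faces of $\mathcal B$ (the sphere $\s^{n-1}$ of directions contributes none). On $\{y_1=\sqrt{{R_1}^2-{h_y}^2},\ 0<h_y\leq h_2\}$, whose outward normal points in the decreasing $y_1$ direction, the fiber is unchanged and, moving this conormal past the even-dimensional fiber factor (which costs no sign), one gets the chain $\partial_3 B_{Y_1}$ oriented by $\Omega(A(y))\wedge\d h_y\wedge\d\Dir_y$; it lies in $p_b^{-1}((N_2\cap E_1)\times\partial N_1)$ since $y_2=0$ and $y_1=\sqrt{{R_1}^2-{h_y}^2}$ force $y\in\partial_sN_1$. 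On $\{h_y=h_2\}$ the annulus $A(y)$ degenerates to the single circle at height $h_2$, so this locus has dimension $n+1<\dim\partial B_s^{(2)}=n+2$: it is a corner of $B_s^{(2)}$, not a codimension $1$ face. On $\{h_y=0,\ y_1\geq {R_1}\}$, whose outward normal points in the decreasing $h_y$ direction, the point $y$ equals $(y_1,0,\overline 0)$, so this locus is $\Dr{+}{1}$ (as ${R_1}$ is the left endpoint of $\Dr{+}{1}$), while the fiber $A(y)$ becomes the whole slice $\{x\in\partial N_2\mid\Dir_x=\Dir_y\}$; letting $\Dir_y$ range over $\s^{n-1}$, these slices cover $\partial N_2=\partial E_2$ exactly once away from the axis, so the parametrization has degree $\pm1$ and this face equals $\pm\,\partial E_2\times\Dr{+}{1}$. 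A direct orientation check, comparing $\Omega(\partial E_2)$ with the product of the slice orientation of $A(y)$ and the standard orientation of $\s^{n-1}$, gives the coefficient $(-1)^n$, which by Lemma \ref{bx1} equals $-\partial_E B_{X_1}$. Finally, on the noncompact end $y_1\to+\infty$ the pair $(x,y)$ converges in $\configM$ to $(x,e_1)\in\spamb\times\s^{n+1}$, independently of $h_y$ and $\Dir_y$: the parametrizing map of this would-be face collapses the $h_y$ and $\Dir_y$ directions, so its image has dimension $n+1<n+2$ and it vanishes as a chain.

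The main obstacle is the orientation bookkeeping rather than anything geometric: one must propagate the conventions of Lemma \ref{chaine2}, the "outward normal first" rule, and the orientation $-\Omega(A(y))\wedge\d y_1\wedge\d h_y\wedge\d\Dir_y$ of $B_s^{(2)}$ through each face, most delicately to extract the $(-1)^n$ in front of $\partial E_2\times\Dr{+}{1}$ and the minus sign in front of $\partial_3 B_s^{(1)}$. The only other point needing care is the vanishing of the end at infinity, which is made precise by writing the parametrizing map explicitly and observing that it factors through a space of dimension $n+1$.
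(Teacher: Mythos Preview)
Your proposal is correct and follows essentially the same approach as the paper: both compute $\partial B_s^{(2)}$ via the Leibniz rule for a fibered family, identifying the fiberwise boundary with $-\partial_3 B_s^{(1)}$, the face $h_y\to 0$ with $-\partial_E B_{X_1}$, the face $y_1=\sqrt{R_1^2-h_y^2}$ with $\partial_3 B_{Y_1}$, and dismissing the faces $h_y=h_2$ and $y_1\to\infty$ as degenerate. One small imprecision: at $y_1\to\infty$ you say the map ``collapses the $h_y$ and $\Dir_y$ directions'', but $\Dir_y$ is tied to $\Dir_x$ and is not actually collapsed; only the $h_y$ direction is, which still forces the image into the $(n{+}1)$-dimensional set $\{(x,\infty,e_1)\mid x\in\partial N_2\}$ and gives the same conclusion the paper states directly.
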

\begin{proof}
The face $-\partial_3 B_s^{(1)}$ corresponds to the boundary of $A(y)$.
The face $-\partial_EB_{X_1}$ appears when $h_y$ approaches zero. The face corresponding to $h_y = h_2$ is of codimension $2$, since $A(y)$ degenerates to a circle.
When $y_1=\sqrt{{R_1}^2-{h_y}^2}$, we obtain the face $\partial_3B_{Y_1}$, and when $y_1$ approaches infinity, we obtain a face contained in $\{(x, y=\infty, u=e_1)\mid x \in \partial N_2\}$, thus of codimension at least two. 
\end{proof}

\subsubsection{Proof of Lemma \ref{BY1} and definition of the chain in \texorpdfstring{$X\cup Y$}{X cup Y}}

Set $B_{Y_1} = B_c + B_s^{(1)} + B_s^{(2)} $. The chain $B_{Y_1}$ satisfies the conditions of Lemma \ref{BY1}.

Let $S\colon C_2(N_3) \rightarrow  C_2(N_3)$ and $T\colon\configM \rightarrow \configM$ be the smooth maps defined on the interiors of their respective domains by the formulas $S(x,y) = (-x, -y)$ and $T(x, y) = (y, x)$, and set $B_{Y_2} = (-1)^n ST(B_{Y_1})$ and $B_{X\cup Y} = B_{X_0} + B_{Y_1}+ B_{X_1} + B_{Y_2} + B_{X_2}$. 
\begin{lm}\label{deltaZ}
Let $G_\tau$ be the map of Definition \ref{configM}.

The chain $\partial B_{X\cup Y}- G_\tau^{-1}(\{e_1\})$ defines a cycle $\delta_W$ of $\partial W\subset W$.
\end{lm}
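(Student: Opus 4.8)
The goal is to show that the chain $\partial B_{X\cup Y} - G_\tau^{-1}(\{e_1\})$ is supported in $\partial W$ and is a cycle there, so that it defines $\delta_W$. The strategy is a bookkeeping argument: list all codimension-$1$ faces of the five pieces $B_{X_0}, B_{Y_1}, B_{X_1}, B_{Y_2}, B_{X_2}$ that constitute $B_{X\cup Y}$, check that those lying in the interior of $X\cup Y$ cancel in pairs, and identify the remaining faces as either part of $G_\tau^{-1}(\{e_1\})$ or part of $\partial W$. First I would recall from Lemmas \ref{bordX0}, \ref{bx1}, \ref{chaine1}, \ref{chaine2}, \ref{chaine3}, and \ref{BY1} the complete list of faces of $B_{X_0}$, $B_{X_1}$ and $B_{Y_1}$, and use the symmetry maps $S$ and $T$ (via the formula $B_{Y_2} = (-1)^n ST(B_{Y_1})$ and the analogous relation $B_{X_2}$ obtained from $B_{X_1}$ by the last bullet of Definition \ref{adm-prop}) to transport these lists to $B_{Y_2}$ and $B_{X_2}$, paying attention to the sign $(-1)^n$ and the orientation-reversal behavior of $S$ and $T$ on the relevant strata.

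Next I would go through the cancellations. By construction (this is exactly what Lemmas \ref{chaine1}, \ref{chaine2}, \ref{chaine3} were designed to produce), the faces $\partial_{c,i}B_{X_0}$, $\partial_{s,i}B_{X_0}$, $\partial_{s,o}^\pm B_{X_0}$, $\partial_{c,o}^\pm B_{X_0}$ of $\partial B_{X_0}$ cancel against the corresponding faces of $B_{Y_1}$; the face $\partial_\ell B_{X_1}$ and the face $\partial_E B_{X_1}$ of $\partial B_{X_1}$ cancel against $-\partial_\ell B_{X_1}$ and $-\partial_E B_{X_1}$ appearing in $B_{Y_1}$ (Lemma \ref{BY1}); and the internal faces $\partial_3 B_s^{(1)}$ and $\partial_3 B_{Y_1}$ from the construction of $B_{Y_1}$ itself cancel internally. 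Applying $(-1)^n ST$ gives the mirror cancellations on the $X_2, Y_2$ side. The faces of $B_{X_1}$ that are ``new'' with respect to $\partial W$, namely $\partial_{\ell,\mu}B_{X_1}$, $\partial_\mu B_{X_1}$, $\partial_{E,L}B_{X_1}$, $\partial_L B_{X_1}$, lie in $p_b^{-1}$ of boundary strata of $\bigcup_{r}E_{r+1}\times N_r$ that are contained in $\partial W$ (the ``$r=2$'' walls $p_b^{-1}(E_3\times \partial N_2)$, $p_b^{-1}(\partial E_3\times\{(R_r,0,\bar 0)\})$, etc.), so they contribute to $\delta_W$ rather than cancel; I would check each one lies in $\partial W = \overline{\partial\configM\setminus\partial(X\cup Y)}\cap(X\cup Y)$-closure, i.e. in the wall $W\cap(X\cup Y)$. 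The mirror faces of $B_{X_2}$ land in $\partial W$ on the other side. Finally, the ``$G_\tau^{-1}$'' faces: $\partial B_{X_0}$ contributes $G_\tau^{-1}(\{e_1\})\cap p_b^{-1}(N_1\times N_1)$, $\partial B_{X_1}$ contributes $\partial_\infty B_{X_1} = G_\tau^{-1}(\{e_1\})\cap p_b^{-1}(E_2\times\{\infty\})$, $\partial B_{Y_1}$ contributes $\partial_\infty B_{Y_1} = G_\tau^{-1}(\{e_1\})\cap p_b^{-1}((N_2\cap E_1)\times\{\infty\})$, and their mirrors contribute the analogous pieces with the roles of the two factors swapped and over $E_2$. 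Assembling these, the union of all $G_\tau^{-1}(\{e_1\})$-type faces is exactly $G_\tau^{-1}(\{e_1\})$, because every configuration in $G_\tau^{-1}(\{e_1\})$ lies over $(N_1\times N_1)\cup(E_2\times\{\infty\})\cup(\{\infty\}\times E_2)$ — indeed $G_\tau$ is the Gauss map on $p_b^{-1}(N_1\times N_1)$, equals $-u$ resp. $u$ on $\partial_{\infty,\ambientspace}\configM$ resp. $\partial_{\ambientspace,\infty}\configM$ by Definition \ref{configM}, and the support of $B_{X\cup Y}$ stays inside $X\cup Y$. Subtracting $G_\tau^{-1}(\{e_1\})$ from $\partial B_{X\cup Y}$ therefore leaves precisely the faces lying in $\partial W$, which I call $\delta_W$.

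It then remains to check $\partial\delta_W = 0$, i.e. that $\delta_W$ is a cycle. Since $\partial B_{X\cup Y}$ is automatically a cycle (it is a boundary) and $G_\tau^{-1}(\{e_1\})$ is a cycle in $\partial\configM$ (its boundary would lie in the codimension-$2$ ridges of $\configM$, and $e_1$ is a regular value), the difference $\delta_W = \partial B_{X\cup Y} - G_\tau^{-1}(\{e_1\})$ is a cycle; combined with the fact that $\mathrm{Supp}(\delta_W)\subset \partial W$ established in the previous paragraph, this gives the claim. I would also note $\partial W\subset W$ is trivially true for the closed manifold-with-corners $W$, so $\delta_W$ is indeed a cycle of $W$.

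\textbf{Main obstacle.} The real work — and the place where sign errors are easiest to make — is the orientation bookkeeping: verifying that the faces shared between consecutive pieces genuinely appear with opposite orientations and hence cancel, and in particular getting the $(-1)^n$ and $(-1)^{n+1}$ factors in the definitions of $B_{X_1}$, $B_{Y_2} = (-1)^n ST(B_{Y_1})$, and the orientation conventions of $D^1(y,-e_1)$, $A(y)$, the longitude arcs, etc., to line up consistently. I expect the proof to consist of a careful face-by-face table: for each of the roughly dozen-and-a-half codimension-$1$ strata, record which piece(s) of $B_{X\cup Y}$ it bounds, with what orientation, and conclude either ``cancels'' or ``lies in $G_\tau^{-1}(\{e_1\})$'' or ``lies in $\partial W$''. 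No genuinely new geometric input is needed beyond the already-proved Lemmas \ref{bordX0}--\ref{chaine3} and \ref{BY1}; the statement is essentially the summary that those lemmas were built to yield.
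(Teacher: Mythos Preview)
Your overall strategy---list all codimension-$1$ faces, verify the designed cancellations from Lemmas \ref{bordX0}--\ref{BY1}, and check that what remains lies either in $G_\tau^{-1}(\{e_1\})$ or in $\partial W$---is exactly the paper's approach, and your remark that $\delta_W$ is automatically a cycle because it is a difference of two cycles is correct and matches how the paper concludes.

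There is, however, one genuine error in your bookkeeping. You assert that the $G_\tau^{-1}$-type faces of $\partial B_{X\cup Y}$ assemble to \emph{all} of $G_\tau^{-1}(\{e_1\})$, justifying this by saying ``every configuration in $G_\tau^{-1}(\{e_1\})$ lies over $(N_1\times N_1)\cup(E_2\times\{\infty\})\cup(\{\infty\}\times E_2)$''. That is false: $G_\tau$ is defined on all of $\partial\configM$, in particular on the diagonal face $\partial_\Delta\configM=\unitaire\ambientspace$, and the configurations $(x,x,[\tau_x(e_1)])$ with $x\in E_1$ lie in $G_\tau^{-1}(\{e_1\})$ but not in $X\cup Y$ (hence not in $\partial B_{X\cup Y}$). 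The $G_\tau$-face of $\partial B_{X\cup Y}$ is only $G_\tau^{-1}(\{e_1\})\cap(X\cup Y)=(G_\tau|_{(\partial\configM)\setminus\unitaire E_1})^{-1}(\{e_1\})$. Consequently, when you subtract the full $G_\tau^{-1}(\{e_1\})$ you do not get zero on that part; you get the extra piece $-G_\tau^{-1}(\{e_1\})\cap W=-(G_\tau|_{\unitaire E_1})^{-1}(\{e_1\})$. This piece \emph{does} lie in $\partial W$ (since $\unitaire E_1\subset\partial\configM\cap W$), so the conclusion of the lemma survives, but your argument as written does not establish it. The paper handles this point explicitly: it identifies the $G_\tau$-face of $\partial B_{X\cup Y}$ as $G_\tau^{-1}(\{e_1\})\cap(X\cup Y)$ and observes that subtraction replaces it by $-G_\tau^{-1}(\{e_1\})\cap W\subset\partial W$. (This extra diagonal piece is not idle: it reappears in the computation of $\langle D_W,\delta_W\rangle$ in Section \ref{Section43}, where it is paired against $\partial_1 D_1$ and the fibers $U_{x_i}M$.) A minor secondary slip: $\partial_3 B_{Y_1}$ does not ``cancel internally''; it is one of the surviving $\partial_i B_{Y_1}$ faces that land in $\partial W$. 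What cancels internally inside $B_{Y_1}$ is $\partial_3 B_s^{(1)}$ against a face of $B_s^{(2)}$.
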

\begin{proof}For any $1\leq i \leq 3$, set 
$\partial_i B_{Y_2} = (-1)^nST(\partial_i B_{Y_1})$. 
Set $\partial_L B_{X_2}=- \partial L_0^-(2)\times E_3$ and 
$\partial_\mu B_{X_2}=(\partial \mur[2])\times (\Sigma^+\cap E_3)$. 
Set $\partial_{\ell,\mu}B_{X_2}=(-1)^nST(\partial_{\ell, \mu} B_{X_1})$, 
and $\partial_{E,L}B_{X_2}= (-1)^nST(\partial_{E, L} B_{X_1})$, 
where the faces $\partial_{\ell, \mu} B_{X_1}$ 
and $\partial_{E, L} B_{X_1}$ are defined in Lemma \ref{bx1}.

The boundary of $B_{X\cup Y}$ is the union of  \begin{itemize}
\item the faces $(\partial_i B_{Y_1})_{1\leq i \leq 3}$ and $(\partial_i B_{Y_2})_{1\leq i \leq 3}$,
\item the faces $\partial_{\ell, \mu} B_{X_i}$, $\partial_\mu B_{X_i}$, $\partial_{E,L}B_{X_i}$, $\partial_L B_{X_i}$ for $i\in \{1,2\}$,
\item the face $ G_\tau^{-1}(\{e_1\}) \cap (X\cup Y)= ({G_\tau}_{|( \partial \configM )\setminus\unitaire E_1})^{-1}(\{e_1\})$.
\end{itemize}

All the previous faces except the last one are in $\partial W$. 
Making the difference with $G_\tau^{-1}(\{e_1\})$ replaces the last part with $- G_\tau^{-1}(\{e_1\}) \cap W=-({G_\tau}_{|\unitaire E_1})^{-1}(\{e_1\})$, which is contained in $\partial W$. \qedhere

\end{proof}

\subsection{Extension of the chain to \texorpdfstring{$W$}{W}}\label{Section43}
\subsubsection{Construction of \texorpdfstring{$B_W$}{BW} up to Lemma \ref{DZ}}
In this section, we prove that the cycle $\delta_W$ of Lemma \ref{deltaZ} is null-homologous in $W$.

\begin{lm}\label{deltaZ=0}
There exists a chain $\P_W\subset W$ such that $\partial \P_W= -\delta_W$.

\end{lm}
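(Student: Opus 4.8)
The statement to prove is that the cycle $\delta_W \subset \partial W$ bounds a chain $\P_W$ inside $W$; equivalently, that $[\delta_W] = 0$ in $H_*(W)$. The strategy is to compute (or at least constrain) the relevant homology group of $W$ and then to show that the homology class of $\delta_W$ vanishes in it. First I would determine the homotopy type of $W = \overline{\configM \setminus (X \cup Y)}$. Since $X = X_0 \cup X_1 \cup X_2$ and $Y = Y_1 \cup Y_2$ are built from preimages under $p_b$ of products of neighborhoods of the knot (or of the trivial knot, in the $\psi_0$ case), $W$ deformation retracts onto the part of $\configM$ where both points are ``far'' from the knot, i.e. essentially $C_2$ of the knot exterior $E_1$, together with the diagonal face $\unitaire E_1$ and the faces at infinity. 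The key homological input is that $\ambientspace$ is an asymptotic $\mathbb K$-homology $\R^{n+2}$, so the configuration space $\configM$ has the same rational homology as $C_2(\R^{n+2}) \simeq \s^{n+1}$, and the knot exterior has controlled homology (by Alexander duality, its rational homology is governed by that of a Seifert surface, which is finite-dimensional in each degree).

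The main computation would be to run a Mayer--Vietoris argument: decompose $\configM = (X \cup Y) \cup W$ with intersection a neighborhood of $\partial W \cap \partial(X \cup Y)$, and use known values of $H_*(\configM)$ together with the already-analyzed structure of $X$ and $Y$ (every face of $B_{X_0}, B_{X_1}, B_{X_2}, B_{Y_1}, B_{Y_2}$ has been identified explicitly in Lemmas \ref{bordX0} through \ref{chaine3}) to pin down $H_*(W)$ in the relevant degree, which is the degree of $\delta_W$, namely $n+2$ (since $\delta_W = \partial B_{X \cup Y} - G_\tau^{-1}(\{e_1\})$ is a cycle in $\partial W$, of dimension $n+2$, bounding an $(n+3)$-chain). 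Then I would show $[\delta_W] = 0$ by exhibiting an explicit bounding chain, or by a degree/linking-number argument: the cycle $\delta_W$ is, by construction, the boundary of the ``standard'' propagator chain $G_\tau^{-1}(\{e_1\})$ truncated to $W$, so it is homologous to a chain supported near $\unitaire E_1$ and the faces at infinity, where the propagator behaves like the canonical one on $C_2(\R^{n+2})$; there the analogous cycle bounds because $G\colon C_2(\R^{n+2}) \to \s^{n+1}$ is a genuine map and $G^{-1}(\{e_1\})$ is a genuine chain with the prescribed boundary.

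The cleanest route is probably: (i) note $G_\tau$ restricted to $\partial W$ extends (by Definition \ref{configM}) to a map on all of $\partial W$, and $\delta_W$ is designed so that $G_\tau|_{\partial W}$ sends $\delta_W$ to a point or a null-homologous subset of $\s^{n+1}$; (ii) observe that $W$ carries an extension of $G_\tau$ to a map $W \to \s^{n+1}$ (this is the content of ``the propagator exists on $\configM$'', Definition \ref{df-prop}, applied to the ambient manifold — the existence of external propagators of $(\ambientspace,\tau)$ guarantees a global $(n+3)$-chain whose boundary is $\frac12 G_\tau^{-1}(\{-x,+x\})$, and restricting it appropriately to $W$ and subtracting off the already-constructed part on $X \cup Y$ yields $\P_W$ after correcting the boundary by a null-homologous cycle of $\partial W$); (iii) the correction cycle is null-homologous in $W$ because its class lives in $H_{n+2}(W)$, which Mayer--Vietoris identifies as a subgroup of $H_{n+2}(\configM) \oplus (\text{boundary/knot-exterior contributions})$, and one checks term by term that $\delta_W$ pairs trivially against a generating set (using Lemma \ref{lk} to convert intersection pairings with $\delta_W$ into linking numbers of cycles in the knot exterior, all of which vanish since the relevant cycles are either null-homologous or lie in disjoint regions).

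\textbf{Main obstacle.} The hard part will be step (iii): controlling $H_{n+2}(W)$ precisely enough and verifying the vanishing of $[\delta_W]$ against all generators. The group $H_*(W)$ is not simply that of $\s^{n+1}$ — the knot exterior $E_1$ contributes extra homology coming from the Seifert surface (this is exactly where the Alexander polynomials will eventually enter the main theorem), so $\delta_W$ could a priori have a nonzero component along a ``Seifert-surface class''. The resolution must be that $\delta_W$, being the boundary of the explicit chain $B_{X \cup Y}$ minus the globally-defined chain $G_\tau^{-1}(\{e_1\})$, is by construction a boundary of an $(n+3)$-chain in $\configM$, hence has trivial image in $H_{n+2}(\configM)$; the Mayer--Vietoris connecting map then detects whether it survives as a nonzero class in $W$, and one shows it does not by checking that it also dies in $X \cup Y$ (since $B_{X\cup Y}$ is an explicit chain there with $\partial B_{X\cup Y} = \delta_W + G_\tau^{-1}(\{e_1\})$ restricted appropriately), so $\delta_W$ lies in the image of $H_{n+3}(\configM) \to H_{n+2}(W)$ composed into zero — i.e. a diagram chase. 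Making this diagram chase airtight, keeping careful track of which faces lie in $\partial W$ versus in the other pieces (Lemma \ref{deltaZ} has already done the bookkeeping), is where the real work lies.
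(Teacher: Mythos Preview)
Your overall strategy---show $[\delta_W]=0$ in $H_{n+2}(W)$---is correct, and you correctly identify that the homology of $W$ is the crux. But the execution has two genuine gaps.

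First, step (ii) rests on a misreading: the existence of external propagators (Definition~\ref{df-prop}) does \emph{not} mean that $G_\tau$ extends to a map $W\to\s^{n+1}$ or $\configM\to\s^{n+1}$. A propagator is a chain whose boundary equals $\frac12 G_\tau^{-1}(\{-x,x\})$; this is a homological statement, not the existence of a map. So you cannot simply ``restrict the global propagator to $W$'' and read off $\P_W$.

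Second, the Mayer--Vietoris chase in (iii) is not coherent. The connecting map runs $H_{n+3}(\configM)\to H_{n+2}((X\cup Y)\cap W)$, not into $H_{n+2}(W)$; moreover $\configM\simeq\s^{n+1}$ gives $H_{n+3}(\configM)=0$, so that map is zero anyway. Your claim that $\delta_W$ ``dies in $X\cup Y$'' is also off: $\partial B_{X\cup Y}=\delta_W+G_\tau^{-1}(\{e_1\})$, so $\delta_W$ alone is \emph{not} a boundary there.

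What the paper actually does: it observes $W$ is homotopy equivalent to $W_1=p_b^{-1}(E_1\times E_1)=C_2(E_1)$, and computes $H_{n+2}(W_1)=\QQ\cdot[M_W]$ directly (Alexander duality on $E_1$ plus the long exact sequence of $(E_1^2,E_1^2\setminus\Delta)$), where $M_W=\unitaire\ambientspace_{|\partial\mur[2]}$. So the group is one-dimensional, and the Seifert-surface homology you worried about does not appear here. To show $[\delta_W]=0$ it then builds an explicit dual $(n+2)$-chain $D_W=D_1+D_2+D_3$ with $\langle D_W,M_W\rangle=\pm1$, and checks $\langle D_W,\delta_W\rangle=0$. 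The pieces $D_1,D_2,D_3$ are built from an auxiliary Seifert surface $\Sigma'$ and the intersection computation is delicate: the contributions from $D_3$ and from the diagonal face of $D_1$ are respectively $(-1)^{n+1}\frac{1-\chi(\Sigma')}{2}$ and $\deg_{e_1}(\phi_n)$ where $\phi_n(x)=\tau_x^{-1}(n_x)$, and these cancel by a Poincar\'e--Hopf argument (Lemma~\ref{lm419}). This geometric cancellation is the heart of the proof, and nothing in your sketch anticipates it.
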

\begin{proof}[Proof of Lemma \ref{th-prop0} assuming Lemma \ref{deltaZ=0}.]

Let $\P_W$ be like in the lemma, so that $\partial (\P_W+\P_{X\cup Y})= G_\tau^{-1}(\{e_1\})$. Set $B^T = \frac12\left( \P_W+ \P_{X\cup Y} + T( \P_W+\P_{X\cup Y}) \right)$, so that $\partial B^T = \frac12G_\tau^{-1}(\{-e_1, e_1\})$. Note also that if $c=(x,y)\in B^T$, and if $(x,y)\in\overline{{p_b}^{-1}(\psi(\R^n)\times\ambientspace)}$, the definition of $(B_{X_i})_{1\leq i \leq 3}$ and the construction of $B_c$ in Lemma \ref{chaine1} imply that $y$ lies in the closure $\overline{\Sigma^-\cup\Sigma^+}$ of $\Sigma^-\cup\Sigma^+$ in $C_1(\spamb)$. 

 This proves the first assertion of Lemma \ref{th-prop0}. It remains to prove that admissible propagators can be chosen standard in $p_b^{-1}(N_3\times N_3)$ as stated in the second part of Lemma \ref{th-prop0}.

The previous work with the trivial knot and the surfaces $( ({}_0\Sigma^+)^0, ({}_0\Sigma^-)^0)$, yields an admissible propagator $B_0$ for $( ({}_0\Sigma^+)^0, ({}_0\Sigma^-)^0, \psi_0)$.

Set $W_2= p_b^{-1}(N_3\times N_3) \cap W$ and $W_3=\overline{W\setminus W_2}$. Set $\P_{W_2}^T = B_0\cap W_2$.

The chain $\delta_{W_3}^T=\frac12(\delta_W+T(\delta_W)) + \partial \P_{W_2}^T$ is a cycle of $W_3$, which is null-homologous in $W$ because of Lemma \ref{deltaZ=0}. 
Since $W_3$ is a deformation retract of $W$, $\delta_{W_3}^T$ is a null-homologous cycle in $W_3$. Therefore, there exists $\P_{W_3}$ in $W_3$ such that $\partial \P_{W_3}^T =- \delta_{W_3}^T$. Since $T(\delta_{W_3}^T)= \delta_{W_3}^T$, choose $\P_{W_3}^T$ such that $T(\P_{W_3}^T) = \P_{W_3}^T$.

Set $B= \frac12(\P_{X\cup Y} + T(\P_{X\cup Y}))+ \P_{W_2}^T+\P_{W_3}^T$. Since the boundary of $\frac12(\P_{X\cup Y} + T(\P_{X\cup Y}))+ \P_{W_2}^T $ is $\frac12G_\tau^{-1}(\{-e_1,e_1\})  +\frac12(\delta_W+T(\delta_W))+ \partial \P_{W_2}^T$, the chain $B$ is as requested by Lemma \ref{th-prop0}.\qedhere

\end{proof}
The rest of this section is devoted to the proof of Lemma \ref{deltaZ=0}.

Set $ W_1=p_b^{-1}(E_1\times E_1)$. 
Note that $W \hookrightarrow W_1$ is a homotopy equivalence. In order to prove Lemma \ref{deltaZ=0}, it suffices to prove that the class $[\delta_W]\in H_{n+2}(W_1)$ is null. Lemma \ref{deltaZ=0} directly follows from the following two lemmas.

\begin{lm}
Let $M_W$ denote the cycle $p_b^{-1}(\{(x,x) \mid x\in \partial \mur[2]\})= \unitaire\ambientspace_{|\partial\mur[2]}$. 

With these notations, $H_{n+2}(W_1) = \mathbb Q.[M_W]$.
\end{lm}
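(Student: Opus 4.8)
The statement asserts that $H_{n+2}(W_1)$ is one-dimensional, generated by the class of the small-diagonal sphere bundle $M_W = U\sphamb_{|\partial\mathbb D_\mu(2)}$. Recall $W_1 = p_b^{-1}(E_1\times E_1)$, where $E_1 = \overline{\spamb\setminus N_1}$ is the knot complement. The plan is to compute $H_*(W_1)$ from a model for the configuration space $C_2^0(E_1) = \{(x,y)\in E_1^2\mid x\neq y\}$ (to which $W_1$ is homotopy equivalent, being the relevant compactified piece) by combining three ingredients: the cohomology of $E_1$, which is that of $\s^1$ since $\psi$ is null-homologous in a $\mathbb K$-homology sphere (so $E_1$ has the $\QQ$-homology of a circle by Alexander duality), the fibration structure, and a Gysin/Mayer–Vietoris argument for the diagonal.

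\textbf{Key steps.} First I would record $H_*(E_1;\QQ) = H_*(\s^1;\QQ)$, which follows from the null-homology hypothesis exactly as the Alexander-polynomial setup in Theorem~\ref{Levine-th} already presupposes; concretely $E_1$ deformation retracts onto a compact manifold with boundary $\partial N_1$ whose rational homology is that of $\s^1$. Second, decompose $C_2^0(E_1)$ (equivalently $W_1$) along a tubular neighborhood of the diagonal $\Delta_{E_1}$: the neighborhood retracts to $\Delta_{E_1}\simeq \s^1$, its boundary is the unit tangent bundle $U E_1$, and the complement is $\{(x,y)\mid x\neq y\}$ which, because $E_1$ has the homotopy type of a low-dimensional complex relative to the $(n{+}2)$-dimensional manifold, is controlled by the projection $(x,y)\mapsto x$ with fiber $E_1\setminus\{\text{pt}\}$. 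Third, run the Mayer–Vietoris sequence for $W_1 = (\text{diagonal nbhd})\cup(\text{off-diagonal part})$ and track the single top-degree class: in degree $n+2$ the only surviving generator comes from the fiber sphere $U_x\sphamb = \s^{n+1}$ sitting over a point of the base circle, i.e. from $M_W$. One then checks all other potential contributions to $H_{n+2}$ vanish for dimension reasons (the base $\s^1$ contributes nothing in degrees $\geq 2$, and products of a $\s^{n+1}$-class with a $1$-class land in degree $n+2$ only in the one way accounted for by $[M_W]$).

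\textbf{Main obstacle.} The delicate point is organizing the off-diagonal part $\{(x,y)\in E_1^2\mid x\neq y\}$: I would use the fibration $p_1$ with fiber $E_1\setminus\{\text{pt}\}\simeq E_1\vee\s^{n+1}$ (removing a point from an $(n{+}2)$-manifold with the homology of $\s^1$ wedges on an $\s^{n+1}$), compute its Serre spectral sequence over the base $E_1\simeq\s^1$, and then graft on the diagonal neighborhood via Mayer–Vietoris, where the gluing map $U E_1 \to (\text{off-diagonal part})$ and $UE_1\to \Delta_{E_1}$ must be understood well enough to see that $[M_W]$ survives and generates. Verifying that nothing else survives in degree $n{+}2$ — in particular that the class of $UE_1$ itself maps to $[M_W]$ up to a nonzero scalar and is not killed — is the computation I expect to require the most care; it hinges on the normal bundle of the diagonal being $TE_1$ and on the Euler class of $S^{n+1}$-bundles over $\s^1$ being rationally trivial. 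Everything else is bookkeeping with the two spectral sequences / exact sequences.
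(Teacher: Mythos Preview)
Your core idea---running the Serre spectral sequence for the Fadell--Neuwirth fibration $p_1\colon C_2^0(E_1)\to E_1$ with fiber $E_1\setminus\{\mathrm{pt}\}$---does give the answer: the fiber has rational homology $\mathbb Q$ in degrees $0,1,n{+}1$, the monodromy is trivial on these groups, and over a base with the rational homology of $\s^1$ the only contribution in total degree $n{+}2$ is $E_2^{1,n+1}=\mathbb Q$, realized by the base circle crossed with the linking $\s^{n+1}$ in the fiber, which is exactly $[M_W]$. However, your write-up conflates this with a separate Mayer--Vietoris decomposition ``along a tubular neighborhood of the diagonal'' that ``retracts to $\Delta_{E_1}$'': but $C_2^0(E_1)$ does not contain the diagonal, and the ``off-diagonal part'' you then analyze via $p_1$ is already all of $C_2^0(E_1)$. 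The gluing step you flag as the main obstacle is thus an artifact of mixing two arguments rather than a genuine difficulty; either one suffices once disentangled. (Also, the claimed homotopy equivalence $E_1\setminus\{\mathrm{pt}\}\simeq E_1\vee\s^{n+1}$ is stronger than you need or can easily justify; only the rational homology of the fiber matters.)

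The paper's route is shorter and avoids spectral sequences. Since $H_*(E_1^2;\mathbb Q)$ vanishes above degree $2$, the long exact sequence of the pair $(E_1^2,\, E_1^2\setminus\Delta_{E_1})$ gives an isomorphism $H_{n+2}(E_1^2\setminus\Delta_{E_1})\cong H_{n+3}(E_1^2,\, E_1^2\setminus\Delta_{E_1})$. Excision identifies the right-hand side with the relative homology of a tubular neighborhood of $\Delta_{E_1}$ modulo the complement of the zero section; because $\spamb$ is parallelizable this normal bundle is trivial, so K\"unneth yields $H_1(\Delta_{E_1})\otimes H_{n+2}(\mathbb D^{n+2},\partial\mathbb D^{n+2})\cong\mathbb Q$, with generator $[\partial\mathbb D_\mu(2)]\otimes[\partial\mathbb D^{n+2}]$ mapping to $\pm[M_W]$. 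This is essentially the Thom-isomorphism argument behind the standard computation of $H_*(C_2(\R^m))$, and it sidesteps the monodromy and extension checks that your spectral-sequence approach would in principle require.
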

\begin{proof}
$W_1$ is nothing but $C_2(E_1)$, and $E_1$ is homotopic to the complement of $\psi(\R^n)\cup\{\infty\}\subset \s^{n+2}$. 
Let $\Delta_{E_1}$ denote the diagonal of ${E_1}^2$. The construction of the configuration space $C_2(E_1)$ implies that $C_2(E_1)$ has the homotopy type of its interior ${E_1}^2\setminus\Delta_{E_1}$, so that $H_{n+2}(W_1) \cong H_{n+2}({E_1}^2\setminus \Delta_{E_1})$.

Alexander duality implies that $H_*(E_1)$ is non trivial only in degree $0$ and $1$, and that $H_1(E_1)$ is generated by $[\partial \mur[2] ]$. 
Thus, $H_*({E_1}^2) = 0$ for $*> 2$ and the long exact sequence associated to ${E_1}^2\setminus \Delta_{E_1}\hookrightarrow {E_1}^2$ yields an isomorphism from $H_{n+3}({E_1}^2, {E_1}^2\setminus \Delta_{E_1})$ to $H_{n+2}({E_1}^2\setminus \Delta_{E_1})$.

The excision theorem yields an isomorphism between $H_{n+3}({E_1}^2, {E_1}^2\setminus \Delta_{E_1})$ and $ H_{n+3}( \mathcal N(\Delta_{E_1}) , \mathcal N(\Delta_{E_1}) \setminus \Delta_{E_1})$, where $\mathcal N(\Delta_{E_1})$ denotes a tubular neighborhood of $\Delta_{E_1}$.
Since $\punct M$ is parallelizable, $\mathcal N(\Delta_{E_1})$ is diffeomorphic to the trivial disk bundle $\Delta_{E_1}\times \mathbb D^{n+2}$, and \begin{eqnarray*}H_{n+3}(\mathcal N(\Delta_{E_1}), \mathcal N(\Delta_{E_1}) \setminus \Delta_{E_1}) &\cong& H_{n+3}(\Delta_{E_1}\times \mathbb D^{n+2}, \Delta_{E_1} \times (\mathbb D^{n+2}\setminus \{0\}))\\ &\cong& H_1(\Delta_{E_1}) \otimes H_{n+2}(\mathbb D^{n+2}, \partial \mathbb D^{n+2})\\
&\cong& H_1(\Delta_{E_1}) \otimes  H_{n+1}(\partial \mathbb D^{n+2})\\
&=& \mathbb Q.[\partial\mur[2]]\otimes[\partial\mathbb D^{n+2}].\end{eqnarray*} 
Therefore, $H_{n+2}(W_1)\cong \mathbb Q.[\partial\mur[2]]\otimes[\partial\mathbb D^{n+2}]$.
This identification maps $[M_W]$ to the generator $\pm[\partial \mur[2]] \otimes [\partial\mathbb D^{n+2}]$.
\end{proof}

\begin{lm}\label{DZ}
There exists an $(n+2)$-chain $D_W$, with $\partial D_W\subset \partial W_1$, such that: \begin{itemize}
\item $D_W$ is dual to $M_W$: $\langle D_W, M_W\rangle_{W_1} =\pm 1$. 
\item The intersection number $\langle D_W, \delta_W\rangle_{W_1} $ is zero.
\end{itemize}

\end{lm}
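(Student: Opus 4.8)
\textbf{Proof plan for Lemma \ref{DZ}.}

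The idea is to build $D_W$ very explicitly as a ``linking chain'' that detects one turn of the second configuration point around $\partial\mur[2]$, and then to verify the two required properties by a local computation near the diagonal and a direct inspection of the pieces of $\delta_W$ constructed in Sections \ref{41}--\ref{Section43}. Concretely, I would first fix a small transverse disk: choose a point $x_0\in\partial\mur[2]$ and an $(n+2)$-dimensional slice $\mathbb D_0\subset E_1$ through $x_0$ transverse to $\partial\mur[2]$, and set
\[
D_W = p_b^{-1}\big( \mathbb D_0 \times (\partial\mur[2]\setminus\{x_0\}) \big) \cup (\text{a collar piece in }\partial W_1),
\]
i.e. the closure in $W_1$ of the set of configurations $(x,y)$ with $x\in \mathbb D_0$ and $y$ running once around $\partial\mur[2]$. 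This is an $(n+2)$-chain whose boundary lies in $\partial W_1$ (it meets the diagonal face over $x_0$ and the faces $p_b^{-1}(\partial E_1\times E_1)$, $p_b^{-1}(E_1\times\partial E_1)$). The first bullet, $\langle D_W, M_W\rangle_{W_1}=\pm1$, is then a purely local statement: near $p_b^{-1}(\{(x_0,x_0)\})$ the configuration space looks like $\mathbb D_0\times \s^{n+1}$ (unit normal bundle to the diagonal), $M_W$ is the fiber $\{x_0\}\times\s^{n+1}$ over $x_0\in\partial\mur[2]$, and $D_W$ meets it in the single direction tangent to $\partial\mur[2]$ at $x_0$; one checks the orientations match the normalization in the previous lemma that sends $[M_W]$ to $\pm[\partial\mur[2]]\otimes[\partial\mathbb D^{n+2}]$.

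The second bullet, $\langle D_W,\delta_W\rangle_{W_1}=0$, is where the real work is. Here $\delta_W = \partial B_{X\cup Y} - G_\tau^{-1}(\{e_1\})$ restricted to $\partial W$, so by construction it is supported on the faces enumerated in the proof of Lemma \ref{deltaZ}: the faces $(\partial_i B_{Y_1})_{1\le i\le 3}$ and $(\partial_i B_{Y_2})_{1\le i\le 3}$ lying in $p_b^{-1}((N_2\cap E_1)\times\partial N_1)$ and $p_b^{-1}(\partial N_1\times(N_2\cap E_1))$, the faces $\partial_{\ell,\mu}B_{X_i},\partial_\mu B_{X_i},\partial_{E,L}B_{X_i},\partial_L B_{X_i}$ lying over $\partial\mur[2]\times E_3$ and $E_3\times\partial\mur[2]$ (and their transposes), and the piece $({G_\tau}_{|\unitaire E_1})^{-1}(\{e_1\})$ on the diagonal face. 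I would choose $x_0$ generically on $\partial\mur[2]$, so that the slice $\mathbb D_0$ avoids all the Seifert surfaces $\Sigma^\pm\cap E_3$ and all the half-lines $L_0^\pm(r)$; then $D_W$ is disjoint from every face involving $\Sigma^\pm$ or the $L_0^\pm(r)$, which kills the contribution of all the $X_i$-faces except those of the form (something)$\times\partial\mur[2]$ or $\partial\mur[2]\times$(something). For the remaining faces — the $\partial_\mu B_{X_i}$ type faces over $\partial\mur[2]$, the $\partial_i B_{Y_j}$ faces over $\partial N_1$, and the diagonal face — one computes the intersection with $D_W$ directly: since $D_W$ fixes the first point to lie in the small slice $\mathbb D_0$ and forces the second point to wind around $\partial\mur[2]$, the intersection points, if any, are isolated configurations with $x\in\mathbb D_0$ and $y\in\partial\mur[2]$, and I expect them to cancel in pairs because the faces $\partial_i B_{Y_1}$ and $\partial_i B_{Y_2} = (-1)^n ST(\partial_i B_{Y_1})$ are exchanged (up to sign) by the symmetry $(x,y)\mapsto(-y,-x)$, under which $D_W$ is preserved when $x_0$ is chosen antipodally-symmetric, i.e. $x_0\in\partial\mur[2]$ and $-x_0\in\partial\mur[2]$ automatically. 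Matching the signs here using the orientation conventions of Lemmas \ref{chaine1}--\ref{chaine3} is the delicate bookkeeping step.

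An alternative, cleaner route for the second bullet that I would try first: instead of a pointwise count, argue homologically. The cycle $\delta_W$ already bounds in $W_1$ after capping with the known pieces — more precisely, $\delta_W$ is the boundary of $B_{X\cup Y}\cap W_1$ minus a propagator piece, hence $[\delta_W]$ pairs with $[D_W]$ as $\langle D_W,\delta_W\rangle = \langle \partial D_W, B_{X\cup Y}\rangle$ up to sign (Stokes), and $\partial D_W$ is supported in $\partial W_1$, where $B_{X\cup Y}$ has a completely explicit description. This reduces the computation to the known boundary behavior of the $B_{X_i}$ and $B_{Y_j}$ near $\partial N_1$ and $\partial\mur[2]$, i.e. to data already tabulated in Lemmas \ref{bordX0}, \ref{bx1}, \ref{chaine1}, \ref{chaine2}, \ref{chaine3}. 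The main obstacle in either approach is purely the sign/orientation accounting: one must track the orientations of $D_W$, of the diagonal face $\unitaire E_1$ via $G_\tau$, and of the six $Y$-faces through the maps $S$ and $T$, and confirm that the total is zero; there is no conceptual difficulty, but it is the step most likely to require care, and I would organize it by grouping the faces into $S T$-symmetric pairs so that each pair visibly contributes $0$.
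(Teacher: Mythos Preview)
Your construction of $D_W$ as a ``linking chain'' $\mathbb D_0\times\partial\mur[2]$ is a natural first attempt, but the proof sketch has a real gap in the second bullet. The main issue is the contribution of the diagonal face $-G_\tau^{-1}(\{e_1\})\cap W_1\subset\delta_W'$: your $\partial D_W$ meets the diagonal boundary $\unitaire E_1$ in (essentially) the full fiber $U_{x_0}\spamb$, so this face contributes $\pm1$ to $\langle D_W,\delta_W\rangle$, and nothing in your sketch cancels it. Your proposed $ST$-symmetry argument does not help here, since the diagonal face is not part of a $Y_1/Y_2$ pair, and in any case your $D_W$ is not $ST$-invariant: $ST$ swaps the roles of the two factors, and $\mathbb D_0\neq\partial\mur[2]$. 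The Stokes ``cleaner route'' also fails as stated, because $B_{X\cup Y}$ lives in $X\cup Y$, not in $W_1$, so there is no chain in $W_1$ with boundary $\delta_W$ to pair $\partial D_W$ against. (There is also a dimension slip: the transverse slice $\mathbb D_0$ must be $(n+1)$-dimensional, not $(n+2)$.)

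The paper takes a genuinely different and more elaborate route. It builds $D_W=D_1+D_2+D_3$ from an auxiliary Seifert surface $\Sigma'$ disjoint from $\Sigma^\pm$: the piece $D_1$ is the trace of the isotopy pushing $\Sigma'$ to a parallel $\Sigma'^+$ (this alone gives $\langle D_W,M_W\rangle=\pm1$); the piece $D_2\subset\Sigma'\times\Sigma'^+$ cancels the face $\Delta(\Sigma',\Sigma'^+)$ by decomposing the diagonal class in the K\"unneth basis of $H_{n+1}(S'\times S'^+)$; and the piece $D_3$ bounds the remaining product cycles $\ba_i^d\times(\bb_i^{n+1-d})^+$ by filling each $\ba_i^d$ in $E_3$, which produces a weighted sum of unit-tangent fibres $U_{x_i}M$ with total weight $(-1)^{n+1}\tfrac{1-\chi(\Sigma')}{2}$. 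With this $D_W$, all faces of $\delta_W'$ except the diagonal one are avoided for geometric reasons (the auxiliary $\Sigma'$ is chosen transverse to everything), and the vanishing $\langle D_W,\delta_W\rangle=0$ boils down to a single identity: the diagonal-face contribution from $\partial_1 D_1$ equals the degree of the Gauss map $x\mapsto\tau_x^{-1}(n_x)$ along $\Sigma'$, and a Poincar\'e--Hopf argument (Lemma~\ref{lm419}) shows this degree is $\tfrac{\chi(\Sigma')-1}{2}$, exactly cancelling the $D_3$ contribution. This Euler-characteristic cancellation is the heart of the lemma and has no analogue in your sketch.
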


Since this lemma implies that $[\delta_W]=0\in H_{n+2}(W_1)$, it implies Lemma \ref{deltaZ=0}. 
We are left with the proof of Lemma \ref{DZ}.

We will construct the chain $D_W= D_1+D_2+D_3$ as the sum of a chain $D_1$ defined in Lemma \ref{D1}, a chain $D_2$ defined in Lemma \ref{D2}, and a chain $D_3$ defined in Lemma \ref{D3}.

\subsubsection{Construction of the chain \texorpdfstring{$D_1$}{D1}}

Fix a Seifert surface $\Sigma'^0$, whose interior does not meet those used in the construction of the chain $\P_{X\cup Y}$, such that $\Sigma'^0\cap N_3 = \{(r\cos(\frac\pi6), r\sin(\frac{\pi}6), \overline x)\mid \overline x\in \R^n, r\geq0\}\cap N_3$.
If $n$ is even, 
we assume that $\mathcal T_{{\Sigma'}^0}'(1)=0$, which is always possible after attaching some trivial $1$-handles to a given Seifert surface $\Sigma_{init}$ since Lemma \ref{dernierlemme} guarantees that 
$\mathcal T_{\Sigma_{init}}'(1)$ is an integer.
Let $\Sigma'$ denote $\Sigma'^0\cap E_1$. 
Fix an embedding $\phi\colon[-1,1]\times \Sigma'\rightarrow E_1$, such that $\phi(0,x) = x$ for any $x\in\Sigma'$. This allows us to define a normal vector $n_x=\frac{ (\frac{\partial\phi}{\partial t} )(0, x)}{||(\frac{\partial\phi}{\partial t} )(0, x)||}$ for any $x\in \Sigma'$. 
Let $\Sigma'^+$ denote the parallel surface $\phi(\{1\}\times \Sigma')$, and, for any $x\in \Sigma'$, let $x^+$ denote the associated point $\phi(1,x)$ in $\Sigma'^+$. 
Assume without loss of generality that $\Sigma'^+\cap N_3= \{(r\cos(\frac\pi3), r\sin(\frac{\pi}3), \overline x)\mid \overline x\in \R^n,r\geq0 \}\cap (E_1\cap N_3)$.

\begin{lm}\label{D1}
Set $D_1 = \overline{p_b^{-1}(\{(\phi(0,x), \phi(t,x)) \mid (t,x)\in ]0,1]\times \Sigma' \})}$. The closure adds the configurations $(x, x , [n_x])\in  \partial_\Delta\configM$ where $x\in \Sigma'$. The intersection $D_1\cap M_W$ consists of the configuration $c= (x_0, x_0, [n_{x_0}])$ where $x_0$ is the unique intersection point of $\partial\mur[2]$ and $\Sigma'$. Orient $D_1$ as $[0,1]\times \Sigma'$.

The boundary of $D_1$ is the union of three codimension $1$ faces: 
\begin{itemize}
\item the face $\Delta(\Sigma',\Sigma'^+) = \{(x, x^+)\mid x\in \Sigma'\}$, oriented as $\Sigma'$,
\item the face $\partial_1D_1 = \{(x, x, [n_x]) \mid x\in \Sigma'\}$, oriented as $-\Sigma'$,
\item the face $\partial_2D_1= \overline{\{(x, \phi(t,x)) \mid 0< t\leq 1, x\in \partial\Sigma'\}}$, oriented as $-[0,1]\times\partial\Sigma'$.
\end{itemize}
Furthermore, the last two faces are contained in $\partial W_1$.
\end{lm}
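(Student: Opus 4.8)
The plan is to realize $D_1$ as the image of an embedding of the manifold with corners $[0,1]\times\Sigma'$ into $\configM$ and to read off everything from that description. Set $\Phi(t,x) = (\phi(0,x),\phi(t,x))$ for $(t,x)\in\,]0,1]\times\Sigma'$; this lands in $C_2^0(\ambientspace)$ since $\phi$ is injective and $\phi(0,x)=x$. Because $\phi$ is an embedding, $\phi(t,x)-x = t\,\|\partial_t\phi(0,x)\|\,n_x + O(t^2)$, so reading $\configM$ near $\partial_\Delta\configM$ through the blow-down $p_b$ (locally a neighbourhood of a diagonal point is parametrized by a base point, a nonnegative radial coordinate $\rho$ and a direction $u\in\s^{n+1}$, with $p_b(x,\rho,u) = (x,\exp_x(\rho u))$ for a fixed metric), $\Phi$ becomes $(t,x)\mapsto (x,\rho(t,x),u(t,x))$ with $\rho(t,x)$ equal to $t$ times a positive smooth function and $u(0,x)=n_x$. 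Hence $\Phi$ extends to a smooth map on $[0,1]\times\Sigma'$, it is an immersion, it is injective (the first point recovers $x$, then injectivity of $\phi$ recovers $t$), and it is proper since $E_1$, hence $\Sigma' = \Sigma'^0\cap E_1$, hence $[0,1]\times\Sigma'$, is compact; so $\Phi$ is an embedding. Its compact image contains $p_b^{-1}(\{(\phi(0,x),\phi(t,x))\mid t\in\,]0,1],\ x\in\Sigma'\})$, hence equals the closure $D_1$, and the only configurations added are $\Phi(0,x) = (x,x,[n_x])\in\partial_\Delta\configM$ for $x\in\Sigma'$.

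Next I would identify the boundary faces. Up to corners, $\partial([0,1]\times\Sigma') = \{1\}\times\Sigma'\ \sqcup\ \{0\}\times\Sigma'\ \sqcup\ [0,1]\times\partial\Sigma'$, and $\Phi$ maps these onto $\Delta(\Sigma',\Sigma'^+)$, $\partial_1 D_1$ and $\partial_2 D_1$ respectively. Transporting the orientation $\d t\wedge\Omega(\Sigma')$ through $\Phi$ and using the ``outward normal first'' convention, at $t=1$ the outward normal is $+\partial_t$, giving orientation $\Omega(\Sigma')$, so $\Delta(\Sigma',\Sigma'^+)$ is oriented as $\Sigma'$; at $t=0$ it is $-\partial_t$, giving $-\Omega(\Sigma')$, so $\partial_1 D_1$ is oriented as $-\Sigma'$; along $[0,1]\times\partial\Sigma'$ the outward normal is that of $\partial\Sigma'$ in $\Sigma'$, and commuting it past $\d t$ produces a sign, so $\partial_2 D_1$ is oriented as $-[0,1]\times\partial\Sigma'$. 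That the last two faces lie in $\partial W_1$ is then immediate: $\partial_1 D_1\subset\unitaire E_1\subset\partial W_1$ because $\Sigma'\subset E_1$, and $\partial_2 D_1\subset p_b^{-1}(\partial E_1\times E_1)\subset\partial W_1$ because $\partial\Sigma' = \Sigma'^0\cap\partial E_1$ and $\phi$ takes values in $E_1$.

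It remains to compute $D_1\cap M_W$, where $M_W = \unitaire\ambientspace_{|\partial\mur[2]}$. The interior of $D_1$ lies in $C_2^0(\ambientspace)$, which is disjoint from $M_W\subset\partial_\Delta\configM$, so $D_1\cap M_W\subset\partial D_1$; among the boundary faces only $\partial_1 D_1$ lies in $\partial_\Delta\configM$, and the closure of $\partial_2 D_1$ adds only diagonal configurations over $\partial\Sigma'$, which is disjoint from $\partial\mur[2]$ (the circle $\partial\mur[2]$ sits at cylindrical radius $2$, while $\partial\Sigma'\subset\partial N_1$). Hence $D_1\cap M_W = \{(x,x,[n_x])\mid x\in\Sigma'\cap\partial\mur[2]\}$, and since $\partial\mur[2]$ is the radius-$2$ circle in $\{\overline x=\overline0\}$ while $\Sigma'\cap N_3$ is the half-hyperplane of angle $\frac{\pi}{6}$, both lying in $N_3$, these meet in the single point $x_0 = (2\cos\frac{\pi}{6},\,2\sin\frac{\pi}{6},\,\overline0)$, which lies in $E_1$; thus $D_1\cap M_W = \{(x_0,x_0,[n_{x_0}])\}$.

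The step I expect to be the main obstacle is the first one: turning the informal picture of $\Phi$ near $t=0$ into an honest smooth embedding $[0,1]\times\Sigma'\hookrightarrow\configM$ requires care with the smooth structure of the diagonal blow-up (the ``radial'' coordinate is a smooth substitute for $\|\phi(t,x)-x\|$, not the norm itself), and the transversality of $D_1$ and $M_W$ at $(x_0,x_0,[n_{x_0}])$ that will be used in Lemma~\ref{DZ} comes down to the transversality of the circle $\partial\mur[2]$ and the surface $\Sigma'$ inside $N_3$.
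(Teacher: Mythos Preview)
Your proposal is correct and simply fleshes out what the paper records as ``a direct check'': you realize $D_1$ as the image of the natural embedding $\Phi\colon[0,1]\times\Sigma'\hookrightarrow\configM$, read off the three boundary faces with their orientations from the outward-normal-first convention, and identify $D_1\cap M_W$ by intersecting $\Sigma'$ with $\partial\mur[2]$ in $N_3$. The closing remark on transversality is a useful anticipation of what is needed in Lemma~\ref{DZ}, though it goes slightly beyond the present statement.
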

\begin{proof}This is a direct check.
\end{proof} Our chain $D_W$ will be defined from $D_1$ by gluing other pieces in order to cancel the face $\Delta(\Sigma', \Sigma'^+)$, which is not contained in $\partial W_1$.
\subsubsection{Construction of the chain \texorpdfstring{$D_2$}{D2}}

Let $S'$ denote the closed surface obtained by gluing a disk $\mathbb D^{n+1}$ and $\Sigma'$ along their boundaries. The surface $S'$ is oriented as $\Sigma' \cup - \mathbb D^{n+1}$. Let $S'\times S'^+$ denote the product of two copies of $S'$, where the coordinates read $(x, y^+)$, so that $\Sigma'\times \Sigma'^+\subset W_1$ naturally embeds into $S'\times S'^+$. Set $\Delta(S', S'^+) = \{(x, x^+)\mid x\in S'\}$, and orient it as $S'$. 
\begin{nt}\label{sigmas2}
Choose two families $(\bb_i^\dimd)_{0\leq \dimd \leq n+1, i\in\und{b_\dimd}}$ and $(\ba_i^\dimd)_{0\leq \dimd \leq n+1, i\in\und{b_\dimd}}$ of cycles of $S'$ such that
\begin{itemize}
\item for any $\dimd \in\{0,\ldots, n+1\}$, the families $([\bb_i^\dimd])_{i \in \und{b_\dimd}}$ and $([\ba_i^\dimd])_{i\in \und{b_\dimd}}$ are two bases of $H_\dimd(S')$,
\item for any $\dimd \in\{0,\ldots, n+1\}$, and any $(i,j) \in (\und{b_\dimd})^2$, $\langle [\bb_i^\dimd] , [\ba^{n+1-\dimd}_j]\rangle_{S'} = \delta_{i,j}$,
\item for any $\dimd\in \und n$ and any $i\in \und{b_\dimd}$, the chains $\bb_i^\dimd$ and $\ba_i^\dimd$ are contained in $\Sigma'\cap E_3$,
\item the chains $\bb_1^0$ and $\ba_1^0$ are two distinct points of $\partial\Sigma'$, and $\bb_1^{n+1}=\ba_1^{n+1}= S'$.
\end{itemize} 
Such a choice is possible as in Lemma \ref{sigmas}, and the previous chains induce similar families $((\bb_i^\dimd)^+)_{0\leq \dimd \leq n+1, i\in\und{ b_\dimd}}$ and $((\ba_i^\dimd)^+)_{0\leq \dimd \leq n+1, i\in \und{b_\dimd}}$ in ${S'}^+$. 
\end{nt}
\begin{lm}\label{315}
We have the following equality in $H_{n+1}(S'\times S'^+)$: 
\[ [ \Delta(S', S'^+)] = \sum\limits_{\dimd=0}^{n+1}\sum\limits_{i\in\und{b_\dimd}}(-1)^{nd} [\ba_i^\dimd\times (\bb_i^{n+1-\dimd})^+].\]
\end{lm}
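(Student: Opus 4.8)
The statement to prove is the Künneth-type decomposition of the diagonal class in $H_{n+1}(S'\times S'^+)$, namely
\[ [ \Delta(S', S'^+)] = \sum\limits_{\dimd=0}^{n+1}\sum\limits_{i\in\und{b_\dimd}}(-1)^{nd} [\ba_i^\dimd\times (\bb_i^{n+1-\dimd})^+].\]

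This is essentially the classical formula for the diagonal class of a closed oriented manifold in terms of a pair of Poincaré-dual bases, and the plan is to reduce it to that statement via the Künneth theorem. Here is how I would proceed.

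\textbf{Step 1: Set up Künneth.} Since $S'$ is a closed oriented $(n+1)$-manifold (it is $\Sigma'\cup -\mathbb D^{n+1}$, glued along $\partial\Sigma'\cong\s^n$), its rational homology is finite-dimensional and the cross-product map $\bigoplus_{d} H_d(S')\otimes H_{n+1-d}(S'^+)\to H_{n+1}(S'\times S'^+)$ is an isomorphism. By Notation \ref{sigmas2}, for each $\dimd$, the classes $([\bb_i^\dimd])_{i}$ form a basis of $H_\dimd(S')$ and $([\ba_i^{n+1-\dimd}])_i$ form a basis of $H_{n+1-\dimd}(S')$ (using $b_{\dimd}=b_{n+1-\dimd}$, which follows from Poincaré duality), with $\langle [\bb_i^\dimd],[\ba_j^{n+1-\dimd}]\rangle_{S'}=\delta_{ij}$. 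Hence the classes $[\ba_i^\dimd\times(\bb_i^{n+1-\dimd})^+]$ for $\dimd\in\{0,\dots,n+1\}$, $i\in\und{b_\dimd}$ form a basis of $H_{n+1}(S'\times S'^+)$ (after reindexing, the collection $\{[\ba_i^\dimd]\otimes[(\bb_i^{n+1-\dimd})^+]\}$ runs over a basis of each Künneth summand). So it suffices to compute the coefficients of $[\Delta(S',S'^+)]$ against this basis, and by the duality relation it is enough to pair $[\Delta(S',S'^+)]$ with the dual basis $\{[(\bb_j^{\dimd'})^+\times \ba_j^{n+1-\dimd'}]\}$ appropriately ordered, exactly as was done in the proof of Lemma \ref{decomp-prop}.

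\textbf{Step 2: Compute the intersection numbers with the diagonal.} The core computation is $\langle \Delta(S',S'^+),\ \bb_j^{\dimd'}\times(\ba_j^{n+1-\dimd'})^+\rangle_{S'\times S'^+}$ (or the analogous pairing with the appropriate sign/ordering convention). Since the diagonal $\Delta(S',S'^+)$ is the image of $S'$ under $x\mapsto(x,x^+)$, a point of $\Delta\cap(\bb_j^{\dimd'}\times(\ba_j^{n+1-\dimd'})^+)$ corresponds to a point of $\bb_j^{\dimd'}\cap \ba_j^{n+1-\dimd'}$ in $S'$, and the algebraic count is $\langle \bb_j^{\dimd'},\ba_j^{n+1-\dimd'}\rangle_{S'}=\delta_{\dimd,\dimd'}\delta_{ij}$ up to a sign depending only on $n$ and $\dimd$ coming from reordering the normal bundles (the standard fact that the self-intersection/diagonal pairing introduces a factor like $(-1)^{\dimd(n+1-\dimd)}$ or, after incorporating the orientation conventions of the paper, $(-1)^{n\dimd}$). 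This sign bookkeeping is exactly parallel to the computation of $\alpha_{\dimd,p,q,\epsilon,\epsilon'}^{(i)}$ in Lemma \ref{decomp-prop}, where the factor $(-1)^{(n+1)\dimd}$ appeared, and to Lemma \ref{interpii}; I would simply transcribe that computation, being careful that here we pair against the diagonal rather than an external propagator, which is why the final sign is $(-1)^{n\dimd}$ rather than involving $\lk$.

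\textbf{Step 3: Assemble.} Matching coefficients: the coefficient of $[\ba_i^\dimd\times(\bb_i^{n+1-\dimd})^+]$ in $[\Delta(S',S'^+)]$ equals (up to the universal sign) $\langle\Delta(S',S'^+),\bb_i^{\dimd}\times(\ba_i^{n+1-\dimd})^+\rangle=(-1)^{nd}$ by Step 2, and all cross terms with $(\dimd',j)\neq(\dimd,i)$ vanish, which gives the stated formula. The main obstacle here is purely the sign $(-1)^{nd}$: one has to pin down the orientation conventions of Section on orientations (coorientations of preimages, the orientation of a product of normal bundles, the "outward normal first" convention for boundaries, and how $\Delta(S',S'^+)$ is oriented "as $S'$") and verify they produce exactly $(-1)^{nd}$ and not some other power of $-1$. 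I expect no conceptual difficulty beyond this — the topology (Künneth plus Poincaré duality for the closed manifold $S'$) is completely standard — but the sign is the one place where care is genuinely needed, and I would verify it on a small model case ($S'=\s^{n+1}$, where the two $d=0,n+1$ terms give $[\text{pt}\times S'^+]+(-1)^{n(n+1)}[S'\times\text{pt}^+]$, consistent with $n(n+1)$ even and hence $[\Delta]=[\text{pt}\times S']+[S'\times\text{pt}]$) before trusting it in general.
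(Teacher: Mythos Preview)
Your proposal is correct and follows essentially the same route as the paper: write $[\Delta(S',S'^+)]$ in the K\"unneth basis $[\ba_i^\dimd\times(\bb_j^{n+1-\dimd})^+]$, then extract the coefficients by intersecting with the dual basis and reducing to the single-factor intersections $\langle \bb_i^{n+1-\dimd},\ba_j^{\dimd}\rangle_{S'}$. The paper's computation of the sign is the two-line reduction $(-1)^{(n+1)(\dimd+1)}\cdot(-1)^{n+1-\dimd}=(-1)^{n\dimd}$, obtained from the duality $\langle [\ba_i^\dimd\times(\bb_j^{n+1-\dimd})^+],[\bb_{i'}^{n+1-\dimd'}\times(\ba_{j'}^{\dimd'})^+]\rangle=(-1)^{(n+1)(\dimd'+1)}\delta$, which is exactly the bookkeeping you flagged as the only nontrivial point.
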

\begin{proof}
The Künneth formula implies that $H_{n+1}(S'\times S'^+)$ admits the two families $([\ba_i^\dimd\times (\bb_j^{n+1-\dimd})^+])_{0\leq\dimd \leq n+1,  (i,j) \in(\und{b_\dimd})^2}$ and $([\bb_i^\dimd\times (\ba_j^{n+1-\dimd})^+])_{0\leq\dimd \leq n+1,  (i,j) \in(\und{b_\dimd})^2}$ as bases. For any $(\dimd,\dimd')\in\{0,\ldots, n+1\}^2$, any $(i,j)\in(\und{b_\dimd})^2$, and any $ ( i',j')\in(\und{b_{\dimd'}})^2$, we have the following duality property:
\[\langle [\ba_i^\dimd\times (\bb_j^{n+1-\dimd})^+] , [\bb_{i'}^{n+1-\dimd'}\times (\ba_{j'}^{\dimd'})^+]\rangle_{S'\times{S'}^+} =(-1)^{(n+1)(d'+1)}\delta_{i,i'}\delta_{j,j'}\delta_{\dimd,\dimd'}\]

There exist coefficients such that $[ \Delta(S', S'^+)] = \sum\limits_{\dimd=0}^{n+1}\sum\limits_{i=1}^{b_\dimd} \sum\limits_{j=1}^{b_\dimd}\alpha_{i,j}^\dimd [\ba_i^\dimd\times (\bb_j^{n+1-\dimd})^+]$, and the duality property above and the definition of $\Delta(S', S'^+)$ yield \begin{multline*}
\alpha_{i,j}^\dimd =(-1)^{(n+1)(d+1)} \left\langle [\Delta(S', S'^+) ] , [\bb_i^{n+1-\dimd}\times (\ba_j^{\dimd})^+]\right\rangle_{S'\times S'^+ }\\=(-1)^{(n+1)(d+1)} (-1)^{n+1-d}\delta_{i,j}.\qedhere\end{multline*}
\end{proof}

Let $\mathbb D$ denote the $(n+1)$-disk $\mathbb D^{n+1}$, which we glued to $\Sigma'$ above.
To express $\Delta(\Sigma',\Sigma'^+)$ from this lemma, we study $\Delta(\mathbb D, \mathbb D^+)= \Delta(S', S'^+)-\Delta(\Sigma',\Sigma'^+)$.

\begin{lm}\label{ddelta} 

There exists a chain $D_\delta$ in $\partial \Sigma'\times\partial\Sigma'^+$ such that the chain 
$c_{\delta,1}= D_\delta -
 \ba_1^0 \times \mathbb D^+ 
- \mathbb D\times (\bb_1^0)^+  + \Delta(\mathbb D, \mathbb D^+)$ 
is a null-homologous cycle of $\mathbb D\times \mathbb D^+$.
\end{lm}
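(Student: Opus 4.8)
The plan is to pin down the class $[\Delta(\mathbb D,\mathbb D^+)]$ in $H_{n+1}(\mathbb D\times\mathbb D^+)$ relative to its boundary, and then subtract off the pieces of $\Delta(S',S'^+)$ that land on the $\mathbb D$-factors, so that what remains becomes a genuine cycle in $\mathbb D\times\mathbb D^+$; since $\mathbb D\times\mathbb D^+$ is contractible, any cycle is null-homologous, which is exactly the assertion. First I would observe that $\mathbb D\times\mathbb D^+$ deformation retracts to a point, so $H_{n+1}(\mathbb D\times\mathbb D^+)=0$ and $H_{n+1}(\mathbb D\times\mathbb D^+,\partial)$ fits in the exact sequence with $H_{n+1}(\partial(\mathbb D\times\mathbb D^+))$ and $H_n(\partial(\mathbb D\times\mathbb D^+))$. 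Thus the only obstruction to $c_{\delta,1}$ being null-homologous is its relative class, which is detected by its boundary in $\partial(\mathbb D\times\mathbb D^+)$; so the real content is to exhibit $D_\delta\subset\partial\Sigma'\times\partial\Sigma'^+$ whose boundary cancels the boundary of $\Delta(\mathbb D,\mathbb D^+)-\ba_1^0\times\mathbb D^+-\mathbb D\times(\bb_1^0)^+$.

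Next I would compute the boundary of $\Delta(\mathbb D,\mathbb D^+)$: since $\partial\mathbb D=\partial\Sigma'$ (up to sign), the diagonal's boundary is $\Delta(\partial\Sigma',\partial\Sigma'^+)$, a cycle sitting in $\partial\Sigma'\times\partial\Sigma'^+\simeq\s^n\times\s^n$. Similarly $\partial(\ba_1^0\times\mathbb D^+)=\ba_1^0\times\partial\Sigma'^+$ and $\partial(\mathbb D\times(\bb_1^0)^+)=\partial\Sigma'\times(\bb_1^0)^+$, both in $\partial\Sigma'\times\partial\Sigma'^+$ as well (here $\ba_1^0,\bb_1^0$ are the chosen distinct points of $\partial\Sigma'$, which is an $n$-sphere). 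So the combined boundary $\partial(\Delta(\mathbb D,\mathbb D^+)-\ba_1^0\times\mathbb D^+-\mathbb D\times(\bb_1^0)^+)$ is the cycle $z:=\Delta(\partial\Sigma',\partial\Sigma'^+)-\ba_1^0\times\partial\Sigma'^+-\partial\Sigma'\times(\bb_1^0)^+$ in $\s^n\times\s^n$. Using the Künneth decomposition of $H_n(\s^n\times\s^n)=H_0(\s^n)\otimes H_n(\s^n)\oplus H_n(\s^n)\otimes H_0(\s^n)$, the diagonal class is $[\mathrm{pt}\times\s^n]+[\s^n\times\mathrm{pt}]$ (up to the usual sign $(-1)^{n}$ from the convention in Lemma \ref{315}), while the two subtracted terms represent exactly these two generators; hence $[z]=0$ in $H_n(\s^n\times\s^n)$ and there exists $D_\delta\subset\partial\Sigma'\times\partial\Sigma'^+$ with $\partial D_\delta=z$. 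I would then set $c_{\delta,1}=D_\delta-\ba_1^0\times\mathbb D^+-\mathbb D\times(\bb_1^0)^+ +\Delta(\mathbb D,\mathbb D^+)$, which by construction is a cycle in $\mathbb D\times\mathbb D^+$, and since that space is contractible it is null-homologous, proving the lemma.

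The main obstacle I expect is bookkeeping the orientations and signs: matching the convention $(-1)^{nd}$ from Lemma \ref{315}, the $(-1)^{(n+1)(d+1)}$ duality signs, and the induced orientations on $\partial\mathbb D$ versus $\partial\Sigma'$ (which differ by the orientation $S'=\Sigma'\cup-\mathbb D^{n+1}$), so that the three boundary contributions genuinely add up to a null-homologous class rather than to twice a generator or to zero with a spurious sign. A secondary point to be careful about is that $\ba_1^0$ and $\bb_1^0$ are chosen to be \emph{distinct} points of $\partial\Sigma'$, which is what makes $\ba_1^0\times\mathbb D^+$ and $\mathbb D\times(\bb_1^0)^+$ have disjoint enough boundaries; one should record that the diagonal $\Delta(\partial\Sigma',\partial\Sigma'^+)$, being the diagonal of $\s^n\times\s^n$, is homologous to $[\mathrm{pt}\times\s^n]+(-1)^n[\s^n\times\mathrm{pt}]$, and that $\ba_1^0$ (resp. $(\bb_1^0)^+$) being a point, the products $\ba_1^0\times\partial\Sigma'^+$ and $\partial\Sigma'\times(\bb_1^0)^+$ represent precisely those two generators. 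Everything else is a routine verification that the stated chain is a cycle and an application of contractibility of $\mathbb D\times\mathbb D^+$.
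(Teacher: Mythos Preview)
Your argument is correct and reaches the same conclusion as the paper, but by a genuinely different route. The paper constructs $D_\delta$ \emph{explicitly}: identifying $\ba_1^0$ and $\bb_1^0$ with the North and South poles of $\s=\partial\mathbb D$, it sweeps each point $x^+\in\s^+\setminus\{P_N^+\}$ to $P_S^+$ along the unique short geodesic, so that $D_\delta=\overline{\{(x,y_x^+(t))\}}$ is a concrete prism whose three boundary faces are read off directly as $\{P_N\}\times\s^+$, $\s\times\{P_S^+\}$, and $-\partial\Delta(\mathbb D,\mathbb D^+)$. This immediately shows $c_{\delta,1}$ is closed without any appeal to the homology of $\s^n\times\s^n$, and the null-homologous conclusion then follows from $H_{n+1}(\mathbb D\times\mathbb D^+)=0$ just as in your last step.

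Your approach replaces this geometric sweep by a homological existence argument: you compute the boundary of $\Delta(\mathbb D,\mathbb D^+)-\ba_1^0\times\mathbb D^+-\mathbb D\times(\bb_1^0)^+$ as a cycle in $\partial\Sigma'\times\partial\Sigma'^+\cong\s^n\times\s^n$, identify it (via K\"unneth and the standard decomposition of the diagonal class of a sphere) as null-homologous there, and deduce the existence of $D_\delta$. This is perfectly valid and arguably more conceptual; the trade-off is exactly the one you flag, namely that the sign verification (matching $[\Delta_{\s^n}]=[\mathrm{pt}\times\s^n]+(-1)^n[\s^n\times\mathrm{pt}]$ against the induced boundary orientations, including $\partial\mathbb D=-\partial\Sigma'$) must actually be carried out rather than deferred, whereas the paper's explicit construction sidesteps that computation entirely. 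Since $D_\delta$ is only used through its boundary in the sequel, the non-constructive nature of your $D_\delta$ costs nothing downstream.
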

\begin{proof}
Let $\s$ (respectively $\s^+$) denote the sphere that bounds the disk $\mathbb D$ (respectively $\mathbb D^+$), which we glued to $\Sigma'$ (respectively $\Sigma'^+$). Note that $\s= - \partial \Sigma'$. Without loss of generality,
assume that $\ba_1^0$ is the North Pole $P_N$ of the sphere $\s$, and that $\bb_1^0$ is the South Pole $P_S$. Similarly define $P_S^+$ and $P_N^+$.

For any $x\in \s\setminus \{P_N\}$, there exists a unique shortest geodesic parametrized with constant speed $(y^+_x(t))_{0\leq t \leq 1}$ on the sphere $\s^+$ going from $x^+$ to $P_S^+$. Set $D_\delta= \overline{\{(x,y^+_x(t)) \mid 0\leq t \leq 1, x\in \s\setminus\{P_N\}\}}$, and orient it as $[0,1]\times( \s\setminus \{P_N\})$.

The boundary of $D_\delta$ is the union of three codimension $1$ faces:\begin{itemize}
\item the face $\{P_N\}\times \s^+$,
\item the face $+\s\times\{P_S^+\}$,
\item the face $-\partial \Delta(\mathbb D, \mathbb D^+)$.
\end{itemize}
The first face appears when $x$ approaches $P_N$, the second one when $t=1$, and the third one when $t=0$. This implies that $c_{\delta,1}$ is a cycle. Since $H_{n+1}(\mathbb D\times \mathbb D^+)=0$, $c_{\delta,1}$ is null-homologous.
\end{proof}

\begin{lm}\label{D2}
There exists a chain $D_2\subset \Sigma'\times\Sigma'^+\subset W_1$, such that $D_2\cap \partial W_1\subset \partial D_2$, and such that

\[\partial D_2 = \left(\sum\limits_{\dimd\in\und n}\sum\limits_{i\in\und{b_\dimd}}
(-1)^{nd}
 \ba_i^\dimd\times (\bb_i^{n+1-\dimd})^+\right) + \ba_1^0\times \Sigma'^+ + \Sigma'\times (\bb_1^0)^+ + D_\delta- \Delta(\Sigma',\Sigma'^+).\]
\end{lm}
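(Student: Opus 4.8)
The idea is to produce $D_2$ by feeding the two homological identities already proved — Lemma~\ref{315} (in $S'\times S'^+$) and Lemma~\ref{ddelta} (in $\mathbb D\times\mathbb D^+$) — into a single comparison of cycles in the product $\Sigma'\times\Sigma'^+$, and then to descend the resulting vanishing class from $S'\times S'^+$ down to $\Sigma'\times\Sigma'^+$ via the Künneth theorem. Concretely, write $\beta$ for the prescribed right-hand side of $\partial D_2$; the claim is that $\beta$ is a cycle carried by $\Sigma'\times\Sigma'^+$ that is null-homologous there, so that $D_2$ can be taken to be any $(n+2)$-chain in $\Sigma'\times\Sigma'^+$ with $\partial D_2=\beta$.

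First I would check that $\beta$ is a cycle in $\Sigma'\times\Sigma'^+$: the chains $\ba_i^\dimd$, $(\bb_i^{n+1-\dimd})^+$ with $\dimd\in\und n$ are cycles, $\ba_1^0$ and $(\bb_1^0)^+$ are points, and the boundary contributions of $\ba_1^0\times\Sigma'^+$, $\Sigma'\times(\bb_1^0)^+$, $D_\delta$ and $\Delta(\Sigma',\Sigma'^+)$ cancel using the description of $\partial D_\delta$ in Lemma~\ref{ddelta} together with $\s=-\partial\Sigma'$. Next, decompose $S'=\Sigma'+\mathbb D$ and $S'^+=\Sigma'^++\mathbb D^+$ at the level of chains (with $\mathbb D$ the disk piece of $S'$ used in Lemma~\ref{ddelta}, so that $\Delta(S',S'^+)=\Delta(\Sigma',\Sigma'^+)+\Delta(\mathbb D,\mathbb D^+)$), and set
\[\beta' = \Delta(S',S'^+)-\ba_1^0\times S'^+-S'\times(\bb_1^0)^+-\sum_{\dimd\in\und n}\sum_{i\in\und{b_\dimd}}(-1)^{n\dimd}\,\ba_i^\dimd\times(\bb_i^{n+1-\dimd})^+ .\]
Since $\bb_1^{n+1}=\ba_1^{n+1}=S'$, $\bb_1^0,\ba_1^0$ are points and $(-1)^{n(n+1)}=1$, the cycle $\beta'$ is exactly the difference of the two sides of Lemma~\ref{315}, hence null-homologous in $S'\times S'^+$. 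A term-by-term comparison (this is where the orientation conventions get used) then yields $\beta+\beta'=c_{\delta,1}$, the cycle of Lemma~\ref{ddelta}, which bounds inside $\mathbb D\times\mathbb D^+\subset S'\times S'^+$. Therefore $[\beta]=0$ in $H_{n+1}(S'\times S'^+)$.

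Finally I would descend to $\Sigma'\times\Sigma'^+$. As in Lemma~\ref{sigmas}, the inclusion induces isomorphisms $H_\dimd(\Sigma')\to H_\dimd(S')$ for $\dimd\in\{0,\dots,n\}$, while $H_{n+1}(\Sigma')=H_{n+1}(\Sigma'^+)=0$ because these are $(n+1)$-manifolds with non-empty boundary; hence by Künneth $H_{n+1}(\Sigma'\times\Sigma'^+)$ has basis $\big([\ba_i^\dimd\times(\bb_j^{n+1-\dimd})^+]\big)_{\dimd\in\und n}$, and the inclusion $\Sigma'\times\Sigma'^+\hookrightarrow S'\times S'^+$ sends this basis injectively into the Künneth basis of $H_{n+1}(S'\times S'^+)$. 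Combining with $[\beta]=0$ upstairs gives $[\beta]=0$ in $H_{n+1}(\Sigma'\times\Sigma'^+)$, so $\beta$ bounds a chain $D_2\subset\Sigma'\times\Sigma'^+$; choosing $D_2$ transverse to the boundary faces of the manifold-with-corners $\Sigma'\times\Sigma'^+$, which make up $(\Sigma'\times\Sigma'^+)\cap\partial W_1$, ensures $D_2\cap\partial W_1\subseteq\partial D_2$, as required.

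\textbf{Main obstacle.} The only real difficulty is the sign bookkeeping: one must pin down compatible conventions for the diagonal chains $\Delta(S',S'^+)$, $\Delta(\Sigma',\Sigma'^+)$, $\Delta(\mathbb D,\mathbb D^+)$, for the chain-level splittings $S'=\Sigma'+\mathbb D$ and $S'^+=\Sigma'^++\mathbb D^+$ (note the ``$\Sigma'\cup-\mathbb D^{n+1}$'' orientation of $S'$), and for the orientations of $\partial D_\delta$ and of the product cycles, so that the clean identity $\beta+\beta'=c_{\delta,1}$ holds on the nose rather than merely up to sign. Once that is arranged, the rest — the cycle check, the application of Lemmas~\ref{315} and~\ref{ddelta}, and the Künneth injectivity argument — is routine.
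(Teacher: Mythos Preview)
Your proposal is correct and follows essentially the same route as the paper: your cycle $\beta$ is precisely the paper's $c_{\delta,2}$, your $-\beta'$ is exactly the combination the paper subtracts from $c_{\delta,1}$ to obtain $c_{\delta,2}$, and the final Künneth injectivity step is identical. The only addition is your brief transversality remark for the condition $D_2\cap\partial W_1\subset\partial D_2$, which the paper leaves implicit.
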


\begin{proof}

Let $c_{\delta,2} = c_{\delta,1} - \Delta(S', S'^+) + \sum\limits_{\dimd=0}^{n+1}\sum\limits_{i\in\und{b_\dimd}} 
(-1)^{nd}
 \ba_i^\dimd\times (\bb_i^{n+1-\dimd})^+$.

Lemma \ref{315} and \ref{ddelta} imply that $c_{\delta,2}$ is null-homologous in $S'\times {S'}^+$.
Now $c_{\delta,2}$ reads 
\[c_{\delta,2} = 
\left(\sum\limits_{\dimd\in\und n}\sum\limits_{i\in\und{b_\dimd}}
(-1)^{nd} 
\ba_i^\dimd\times (\bb_i^{n+1-\dimd})^+\right) 
+ \ba_1^0\times \Sigma'^+
+\Sigma'\times (\bb_1^0)^+ 
+ D_\delta- \Delta(\Sigma',\Sigma'^+).\]
Therefore, $c_{\delta, 2}$ is a cycle of $\Sigma'\times \Sigma'^+$ and the class $[c_{\delta, 2}]$ is null in $H_{n+1}(S'\times S'^+)$.
The Künneth formula proves that $([\ba_i^\dimd\times (\bb_j^{n+1-\dimd})^+])_{1\leq \dimd \leq n, 1\leq i,j \leq b_\dimd}$ is a basis of $H_{n+1}(\Sigma'\times \Sigma'^+)$. Since it is a subfamily of the basis $([\ba_i^\dimd\times (\bb_j^{n+1-\dimd})^+])_{0\leq \dimd \leq n+1, 1\leq i,j \leq b_\dimd}$ of $H_{n+1}(S'\times S'^+)$,
 the map $H_{n+1}(\Sigma'\times\Sigma'^+)\rightarrow H_{n+1}(S'\times S'^+)$ induced by the inclusion is injective, and $\left[c_{\delta, 2}\right]=0$ in $H_{n+1}(\Sigma'\times \Sigma'^+)$.\qedhere
\end{proof}
At this point, $\partial(D_1+ D_2) $ is the sum of a chain contained in $\partial W_1$ and the chain $\left(\sum\limits_{\dimd\in\und n}\sum\limits_{i\in\und{b_\dimd}} 
(-1)^{nd}
\ba_i^\dimd\times (\bb_i^{n+1-\dimd})^+\right)$, which is not contained in $\partial W_1$. It remains to define the chain $D_3$ in order to cancel $\left(\sum\limits_{\dimd \in \und n}\sum\limits_{i\in\und{b_\dimd}}
(-1)^{nd}
 \ba_i^\dimd\times (\bb_i^{n+1-\dimd})^+\right)$.

\subsubsection{Construction of the chain \texorpdfstring{$D_3$}{D3}}

Recall that the unit normal bundle to the diagonal of $\ambientspace\times\ambientspace$ has been identified with the unit tangent bundle $U\ambientspace$ of $\ambientspace$, and that it is a piece of $\partial \configM$.
\begin{lm}\label{D3}
There exists a chain $D_3\subset p_b^{-1}(E_3\times E_3)$, which meets $\partial p_b^{-1}(E_3\times E_3)$ only along $\partial D_3$, and such that $\partial D_3$ is the union of
\begin{itemize}
\item the faces $ (-1)^{nd+1}\ba_i^\dimd\times(\bb_i^{n+1-\dimd})^+$, for $\dimd\in\und n$ and $i\in\und{b_\dimd}$,
\item a finite collection of fibers $\epsilon(x_i).U_{x_i}M\subset \partial W_1 \cap\partial\configM$, for $1\leq i \leq m$.
\end{itemize}
Furthermore, $\sum\limits_{i=1}^m  \epsilon( x_i) = (-1)^{n+1}\frac{\chi(\Sigma')-1}2$.
\end{lm}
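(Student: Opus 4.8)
The plan is to obtain $D_3$ from a homology computation in the two-point configuration space $p_b^{-1}(E_3\times E_3)$, which already contains the $(n+1)$-cycle
\[ Z := \sum_{d\in\und n}\sum_{i\in\und{b_d}}(-1)^{nd}\,\ba_i^d\times(\bb_i^{n+1-d})^+ \]
that must be cancelled; since the cycles $\ba_i^d$ and $(\bb_i^{n+1-d})^+$ lie in $\Sigma'$ and $\Sigma'^+$ respectively, $Z$ lies in the interior of $p_b^{-1}(E_3\times E_3)$, and what we want is an $(n+2)$-chain $D_3$ with $\partial D_3=-Z+\sum_i\epsilon(x_i)U_{x_i}M$ touching $\partial p_b^{-1}(E_3\times E_3)$ only along those fibers, with $\sum_i\epsilon(x_i)=(-1)^{n+1}\frac{\chi(\Sigma')-1}2$.

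First I would compute the relevant homology of $p_b^{-1}(E_3\times E_3)$. The space $E_3$ is homotopy equivalent to the knot complement, which is rationally an $\s^1$, so $H_*(E_3^2;\QQ)=H_*(\s^1\times\s^1;\QQ)$, and removing the (orientable, by parallelizability) diagonal and running the same Alexander-duality-and-Gysin-sequence argument as in the computation of $H_{n+2}(W_1)$ above shows that the kernel of the inclusion-induced map $H_{n+1}(p_b^{-1}(E_3\times E_3))\to H_{n+1}(E_3^2)$ is spanned by the class $[U_xM]$ of one unit tangent fiber (the image under the connecting map of the Thom class over $H_0(\Delta)$). Now each $\ba_i^d$ occurring in $Z$ is null-homologous in $E_3$: for $d\geq 2$ because $H_d(E_3;\QQ)=0$, and for $d=1$ because a $1$-cycle carried by a Seifert surface has vanishing linking number with the knot, hence bounds in the complement. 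Therefore $Z$ is sent to $0$ in $H_{n+1}(E_3^2)$, so $[Z]=c\,[U_xM]$ for a unique $c\in\QQ$, and a routine cut-and-paste (push $c$ parallel copies of a single fiber slightly into the interior, bound the resulting interior cycle, then glue back the product cylinders joining those copies to the boundary fibers) produces $D_3\subset p_b^{-1}(E_3\times E_3)$ with $\partial D_3=-Z+\sum_i\epsilon(x_i)U_{x_i}M$, $\sum_i\epsilon(x_i)=c$, meeting $\partial p_b^{-1}(E_3\times E_3)$ only along the fibers; the faces $(-1)^{nd+1}\ba_i^d\times(\bb_i^{n+1-d})^+$ of $\partial D_3$ are then $-Z$ read term by term.

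It then remains to identify $c$, which I would do by pairing $[Z]$ against a relative $(n+3)$-class dual to $[U_xM]$, realised by the restriction to $p_b^{-1}(E_3\times E_3)$ of an external propagator $B$ of $M$. The normalisation $\langle U_xM,B\rangle=\pm1$ is the standard fact that $G_\tau$ restricts to a diffeomorphism of each fiber onto $\s^{n+1}$; and since $\Sigma'$ and $\Sigma'^+$ are disjoint, Lemma~\ref{lk} gives $\langle\ba_i^d\times(\bb_i^{n+1-d})^+,B\rangle_{\configM}=(-1)^{d+1}\lk(\ba_i^d,(\bb_i^{n+1-d})^+)$, so that
\[ c=\pm\sum_{d=1}^n(-1)^{nd}(-1)^{d+1}\sum_{i\in\und{b_d}}\lk\!\left(\ba_i^d,(\bb_i^{n+1-d})^+\right). \]
Using the symmetry $\lk(X^d,Y^{n+1-d})=(-1)^{nd+1}\lk(Y,X)$ of Definition~\ref{d27} and the definition of the Seifert matrices (Definition~\ref{bases duales}), after checking that the families $(\ba_i^d),(\bb_i^d)$ of Notation~\ref{sigmas2} restrict to dual bases of $\overline H_*(\Sigma')$, each inner sum is $(-1)^{nd+1}\Tr(V_{n+1-d}^-)$; reindexing by $n+1-d$ and collecting signs gives $c=(-1)^{n+1}\sum_{d=1}^n(-1)^d\Tr(V_d^-)$, which by Lemma~\ref{dernierlemme} together with the choice $\mathcal T_{\Sigma'}'(1)=0$ equals $(-1)^{n+1}\frac{\chi(\Sigma')-1}2$. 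This number is an integer (again Lemma~\ref{dernierlemme}), so the fibers can be taken with signs $\epsilon(x_i)=\pm1$, as claimed.

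The main obstacle is the sign bookkeeping in the last step: one must track which of the two dual bases appears in which slot, the $(-1)^{nd+1}$ in the symmetry of the linking number, the sign in the normalisation $\langle U_xM,B\rangle$, and the orientation induced on $D_3$, so as to land on $(-1)^{n+1}\frac{\chi(\Sigma')-1}2$ rather than its negative. A secondary technical point is ensuring the homological argument yields a $D_3$ whose intersection with $\partial p_b^{-1}(E_3\times E_3)$ is an honest disjoint union of unit tangent fibers rather than a more complicated $(n+1)$-cycle on the blow-up locus; this is handled by representing $c\,[U_xM]$ by $c$ parallel copies of a single fiber before pushing the bounding chain off the boundary.
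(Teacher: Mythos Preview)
Your approach is correct and runs parallel to the paper's, but at a higher level of abstraction. Where you argue homologically that $[Z]=c[U_xM]$ in $H_{n+1}(p_b^{-1}(E_3\times E_3))$ and then extract $c$ by pairing against an external propagator via Lemma~\ref{lk}, the paper builds $D_3$ explicitly: for each $\ba_i^{\dimd}$ it chooses a bounding chain $\bA_i^{\dimd+1}\subset E_3$ (using $H_{\dimd}(E_3)=0$ for $\dimd\geq2$ and the vanishing linking number for $\dimd=1$, exactly as you note), and sets
\[
D_3=\sum_{\dimd\in\und n}\sum_{i\in\und{b_{\dimd}}}(-1)^{n\dimd+1}\,\overline{p_b^{-1}\!\left(\bigl(\bA_i^{\dimd+1}\times(\bb_i^{n+1-\dimd})^+\bigr)\setminus\Delta\right)}.
\]
The unit tangent fibers then appear automatically as the new boundary created by blowing up the diagonal at the transverse intersection points $x\in K_{i,\dimd}=\bA_i^{\dimd+1}\cap(\bb_i^{n+1-\dimd})^+$, and their signed count is read off directly as
\[
\sum_{x\in K_{i,\dimd}}\epsilon(x)=(-1)^{(n+1)\dimd+1}\bigl\langle\bA_i^{\dimd+1},(\bb_i^{n+1-\dimd})^+\bigr\rangle_{\spamb}=(-1)^{(n+1)\dimd+1}\lk\!\left(\ba_i^{\dimd},(\bb_i^{n+1-\dimd})^+\right)=(-1)^{\dimd}[V_{n+1-\dimd}^-]_{i,i},
\]
which is precisely the trace your propagator pairing produces. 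The two computations are thus the same linking-number calculation; the paper's explicit construction buys two things you flag as obstacles: it resolves the residual $\pm$ (the sign $(-1)^{(n+1)\dimd+1}$ comes straight from the orientation of the blown-up product, not from a duality pairing whose normalisation must be tracked separately), and it makes the intersection with $\partial\configM$ consist of honest fibers by construction, with no need for a separate push-off argument.
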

\begin{proof}
Recall that $H_*(E_3) = H_*(\s^1)$.
Therefore, for any $\dimd\in\{2,\ldots, n\}$ and any $i\in \und{b_\dimd}$, there exists $\bA_i^{\dimd+1}\subset E_3$ such that $\partial\bA_i^{\dimd+1}= \ba_i^\dimd$.
For any $i\in\und{b_1}$, the class of $\ba_i^1$ in $H_1(E_3)=\mathbb Q$ is given by its intersection number with any Seifert surface for $\psi$. Since $\ba_i^1$ does not meet $\Sigma'^+$, 
there exists a chain $\bA_i^2$ of $E_3$ such that $\partial \bA_i^2 = \ba_i^1$.

Without loss of generality, assume that the chains $(\bA_i^{\dimd+1})_{i,\dimd}$ have been chosen such that $\bA_i^{\dimd+1}$ and $(\bb_i^{n+1-\dimd})^+$ are transverse for any $i$ and $\dimd$. Set $K_{i,\dimd}= \bA_i^{\dimd+1}\cap (\bb_i^{n+1-\dimd})^+$ and $K=\bigcup\limits_{\dimd\in\und n} \bigcup\limits_{i\in\und{b_\dimd}} K_{i,\dimd}$.
Define \[D_3= 
\sum\limits_{\dimd\in\und n} \sum\limits_{i\in \und{b_\dimd}}
(-1)^{nd+1}
\overline{p_b^{-1}\left(\left(\bA_i^{\dimd+1} \times (\bb_i^{n+1-\dimd})^+\right)\setminus (\Delta\cap {K_{i,d}}^2)\right)}
,\]
so that \[
\partial D_3 = - \sum\limits_{\dimd\in\und n} \sum\limits_{i\in \und{b_\dimd}}  (-1)^{nd}\ba_i^\dimd\times(\bb_i^{n+1-\dimd})^+ +
\sum\limits_{x\in K} \epsilon(x).U_{x}M,
\] where $(-1)^{(n+1)d+1}\epsilon(x)$ is the sign of the intersection point. For $d\in\und n$ and $i\in\und{b_\dimd}$, \begin{multline*}\sum\limits_{x\in K_{i,\dimd}} \epsilon(x) 
=  (-1)^{(n+1)d+1}\left\langle\bA_i^{\dimd+1}, (\bb_i^{n+1-\dimd})^+\right\rangle_{\ambientspace}\\
=  (-1)^{(n+1)d+1}\lk\left( \ba_i^\dimd, (\bb_i^{n+1-\dimd})^+\right) 
=  (-1)^{d}[V_{n+1-\dimd}^-]_{i,i},\end{multline*}
where $V_{n+1-\dimd}^-=V_{n+1-\dimd}^-(\Ba, \Bb)$ as in Definition \ref{bases duales} with $\Ba = ([\bb_i^\dimd])_{i,\dimd}$ and $\Bb = ([\ba_i^\dimd])_{i,\dimd}$, so that
\[\sum\limits_{x\in K}  \epsilon(x) = \sum\limits_{\dimd\in\und n} \sum\limits_{i\in\und{b_\dimd}} \sum\limits_{x\in K_{i,\dimd}}\epsilon (x)= \sum\limits_{\dimd\in\und n} (-1)^{d} \Tr( V_{n+1-\dimd}^-).\]
Conclude with Lemma \ref{dernierlemme} and the fact that $\mathcal T_{\Sigma'}'(1)=0$.\qedhere

\end{proof}
\subsubsection{End of the proof of Lemma \ref{DZ}}
Set $D_W = D_1+D_2+D_3$. By construction, $\partial D_W$ is the union of: \begin{itemize}
\item the faces $\partial_1 D_1$ and $\partial_2 D_1$ of Lemma \ref{D1},
\item the faces $D_\delta$, $\ba_1^0\times \Sigma'^+$, and $\Sigma'\times(\bb_1^0)^+$ of Lemma \ref{D2},
\item the faces $\epsilon(x_i) U_{x_i}M$ of Lemma \ref{D3}.
\end{itemize}
All these faces are contained in $\partial W_1$.

Let us check that $\langle D_W, M_W\rangle_{W_1} = \pm 1$. We already saw that $\langle D_1, M_W\rangle_{W_1} = \pm 1$. 
Since $D_2\subset \Sigma'\times \Sigma'^+$, the chain $D_2$ does not meet $M_W$, which is contained in the diagonal.
Since $D_3\subset{p_b}^{-1}(E_3\times E_3)$, $\langle D_3, M_W\rangle_{W_1} = 0$, and $\langle D_W, M_W\rangle_{W_1} = \pm 1$.

It remains to check that $\langle D_W, \delta_W\rangle_{W_1} =0$ for the cycle $\delta_W$ of Lemma \ref{deltaZ}.
Note that $[\delta_W] = [\delta_W-\partial(\P_{X\cup Y} \cap W_1)]$. 
The cycle $\delta_W'=\delta_W -\partial(\P_{X\cup Y} \cap W_1)$ is the union of:\begin{itemize}
\item the faces $\partial_{\mu,1} \P= (-1)^{n+1}(\Sigma^-\cap E_2)\times (\partial \mur[1])$ and $\partial_{\mu,2} \P= (\partial \mur[1])\times(\Sigma^+\cap E_2)$,
\item the faces $-\partial \Dr{-}{1}\times E_2$ and $-E_2\times \partial \Dr{+}{1}$,
\item the faces $(\partial_i\P_{Y_1})_{1\leq i \leq 3}$ and $(\partial_i\P_{Y_2})_{1\leq i \leq 3}$,
\item the face $-G_\tau^{-1}(\{e_1\})\cap W_1$.
\end{itemize}

The faces $\partial \Dr{-}{1}\times E_2$, $E_2\times \partial \Dr{+}{1}$, $\partial_{\mu,1} B$ and $\partial_{\mu,2} B$ cannot meet $D_W$: indeed, they are contained in $\partial N_1\times E_2$ or $E_2\times\partial N_1$, so they do not meet $D_1$ or $D_3$, and since the points $\partial\Dr{\pm}{1}$ are on the Seifert surfaces $\Sigma^+$ and $\Sigma^-$, and since $\Sigma'$ and $\Sigma'^+$ do not meet the surfaces $\Sigma^\pm$, these faces do not meet $D_2$, either. 

For $1\leq i \leq 3$, let us study the intersection of the faces $\partial_i\P_{Y_1}\subset p_b^{-1}( (N_2\cap E_1) \times \partial N_1)$ and $\partial_i\P_{Y_2}\subset p_b^{-1}(\partial N_1\times (N_2\cap E_1) )$ with $D_W$:\begin{itemize}
\item They cannot meet $D_3$, which is contained in $p_b^{-1}(E_3\times E_3)$. 

\item They could meet $D_1$ along $D_1\cap \partial (X\cup Y) = \partial_2 D_1$, which is composed of configurations where the two points are in $\partial N_1$. The choice of the longitudes $\partial\Sigma'$ and $\partial \Sigma'^+$, and the description of these faces in Lemmas \ref{chaine1}, \ref{chaine2}, \ref{chaine3} imply that any configuration $c=(x,y)\in \partial N_1\times \partial N_1$ in one of these faces is such that $\frac{y-x}{||y-x||} = e_1$. Figure \ref{Fig7} shows that this never happens when $(x,y)\in \partial_2D_1$.
Similarly, the faces $(\partial_i B_{Y_j})_{i\in\und{3}, j\in\und{2}}$ meet
neither $D_\delta$ nor $\Delta(\Sigma', {\Sigma'}^+)$.
\begin{figure}[h]
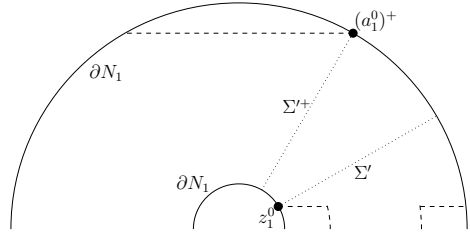

\centering
\figuredz
\caption{Dotted line: The surfaces $\Sigma'$ and $\Sigma'^+$ inside $N_3\cap \Pi_x$ for any $x$ such that $h_x< h_1$. Dashed line: The points $x\in N_2\cap E_1$ such that $(x,(a_1^0)^+)$ or $(z_1^0, x)$ lies in $\partial_2B_{Y_i}$ or $\partial_1B_{Y_i}$.
}\label{Fig7}
\end{figure}
\item Eventually, they could meet $D_2$ along $\ba_1^0\times \Sigma'^+$ and $\Sigma'\times(\bb_1^0)^+$, which would necessarily happen inside $\ba_1^0\times (\Sigma'^+\cap( N_2\cap E_1))$ or $(\Sigma'\cap(N_2\cap E_1))\times(\bb_1^0)^+$. Assume without loss that $\ba_1^0 = (\cos(\frac\pi6), \sin(\frac\pi6), \overline 0)$ and that $(\bb_1^0)^+= (18\cos(\frac\pi3), 18\sin(\frac\pi3), \overline 0)$. In this case, we get no intersection points, as it can be seen on Figure \ref{Fig7}. 
\end{itemize}
Therefore, these faces do not meet $D_W$.

We are left with the proof that $\langle D_W, -G_\tau^{-1}(\{e_1\})\cap W_1\rangle=0$. We will use the following lemma since this intersection is contained in the faces of $D_W$.

\begin{lm}
Let $P$ be an oriented manifold with boundary, let $Q$ be a submanifold of $P$, and let $R$ be a submanifold of $\partial P$ such that $\dim(R) + \dim(Q) =\dim(P)$. Assume that \begin{itemize}
\item the submanifold $Q$ meets $\partial P$ only along its boundary: $Q\cap\partial P \subset \partial Q$, and this intersection is transverse,
\item the submanifolds $Q\cap\partial P$ and $R$ are transverse in $\partial P$.
\end{itemize}
The submanifolds $Q$ and $R$ are transverse in $P$ and $\langle Q, R\rangle_P =  \langle \partial Q\cap \partial P , R\rangle_{\partial P} $.
\end{lm}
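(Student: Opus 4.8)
The plan is to localise at the (finitely many) points of $Q\cap R$ and compare local intersection signs. First I would record that, since $R\subset\partial P$ while $Q\cap\partial P\subset\partial Q$, every point of $Q\cap R$ lies in $Q\cap\partial P=\partial Q\cap\partial P$; by the second hypothesis this is a transverse $0$-dimensional set $S$, so both $\langle Q,R\rangle_P$ and $\langle\partial Q\cap\partial P,R\rangle_{\partial P}$ are the corresponding sums of local signs over $S$. Transversality of $Q$ and $R$ in $P$ then follows from the hypotheses: at $x\in S$ the first hypothesis provides $w\in T_xQ$ with $w\notin T_x\partial P$, the second gives $T_x(\partial Q\cap\partial P)+T_xR=T_x\partial P$ inside $T_x\partial P$, and since $T_x(\partial Q\cap\partial P)\subset T_xQ$ we get $T_xQ+T_xR\supseteq T_x\partial P+\mathbb{R}w=T_xP$.

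Next I would choose, at each $x\in S$, a chart of $P$ adapted to all of the data at once: $P=\{y_p\ge 0\}$, $\partial P=\{y_p=0\}$, $Q=\{y_q=\dots=y_{p-1}=0,\ y_p\ge 0\}$, $\partial Q\cap\partial P=\{y_q=\dots=y_{p-1}=0,\ y_p=0\}$, and $R=\{y_1=\dots=y_{q-1}=0,\ y_p=0\}$. Such a chart exists: one first straightens the transverse pair $(\partial Q\cap\partial P,\,R)$ inside $\partial P$ to complementary coordinate planes, then, using the transversality of $Q$ with $\partial P$, extends into $P$ so that $Q$ becomes the coordinate half-space obtained from $\partial Q\cap\partial P$ by adjoining the inward normal direction $y_p$. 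The structural fact this makes visible, and which is the geometric content of the lemma, is that the \emph{outward} normal of $Q$ along $\partial Q$ and the outward normal of $P$ along $\partial P$ are the same direction $-\partial_{y_p}$ -- since $Q$ sits on the inner side of $\partial P$, it cannot escape across it.

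Finally I would carry out the sign comparison with the paper's conventions. Writing $T_xP=\mathbb{R}\partial_{y_p}\oplus T_x\partial P$, the ``outward normal first'' rule gives $o(P)=(-\partial_{y_p})\wedge o(\partial P)$ and, by the normal form, $o(Q)=(-\partial_{y_p})\wedge o(\partial Q)$ near $x$; from these one reads off that the coorientation of $R$ in $P$ is $o(\mathfrak{N}^P R)=(-\partial_{y_p})\wedge o(\mathfrak{N}^{\partial P}R)$, and that the coorientations of $Q$ in $P$ and of $\partial Q\cap\partial P$ in $\partial P$ differ by $(-1)^{\dim R}$. Computing the $0$-dimensional intersection numbers by the rule $\mathfrak{N}A_\cap=\bigoplus_i\mathfrak{N}A_i$ (with $Q$, resp.\ $\partial Q\cap\partial P$, taken first and $R$ second), two factors of $(-1)^{\dim R}$ enter -- one from the reordering forced by $o(\mathfrak{N}^P R)$, one from the discrepancy of the $Q$-coorientations -- and their product is $1$, leaving $\epsilon_P(x)\,o(P)=(-\partial_{y_p})\wedge\bigl(\epsilon_{\partial P}(x)\,o(\partial P)\bigr)=\epsilon_{\partial P}(x)\,o(P)$. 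Hence $\epsilon_P(x)=\epsilon_{\partial P}(x)$ for every $x\in S$, and summation over $S$ yields the stated equality.

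I expect this last step to be the only real difficulty: threading the $\pm1$'s coming from the ``$\mathfrak{N}A_\cap=\bigoplus\mathfrak{N}A_i$'' rule, the ``outward normal first'' boundary orientation, and the coorientation conventions so that the two factors of $(-1)^{\dim R}$ cancel and no spurious universal sign remains. The computation is elementary multilinear algebra, but it is the place where the geometric hypothesis (that $Q$ lies on the inner side of $\partial P$) is actually used, through the identity $o(Q)=(-\partial_{y_p})\wedge o(\partial Q)$. The construction of the adapted chart is routine and is where the transversality of $Q$ with $\partial P$ enters.
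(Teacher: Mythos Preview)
Your argument is correct and is precisely the kind of local computation the paper has in mind: the paper's own proof consists of the single sentence ``The lemma follows from a direct computation,'' and what you have written is exactly that computation carried out with the paper's orientation conventions. Your identification of the key geometric point---that the outward normal to $\partial Q\cap\partial P$ in $Q$ coincides with the outward normal to $\partial P$ in $P$---and the resulting cancellation of the two $(-1)^{\dim R}$ factors are both right.
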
\begin{proof}
The lemma follows from a direct computation.\qedhere
\end{proof}
The only configurations of $D_W$ where the two points collide with $u= \tau_x(e_1)$ are: \begin{itemize}
\item Those coming from the faces $\epsilon(x_i)U_{x_i}M\subset \partial D_3.$ Their contribution is \begin{eqnarray*} \langle D_3, -G_\tau^{-1}(\{e_1\})\cap W_1 \rangle_{W_1}&=&\left\langle \sum\limits_{i=1}^m \epsilon(x_i) U_{x_i}M , -G_\tau^{-1}(\{e_1\})\cap W_1\right\rangle_{\partial W_1}\\
&=&- \sum\limits_{i=1}^m\epsilon(x_i)\\
&=&(-1)^{n+1}\frac{1-\chi(\Sigma')}2\end{eqnarray*} 

\item Those coming from $\partial_1 D_1$. Assume without loss of generality that $e_1$ is a regular value of the map $\phi_n\colon x\in\Sigma'\mapsto \tau_x^{-1}(n_x)\in \s^{n+1}$. Their contribution is \begin{eqnarray*}\langle D_1, -G_\tau^{-1}(\{e_1\})\cap W_1\rangle_{W_1}&=& \langle\partial_1 D_1, -G_\tau^{-1}(\{e_1\})\cap W_1\rangle_{\partial W_1}\\ 
&=&+\deg_{e_1}(\phi_n),\end{eqnarray*} where $\deg_y(\phi_n)$ is the differential degree of $\phi_n$ at $y$, and where the plus sign comes from the fact that the face $\partial_1D_1$ of Lemma \ref{D1} is oriented as $- \Sigma'$.
\end{itemize}

The proof of Lemma \ref{DZ} is now completed by the following lemma, since $\chi(\Sigma')=1$ if $n$ is even.

\begin{lm}\label{lm419}
Let $\phi_n$ be the map $x\in \Sigma'\mapsto \tau_x^{-1}(n_x)\in\s^{n+1}$. 

The differential degree of $\phi_n$ may be extended to the constant map on $\s^{n+1}$ with value $\frac{\chi(\Sigma')-1}2$.
\end{lm}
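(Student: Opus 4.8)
The plan is to relate the differential degree of $\phi_n\colon x\in\Sigma'\mapsto\tau_x^{-1}(n_x)\in\s^{n+1}$ to a classical obstruction-theoretic quantity, namely the Euler number of a suitable bundle over $\Sigma'$, and then evaluate that Euler number via Poincaré–Hopf. First I would observe that $\Sigma'$ is an $(n+1)$-manifold with boundary, that $n_x$ is a nowhere-zero section of the (trivialized, via $\tau$) normal bundle $\N_x\Sigma'$ in $\punct M$ restricted to $\Sigma'$, and that $\phi_n$ records this section as a map to $\s^{n+1}$. The "differential degree" is only literally a degree when $\Sigma'$ is closed; here one should read it as the local degree summed over $\phi_n^{-1}(e_1)$ for a regular value $e_1$, which is exactly the count (with signs) of points $x\in\Sigma'$ where $n_x$ points in the $\tau_x(e_1)$ direction. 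The assertion of the lemma is that this count equals $\frac{\chi(\Sigma')-1}{2}$, independently of all choices.

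The key steps, in order: (1) Fix a regular value $e_1$ of $\phi_n$; the preimage $\phi_n^{-1}(e_1)$ is a finite set of interior points of $\Sigma'$ (one must check the boundary $\partial\Sigma'$ can be avoided, using that near $\partial\Sigma'$ the surface $\Sigma'$ is standard and $n_x$ is there a fixed coordinate direction, so $\phi_n$ is locally constant there and $e_1$ can be chosen to avoid that value). (2) Reinterpret the signed count $\deg_{e_1}(\phi_n)$ as the algebraic count of zeros of a vector field: consider on $\Sigma'$ the tangential projection of the constant vector field $\tau_x(e_1)$ — more precisely, the section $x\mapsto$ "tangential part of $\tau_x(e_1)$ in $T_x\Sigma'$", whose zeros are precisely the points where $\tau_x(e_1)$ is normal to $\Sigma'$, i.e. collinear with $n_x$, i.e. $\phi_n(x)=\pm e_1$; a sign/orientation bookkeeping shows the contribution from $\phi_n(x)=+e_1$ alone, with the orientation conventions in the statement, is what enters. (3) Relate this vector-field zero count to $\chi(\Sigma')$ by Poincaré–Hopf for manifolds with boundary: the sum of indices equals $\chi(\Sigma')$ if the field points outward along $\partial\Sigma'$ and equals $\chi(\Sigma')-\chi(\partial\Sigma')$ (or a half-Euler-characteristic correction) in the mixed-boundary case. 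Since $\partial\Sigma' \cong \s^{n-1}$ and $n$ is even (so $n+1$ is odd, and the relevant boundary sphere has $\chi(\s^{n-1})=2$ when $n$ is even), the boundary correction produces the $-\tfrac12$ and the factor $\tfrac12$; together with the standard behaviour of $n_x$ near $\partial\Sigma'$, this yields $\frac{\chi(\Sigma')-1}{2}$. (4) Finally, since $\chi(\Sigma')=1$ when $n$ is even (as already noted in the proof of Lemma \ref{dernierlemme}: $\Sigma'\cap\bM$ is an odd-dimensional compact orientable manifold with boundary a sphere), the degree is $0$, and in particular constant; the map $\s^{n+1}\to\mathbb Z$ "$y\mapsto\deg_y(\phi_n)$" extends to the constant map with value $\frac{\chi(\Sigma')-1}{2}$, which is the assertion.

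I would carry out step (3), the Poincaré–Hopf bookkeeping on a manifold-with-boundary with the particular behaviour of the vector field near $\partial\Sigma'$, as the technical heart: one needs to push the field outward along part of $\partial\Sigma'$ and track whether the indices along $\partial\Sigma'$ are counted, which is exactly where the "$-1$" and the "$/2$" come from. A clean way is to double $\Sigma'$ (or to attach a collar and a cap) so as to work on a closed odd-dimensional manifold, where the total index is $\chi=0$, and then subtract off the contribution coming from the cap; the cap's contribution is a degree computation on $\s^{n-1}$, contributing $\pm 1$ and hence, after halving, the stated answer. The main obstacle is making the orientation signs in this doubling/capping argument match the orientation conventions fixed in Lemma \ref{D1} (where $\partial_1 D_1$ is oriented as $-\Sigma'$) and in Definition \ref{configM} (the identification of $\partial_\Delta\configM$ with $U\punct M$), so that the "$+$" sign asserted in the parent computation $\langle D_1,-G_\tau^{-1}(\{e_1\})\cap W_1\rangle_{W_1}=+\deg_{e_1}(\phi_n)$ is consistent; the value $\frac{\chi(\Sigma')-1}{2}=0$ fortunately makes the final numerical conclusion robust to any residual sign ambiguity.
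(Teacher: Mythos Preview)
Your core strategy matches the paper's: project $\tau_x(e_1)$ onto $T_x\Sigma'$ along $n_x$ to get a tangent field $X$ whose zeros are $\phi_n^{-1}(\{\pm e_1\})$, then apply Poincar\'e--Hopf. There are, however, two concrete errors. First, $\Sigma'$ is $(n+1)$-dimensional, so $\partial\Sigma'\cong\s^n$, not $\s^{n-1}$; your boundary heuristic in step~(3) rests on the wrong sphere. Second, your step~(2) is vague about which zeros count: both $\pm e_1$ contribute, and the factor $\tfrac12$ arises because $\sum_z i(X,z)=\deg_{e_1}(\phi_n)+\deg_{-e_1}(\phi_n)=2\deg(\phi_n)$ once one knows the degree is independent of the regular value---not from any half-Euler-characteristic correction on the boundary.

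You also underuse the key structural fact: $\phi_n$ is \emph{constant} on the whole collar $\Sigma'\cap N_3$ (equal to $(\cos\tfrac{2\pi}3,\sin\tfrac{2\pi}3,\overline 0)$), not merely bounded away from $e_1$ there. This both gives degree-independence directly (any two regular values are joined by a path missing that single point) and furnishes an explicit $(n+1)$-disk $\mathbb D\subset\Sigma'\cap N_3$ on which $X$ is a nonzero constant. The paper's argument is then very clean: modify $X$ to $X'$ only on $\mathbb D$ so that $X'$ points outward along all of $\partial\Sigma'$ and along $\partial\mathbb D$; Poincar\'e--Hopf applied to $(\Sigma',X')$ and to $(\mathbb D,X'_{|\mathbb D})$ separately gives $\sum_z i(X,z)=\chi(\Sigma')-\chi(\mathbb D)=\chi(\Sigma')-1$, whence $\deg(\phi_n)=\tfrac{\chi(\Sigma')-1}{2}$. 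Your doubling/capping alternative could in principle be pushed through but is less direct and does not exploit the constancy on the collar; and the lemma and its proof hold for all $n$, so your restriction to $n$ even in step~(4) is out of place (it belongs to the application of the lemma, not to its proof).
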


\begin{proof}
For any $x\in \Sigma'\cap N_3$, $\phi_n(x)= (\cos(\frac{2\pi}3),  \sin(\frac{2\pi}3), \overline 0)$. All the boundary of $\Sigma'$ is mapped by $\phi_n$ to one point in $\s^{n+1}$. This implies that the differential degree of $\phi_n$ does not depend on the chosen regular value in $\s^{n+1}$. Assume that $\phi_n$ admits $-e_1$ and $e_1$ as regular values, without loss of generality.

For any $x\in \Sigma'$, define the projection $X(x)$ of $\tau_x(e_1)$ on $T_x\Sigma'$ along the direction $n_x$ (which is the only vector of $T_x\Sigma'$ that can be expressed as $\tau_x(e_1)-\lambda n_x$ for some $\lambda\in\R$).
This defines a tangent vector field $X$ on $\Sigma'$, whose zeros are the points such that $\phi_n(x) = \pm e_1$. Around such a zero $z$, $\phi_n$ is a local diffeomorphism from a disk around $z$ to a disk inside $\s^{n+1}$. In this setting, the index $i(X, z)$ of the zero is $+1$ if and only if this local diffeomorphism preserves the orientation. This implies that $\sum\limits_{z  \text{ zero of X}} i(X, z) = \deg_{e_1}(\phi_n)+\deg_{-e_1}(\phi_n)$. Since $\deg(\phi_n)$ does not depend on the regular value, $\deg(\phi_n) = \frac12\sum\limits_{z \text{ zero of X}} i(X,z)$.

Let $\mathbb D\subset \Sigma'$ be the set $\{(r\cos(\frac\pi6), r\sin(\frac\pi6), \overline x) \mid  1\leq r \leq 2,\overline x \in \R^n\}\cap E_1$, as depicted in Figure \ref{Fig8}. This is an $(n+1)$-disk on which $X$ takes a constant value $X_0\neq 0$. Change the vector field $X$ on $\mathbb D$ so that it keeps the same value on $\partial \mathbb D\setminus \partial \Sigma'$ but is going outwards on all $\mathbb D\cap \partial \Sigma'$. The obtained vector field $X'$ is going outwards on $\partial \Sigma'$ and $X'_{|\mathbb D}$ is going outwards on $\partial \mathbb D$ as in Figure \ref{Fig8}. The zeros of $X'$ are the union of those of $X$ with same indices (which are in $\Sigma'\setminus \mathbb D$) and those of $X'_{|\mathbb D}$.
In this setting, Poincaré-Hopf theorem (see for example \cite[Section 6, p 35]{[Milnor]}) yields $\sum\limits_{z \text{ zero of $X'$}} i(X',z)=\chi(\Sigma'\cup \mathbb D)= \chi(\Sigma')$, and $\sum\limits_{z \text{ zero of $X'_{|\mathbb D}$}} i(X',z)= \chi(\mathbb D)=1$.

The difference of these two formulas gives $\sum\limits_{z \text{ zero of $X$}} i(X,z)= \chi(\Sigma')-1$, and implies the lemma.\qedhere

\begin{figure}[H]
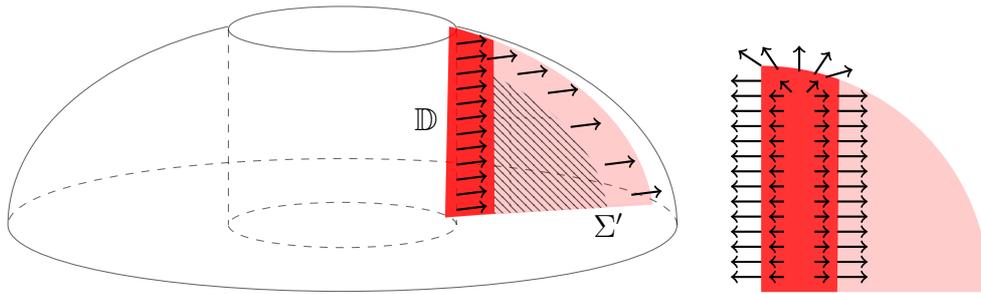
\label{fig4}
\centering
\figuresigma\ \ \ \figuresigmab
\caption{Left: The surface $\Sigma'$ with the darker disk $\mathbb D$, and the vector field $X$. The hashed area depicts $\Sigma'\cap E_2$, which is not necessarily a disk as in the picture. Right: The modified field $X'$ on $\mathbb D$, which points outwards on the boundary.}
\label{Fig8}
\end{figure}
\end{proof}

	\bibliographystyle{alpha}
	\bibliography{Brouillon}
\end{document}